\documentclass[11pt, twoside]{article}

\usepackage[english]{babel}

\usepackage{bbm}
\usepackage{amssymb}
\usepackage{amsfonts}
\usepackage[reqno,tbtags]{amsmath}
\usepackage{amsthm}
\usepackage{color}
\usepackage{mathrsfs}
\usepackage{txfonts}
\usepackage{enumerate}
\usepackage{anysize}
\usepackage{indentfirst}
\usepackage[T1]{fontenc}
\usepackage{latexsym}

\usepackage[colorlinks=true,
  linkcolor=blue,
  citecolor=red,
  urlcolor=magenta,
  ]{hyperref}

\allowdisplaybreaks

\newtheorem{theorem}{Theorem}[section]

\newtheorem{lemma}[theorem]{Lemma}

\newtheorem{proposition}[theorem]{Proposition}
\newtheorem{example}[theorem]{Example}
\theoremstyle{definition}
\newtheorem{remark}[theorem]{Remark}

\newtheorem{definition}[theorem]{Definition}

\newcounter{assum}

\renewcommand{\appendix}{\par
\setcounter{section}{0}%
\setcounter{subsection}{0}%
\setcounter{subsubsection}{0}%
\gdef\thesection{\@Alph\c@section}%
\gdef\thesubsection{\@Alph\c@section.\@arabic\c@subsection}%
\gdef\theHsection{\@Alph\c@section.}%
\gdef\theHsubsection{\@Alph\c@section.\@arabic\c@subsection}%
\csname appendixmore\endcsname
}

\numberwithin{equation}{section}

\begin{document}

\title{\bf\Large Sharp Quantitative Weighted Estimates for Lifted
Maximal Operators with Applications
\footnotetext{\hspace{-0.35cm} 2020
\emph{Mathematics Subject Classification}. Primary 42B25;
Secondary 42B30, 46E35, 46E30.
\endgraf \emph{Key words and phrases}.
lifted maximal operator, Muckenhoupt weight,
quantitative weighted estimate,
weak-type characterization,
Hardy--Sobolev space.
\endgraf
This project is partially supported by the National
Natural Science Foundation of China
(Grant Nos. 12431006, 12371093, and 125B2008), the Beijing Natural Science
Foundation (Grant No. 1262011), and
the Fundamental Research Funds
for the Central Universities (Grant No. 2253200028).}}
\author{Meng Geng, Dachun Yang, 
 Wen Yuan and Yirui Zhao
}
\maketitle

\vspace{-0.7cm}

\begin{center}
\begin{minipage}{13cm}
{\small {\bf Abstract}\quad
In this article, we establish sharp quantitative weighted estimates
for the lifted Hardy--Littlewood (resp Sobolev sharp) maximal operators
$\{\mathcal M_\theta\}_{\theta\in(0,\infty)}$ (resp $\{\mathcal M_\theta^\sharp\}_{\theta\in(0,\infty)}$).
Specifically, for $p\in(1,\infty)$,
$\gamma\in\mathbb R\setminus\{0\}$, $\upsilon\in A_p$,
and $f\in L^p_\upsilon$, we prove that
\begin{align*}
\sup_{\theta,\lambda\in(0,\infty)}\lambda^p
\underset{\mathcal M_\theta(f)(x,t)>\lambda t^\gamma}
{\int_{\mathbb R^n}\int_0^\infty}
t^{\gamma p-1}\,dt\,\upsilon(x)\,dx
\lesssim [\upsilon]_{A_p}^{\alpha_{p,\gamma}}
\|f\|_{L^p_\upsilon}^p,
\end{align*}
where $\alpha_{p,\gamma}:=\frac{p}{p-1}$ for
$\gamma\in[-n,0)$ and $\alpha_{p,\gamma}:=1$ otherwise
and where the implicit positive constant depends only on $n$, $p$,
and $\gamma$.
For $\mathcal M_\theta^\sharp$, the corresponding estimate
holds with $\|f\|_{L^p_\upsilon}^p$ replaced by
$\|\nabla f\|_{L^p_\upsilon}^p$
for any $f\in\dot W^{1,p}_\upsilon$,
and with $\alpha_{p,\gamma}:=\frac{p}{p-1}$
for $\gamma\in[-1,0)$ and $\alpha_{p,\gamma}:=1$ otherwise.
We show that, for an $A_1$ weight $\upsilon$,
the weight constant in the above estimates
becomes $[\upsilon]_{A_1}$
for the sharp ranges $p\in(1,\infty)$
and $\gamma\in\mathbb R\setminus\{0\}$,
or $p=1$ and $\gamma\in\mathbb R\setminus[-n,0]$ 
(resp. $\gamma\in\mathbb R\setminus[-1,0]$).
We also prove that $\alpha_{p,\gamma}$ is sharp,
and we establish exact limiting formulae
and characterize Muckenhoupt weights via
weak-type bounds.
As applications, we characterize the weighted Hardy(--Sobolev) space
$H^1_\upsilon$ with $\upsilon\in A_1$ in terms of the truncated
Riesz transforms with optimal
range $\gamma\in \mathbb R\setminus\{0\}$.
More applications include the sharp weighted
Caffarelli--Silvestre extension and the generalization
to ball Banach function spaces.
}
\end{minipage}

\end{center}

\vspace{0.2cm}

\tableofcontents

\vspace{0.2cm}

\section{Introduction}

Throughout this article, we work in $\mathbb{R}^n$ and, unless
otherwise specified, we always take $\mathbb{R}^n$ as the underlying space.

A well-known defect of the Gagliardo seminorm
is that it does not recover the first-order Sobolev seminorm at the
endpoint of the smoothness exponent. To remedy this defect,
Brezis et al.\ \cite{bsvy21}
replaced the strong $L^p$ norm in the Gagliardo seminorm
by the weak $L^p$ norm.
Subsequently, Brezis et al.\ \cite{bssvy24} showed that,
for any given $p\in[1,\infty)$ and $\gamma\in\mathbb R$, the equivalence
\begin{align}\label{eq:intro-bsvy}
\sup_{\lambda\in(0,\infty)}\lambda^p
\underset{|f(x)-f(y)|>\lambda|x-y|^{1+\frac\gamma p}}
{\int_{\mathbb R^n}\int_{\mathbb R^n}}
|x-y|^{\gamma-n}\,dy\,dx
\sim \|\nabla f\|_{L^p}^p
\end{align}
holds for any $f\in C^2$ with compactly supported $\nabla f$
if and only if either $p\in(1,\infty)$ and $\gamma\in\mathbb R\setminus\{0\}$, or $p=1$ and
$\gamma\in \mathbb R\setminus[-1,0]$, where the positive equivalence constants depend only on $n$, $p$, and
$\gamma$. Here, and thereafter, a condition written below
an iterated integral sign means to restrict the inner integration to that condition.
More precisely, we always use the \emph{abbreviation}
\begin{align*}
\underset{g(x,y)>h(x,y)}
{\int_X\int_Y}
\cdots\,d\nu(y)\,d\mu(x)
:=\int_X
\left[
\int_{\{y\in Y:\,g(x,y)>h(x,y)\}}
\cdots\,d\nu(y)
\right]d\mu(x).
\end{align*}
In particular, \eqref{eq:intro-bsvy} in the case $\gamma=n$
recovers \cite[Theorem 1.1]{bsvy21}.
These formulae connect the local first-order smoothness with
nonlocal difference quotients, and provide a representation of
first-order Sobolev seminorms valid even at the endpoint case $p=1$.
This discovery has stimulated various further developments and applications
in harmonic analysis (see, for example,
\cite{bssvy22,bvy-lorentz21,dlyyz22,gh23,hxz25,hyyz26,poliakovsky22}),
including weighted extensions (see, for example, \cite{dly,hlyyz25,scm,pssy26,zyy}),
and has close connections with the
calculus of variations (see, for example, \cite{agmp20,gp25,lahti25,nguyen25}).

Furthermore, weak-type representations have also been
established for operators beyond difference quotients (see, for example,
\cite{dlyyz,dm22,dssvy23,frank24,hytonen26,hk25,zlyyz24}).
In particular, Dom\'inguez and Milman
\cite{dm22} established a general framework associated with
one-parameter families of operators, which yields representations
of the norms of various differential operators beyond the gradient.
Their method relies on the strong $L^p$
boundedness of the relevant maximal operators. The failure of this
boundedness at the endpoint case $p=1$ prevents their framework from
providing a
satisfactory result at the endpoint case, a problem explicitly raised
in \cite[p.\,22]{dm22}.

Dai et al.\ \cite{dlyyz} addressed this endpoint problem by
introducing the lifted Hardy--Littlewood maximal operators.
The main idea is to restrict the radii of balls
appearing in the definition of the Hardy--Littlewood maximal operator
to some interval depending on the time variable
$t\in(0,\infty)$ and the given size scale $\theta\in(0,\infty)$,
instead of taking the supremum over all balls,
and then establish a weak-type estimate on the upper half-space
$\mathbb R^{n+1}_+:=\mathbb R^n\times(0,\infty)$.
Recall that, for any given $\theta\in(0,\infty)$ and for any $f\in L^1_{\rm loc}$
(the set of all locally integrable functions on $\mathbb R^n$),
the \emph{lifted Hardy--Littlewood maximal function}
$\mathcal M_\theta(f):\mathbb R^{n+1}_+\to[0,\infty]$ is defined by setting,
for any $(x,t)\in\mathbb R^{n+1}_+$,
\begin{align*}
\mathcal M_\theta(f)(x,t)
:=\sup_{\genfrac{}{}{0pt}{}{B\ni x}{
\operatorname{rad}(B)\in[\theta t,2\theta t)}}
\frac{1}{|B|}\int_B|f(y)|\,dy,
\end{align*}
where the supremum is taken over all balls $B$ containing $x$
with the radius, denoted by $\operatorname{rad}(B)$, satisfying
$\operatorname{rad}(B)\in[\theta t, 2\theta t)$.
Dai et al.\ \cite[Theorem~2.3]{dlyyz} proved that the equivalence
\begin{align}\label{eq:intro-unweighted-lifted-hl}
\sup_{\theta,\lambda\in(0,\infty)}\lambda^p
\underset{\mathcal M_\theta(f)(x,t)>\lambda t^\gamma}
{\int_{\mathbb R^n}\int_0^\infty}
t^{\gamma p-1}\,dt\,dx
\sim \|f\|_{L^p}^p
\end{align}
holds for any $f\in L^p$ if and only if
either $p\in(1,\infty)$ and $\gamma\in\mathbb{R}\setminus\{0\}$, or $p=1$ and $\gamma\in\mathbb{R}\setminus[-n,0]$, where the positive equivalence constants depend only on $n$, $p$, and $\gamma$.
The estimate \eqref{eq:intro-unweighted-lifted-hl} yields
endpoint weak-type representations for generators of
one-parameter operator families, including the Laplacian through
ball averages and the fractional Laplacian through the fractional
Poisson (Caffarelli--Silvestre) extension. Dai et al.\ \cite[Sections~3--5]{dlyyz} further applied these results
to semigroups and to characterizations of Hardy(--Sobolev)
spaces via truncated Riesz transforms.

Subsequently, Dai et al.\ \cite{dlyyz2} introduced the following
lifted Sobolev sharp maximal function.
Recall that, for any given $\theta\in(0,\infty)$ and for any $f\in L^1_{\rm loc}$, the \emph{lifted Sobolev sharp maximal function}
$\mathcal M_\theta^\sharp(f):
\mathbb R^{n+1}_+\to[0,\infty]$ is defined by setting, for any
$(x,t)\in\mathbb R^{n+1}_+$,
\begin{align*}
\mathcal M_\theta^\sharp(f)(x,t)
:=
\sup_{\genfrac{}{}{0pt}{}{B\ni x}{
\operatorname{rad}(B)\in[\theta t,2\theta t)}}
\frac{1}{\operatorname{rad}(B)}
\fint_B|f(y)-f_B|\,dy,
\end{align*}
here and thereafter, $$f_E:=\fint_E f(y)\,dy:=\frac{1}{|E|}\int_E f(y)\,dy$$
for any measurable set $E\subset\mathbb R^n$ with $|E|\in (0, \infty)$.
Dai et al.\ \cite[Theorem~1.1]{dlyyz2} showed that
\begin{align}\label{eq:intro-unweighted-lifted-sharp}
\sup_{\genfrac{}{}{0pt}{}{\theta\in(0,1]}{\lambda\in(0,\infty)}}\lambda^p
\underset{\mathcal M_\theta^\sharp(f)(x,t)>\lambda t^\gamma}
{\int_{\mathbb R^n}\int_0^\infty}
t^{\gamma p-1}\,dt\,dx
\sim \|\nabla f\|_{L^p}^p
\end{align}
holds for any $f\in\dot W^{1,p}$ if and only if either
$p\in(1,\infty)$ and $\gamma\in\mathbb R\setminus\{0\}$, or $p=1$ and
$\gamma\in\mathbb R\setminus[-1,0]$, where the
positive equivalence constants depend only on $n$, $p$, and
$\gamma$. In \cite{dlyyz2}, estimate \eqref{eq:intro-unweighted-lifted-sharp} yields formula
\eqref{eq:intro-bsvy} and
 an extension of Frank's characterization involving mean oscillation in
\cite[Theorem~1.1]{frank24} both with optimal parameter ranges,
including the endpoint case $p=1$. The corresponding formula also
applies to complete Poincar\'e spaces in \cite{dlyyz2}.

A natural \emph{question} is whether the equivalences in
\eqref{eq:intro-unweighted-lifted-hl} and
\eqref{eq:intro-unweighted-lifted-sharp} remain valid in the
weighted setting, especially at the endpoint case $p=1$, and how the corresponding
estimates depend on the weight constants.
In this article, we establish quantitative weighted analogues of
\eqref{eq:intro-unweighted-lifted-hl} and
\eqref{eq:intro-unweighted-lifted-sharp} with Muckenhoupt weights,
including the endpoint case $p=1$. We investigate the sharp powers of the Muckenhoupt
constants and characterize Muckenhoupt weights through
weak-type estimates for these operators.
In what follows, since we handle the weighted estimates for $\mathcal M_\theta$ and
$\mathcal M_\theta^\sharp$ simultaneously, to simplify the presentation
we introduce the following \emph{unified notation}.
Let $\tau\in\{\mathrm{av},\mathrm{osc}\}$ and write
\begingroup
\begin{align}\label{eq:unified-notation}
\begin{array}{c@{\qquad}|@{\qquad}c@{\qquad}|@{\qquad}c}
&\tau:=\mathrm{\color{red}av}
&\tau:=\mathrm{\color{blue}osc}\\
\hline
\mathcal M_\theta^\tau(f)
&\mathcal M_\theta(f)
&\mathcal M_\theta^\sharp(f)\\
D^\tau f
&f
&\nabla f\\
X^{\tau,p}_\upsilon
&L^p_\upsilon
&\dot W^{1,p}_\upsilon\\
c_n^\tau
&1
&K_{(n)}
\end{array}
\end{align}
\endgroup
where
$K_{(n)}
:=
\frac{1}{\sqrt{\pi}}
\frac{\Gamma(\frac{n+2}{2})}
{\Gamma(\frac{n+3}{2})}$ and the other notation
can be found in Definitions \ref{df:c-v} and \ref{df:hwss}. Let
\begin{align}\label{eq:unified-gamma-range}
\Gamma_p^\tau
:=
\begin{cases}
\mathbb R\setminus\{0\}
&\text{if }p\in(1,\infty),\\
\mathbb R\setminus[-n,0]
&\text{if }p=1\text{ and }\tau=\mathrm{av},\\
\mathbb R\setminus[-1,0]
&\text{if }p=1\text{ and }\tau=\mathrm{osc}.
\end{cases}
\end{align}
For any $p\in(1,\infty)$ and $\gamma\ne0$, let
\begin{align*}
\alpha_{p,\gamma}^{\mathrm{av}}
:=
\begin{cases}
\displaystyle\frac{p}{p-1}
&\text{if }\gamma\in[-n,0),\\
1
&\text{if }\gamma\in\mathbb R\setminus[-n,0]
\end{cases}
\ \mathrm{and}\
\alpha_{p,\gamma}^{\mathrm{osc}}
:=
\begin{cases}
\displaystyle\frac{p}{p-1}
&\text{if }\gamma\in[-1,0),\\
1
&\text{if }\gamma\in\mathbb R\setminus[-1,0].
\end{cases}
\end{align*}

Our main weighted estimates are as follows.

\begin{theorem}\label{th:intro-m}
Let $\tau\in\{\mathrm{av},\mathrm{osc}\}$,
$p\in[1,\infty)$, and $\gamma\in\Gamma_p^\tau$. Then the
following assertions hold.
\begin{enumerate}
\item[\textup{(i)}]
If $\upsilon\in A_1$, then, for any
$f\in X^{\tau,p}_\upsilon$,
\begin{align}\label{eq:intro-m-1}
\frac{(c_n^\tau)^p}{p|\gamma|}
\|D^\tau f\|_{L^p_\upsilon}^p
\leq
\sup_{\theta,\lambda\in(0,\infty)}
\lambda^p
\underset{
\mathcal M_\theta^\tau(f)(x,t)>
\lambda t^\gamma}
{\int_{\mathbb R^n}\int_0^\infty}
t^{\gamma p-1}\,dt\,\upsilon(x)\,dx
\lesssim
[\upsilon]_{A_1}
\|D^\tau f\|_{L^p_\upsilon}^p.
\end{align}

\item[\textup{(ii)}]
If $p\in(1,\infty)$ and $\upsilon\in A_p$, then, for any
$f\in X^{\tau,p}_\upsilon$,
\begin{align}\label{eq:intro-m-p}
\frac{(c_n^\tau)^p}{p|\gamma|}
\|D^\tau f\|_{L^p_\upsilon}^p
\leq
\sup_{\theta,\lambda\in(0,\infty)}
\lambda^p
\underset{
\mathcal M_\theta^\tau(f)(x,t)>
\lambda t^\gamma}
{\int_{\mathbb R^n}\int_0^\infty}
t^{\gamma p-1}\,dt\,\upsilon(x)\,dx
\lesssim
[\upsilon]_{A_p}^{\alpha_{p,\gamma}^{\tau}}
\|D^\tau f\|_{L^p_\upsilon}^p.
\end{align}
\end{enumerate}
The implicit positive constants depend only on $n$, $p$, and
$\gamma$. For any $\gamma\in\Gamma_p^\tau$,
the powers of the Muckenhoupt constants are \emph{sharp}.
\end{theorem}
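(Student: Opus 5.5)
The plan has four parts: a lower bound, an upper bound split by the sign and size of $\gamma$, the $A_1$ improvement, and sharpness.

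\textbf{Lower bound.} It does not depend on the weight. We would use the pointwise limiting behaviour of the lifted operators as $\theta\to0$. For almost every $x$, the Lebesgue differentiation theorem gives $\mathcal M_\theta(f)(x,t)\to|f(x)|$. In the oscillation case, a first-order Taylor argument for smooth $f$ gives $\mathcal M_\theta^\sharp(f)(x,t)\to K_{(n)}|\nabla f(x)|$. Here $K_{(n)}$ is the average of $|\langle e,y\rangle|$ over the unit ball, which identifies $c_n^\tau$.

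Fix $\lambda$ and $x$. The $t$-set where $c_n^\tau|D^\tau f(x)|>\lambda t^\gamma$ is a half-line, and $\int t^{\gamma p-1}\,dt$ over it equals $\frac{(c_n^\tau|D^\tau f(x)|)^p}{p|\gamma|\lambda^p}$. Fatou's lemma in $(x,t)$ with respect to $\upsilon(x)\,dx\,dt$ then gives the left-hand inequalities in \eqref{eq:intro-m-1} and \eqref{eq:intro-m-p}. In the oscillation case we would first prove this for $f\in C_c^\infty$ and then pass to $\dot W^{1,p}_\upsilon$ by density; density holds since $\upsilon\in A_p$.

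\textbf{Upper bound.} Fix $\theta$ and $\lambda$, and split according to $\gamma$.

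For $\gamma>0$, the superlevel set in $t$ is bounded above. We have $\mathcal M_\theta(f)(x,t)\le M f(x)$, and more usefully $\mathcal M_\theta(f)(x,t)\lesssim (\theta t)^{-n/p}\|f\|_{L^p}$-type decay is not needed. It suffices to bound the integral by $\int t^{\gamma p-1}\mathbf 1_{\{Mf(x)>\lambda t^\gamma\}}\,dt$, which equals $(p\gamma)^{-1}(Mf(x)/\lambda)^p$. Then Buckley's weak-type bound $\|M\|_{L^p_\upsilon\to L^{p,\infty}_\upsilon}\lesssim[\upsilon]_{A_p}^{1/p}$ does not directly apply, since we have a strong $p$-th power. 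Instead we would use a layer-cake decomposition in $t$ and the weak-type estimate at each dyadic scale $t\sim2^k$. The key input is that, at scale $t$, $\{\mathcal M_\theta(f)(\cdot,t)>s\}$ is covered by balls of radius about $\theta t$. A weighted Vitali covering then gives a weak $(p,p)$ estimate with constant $[\upsilon]_{A_p}$. Summing the geometric series in $k$ (convergent because $\gamma\neq0$) yields exactly the power $1$.

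For $\gamma<-n$, the superlevel set in $t$ is bounded below. We would use the trivial bound $\mathcal M_\theta(f)(x,t)\le \upsilon(B)^{-1/p}[\upsilon]_{A_p}^{1/p}\|f\|_{L^p_\upsilon}$ on balls of radius $\sim\theta t$, together with the doubling of $\upsilon$. The same dyadic summation then again gives power $1$.

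The case $\gamma\in[-n,0)$ (respectively $[-1,0)$ for osc) is where the scale sum degenerates. Here we would dominate by the strong-type estimate $\|Mf\|_{L^p_\upsilon}\lesssim[\upsilon]_{A_p}^{1/(p-1)}\|f\|_{L^p_\upsilon}$ (Buckley). Raising to the $p$-th power gives exactly $[\upsilon]_{A_p}^{p/(p-1)}$.

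For the oscillation operator we would reduce to the averaging case. The Poincaré inequality gives
\[
\frac1{\operatorname{rad}B}\fint_B|f-f_B|\lesssim \fint_{B}|\nabla f|,
\]
so $\mathcal M^\sharp_\theta(f)\lesssim\mathcal M_\theta(|\nabla f|)$ for large $t$-behaviour. For the range $\gamma<-1$ we would instead use $\mathcal M^\sharp_\theta(f)(x,t)\lesssim (\theta t)^{-1}\mathcal M_\theta(f)$-type decay combined with the $\gamma$-shift. This accounts for the threshold moving from $-n$ to $-1$.

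\textbf{$A_1$ case.} For $\upsilon\in A_1$ the same scale-by-scale covering argument gives weak-type constants $[\upsilon]_{A_1}$, uniformly in $p\ge1$, with no appeal to Buckley.

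\textbf{Sharpness.} We would test power weights $|x|^{a}$ with $a$ approaching the $A_p$ endpoints, paired with $f=|x|^{-(n+a)/p}$ truncated appropriately. For $\gamma\in[-n,0)$, the blow-up of $[\upsilon]_{A_p}$ as $a\to -n$ must be matched by the supremum over $\lambda$ and $\theta$ behaving like $[\upsilon]^{p/(p-1)}$.

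\textbf{Main obstacle.} We expect the hardest step to be the dyadic summation for $\gamma>0$ and $\gamma<-n$, that is, showing that the scale-localized weak-type constants add up to only a single power of $[\upsilon]_{A_p}$, together with the matching sharpness examples.
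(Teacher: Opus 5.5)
The lower bound is fine. Letting $\theta\to0$ at fixed $\lambda$ and applying Fatou's lemma is equivalent to the paper's scaling argument, and your density step for $\tau=\mathrm{osc}$ is legitimate once the upper bound is available for $f-g_k$.

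The upper bound, however, rests on a summation that fails. Grant your single-scale Vitali estimate $\upsilon(\{\mathcal M_\theta(f)(\cdot,t)>s\})\lesssim[\upsilon]_{A_p}s^{-p}\|f\|_{L^p_\upsilon}^p$. At $t\sim2^k$ the level is $s=\lambda2^{k\gamma}$, and multiplying by $\lambda^p\int_{2^k}^{2^{k+1}}t^{\gamma p-1}\,dt\sim\lambda^p2^{k\gamma p}$ leaves $[\upsilon]_{A_p}\|f\|_{L^p_\upsilon}^p$ for every $k$. So the series over $k$ diverges for every $\gamma$. Nothing in this scheme sees the threshold $-n$: if it worked, it would give power $1$ on $[-n,0)$ as well, contradicting Proposition \ref{pro:sharp-m}(ii). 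Invoking doubling for $\gamma<-n$ would also cost extra powers of $[\upsilon]_{A_p}$.

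The geometric sum has to be taken along dyadic chains before any estimate is applied. After the one-third trick, let $\mathcal E=\{Q:\fint_Q|f|>\lambda\ell(Q)^\gamma\}$.
\begin{itemize}
\item For $\gamma>0$, $\sum_{Q\ni x,\,Q\in\mathcal E}|Q|^{\gamma p/n}$ is dominated by the maximal cube. One then applies $\lambda^p|Q|^{\gamma p/n}\upsilon(Q)<(\fint_Q|f|)^p\upsilon(Q)\le[\upsilon]_{A_p}\int_Q|f|^p\upsilon$ once, on the disjoint maximal cubes.
\item For $\gamma<-n$, each $Q\in\mathcal E$ contains a minimal cube $Q_j$. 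H\"older's inequality and the $A_p$ condition on $Q$ give $\lambda^p|Q|^{\gamma p/n}\upsilon(Q)\le[\upsilon]_{A_p}(|Q|/|Q_j|)^{p(1+\gamma/n)}\int_{Q_j}|f|^p\upsilon$. This is summable over ancestors exactly when $\gamma<-n$, which is where the threshold comes from.
\end{itemize}
This is \cite[Proposition~2.7]{scm}, used by the paper as Lemma \ref{es-dyadic1}. For $\upsilon\in A_1$ and $p>1$ no covering is needed at all: $\|M\|_{L^p_\upsilon\to L^p_\upsilon}\lesssim[\upsilon]_{A_1}^{1/p}$ gives power $1$ for every $\gamma\neq0$.

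The oscillation case has a more serious gap. Poincar\'e gives $\mathcal M^\sharp_\theta(f)\lesssim\mathcal M_\theta(|\nabla f|)$, which reaches only $\gamma\notin[-n,0]$ with power $1$. The theorem asserts power $1$ for $\gamma\in[-n,-1)$, and for $p=1$ any bound at all there. Your substitute $\mathcal M^\sharp_\theta(f)\lesssim(\theta t)^{-1}\mathcal M_\theta(f)$ brings in $\|f\|_{L^p_\upsilon}$, which is not controlled by $\|\nabla f\|_{L^p_\upsilon}$ (add a constant to $f$). What is needed is the weighted Cohen--Dahmen--Daubechies--DeVore inequality, Lemma \ref{thm-cddd}. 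For $p>1$ the paper obtains power $1$ as follows:
\begin{itemize}
\item Using $s$-medians, it selects value windows $E_Q$ with $|E_Q|\sim\ell(Q)^{\gamma+1}$ that are disjoint along dyadic chains. This is possible precisely because $\ell(Q)^{\gamma+1}$ decays up the chain when $\gamma<-1$.
\item It applies Sobolev--Poincar\'e to the truncations $T_Q\circ f$, whose gradients $\mathbf 1_{E_Q}(f)\nabla f$ do not overlap.
\end{itemize}

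The sharpness plan also fails. At the endpoint $a\to-n$, the weights $|x|^a$ lie in $A_1$ with $[\upsilon]_{A_p}\sim[\upsilon]_{A_1}\sim(a+n)^{-1}$. The $A_1$ upper bound already caps the quantity by $[\upsilon]_{A_1}\|f\|_{L^p_\upsilon}^p$, so this endpoint can only certify exponent $1$; that is how the paper proves the power-$1$ sharpness. The exponent $p/(p-1)$ needs the opposite endpoint, for example $\upsilon=|x|^{(n-\varepsilon)(p-1)}$ and $f=|x|^{-n+\varepsilon}\mathbf 1_{B(\mathbf 0,1)}$.

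Even then, a point singularity works only for $\gamma=-n$. There every scale $t\in(2^{-1/\varepsilon},1)$ contributes equally, which produces the extra factor $\varepsilon^{-1}$. For fixed $\gamma\in(-n,0)$ the contribution of scale $t$ behaves like $\lambda^pt^{p(n+\gamma)-(p-1)\varepsilon}$, which is geometric. One then gets only $\varepsilon^{-p}$ against $[\upsilon]_{A_p}^\eta\|f\|_{L^p_\upsilon}^p\sim\varepsilon^{-(p-1)\eta-1}$, i.e.\ only $\eta\ge1$.

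The missing idea is to make the singular set scale-invariant at dimension $n+\gamma$. The paper takes a self-similar Ahlfors $(n+\gamma)$-regular set $E$ with $|\mathcal T_r(E)|\sim r^{-\gamma}$, and uses $\upsilon_\delta=\rho^{-(p-1)(\gamma+\delta)}$, for which $[\upsilon_\delta]_{A_p}\sim\delta^{1-p}$, together with $f_\delta=\rho^{\gamma+\delta}$ near $E$. For $\tau=\mathrm{osc}$ and $\gamma\in(-1,0)$ it uses one-dimensional primitives of $f_\delta$ lifted to $\mathbb R^n$. Your proposal does not address the oscillation sharpness at all.
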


\begin{remark}\label{re:intro-sm}
On Theorem \ref{th:intro-m}, we have the following observations.
\begin{enumerate}
\item[\textup{(i)}]
The proofs on the assertions that the powers of the Muckenhoupt constants in Theorem
\ref{th:intro-m} are sharp for any fixed $\gamma\in\Gamma_p^\tau$
are given in Propositions \ref{pro:sharp-m} and
\ref{prop:sharp-sm}.

\item[\textup{(ii)}]
For any $f\in L^1_{\rm loc}$ and
$(x,t)\in\mathbb R^{n+1}_+$, let
\begin{align*}
m_f(x,t)
:=
\fint_{B(x,t)}
|f(y)-f_{B(x,t)}|\,dy.
\end{align*}
By the estimate
$\frac{m_f(x,t)}{t}\leq\mathcal M_1^\sharp(f)(x,t)$
and \cite[Theorem~1.3(i)]{zlyyz24},
we obtain the lower estimate of Theorem \ref{th:intro-m} for $\tau=\mathrm{osc}$ directly.
In Theorem \ref{th:exact-m}, we establish the exact limiting formulae
for both operators $\mathcal{M}_\theta$ and
$\mathcal{M}^\sharp_\theta$, which are new even in the unweighted case.

\item[\textup{(iii)}]
When $\upsilon\equiv1$, Theorem \ref{th:intro-m} when $\tau=\mathrm{av}$ reduces to
the case $\mathcal X:=\mathbb R^n$ of \cite[Theorem~2.3]{dlyyz} and when $\tau=\mathrm{osc}$ reduces to the sufficiency part of
\cite[Theorem~1.1]{dlyyz2}.
The necessity parts of \cite[Theorem~1.1]{dlyyz} and
\cite[Theorem~1.1]{dlyyz2} also prove that the range
$\Gamma_p^\tau$ of $\gamma$ cannot be enlarged and hence is sharp.
\end{enumerate}
\end{remark}

We also characterize the Muckenhoupt class
$A_p$ through the upper weak-type estimates for the two operators (see Theorem \ref{th:AP-chara}).
For $\mathcal M_\theta$, the estimate involves
$\|f\|_{L^p_\upsilon}$. For $\mathcal M_\theta^\sharp$, it involves
$\|\nabla f\|_{L^p_\upsilon}$ when $n=1$ or $p=1$, and
$\|(-\Delta)^{\frac12}f\|_{L^p_\upsilon}$ when
$n\in\mathbb N\setminus\{1\}$ and $p\in(1,\infty)$,
following \cite{zlyyz24}.
In the latter case, we show that an upper estimate in terms of the gradient
norm alone cannot characterize the class $A_p$; see
Proposition \ref{prop:sm-gradient-not-characterization}.

Our applications to Hardy(--Sobolev) spaces build on the
unweighted estimates in terms of truncated Riesz transforms
established by Dai et al.\ \cite[Theorems~5.1 and~5.3]{dlyyz}.
We extend these results to $A_1$
weights, enlarge the admissible range from
$\gamma\in\mathbb R\setminus[-n,0]$ to
$\gamma\in\mathbb R\setminus\{0\}$
(see Theorems \ref{th:weighted-hardy-riesz} and
\ref{th:weighted-hardy-sobolev}),
and prove that the enlarged range is
sharp in this weighted setting (see Lemmas \ref{prop:weighted-hardy-gamma-zero} and
\ref{prop:weighted-hardy-sobolev-gamma-zero}). In particular, the unweighted case of
Theorem \ref{th:weighted-hardy-sobolev} answers the question raised in
\cite[Remark~5.4]{dlyyz}.
It is known that the real-variable theory of Hardy-type spaces 
plays an important role in areas of analysis
such as harmonic analysis and partial differential equations;
see \cite{b26,bdl18,bdl20,hs25,ins22,inns23,yns26} 
for some recent developments.

We also give two further applications. One application is
to establish weighted estimates for the Caffarelli--Silvestre
extension in the sharp parameter range (see Theorem \ref{th:weighted-CS}).
The other application is to extend the weighted estimates in
\eqref{eq:intro-m-1} to general ball Banach function spaces (see Theorem
\ref{th:m-x-sim}), which include several known function spaces
as special cases, such as Morrey spaces (see \cite{ho22morrey,ho24bilinear,ms19,ms20,noss21}),
mixed-norm Lebesgue spaces (see \cite{cgn17,cgn19}),
and Orlicz spaces (see \cite{ho23orliczmorrey,kns23,san19}),
and hence are of wide generality.

The upper estimates in Theorem \ref{th:intro-m} use the
weighted maximal estimates in \cite[Theorem~2.5 and (2.6)]{b93}
and, after dyadic discretization, the weighted average estimate
in \cite[Proposition~2.7]{scm} for $\tau=\mathrm{av}$ and the
weighted Cohen--Dahmen--Daubechies--DeVore inequality in
\cite[Theorem~1.1]{scm} for $\tau=\mathrm{osc}$.
Moreover, for $p\in(1,\infty)$ and $\gamma\in[-n,-1)$ with $n>1$,
we improve the exponent of $[\upsilon]_{A_p}$ in \cite[Theorem~1.1]{scm}
from $\frac{p}{p-1}$ to $1$ when $\tau=\mathrm{osc}$
[see Lemma \ref{thm-cddd}\textup{(ii)}], which is shown to be sharp [see Proposition \ref{prop:sharp-sm}\textup{(ii)}].

A key difficulty in the proof of Theorem \ref{th:intro-m}
is to determine the sharp powers of $[\upsilon]_{A_p}$
for $p\in(1,\infty)$ and fixed $\gamma\in[-n,0)$.
Inspired by some ideas from the construction of sparse sets,
we use $s$-medians and an inductive selection of function values
to construct suitable truncations. Using these truncations
and the Sobolev--Poincar\'e inequality,
we obtain Lemma \ref{thm-cddd}\textup{(ii)}.
To establish sharpness for $\mathcal M_\theta$
and $\gamma\in(-n,0)$, we take powers of the distance to compact
Ahlfors regular self-similar sets of dimension $n+\gamma$
as both weights and test functions, and then estimate the measures
of their tubular neighborhoods. To show the sharpness for
$\mathcal M_\theta^\sharp$ and $\gamma\in(-1,0)$,
we take the primitives of the aforementioned one-dimensional
test functions as new test functions and then extend the
one-dimensional examples to higher dimensions.
These constructions prove the sharpness for fixed $\gamma$
[see Propositions \ref{pro:sharp-m}\textup{(ii)}
and \ref{prop:sharp-sm}\textup{(iv)}].

To obtain the characterizations of the weighted Hardy space $H^1_\upsilon$,
we use its atomic characterization to estimate the weak
norm of the truncated Riesz transforms directly, rather than
relying on the pointwise estimate derived from Cotlar's
inequality in \cite[Proposition~5.6]{dlyyz}.
This approach allows us to extend the admissible range from
$\gamma\in\mathbb R\setminus[-n,0]$ to the optimal range
$\gamma\in\mathbb R\setminus\{0\}$.

The remainder of this article is organized as follows.
In Section \ref{sec:preliminaries}, we collect some preliminary
results on weights and dyadic grids, and establish some pointwise
relations for lifted maximal operators needed in the subsequent proofs.
In Section \ref{sec:weighted-limits}, we show the weighted
estimates and the exact limiting formulae for the lifted maximal
operators and establish some new characterizations
of $L^{\infty}$ and $\dot W^{1,\infty}$ when $\gamma=0$.
Section \ref{sec:m-sharp} is devoted to the sharpness of the
powers of the Muckenhoupt constants.
In Section \ref{sec:weight-characterizations}, we characterize
Muckenhoupt weights via the weighted weak-type estimates for both operators.
Finally, in Section \ref{sec:applications}, we establish the
characterization of $H^1_\upsilon$ in terms of truncated Riesz
transforms and the corresponding sharp representation
of $\|\nabla f\|_{H^1_\upsilon}$ for Schwartz functions $f$,
obtain a sharp weighted estimate for the
Caffarelli--Silvestre extension, and extend the
weighted weak-type estimates for both operators
to ball Banach function spaces.

We end this introduction with some notational conventions.
Let $\mathbb N:=\{1,2,\ldots\}$, $\mathbb Z_+:=\mathbb N\cup\{0\}$,
and $\mathbb Z$ denote the set of all integers.
The symbol $C$ denotes a positive constant which is independent
of the main parameters involved, but may vary from line to line.
For any nonnegative quantities $F$ and $G$, we write $F\lesssim G$
or $G\gtrsim F$ if $F\leq CG$ for some such constant $C$.
If both $F\lesssim G$ and $G\lesssim F$ hold, we write $F\sim G$.
For any Lebesgue measurable set $E\subset\mathbb R^n$, we denote
by $|E|$ its Lebesgue measure and by $\mathbf1_E$ its
characteristic function. For any weight $\upsilon$, let
$\upsilon(E):=\int_E\upsilon(x)\,dx.$
We use $\mathbf0$ to denote the origin of $\mathbb R^n$.
For any $x\in\mathbb R^n$ and $r\in(0,\infty)$, let
$$B(x,r):=\left\{y\in\mathbb R^n:|x-y|<r\right\}$$
and $\omega_n:=|B(\mathbf0,1)|$.
For any ball $B:=B(x,r)$ and $\lambda\in(0,\infty)$, let
$\lambda B:=B(x,\lambda r)$. We use the \emph{notation $t\to {0^+}$}
to denote that there exists a constant $c_0\in(0,1)$ such that
$t\in(0, c_0)$ and $t\to 0$. For any $s\in\mathbb R$,
we use the \emph{notation $\lceil s\rceil$} to denote the smallest
integer not less than $s$.
To avoid technical arguments, we consider only real-valued
measurable functions throughout this article. Our main
results also hold for complex-valued functions.
Finally, in all the proofs we retain the notation introduced
in the corresponding statements.

\section{Preliminaries}\label{sec:preliminaries}
We first collect the weighted maximal estimates for the Hardy--Littlewood maximal operator and then establish some
pointwise estimates for lifted maximal operators needed for the exact limiting formulae.

\begin{definition}
The \emph{Hardy--Littlewood maximal operator} $M$ is defined by setting,
for any $f\in L^1_{\rm loc}$ and $x\in\mathbb R^n$,
\begin{align*}
M(f)(x)
:=
\sup_{Q\ni x}\fint_Q |f(y)|\,dy,
\end{align*}
where the supremum is taken over all cubes $Q$
containing $x$.
\end{definition}
\begin{definition}\label{df:c-v}
A nonnegative locally integrable function $\upsilon$ on $\mathbb R^n$ is called
a \emph{weight} if $\upsilon(x)$ is positive for almost every $x\in\mathbb R^n$. Let $p\in[1,\infty)$ and $\upsilon$ be a weight.
\begin{enumerate}
\item[{\rm (i)}]
The function $\upsilon$ is called a \emph{Muckenhoupt $A_p$-weight} if,
when $p\in(1,\infty)$,
\begin{align}\label{eq:c-v-p}
[\upsilon]_{A_p}:=
\sup_Q
\left[\fint_Q\upsilon(x)\,dx\right]
\left\{\fint_Q[\upsilon(x)]^{-\frac{1}{p-1}}\,dx\right\}^{p-1}
<\infty
\end{align}
and, when $p=1$,
\begin{align}\label{eq:c-v-1}
[\upsilon]_{A_1}:=\sup_Q
\left[\fint_Q\upsilon(x)\,dx\right]
\left\|\upsilon^{-1}\right\|_{L^\infty (Q)}
<\infty,
\end{align}
 where the suprema are taken over all cubes. Equivalently, the
suprema in \eqref{eq:c-v-p} and \eqref{eq:c-v-1} can be taken over
all balls, with positive equivalence constants depending only on $n$ and $p$.

\item[{\rm (ii)}] The \emph{weighted Lebesgue space} $L^p_\upsilon$
is defined to be the set of all $f\in\mathscr{M}$ such that
\begin{align*}
\|f\|_{L^p_\upsilon}
:=
\left[\int_{\mathbb R^n}|f(x)|^p\upsilon(x)\,dx\right]^{\frac{1}{p}}
<\infty,
\end{align*}
where $\mathscr{M}$ denotes the set of all
measurable functions on $\mathbb R^n$.
\end{enumerate}
\end{definition}
\begin{definition}\label{df:hwss}
Let $p\in[1,\infty)$ and $\upsilon$ be a weight. The
\emph{homogeneous weighted Sobolev space} $\dot W^{1,p}_\upsilon$ is
defined to be the set of all $f\in L^1_{\rm loc}$ such that the weak
gradient $\nabla f$ exists and $|\nabla f|\in L^p_\upsilon$. Moreover,
the \emph{seminorm $\|\cdot\|_{\dot W^{1,p}_\upsilon}$} is defined by
setting, for any $f\in\dot W^{1,p}_\upsilon$,
$\|f\|_{\dot W^{1,p}_\upsilon}:=\|\nabla f\|_{L^p_\upsilon}.$
\end{definition}
We now recall the following weighted estimates for the
Hardy--Littlewood maximal operator. Assertion \textup{(i)} of Lemma
\ref{le:M-v} is precisely \cite[Theorem~2.5]{b93}. Assertion
\textup{(ii)} follows from applying the Marcinkiewicz interpolation
theorem to the weighted weak-type $(1,1)$ estimate in
\cite[(2.6)]{b93} and the trivial $L^\infty$ estimate for $M$.
\begin{lemma}\label{le:M-v}
Let $p\in(1,\infty)$. The following assertions hold.
\begin{enumerate}
\item[{\rm (i)}] For any $\upsilon\in A_p$,
$\|M\|_{L^p_\upsilon\to L^p_\upsilon}
\lesssim [\upsilon]_{A_p}^{\frac{1}{p-1}},$
where the implicit positive constant depends only on $n$ and $p$.
\item[{\rm (ii)}] For any $\upsilon\in A_1$,
$\|M\|_{L^p_\upsilon
\to L^p_\upsilon}
\lesssim
[\upsilon]_{A_1}^{\frac{1}{p}},$
where the implicit positive constant depends only on $n$ and $p$.
\end{enumerate}
\end{lemma}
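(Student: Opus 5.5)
Assertion (i) is exactly \cite[Theorem~2.5]{b93}, Buckley's sharp bound, so I would cite it rather than reprove it. If a self-contained argument were wanted, I would take the standard route via Lerner's approach. First reduce to a dyadic maximal operator $M^{\mathcal D}$ over finitely many shifted dyadic grids, so that $M(f)\lesssim\sum_{j}M^{\mathcal D_j}(f)$. Then, writing $\sigma:=\upsilon^{-\frac{1}{p-1}}$, express each dyadic average as
\[
\fint_Q|f| = \frac{\upsilon(Q)^{\frac{1}{p-1}}\,\sigma(Q)}{|Q|^{\frac{p}{p-1}}}\left[\frac{|Q|}{\upsilon(Q)}\left(\frac{1}{\sigma(Q)}\int_Q|f|\sigma^{-1}\sigma\right)^{p-1}\right]^{\frac{1}{p-1}}.
\]
The first factor is at most $[\upsilon]_{A_p}^{\frac1{p-1}}$. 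The remaining factor is controlled by a composition of a weighted dyadic maximal operator $M^{\mathcal D}_\sigma$ with $M^{\mathcal D}_\upsilon$, each bounded on the relevant weighted $L^q$ with constants depending only on $q$ (by the weighted Doob inequality). Carrying through the $L^p_\upsilon$ norms then gives the power $\frac1{p-1}$.

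For (ii), I would follow the indicated route. The key step is the weighted weak-type $(1,1)$ estimate
\[
\upsilon(\{x:M(f)(x)>\lambda\})\lesssim \frac{[\upsilon]_{A_1}}{\lambda}\|f\|_{L^1_\upsilon},
\]
which is \cite[(2.6)]{b93}. It can be proved directly by a Vitali covering of the level set by balls $B$ with $\fint_B|f|>\lambda$. On each such ball,
\[
\upsilon(5B)\le 5^n\,|B|\fint_{5B}\upsilon\le \frac{5^n[\upsilon]_{A_1}}{\lambda}\int_B|f|\,\mathrm{ess\,inf}_{B}\upsilon\le \frac{5^n[\upsilon]_{A_1}}{\lambda}\int_B|f|\upsilon,
\]
and summing over the disjoint subfamily gives the bound. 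In addition, $\|M(f)\|_{L^\infty_\upsilon}\le\|f\|_{L^\infty_\upsilon}$ trivially, because $\upsilon>0$ almost everywhere, so $L^\infty_\upsilon=L^\infty$. Since $M$ is sublinear, the Marcinkiewicz interpolation theorem on the measure space $(\mathbb R^n,\upsilon\,dx)$ between weak $(1,1)$ with norm $C[\upsilon]_{A_1}$ and strong $(\infty,\infty)$ with norm $1$ gives $\|M\|_{L^p_\upsilon\to L^p_\upsilon}\le C_p[\upsilon]_{A_1}^{\frac1p}\cdot 1^{1-\frac1p}$, which is (ii).

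The only point needing care is tracking the constant in Marcinkiewicz interpolation: I need it to be of the form $C_pA_0^{\theta}A_1^{1-\theta}$ with $\frac1p=\theta$. To see this, split $f=f\mathbf 1_{\{|f|>\lambda/2\}}+f\mathbf 1_{\{|f|\le\lambda/2\}}$; the second piece contributes nothing to $\{M(f)>\lambda\}$. Then
\[
\upsilon(\{M(f)>\lambda\})\le \frac{2C[\upsilon]_{A_1}}{\lambda}\int_{\{|f|>\lambda/2\}}|f|\upsilon,
\]
and integrating against $p\lambda^{p-1}\,d\lambda$ gives $\|M(f)\|_{L^p_\upsilon}^p\le C\frac{2^pp}{p-1}[\upsilon]_{A_1}\|f\|^p_{L^p_\upsilon}$. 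This yields exactly the power $[\upsilon]_{A_1}^{1/p}$.
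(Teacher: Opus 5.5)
Your proposal is correct and follows the paper's argument: (i) is cited directly from \cite[Theorem~2.5]{b93}, and (ii) comes from Marcinkiewicz interpolation between the $A_1$ weak-type $(1,1)$ bound \cite[(2.6)]{b93} and the trivial $L^\infty$ bound, just as you do. Your self-contained additions (Lerner's dyadic proof of (i), the Vitali-covering proof of the weak-type bound, and the explicit constant $\frac{2^pp}{p-1}$ showing the power $[\upsilon]_{A_1}^{1/p}$) are sound; the paper's $M$ is defined with cubes, so covering by balls instead costs only a dimensional constant.
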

We also need the following auxiliary results. We first recall the
standard definition of dyadic grids; see, for instance,
\cite[Section~2]{ln19}.
Throughout this article, cubes have finite positive
edge length and edges parallel to the coordinate axes.
For any cube $Q$, we denote its edge length by $\ell(Q)$.

\begin{definition}\label{df:dyadic-grid}
A countable collection $\mathcal D$ of cubes is called a
\emph{dyadic grid} if the following conditions hold:
\begin{enumerate}
\item[\textup{(i)}]
For any $Q\in\mathcal D$, one has
$\ell(Q)=2^k$ for some $k\in\mathbb Z$.
\item[\textup{(ii)}]
For any $P,Q\in\mathcal D$, one has
$P\cap Q\in\{\emptyset, P,Q\}.$
\item[\textup{(iii)}]
For any $k\in\mathbb Z$, one has
$
\mathbb R^n
=
\bigcup_{\genfrac{}{}{0pt}{1}{Q\in\mathcal D}{\ell(Q)=2^k}}Q.$
\end{enumerate}
\end{definition}

We next introduce the shifted dyadic grids used in the covering
argument below.

\begin{example}\label{ex-dyadic}
For any $\alpha\in\{0,1,2\}^n$, let
\begin{align*}
\mathcal D^\alpha
:=
\left\{
2^{-j}\left[m+[0,1)^n+(-1)^j\frac{\alpha}{3}\right]:
j\in\mathbb Z,\ m\in\mathbb Z^n
\right\}.
\end{align*}
Then $\mathcal D^\alpha$ is a dyadic grid in the sense of Definition
\ref{df:dyadic-grid}.
\end{example}

The following covering property is an immediate consequence of the
one-third trick; see \cite[p.\,479]{mtt02}.
\begin{lemma}\label{le:dyadic}
For any ball $B$, there exist
$\alpha\in\{0,1,2\}^n$
and $Q\in\mathcal D^\alpha$ such that
$
B\subset Q$
and $c_1\operatorname{rad}(B)<\ell(Q)<c_2\operatorname{rad}(B),$
where $c_1$ and $c_2$ are positive
constants depending only on $n$.
\end{lemma}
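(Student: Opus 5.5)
The plan is to reduce the statement to a one-dimensional counting fact about the endpoints of the three shifted dyadic lattices, and then take products over the coordinates.

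Let $B=B(c,r)$ with $c=(c_1,\dots,c_n)$. Then $B\subset\prod_{i=1}^n[c_i-r,c_i+r]$. Choose the unique $j\in\mathbb Z$ with $6r<2^{-j}\le 12r$. Each cube of $\mathcal D^\alpha$ of edge length $2^{-j}$ has the form
$$\prod_{i=1}^n\left[2^{-j}\bigl(m_i+(-1)^j\tfrac{\alpha_i}{3}\bigr),\,2^{-j}\bigl(m_i+1+(-1)^j\tfrac{\alpha_i}{3}\bigr)\right).$$
It is therefore enough to choose, for each coordinate $i$ separately, some $\alpha_i\in\{0,1,2\}$ and $m_i\in\mathbb Z$ such that
$$[c_i-r,c_i+r]\subset 2^{-j}\bigl[m_i+(-1)^j\tfrac{\alpha_i}{3},\,m_i+1+(-1)^j\tfrac{\alpha_i}{3}\bigr).$$
We then set $\alpha:=(\alpha_1,\dots,\alpha_n)$ and let $Q$ be the product of these intervals. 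Then $Q\in\mathcal D^\alpha$, $B\subset Q$, and $6r<\ell(Q)\le 12r<13r$. This gives the lemma with $c_1:=6$ and $c_2:=13$.

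For the one-dimensional step, fix $a\in\{0,1,2\}$ and let $E_a:=2^{-j}(\mathbb Z+(-1)^j\frac a3)$ be the endpoints of the generation-$j$ intervals of the $a$-shifted grid. These intervals are half-open, so they tile $\mathbb R$. Hence the closed interval $I:=[c_i-r,c_i+r]$ lies in a single one of them whenever $E_a\cap(c_i-r,c_i+r]=\emptyset$.

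The union $E_0\cup E_1\cup E_2=\frac{2^{-j}}{3}\mathbb Z$ is a lattice of spacing $2^{-j}/3$, and the three sets $E_a$ are pairwise disjoint. Since $|I|=2r<2^{-j}/3$, the interval $I$ contains at most one point of this union. So at most one $a$ has $E_a\cap I\neq\emptyset$, and for either of the remaining two values of $a$ the interval $I$ meets no endpoint of $E_a$. It is therefore contained in a single generation-$j$ interval of that shifted grid, and we take such an $a$ as $\alpha_i$.

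I expect no real obstacle here. The only points needing care are the half-open endpoint convention, which is why we require that no point of $E_a$ lies in $(c_i-r,c_i+r]$, and the strictness of the inequality $2r<2^{-j}/3$, which is what forces at most one lattice point in $I$.
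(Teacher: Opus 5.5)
Your proof is correct and is exactly the one-third trick that the paper does not prove but cites from Muscalu--Tao--Thiele: you choose the scale $2^{-j}\in(6r,12r]$ so that each coordinate interval of length $2r$ meets at most one point of the combined lattice $\frac{2^{-j}}{3}\mathbb Z$, and then pick an endpoint-free shift $\alpha_i$ separately in each coordinate. The only blemish is cosmetic: $c_1$ denotes both the first coordinate of the center and the constant $6$, so rename one of them.
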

The following lemma is a standard property of $A_p$ weights,
which can be found in, for instance, \cite[Proposition~7.1.5]{gtm249}.
\begin{lemma}\label{lem:A1-doubling}
Let $p\in[1,\infty)$, $\upsilon\in A_p$, $Q$ be a
cube, and $S\subset Q$ be a measurable set with $|S|>0$. Then one has
$\upsilon(Q) \le [\upsilon]_{A_p}(\frac{|Q|}{|S|})^{p}\upsilon(S).$
\end{lemma}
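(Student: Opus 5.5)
The plan is to apply Hölder's inequality on $S$ to split $|S|$ into a factor controlled by $\upsilon(S)$ and a factor controlled by the dual weight over $Q$. The definition \eqref{eq:c-v-p}, or \eqref{eq:c-v-1} when $p=1$, then converts the dual-weight factor into $|Q|$ and $\upsilon(Q)$. I will treat $p=1$ and $p\in(1,\infty)$ separately, since the $A_1$ condition is an $L^\infty$ condition rather than an integral one. Throughout I use that $\upsilon(Q)\in(0,\infty)$, because $\upsilon$ is a weight, hence locally integrable and almost everywhere positive.

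\emph{Case $p=1$.} By \eqref{eq:c-v-1} applied to the cube $Q$, for almost every $x\in Q$ we have
\begin{align*}
\upsilon(x)\ge\left\|\upsilon^{-1}\right\|_{L^\infty(Q)}^{-1}
\ge\frac{1}{[\upsilon]_{A_1}}\fint_Q\upsilon(y)\,dy .
\end{align*}
Integrating this over $S\subset Q$ gives
\begin{align*}
\upsilon(S)\ge\frac{|S|}{|Q|}\,\frac{\upsilon(Q)}{[\upsilon]_{A_1}},
\end{align*}
which is exactly the claimed inequality with $p=1$.

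\emph{Case $p\in(1,\infty)$.} Write $\sigma:=\upsilon^{-\frac1{p-1}}$. Then
\begin{align*}
|S|=\int_S\upsilon^{\frac1p}\upsilon^{-\frac1p}\,dx
\le[\upsilon(S)]^{\frac1p}\left[\int_S\sigma(x)\,dx\right]^{\frac{p-1}{p}}
\le[\upsilon(S)]^{\frac1p}\left[\int_Q\sigma(x)\,dx\right]^{\frac{p-1}{p}},
\end{align*}
using Hölder with exponents $p$ and $p'$, together with $S\subset Q$. Raising to the power $p$ yields
\begin{align*}
|S|^p\le\upsilon(S)\left[\int_Q\sigma(x)\,dx\right]^{p-1}.
\end{align*}
Next, \eqref{eq:c-v-p} on the cube $Q$ gives
\begin{align*}
\left[\int_Q\sigma(x)\,dx\right]^{p-1}\le[\upsilon]_{A_p}\,\frac{|Q|^{p}}{\upsilon(Q)}.
\end{align*}
Substituting this bound and rearranging gives $\upsilon(Q)\le[\upsilon]_{A_p}(\frac{|Q|}{|S|})^p\upsilon(S)$.

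There is no real obstacle here; the only point needing care is bookkeeping. One must check that the powers of $|Q|$ coming from the averages in \eqref{eq:c-v-p} combine to exactly $|Q|^p$: one factor of $|Q|$ from the average of $\upsilon$, and $|Q|^{p-1}$ from the average of $\sigma$ raised to the power $p-1$. One must also note that $\upsilon(Q)>0$, so that dividing by it is legitimate.
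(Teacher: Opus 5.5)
Your proof is correct, and it is essentially the standard argument behind the result the paper cites without proof (\cite[Proposition~7.1.5]{gtm249}). There one first proves $\bigl(\fint_Q|f|\bigr)^p\le[\upsilon]_{A_p}\upsilon(Q)^{-1}\int_Q|f|^p\upsilon$ by H\"older's inequality (or, when $p=1$, by the pointwise bound $\upsilon\ge\|\upsilon^{-1}\|_{L^\infty(Q)}^{-1}$) and then takes $f=\mathbf 1_S$, whereas you work with $f=\mathbf 1_S$ from the outset; your bookkeeping of the powers of $|Q|$ and of $\upsilon(Q)\in(0,\infty)$ is correct.
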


In what follows, the \textit{space $W^{1,1}_{\rm loc}$} denotes the space
of all locally integrable functions on $\mathbb{R}^n$ whose weak gradients are locally integrable.
\begin{lemma}\label{lem:exact-sm-pointwise}
Let $\tau\in\{\mathrm{av},\mathrm{osc}\}$. Assume that $f\in L^1_{\rm loc}$
when $\tau=\mathrm{av}$, and that $f\in W^{1,1}_{\rm loc}$ when $\tau=\mathrm{osc}$.
Then, for almost every
$x\in\mathbb R^n$,
\begin{align}\label{eq:exact-sm-pointwise}
\lim_{t\to0^+}\mathcal M_1^\tau(f)(x,t)
=c_n^\tau|D^\tau f(x)|.
\end{align}
Moreover, there exists a positive constant $c$,
depending only on $n$, such that, for any
$(x,t)\in\mathbb R^{n+1}_+$,
\begin{align}\label{eq:exact-sm-control}
\mathcal M_1^\tau(f)(x,t)
\leq c M(|D^\tau f|)(x).
\end{align}
\end{lemma}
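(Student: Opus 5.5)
We treat the two assertions separately and, within each, the two cases $\tau=\mathrm{av}$ and $\tau=\mathrm{osc}$. The control estimate \eqref{eq:exact-sm-control} is the simpler part, so we prove it first.

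\emph{The control estimate \eqref{eq:exact-sm-control}.} Fix $(x,t)$ and a ball $B\ni x$ with $r:=\operatorname{rad}(B)\in[t,2t)$.

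For $\tau=\mathrm{av}$, we enclose $B$ in the cube $Q$ concentric with $B$ and of edge length $2r$. Then $x\in Q$ and $|Q|/|B|=2^n/\omega_n$, so
\[
\fint_B|f|\le \frac{2^n}{\omega_n}\fint_Q|f|\le \frac{2^n}{\omega_n}M(f)(x).
\]

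For $\tau=\mathrm{osc}$, we use the $(1,1)$-Poincar\'e inequality on balls, valid for $f\in W^{1,1}_{\rm loc}$:
\[
\fint_B|f-f_B|\le C_n\, r\fint_B|\nabla f|.
\]
Dividing by $r$ and comparing $\fint_B$ with the average over the enclosing cube as above gives $\frac1r\fint_B|f-f_B|\le cM(|\nabla f|)(x)$. Taking the supremum over admissible $B$ yields \eqref{eq:exact-sm-control} with $c$ depending only on $n$.

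\emph{The pointwise limit \eqref{eq:exact-sm-pointwise}, case $\tau=\mathrm{av}$.} Let $x$ be a Lebesgue point of $f$. Any admissible $B\ni x$ has radius $r<2t$, so $B\subset B(x,4t)$ and $|B(x,4t)|/|B|\le 4^n$. Hence
\[
\left|\fint_B|f|-|f(x)|\right|\le\fint_B\bigl||f(y)|-|f(x)|\bigr|\,dy\le 4^n\fint_{B(x,4t)}|f(y)-f(x)|\,dy\to0 .
\]
This convergence is uniform over admissible $B$, so $\mathcal M_1(f)(x,t)\to|f(x)|$.

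\emph{The pointwise limit, case $\tau=\mathrm{osc}$.} Here we work at points $x$ of $L^1$-differentiability, i.e.\ points where
\[
\frac1t\fint_{B(x,4t)}\bigl|f(y)-f(x)-\nabla f(x)\cdot(y-x)\bigr|\,dy\to0 .
\]
Such points have full measure for $W^{1,1}_{\rm loc}$ functions; this follows from the Lebesgue point property of $\nabla f$ combined with the Poincar\'e inequality (Calder\'on--Zygmund $L^1$-differentiability). Write $L(y):=\nabla f(x)\cdot y$ and set $g:=f-L$. For admissible $B$, the triangle inequality gives
\[
\left|\frac1r\fint_B|f-f_B|-\frac1r\fint_B|L-L_B|\right|\le\frac2r\fint_B|g-g_B|\le \frac{2\cdot4^n}{r}\fint_{B(x,4t)}\bigl|g-g(x)\bigr|\cdot 2,
\]
where we used $|g-g_B|\le|g-g(x)|+|g_B-g(x)|$ and the volume comparison from the previous case. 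Since $r\ge t$, the right-hand side tends to $0$ uniformly in $B$.

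For the linear part, with $a:=\nabla f(x)$,
\[
\frac1r\fint_{B(z,r)}|a\cdot(y-z)|\,dy=|a|\fint_{B(\mathbf0,1)}|y_1|\,dy,
\]
which is independent of $r$ and of the center $z$. It remains to check that this constant is $K_{(n)}$. Slicing in $y_1$ gives
\[
\fint_{B(\mathbf0,1)}|y_1|\,dy=\frac{2\omega_{n-1}}{\omega_n}\int_0^1 s(1-s^2)^{\frac{n-1}2}\,ds=\frac{2\omega_{n-1}}{(n+1)\omega_n}.
\]
With $\omega_n=\pi^{n/2}/\Gamma(\frac n2+1)$, this simplifies to $\frac{1}{\sqrt\pi}\frac{\Gamma(\frac{n+2}2)}{\Gamma(\frac{n+3}2)}=K_{(n)}$. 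Hence $\mathcal M_1^\sharp(f)(x,t)\to K_{(n)}|\nabla f(x)|$.

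\emph{Main obstacle.} The main difficulty is justifying the a.e.\ $L^1$-differentiability of $W^{1,1}_{\rm loc}$ functions. We will cite this as a standard result, proved via Lebesgue points of $\nabla f$ and Poincar\'e applied to $f-L$ on shrinking balls. The essential point is that all error terms are controlled uniformly over the admissible balls, which is exactly what the containment $B\subset B(x,4t)$ together with $\operatorname{rad}(B)\ge t$ provides.
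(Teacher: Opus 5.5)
Your proposal is correct and follows essentially the same route as the paper. The control bound comes from the $(1,1)$-Poincar\'e inequality plus comparison with an enclosing cube, and the $\tau=\mathrm{av}$ limit from Lebesgue points. For $\tau=\mathrm{osc}$ you use a.e.\ $L^1$-differentiability, the containment $B\subset B(x,4t)$ with $\operatorname{rad}(B)\ge t$ to make the affine-approximation error uniform over admissible balls, and the exact identity $\frac1r\fint_B|\ell-\ell_B|=K_{(n)}|\nabla f(x)|$ for the affine part. The differences are minor: the paper cites Gol'dshtein--Hashash--Ukhlov for the $L^1$-differentiability instead of sketching it, and it does not write out the computation $\fint_{B(\mathbf0,1)}|u_1|\,du=K_{(n)}$, which you carry out correctly.
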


\begin{proof}
For $\tau=\mathrm{av}$, \eqref{eq:exact-sm-pointwise} follows from the Lebesgue
differentiation theorem and estimate \eqref{eq:exact-sm-control} follows immediately from the definitions.

We now consider $\tau=\mathrm{osc}$. Applying the Poincar\'e inequality on balls
\cite[Section~5.8.1, Theorem~2]{evans10} with $p=1$,
we find that \eqref{eq:exact-sm-control} holds. From the Lebesgue differentiation theorem and \cite[Theorems~3.3 and~3.4]{ghu24}
 with $p=1$, we deduce that, for almost every $x\in\mathbb R^n$,
\begin{align}\label{eq:exact-sm-L1-diff}
\lim_{r\to0^+}
\frac{1}{r}\fint_{B(x,r)}
|f(y)-f(x)-\nabla f(x)\cdot(y-x)|\,dy
=0.
\end{align}

Fix such $x$ and $t\in(0,\infty)$, and let
$\ell_x(y):=f(x)+\nabla f(x)\cdot(y-x)$ for $y\in\mathbb{R}^n$.
If $B:=B(z,r)\ni x$ with $r\in[t,2t)$, then
\begin{align*}
\frac{1}{r}\fint_B|f(y)-\ell_x(y)|\,dy
\lesssim
\frac{1}{t}\fint_{B(x,4t)}|f(y)-\ell_x(y)|\,dy.
\end{align*}
Moreover, using the triangle inequality, we conclude that
\begin{align*}
\left|
\frac{1}{r}\fint_B|f(y)-f_B|\,dy
-
\frac{1}{r}\fint_B|\ell_x(y)-(\ell_x)_B|\,dy
\right|
\leq
\frac{2}{r}\fint_B|f(y)-\ell_x(y)|\,dy.
\end{align*}
From $(\ell_x)_B=\ell_x(z)$, the change of variables
$u=\frac{y-z}{r}$, and the definition of $K_{(n)}$, we infer that
\begin{align*}
\frac{1}{r}\fint_B|\ell_x(y)-(\ell_x)_B|\,dy
&=\fint_{B(\mathbf0,1)}|\nabla f(x)\cdot u|\,du\\
&=|\nabla f(x)|\fint_{B(\mathbf0,1)}|u_1|\,du
=K_{(n)}|\nabla f(x)|.
\end{align*}
Therefore, for any $B=B(z,r)\ni x$ with $r\in[t,2t)$,
\begin{align}\label{eq:exact-sm-affine-oscillation-error}
\left|
\frac{1}{r}\fint_B|f(y)-f_B|\,dy
-
K_{(n)}|\nabla f(x)|
\right|
\lesssim
\frac{1}{t}\fint_{B(x,4t)}|f(y)-\ell_x(y)|\,dy,
\end{align}
where the implicit positive constant is independent of $B$ and $t$.

Taking the supremum over all such balls $B$ in
\eqref{eq:exact-sm-affine-oscillation-error}, we obtain
\begin{align*}
|
\mathcal M_1^\sharp(f)(x,t)-K_{(n)}|\nabla f(x)|
|
\lesssim
\frac{1}{t}\fint_{B(x,4t)}|f(y)-\ell_x(y)|\,dy.
\end{align*}
Letting $t\to0^+$ and applying \eqref{eq:exact-sm-L1-diff},
we find that \eqref{eq:exact-sm-pointwise} holds, which then completes
the proof of Lemma \ref{lem:exact-sm-pointwise}.
\end{proof}

\section{Weighted Estimates and Exact Limiting Formulae}
\label{sec:weighted-limits}
 In this section, using the preceding preliminaries, we prove the weighted estimates
in Theorem \ref{th:intro-m}. We first establish the upper bounds in Subsection \ref{sub:3-1}, then derive the exact limiting formulae and lower bounds in Subsection \ref{sub:3-2}, and
finally examine the excluded parameters in Subsection \ref{subsec:excluded-parameters}.

\subsection{Upper Estimates}\label{sub:3-1}

We begin by establishing the upper bounds using weighted maximal
inequalities and dyadic estimates for averages and mean oscillations.
A scaling identity first reduces the argument to the case $\theta=1$.

By the definitions, we conclude that, for any
$\tau\in\{\mathrm{av},\mathrm{osc}\}$, $\theta\in(0,\infty)$, and $f\in L^1_{\rm loc}$,
\begin{align*}
\mathcal M_\theta^\tau(f)(x,t)
=\mathcal M_1^\tau(f)(x,\theta t), \quad\forall\,(x,t)\in \mathbb R^{n+1}_+.
\end{align*}
For any $\theta,\lambda\in(0,\infty)$, from the substitutions $s=\theta t$ and $\mu:=\lambda\theta^{-\gamma}$, it follows that
\begin{align}\label{eq:theta-change}
\lambda^p
\underset{\mathcal M_\theta^\tau(f)(x,t)>\lambda t^\gamma}
{\int_{\mathbb R^n}\int_0^\infty}
t^{\gamma p-1}\,dt\,\upsilon(x)\,dx
=\mu^p
\underset{\mathcal M_1^\tau(f)(x,s)>\mu s^\gamma}
{\int_{\mathbb R^n}\int_0^\infty}
s^{\gamma p-1}\,ds\,\upsilon(x)\,dx.
\end{align}
Consequently, taking the supremum over $\theta$ and $\lambda$
gives the same quantity as fixing $\theta=1$ and taking the
supremum over $\lambda$.

We next recall two weighted dyadic estimates for averages and
mean oscillations, respectively. The first one is exactly
\cite[Proposition~2.7]{scm}.
\begin{lemma}\label{es-dyadic1}
If $p\in[1,\infty)$ and
$\beta\in(-\infty,\frac{1}{p}-1)\cup(\frac{1}{p},\infty)$, then, for any dyadic grid
$\mathcal D$, any $\upsilon\in A_p$, and any $f\in L^\infty$ with
compact support,
\begin{align*}
\sup_{\lambda\in(0,\infty)}\lambda^p
\sum_{Q\in\mathcal E_{\lambda,\beta-\frac{1}{p}}[f]}
|Q|^{\beta p-1}\upsilon(Q)
\lesssim[\upsilon]_{A_p}\|f\|_{L^p_\upsilon}^p,
\end{align*}
where the implicit positive constant depends only on $n$, $p$,
and $\beta$, and where, for any dyadic grid $\mathcal D$, any
$\lambda\in(0,\infty)$, and any $b\in\mathbb R$,
\begin{align*}
\mathcal E_{\lambda,b}[f]
:=
\left\{
Q\in\mathcal D:
\fint_Q|f(x)|\,dx>\lambda|Q|^b
\right\}.
\end{align*}
\end{lemma}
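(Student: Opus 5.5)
The plan is to reduce everything to a pointwise geometric series along the dyadic chain through each point, and then to use a single application of the $A_p$ condition on a disjoint family of cubes. This disjointness is what should keep the constant linear in $[\upsilon]_{A_p}$, instead of the power $[\upsilon]_{A_p}^{p'}$ one would get by passing through Lemma \ref{le:M-v}.

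Fix $\mathcal D$, $\lambda$, and put $b:=\beta-\frac1p$. Since $Q\in\mathcal E_{\lambda,b}[f]$ exactly means $\lambda|Q|^{b}<\fint_Q|f|$, every such $Q$ satisfies
\begin{align*}
\lambda^p|Q|^{\beta p-1}=(\lambda|Q|^{b})^p<\Big(\fint_Q|f|\Big)^p .
\end{align*}
Combining this with Hölder's inequality and the definition of $[\upsilon]_{A_p}$ (for $p=1$, directly with \eqref{eq:c-v-1}) gives the basic one-cube estimate
\begin{align*}
\lambda^p|Q|^{\beta p-1}\upsilon(Q)\le\Big(\fint_Q|f|\Big)^p\upsilon(Q)\le[\upsilon]_{A_p}\int_Q|f|^p\upsilon .
\end{align*}
The remaining task is to sum this over $\mathcal E_{\lambda,b}[f]$ without overlap losses. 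To that end I write
\begin{align*}
\sum_{Q\in\mathcal E_{\lambda,b}[f]}|Q|^{\beta p-1}\upsilon(Q)=\int_{\mathbb R^n}\sum_{Q\ni x,\ Q\in\mathcal E_{\lambda,b}[f]}|Q|^{\beta p-1}\,\upsilon(x)\,dx
\end{align*}
and use that the cubes of $\mathcal D$ containing $x$ form a chain.

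\emph{Case $\beta>\frac1p$.} Here $b>0$, and $f\in L^\infty$ with compact support gives $\fint_Q|f|\le\|f\|_{L^1}|Q|^{-1}\to0$ as $|Q|\to\infty$. Hence the cubes of $\mathcal E_{\lambda,b}[f]$ containing $x$ have bounded size, and there is a largest one, $P(x)$. Since $\beta p-1>0$, the inner sum is a geometric series dominated by $C|P(x)|^{\beta p-1}$. The cubes $P(x)$ are exactly the maximal cubes of $\mathcal E_{\lambda,b}[f]$, so they are pairwise disjoint. Therefore
\begin{align*}
\lambda^p\sum_{Q\in\mathcal E_{\lambda,b}[f]}|Q|^{\beta p-1}\upsilon(Q)\lesssim\sum_{P\ \text{maximal}}\lambda^p|P|^{\beta p-1}\upsilon(P)\le[\upsilon]_{A_p}\sum_{P}\int_P|f|^p\upsilon\le[\upsilon]_{A_p}\|f\|_{L^p_\upsilon}^p .
\end{align*}

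\emph{Case $\beta<\frac1p-1$.} Now $b<-1$ and $\beta p-1<-p<0$. Since $\fint_Q|f|\le\|f\|_{L^\infty}$ while $\lambda|Q|^{b}\to\infty$ as $|Q|\to0$, the cubes of $\mathcal E_{\lambda,b}[f]$ containing $x$ have size bounded below. There is a smallest one, $R(x)$, and the inner sum is dominated by $C|R(x)|^{\beta p-1}$.

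This case is the step I expect to be the main obstacle. The cubes $R(x)$ need not be disjoint, because $R(x)$ may contain other cubes of $\mathcal E_{\lambda,b}[f]$ away from $x$. I would first try a stopping-time argument. Start from the family $\mathcal R:=\{R(x)\}$. Any cube $R'\in\mathcal R$ strictly inside $R\in\mathcal R$ has larger ratio $\fint_{R'}|f|/|R'|^{b}$ weighted by a smaller size, and because $b+1<0$ the quantity $\int_R|f|\ge\lambda|R|^{b+1}$ gets \emph{easier} to satisfy as cubes grow. I would exploit this to show that the part of $R$ not covered by strictly smaller members of $\mathcal R$ carries a fixed proportion of $\int_R|f|$, or of $|R|$. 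If that sparseness-type property holds, the family $\mathcal R$ can be summed like a disjoint family: apply the one-cube bound on the sets $E_R:=R\setminus\bigcup\{R'\in\mathcal R:R'\subsetneq R\}$, and use Lemma \ref{lem:A1-doubling} to compare $\upsilon(R)$ with $\upsilon(E_R)$ where needed.

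If this route costs an extra power of $[\upsilon]_{A_p}$, the fallback is different. I would split $\mathcal E_{\lambda,b}[f]$ into the layers
\begin{align*}
\mathcal E^{(j)}:=\Big\{Q:\ 2^{j}\lambda|Q|^{b}<\fint_Q|f|\le2^{j+1}\lambda|Q|^{b}\Big\},\qquad j\in\mathbb Z_+ .
\end{align*}
On each layer I would run the same extremal-cube argument and gain a factor $2^{-jp}$ from the one-cube estimate, which makes the sum over $j$ converge.
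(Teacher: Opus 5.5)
Your argument for $\beta>\frac1p$ is correct and complete. The one-cube estimate, the geometric series up to the largest cube $P(x)$, and the disjointness of the maximal cubes of $\mathcal E_{\lambda,b}[f]$ together give the linear bound. (The paper itself does not prove this lemma; it quotes it from \cite{scm}.)

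The case $\beta<\frac1p-1$, however, has a genuine gap.

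\emph{First route.} Since $b+1<0$, the family $\mathcal E_{\lambda,b}[f]$ is closed under dyadic ancestors. Hence each $E_R$ is a union of children of $R$ lying outside $\mathcal E_{\lambda,b}[f]$, so $\{R(x)\}$ is automatically $2^{-n}$-sparse. Sparseness alone, though, only leads back to $\sum_R(\fint_R|f|)^p\upsilon(E_R)\le\|M^{\mathcal D}f\|_{L^p_\upsilon}^p$. That costs $[\upsilon]_{A_p}^{p/(p-1)}$ for $p>1$ and fails outright for $p=1$. Your proposed way around this, that the uncovered part of $R$ carries a fixed proportion of $\int_R|f|$, has no reason to hold.

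\emph{Fallback.} The layer decomposition is the right one. But ``running the same extremal-cube argument'' on $\mathcal E^{(j)}$ does not close the proof. The smallest cubes of $\mathcal E^{(j)}$ through each point need not be disjoint, for exactly the reason you identified before. The largest ones are disjoint, but since $\beta p-1<0$ the smaller cubes below them carry the larger weights $|Q|^{\beta p-1}$. No geometric series bounds them by the top cube unless you know more.

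The missing ingredient is that each layer has bounded overlap. Suppose $Q\subsetneq Q'$ both lie in $\mathcal E^{(j)}$ with $|Q'|=2^{kn}|Q|$. Then
\begin{align*}
2^j\lambda|Q|^{b+1}<\int_Q|f|\le\int_{Q'}|f|\le2^{j+1}\lambda|Q'|^{b+1}=2^{j+1+kn(b+1)}\lambda|Q|^{b+1},
\end{align*}
so $k<\frac{1}{n|b+1|}$, where $|b+1|=\frac1p-1-\beta>0$. Hence every point lies in at most $1+\frac{1}{n|b+1|}$ cubes of $\mathcal E^{(j)}$.

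With this you need no extremal cube at all. For $Q\in\mathcal E^{(j)}$, your one-cube estimate gives
\begin{align*}
\lambda^p|Q|^{\beta p-1}\upsilon(Q)<2^{-jp}\Big(\fint_Q|f|\Big)^p\upsilon(Q)\le2^{-jp}[\upsilon]_{A_p}\int_Q|f|^p\upsilon.
\end{align*}
Summing over $\mathcal E^{(j)}$ costs only the overlap constant, and summing $2^{-jp}$ over $j\in\mathbb Z_+$ yields $\lesssim[\upsilon]_{A_p}\|f\|_{L^p_\upsilon}^p$, with a constant depending only on $n$, $p$, and $\beta$. Equivalently, this bounded window of generations is exactly what would let the largest-cube version of your argument compare every cube of $\mathcal E^{(j)}$ through $x$ with the maximal one.
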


The following lemma improves the exponent of $[\upsilon]_{A_p}$
in \cite[Theorem~1.1]{scm} to $1$ when
$p\in(1,\infty)$ and
$\beta\in[\frac1p-1,\frac1p-\frac1n)$.
\begin{lemma}\label{thm-cddd}
Let $p\in[1,\infty)$. Assume that
$\beta\in(-\infty,1-\frac{1}{n})\cup(1,\infty)$ when $p=1$,
and that $\beta\in\mathbb R\setminus\{\frac{1}{p}\}$ when
$p\in(1,\infty)$.
\begin{enumerate}
  \item[\textup{(i)}] If $\upsilon\in A_1$,
  then,
  for any dyadic grid $\mathcal{D}$ and
  any $f\in \dot{W}^{1,p}_\upsilon$,
  \begin{align*}
  \sup_{\lambda\in(0,\infty)}\lambda^p
  \sum_{Q\in\mathcal{D}_{\lambda,\beta+1-\frac{1}{p}}[f]}
  |Q|^{\beta p-1}\upsilon(Q)\lesssim
  [\upsilon]_{A_1}\|\nabla f\|_{L^p_{\upsilon}}^p,
\end{align*}
  where the implicit positive constant depends only on $n$, $p$, and $\beta$.
  \item[\textup{(ii)}] If $\upsilon\in A_p$,
  then,
  for any dyadic grid $\mathcal{D}$ and
  any $f\in \dot{W}^{1,p}_\upsilon$,
  \begin{align}\label{eq:thm-cddd-2}
  \sup_{\lambda\in(0,\infty)}\lambda^p
  \sum_{Q\in\mathcal{D}_{\lambda,\beta+1-\frac{1}{p}}[f]}
  |Q|^{\beta p-1}\upsilon(Q)\lesssim
  [\upsilon]_{A_p}^{\widetilde\alpha_{p,\beta}}
  \|\nabla f\|_{L^p_{\upsilon}}^p,
  \end{align}
  where the implicit positive constant depends only on $n$, $p$, and $\beta$, and where
  \begin{align*}
  \widetilde\alpha_{p,\beta}:=
  \begin{cases}
  \displaystyle\frac{p}{p-1}
  &\text{if } p\in(1,\infty)\ \text{and }
  \beta\in\left[\frac{1}{p}-\frac{1}{n},\frac{1}{p}\right),\\
  1
  &\text{otherwise}.
  \end{cases}
  \end{align*}
\end{enumerate}
Here, for any dyadic grid $\mathcal D$, any $\lambda\in(0,\infty)$,
any $b\in\mathbb R$, and any $f\in L^1_{\mathrm{loc}}$,
\begin{align}\label{eq:3.1x}
\mathcal D_{\lambda,b}[f]
:=\left\{Q\in\mathcal D:\frac{1}{|Q|^{1+\frac{1}{n}}}
\int_Q\int_Q|f(x)-f(y)|\,dx\,dy>\lambda|Q|^b\right\}.
\end{align}
\end{lemma}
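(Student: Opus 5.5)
We treat the quantity in Lemma \ref{thm-cddd} as a weak-type sum over the dyadic cubes with large normalized oscillation, and we split into two regimes. In the first, $\beta>\frac1p$ or $\beta<\frac1p-\frac1n$ (for $p=1$ this is $\beta>1$ or $\beta<1-\frac1n$). In the second, $p\in(1,\infty)$ and $\beta\in[\frac1p-\frac1n,\frac1p)$.

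\textbf{First regime.} The plan is to combine the $(1,1)$-Poincar\'e inequality on cubes with the $A_1$ or $A_p$ condition. We bound the normalized oscillation by
\[
\frac{1}{|Q|^{1+\frac1n}}\int_Q\int_Q|f(x)-f(y)|\,dx\,dy\lesssim |Q|^{\frac1n}\fint_Q|\nabla f|.
\]
Then $Q\in\mathcal D_{\lambda,\beta+1-\frac1p}[f]$ forces $\fint_Q|\nabla f|\gtrsim\lambda|Q|^{\beta+1-\frac1p-\frac1n}$. This reduces the sum to the setting of Lemma \ref{es-dyadic1}, applied to $g:=|\nabla f|$ with the shifted exponent $\beta':=\beta+1-\frac1n$. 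Indeed, $|Q|^{\beta p-1}$ versus $|Q|^{\beta' p-1}$ differs by a power of $|Q|$, which we absorb into $\lambda$ after rescaling; I will check this bookkeeping. The condition $\beta'\in(-\infty,\frac1p-1)\cup(\frac1p,\infty)$ is exactly $\beta<\frac1p-\frac1n$ or $\beta>\frac1p-1+\frac1n$.

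Lemma \ref{es-dyadic1} gives the power $[\upsilon]_{A_p}$, and hence $[\upsilon]_{A_1}$ when $\upsilon\in A_1\subset A_p$, with $[\upsilon]_{A_p}\le[\upsilon]_{A_1}$. We then extend from bounded compactly supported $|\nabla f|$ by a monotone limiting argument. The remaining case $\beta>\frac1p$ is not covered directly when $\beta\le\frac1p-1+\frac1n$. There I would instead use the crude bound $\fint_Q\fint_Q|f(x)-f(y)|\le 2\fint_Q|f-c|$ together with a Sobolev--Poincar\'e-type estimate. The argument is the standard large-$\beta$ case: only cubes of bounded size contribute, and the sum is geometric. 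For $\beta>\frac1p$ this gives power $1$ as in \cite{scm}, since the Cohen--Dahmen--Daubechies--DeVore proof there yields $[\upsilon]_{A_p}$ in that range. So (i) follows in full, and (ii) follows except for the middle range.

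\textbf{Second regime.} For $\beta\in[\frac1p-\frac1n,\frac1p)$ with exponent $\frac p{p-1}$, we apply Lemma \ref{le:M-v}(i). Fix $\lambda$. Every selected cube satisfies $|Q|^{\frac1n}\fint_Q|\nabla f|\gtrsim \lambda|Q|^{b}$, so $M(|\nabla f|)\gtrsim\lambda|Q|^{b-\frac1n}$ on $Q$. Here the exponent $b-\frac1n=\beta-\frac1p+1-\frac1n$ lies in $[1-\frac2n... ]$, which is nonnegative in this range, so I would bound the sum level by level.

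Grouping cubes by side length $2^{k}$, the cubes at each level are disjoint, so
\[
\sum_{\ell(Q)=2^k}\upsilon(Q)\le \upsilon\bigl(\{M(|\nabla f|)\gtrsim\lambda 2^{kn(b-\frac1n)}\}\bigr).
\]
We weight this by $2^{kn(\beta p-1)}$ and sum in $k$. This becomes a layer-cake integral $\int M(|\nabla f|)^{p}\upsilon$ provided the powers match: we need $(\beta p-1)/(b-\frac1n)$ to be compatible with $p$. If they do not match exactly, the geometric series converges on one side, since $\beta\ne\frac1p$, and I would use a Sobolev--Poincar\'e refinement for the other side. The bound then follows from $\|M\|^p\lesssim[\upsilon]_{A_p}^{\frac p{p-1}}$, giving power $\frac p{p-1}$.

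\textbf{Main obstacle.} The hardest step is the boundary case $\beta>\frac1p$ close to $\frac1p$ in part (i) with $p=1$, that is $\beta>1$, and the exact exponent matching in the middle range. If the direct approach fails, I would fall back on the truncation scheme mentioned in the introduction. That scheme selects $s$-medians and truncates $f$ at inductively chosen values, so that the oscillation on selected cubes is carried by sparse-like disjoint sets; the Sobolev--Poincar\'e inequality then bounds each piece by $[\upsilon]_{A_p}$ via Lemma \ref{lem:A1-doubling}.
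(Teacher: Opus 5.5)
\paragraph{Gap.} The gap is in the one case that is new in this lemma: $p\in(1,\infty)$, $n\ge2$, and $\beta\in[\frac1p-1,\frac1p-\frac1n)$, where power $1$ of $[\upsilon]_{A_p}$ is asserted. All other cases are in \cite[Theorem~1.1]{scm}.

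Your first regime does not reach this range because the scaling is wrong. The $(1,1)$-Poincar\'e inequality gives $\frac{1}{|Q|^{1+\frac1n}}\int_Q\int_Q|f(x)-f(y)|\,dx\,dy\lesssim\int_Q|\nabla f|=|Q|\fint_Q|\nabla f|$, not $|Q|^{\frac1n}\fint_Q|\nabla f|$. So $Q\in\mathcal D_{\lambda,\beta+1-\frac1p}[f]$ only gives $Q\in\mathcal E_{c\lambda,\beta-\frac1p}[|\nabla f|]$ with the \emph{same} $\beta$. Lemma \ref{es-dyadic1} then covers only $\beta\in(-\infty,\frac1p-1)\cup(\frac1p,\infty)$. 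For $p=1$ this means $\beta<0$ or $\beta>1$, so in (i) the range $[0,1-\frac1n)$ at $p=1$ is also left open.

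Your shift to $\beta'$ cannot be rescued. A factor $|Q|^{c}$ that varies with $Q$ cannot be absorbed into $\lambda$. Even formally, $\beta'<\frac1p-1$ means $\beta<\frac1p-2+\frac1n$, not $\beta<\frac1p-\frac1n$.

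More fundamentally, any argument that keeps only $\fint_Q|\nabla f|\gtrsim\lambda|Q|^{\beta-\frac1p}$ and treats $|\nabla f|$ as a generic element of $L^p_\upsilon$ must fail there. If Lemma \ref{es-dyadic1} held with power $1$ for some $\beta\in[\frac1p-1,\frac1p)$, then \eqref{eq:unified-dyadic-reduction} would give power $1$ for $\mathcal M_\theta$ with $\gamma=n(\beta-\frac1p)\in[-n,0)$. That contradicts Proposition \ref{pro:sharp-m}(ii). Relatedly, the case you call the main obstacle, $\beta>1$ at $p=1$, is the trivial one covered directly by Lemma \ref{es-dyadic1}.

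\paragraph{What the missing range needs.} You need an argument that uses the level structure of $f$ itself; your closing sentence only names the truncation scheme. In the paper it works as follows.
\begin{enumerate}
\item For each selected cube, the $s$-medians give an index $m$ with $F_Q(1-\delta_m)-F_Q(\delta_m)\gtrsim\frac{2^m}{(m+1)^2}\ell(Q)^{n\beta+1-\frac np}$.
\item Cubes are grouped by $m$ and by $\log_2\ell(Q)$ modulo $N$.
\item Since $n\beta+1-\frac np<0$ exactly when $\beta<\frac1p-\frac1n$, the thresholds of the strict ancestors of $Q$ in its group sum to at most that of $Q$.
\item This permits an inductive choice of \emph{value} sets $E_Q\subset(F_Q(\delta_m),F_Q(1-\delta_m))$ of comparable length, disjoint from those of all ancestors. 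Hence $\sum_Q\mathbf 1_Q(x)\mathbf 1_{E_Q}(f(x))\le1$.
\item Apply the $L^{\frac n{n-1}}$ Sobolev--Poincar\'e inequality to $T_Q\circ f$ with $T_Q'=\mathbf 1_{E_Q}$, then H\"older and the $A_p$ condition on $Q$ alone. This gives $|Q|^{\beta p-1}\upsilon(Q)\lesssim[\upsilon]_{A_p}(m+1)^{2p}2^{-\frac{mp}n}\int_{Q\cap\{f\in E_Q\}}|\nabla f|^p\upsilon$.
\item The factor $2^{-\frac{mp}n}$, which the $(1,1)$-Poincar\'e inequality would not supply, makes the sum over $m$ converge.
\end{enumerate}

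\paragraph{Your second regime.} This part is correct in substance and needs no ``refinement''. With the right scaling, $s_k:=c\lambda2^{kn(\beta-\frac1p)}$ satisfies $\lambda^p2^{kn(\beta p-1)}=c^{-p}s_k^p$ exactly. Since $\beta\ne\frac1p$, $\{s_k\}$ is geometric, so the level-by-level sum is $\lesssim\|M(|\nabla f|)\|_{L^p_\upsilon}^p$ for every $\beta\ne\frac1p$. Lemma \ref{le:M-v}(i) then gives the power $\frac p{p-1}$. Lemma \ref{le:M-v}(ii), which you did not invoke, gives $[\upsilon]_{A_1}$ in part (i) for $p>1$. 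Without it, your claim that (i) follows in full from the first regime is unjustified.
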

\begin{remark}
Let $n\in\mathbb N\setminus\{1\}$ and $p\in(1,\infty)$.
For $\beta\in[\frac{1}{p}-1,\frac{1}{p}-\frac{1}{n})$,
Lemma \ref{thm-cddd}\textup{(ii)} improves the power of
$[\upsilon]_{A_p}$ in \cite[Theorem~1.1\textup{(ii)}]{scm}
from $\frac{p}{p-1}$ to $1$.
\end{remark}

\begin{proof}[Proof of Lemma \ref{thm-cddd}]
All assertions except \textup{(ii)} when $p\in(1,\infty)$ and
$\beta\in[\frac1p-1,\frac1p-\frac1n)$ are precisely the
corresponding estimates in \cite[Theorem~1.1]{scm}.
It remains to consider $n\in\mathbb N\setminus\{1\}$,
$p\in(1,\infty)$, and
$\beta\in[\frac1p-1,\frac1p-\frac1n)$. Fix a dyadic grid $\mathcal D$, $\upsilon\in A_p$,
and $f\in\dot W^{1,p}_\upsilon$.
We first consider real-valued $f$ and $\lambda=1$.
Fix a finite collection
$\mathcal S\subset\mathcal D_{1,\beta+1-\frac{1}{p}}[f]$.
For any $Q\in\mathcal S$ and $s\in(0,1)$,
the \emph{$s$-median} $F_Q(s)$ of $f$ on $Q$
is defined by setting
\begin{align*}
F_Q(s):=
\inf\{a\in\mathbb R:
|\{x\in Q:f(x)\leq a\}|\geq s|Q|\}.
\end{align*}
For any $m\in\mathbb Z_+$, let $\delta_m:=2^{-m-2}$.
Using the triangle inequality and
\cite[Chapter~2, Proposition~1.8]{bennett-sharpley88} with $p=1$,
we find that
\begin{align}\label{eq:osc-qint}
\frac12\fint_Q|f(x)-f_Q|\,dx
&\leq\fint_Q\left|f(x)-F_Q\left(\frac12\right)\right|\,dx=\int_0^1\left|F_Q(s)-F_Q\left(\frac12\right)\right|\,ds\notag\\
&=\int_0^{\frac12}
\left[F_Q(1-s)-F_Q(s)\right]\,ds
\leq\sum_{m=0}^\infty
\delta_m\left[F_Q(1-\delta_m)-F_Q(\delta_m)\right].
\end{align}
Indeed, the last inequality follows from splitting $(0,\frac12)$
into the intervals $(\delta_m,2\delta_m)$ and using the
monotonicity of $F_Q$.
Since $\fint_Q|f(x)-f_Q|\,dx >\frac12\ell(Q)^{n\beta+1-\frac{n}{p}}$, from \eqref{eq:osc-qint}, we deduce that there exists
$m\in\mathbb Z_+$ such that
\begin{align}\label{eq:osc-qgap}
F_Q(1-\delta_m)-F_Q(\delta_m)
>\frac{2^{m-1}}{(m+1)^2}\ell(Q)^{n\beta+1-\frac{n}{p}}.
\end{align}
Otherwise, by \eqref{eq:osc-qint}, we conclude that
\begin{align*}
\frac14\ell(Q)^{n\beta+1-\frac{n}{p}}
\leq\frac{\ell(Q)^{n\beta+1-\frac{n}{p}}}8
\sum_{m=0}^\infty\frac1{(m+1)^2}
=\frac{\pi^2}{48}\ell(Q)^{n\beta+1-\frac{n}{p}}
<\frac14\ell(Q)^{n\beta+1-\frac{n}{p}},
\end{align*}
which is a contradiction.
For any $Q\in\mathcal S$, let $m:=m(Q)\in\mathbb Z_+$
be the smallest index for which \eqref{eq:osc-qgap} holds,
and let
\begin{align}\label{eq:osc-qintv}
I_Q:=\left(F_Q(\delta_m),F_Q(1-\delta_m)\right).
\end{align}
From the definition of $F_Q$, we infer that
\begin{align}\label{eq:osc-lower}
|\{x\in Q:f(x)\leq F_Q(\delta_m)\}|&\geq\delta_m|Q|
\end{align}
and
\begin{align}
|\{x\in Q:f(x)\geq F_Q(1-\delta_m)\}|&\geq\delta_m|Q|.
\label{eq:osc-upper}
\end{align}

Let
$N:=\lceil\frac{1}{n(\frac{1}{p}-\beta)-1}\rceil.$ For any $P\in\mathcal S$, write $\ell(P)=2^{j(P)}$ with
$j(P)\in\mathbb Z$.
For any $m\in\mathbb Z_+$ and $r\in\{0,\ldots,N-1\}$, let
\begin{align*}
\mathcal S_{m,r}
:=\{P\in\mathcal S:j(P)\equiv r\;(\mathrm{mod}\,N),\ m(P)=m\}.
\end{align*}
Fix $m$ and $r$ such that $\mathcal S_{m,r}\ne\emptyset$.
Fix $Q\in\mathcal S_{m,r}$.
By the definition of $\mathcal S_{m,r}$, for any
$P\in\mathcal S_{m,r}$ with $Q\subsetneqq P$,
there exists $k\in\mathbb N$ such that
$\ell(P)=2^{kN}\ell(Q)$. Note that, for any fixed $k$,
there exists at most one such cube $P$.
Using $N[n(\frac{1}{p}-\beta)-1]\geq1$, we obtain
\begin{align}
\sum_{\genfrac{}{}{0pt}{}{P\in\mathcal S_{m,r}}{Q\subsetneqq P}}
\ell(P)^{n\beta+1-\frac{n}{p}}
\leq
\sum_{k=1}^\infty2^{-k}\ell(Q)^{n\beta+1-\frac{n}{p}}
=\ell(Q)^{n\beta+1-\frac{n}{p}}.
\label{eq:osc-anc}
\end{align}
Enumerate $\mathcal S_{m,r}=\{Q_1,\ldots,Q_K\}$ such that
$\ell(Q_1)\geq\cdots\geq\ell(Q_K)$.
We construct sets $E_{Q_1},\ldots,E_{Q_K}$ such that,
for any $j\in\{1,\ldots,K\}$, $E_{Q_j}$ is a finite union
of compact intervals and satisfies, for any
$j\in\{1,\ldots,K\}$,
\begin{align}
E_{Q_j}
\subset
I_{Q_j}\setminus
\bigcup_{\genfrac{}{}{0pt}{}{1\leq i<j}{Q_j\subsetneqq Q_i}}E_{Q_i} \, \text{and} \,
|E_{Q_j}|
=\frac{2^{m-2}}{(m+1)^2}\ell(Q_j)^{n\beta+1-\frac{n}{p}}.
\label{eq:osc-values}
\end{align}
Note that the union
$\bigcup_{\genfrac{}{}{0pt}{}{1\leq i<j}{Q_j\subsetneqq Q_i}}E_{Q_i}=\emptyset$
 if there is no $i<j$ such that
$Q_j\subsetneqq Q_i$.
In particular, \eqref{eq:osc-values}  ensure that
$E_Q\cap E_P=\emptyset$ whenever
$P\in\mathcal S_{m,r}$ and $Q\subsetneqq P$.
We show the existence of these sets by induction.
Since \eqref{eq:osc-qgap} holds, it follows that
$|I_{Q_1}|
>\frac{2^{m-1}}{(m+1)^2}\ell(Q_1)^{n\beta+1-\frac{n}{p}}.$
Thus, we can choose a compact interval
$E_{Q_1}\subset I_{Q_1}$ such that
$
|E_{Q_1}|
=\frac{2^{m-2}}{(m+1)^2}\ell(Q_1)^{n\beta+1-\frac{n}{p}}.$
This proves \eqref{eq:osc-values} for $j=1$.

Fix $j\in\{2,\ldots,K\}$ and suppose that
$E_{Q_1},\ldots,E_{Q_{j-1}}$ have been chosen
as finite unions of compact intervals satisfying
\eqref{eq:osc-values}.
Let
$U_{Q_j}:=
I_{Q_j}\setminus
\bigcup_{\genfrac{}{}{0pt}{}{1\leq i<j}{Q_j\subsetneqq Q_i}}E_{Q_i}.$
By the induction hypothesis, \eqref{eq:osc-anc}, and \eqref{eq:osc-qgap}, we find that
\begin{align*}
|U_{Q_j}|
&\geq |I_{Q_j}|-
\sum_{\genfrac{}{}{0pt}{}{1\leq i<j}{Q_j\subsetneqq Q_i}}|E_{Q_i}|=|I_{Q_j}|-\frac{2^{m-2}}{(m+1)^2}
\sum_{\genfrac{}{}{0pt}{}{1\leq i<j}{Q_j\subsetneqq Q_i}}
\ell(Q_i)^{n\beta+1-\frac{n}{p}}\\
&\geq
|I_{Q_j}|-\frac{2^{m-2}}{(m+1)^2}\ell(Q_j)^{n\beta+1-\frac{n}{p}}>
\frac{2^{m-2}}{(m+1)^2}\ell(Q_j)^{n\beta+1-\frac{n}{p}}.
\end{align*}
Since $U_{Q_j}$ is a finite union of bounded open intervals,
we can choose a finite union $E_{Q_j}\subset U_{Q_j}$
of compact intervals such that
$|E_{Q_j}|
=\frac{2^{m-2}}{(m+1)^2}\ell(Q_j)^{n\beta+1-\frac{n}{p}}.$
Consequently, \eqref{eq:osc-values} holds for any
$j\in\{1,\ldots,K\}$.

For any $Q\in\mathcal S_{m,r}$, we define
$T_Q(s):=\int_{-\infty}^s\mathbf1_{E_Q}(u)\,du $ for any $s\in\mathbb R.$
Since $T_Q$ is Lipschitz, from  the chain rule for Sobolev functions
[see, for instance, \cite[(1.1)--(1.2)]{leoni-morini07}], we deduce that
\begin{align}\label{eq:osc-grad}
\nabla[T_Q(f(\cdot))](x)=\mathbf1_{E_Q}(f(x))\nabla f(x)
\end{align}
for almost every $x\in Q$.
Moreover, by \eqref{eq:osc-qintv} and
\eqref{eq:osc-values}, $T_Q(f(\cdot))$ equals $0$ on
$\{x\in Q:f(x)\leq F_Q(\delta_m)\}$ and equals $|E_Q|$ on
$\{x\in Q:f(x)\geq F_Q(1-\delta_m)\}$.
If $(T_Q(f(\cdot)))_Q\geq\frac12|E_Q|$, then
\begin{align*}
|T_Q(f(x))-(T_Q(f(\cdot)))_Q|
=(T_Q(f(\cdot)))_Q
\geq\frac12|E_Q|
\end{align*}
on $\{x\in Q:f(x)\leq F_Q(\delta_m)\}$.
Otherwise,
\begin{align*}
|T_Q(f(x))-(T_Q(f(\cdot)))_Q|
=|E_Q|-(T_Q(f(\cdot)))_Q
>\frac12|E_Q|
\end{align*}
on $\{x\in Q:f(x)\geq F_Q(1-\delta_m)\}$.
Using these, \eqref{eq:osc-lower}, and
\eqref{eq:osc-upper}, we conclude that
\begin{align}\label{eq:osc-measure}
\left|
\left\{x\in Q:
|T_Q(f(x))-(T_Q(f(\cdot)))_Q|
\geq\frac12|E_Q|
\right\}
\right|
\geq\delta_m|Q|.
\end{align}
From \eqref{eq:osc-measure}, \eqref{eq:osc-values},
and \eqref{eq:osc-grad}, together with the
Sobolev--Poincar\'e inequality on $Q$, we infer that
\begin{align*}
\frac{2^{\frac mn}}{(m+1)^2}\ell(Q)^{n(\beta+1-\frac{1}{p})}
\lesssim
\|T_Q(f(\cdot))-(T_Q(f(\cdot)))_Q\|_{L^{\frac n{n-1}}(Q)}
\lesssim
\int_{\{x\in Q:f(x)\in E_Q\}}|\nabla f(x)|\,dx.
\end{align*}
By this, H\"older's inequality, and the $A_p$ condition, we obtain
\begin{align}\label{eq:osc-local}
|Q|^{\beta p-1}\upsilon(Q)
&\lesssim(m+1)^{2p}2^{-\frac{mp}{n}}
\frac{\upsilon(Q)}{|Q|^p}
\left[\int_{\{x\in Q:f(x)\in E_Q\}}|\nabla f(x)|\,dx\right]^p\notag\\
&\leq (m+1)^{2p}2^{-\frac{mp}{n}}
\frac{\upsilon(Q)}{|Q|^p}
\left[\int_{\{x\in Q:f(x)\in E_Q\}}|\nabla f(x)|^p\upsilon(x)\,dx\right]
\left[\int_Q\upsilon(x)^{-\frac1{p-1}}\,dx\right]^{p-1}\notag\\
&\lesssim
[\upsilon]_{A_p}(m+1)^{2p}2^{-\frac{mp}n}
\int_{\{x\in Q:f(x)\in E_Q\}}|\nabla f(x)|^p\upsilon(x)\,dx.
\end{align}
From
\eqref{eq:osc-values}, it follows that
\begin{align}\label{eq:osc-disjoint}
\sum_{Q\in\mathcal S_{m,r}}
\mathbf1_Q(x)\mathbf1_{E_Q}(f(x))\leq1
\end{align}
for almost every $x\in\mathbb R^n$.
Summing \eqref{eq:osc-local} and applying
\eqref{eq:osc-disjoint}, we conclude that
\begin{align}\label{eq:Qsmr-lQ-vQ-fv}
\sum_{Q\in\mathcal S_{m,r}}
|Q|^{\beta p-1}\upsilon(Q)
\lesssim
[\upsilon]_{A_p}(m+1)^{2p}2^{-\frac{mp}n}
\int_{\mathbb R^n}|\nabla f(x)|^p\upsilon(x)\,dx,
\end{align}
where the implicit positive constant is independent of $m$.
Using $\sum_{m=0}^\infty(m+1)^{2p}2^{-\frac{mp}n}<\infty$
and \eqref{eq:Qsmr-lQ-vQ-fv}, we find that
\begin{align}\label{eq:osc-finite}
\sum_{Q\in\mathcal S}
|Q|^{\beta p-1}\upsilon(Q)
\lesssim\sum_{m=0}^{\infty}\sum_{r=0}^{N-1}\sum_{Q\in\mathcal S_{m,r}}|Q|^{\beta p-1}\upsilon(Q)\lesssim
[\upsilon]_{A_p}\int_{\mathbb R^n}|\nabla f(x)|^p\upsilon(x)\,dx,
\end{align}
where the implicit positive constant depends only on $n$, $p$, and $\beta$.
Taking the supremum over finite subcollections $\mathcal S$,
we find that \eqref{eq:osc-finite} with
$\mathcal S$ replaced by $\mathcal D_{1,\beta+1-\frac{1}{p}}[f]$ holds.

Applying \eqref{eq:osc-finite} to $\frac{f}{\lambda}$
for any $\lambda\in(0,\infty)$, we conclude that \eqref{eq:thm-cddd-2} holds.
This completes the proof of Lemma \ref{thm-cddd}.
\end{proof}

\begin{proof}[Proof of the Upper Estimates in Theorem \ref{th:intro-m}]
Since \eqref{eq:theta-change} holds, we deduce that it suffices to consider
$\theta=1$. We first consider $\tau=\mathrm{av}$.
When $p\in(1,\infty)$, using \eqref{eq:exact-sm-control}
and Tonelli's theorem, we conclude that
\begin{align}\label{eq:m-by-M}
\lambda^p
\underset{\mathcal M_1(f)(x,t)>\lambda t^\gamma}
{\int_{\mathbb R^n}\int_0^\infty}
t^{\gamma p-1}\,dt\,\upsilon(x)\,dx
\lesssim\|M(f)\|_{L^p_\upsilon}^p.
\end{align}
Applying Lemma \ref{le:M-v} to \eqref{eq:m-by-M}, we find that
\begin{align*}
\sup_{\lambda\in(0,\infty)}\lambda^p
\underset{\mathcal M_1(f)(x,t)>\lambda t^\gamma}
{\int_{\mathbb R^n}\int_0^\infty}
t^{\gamma p-1}\,dt\,\upsilon(x)\,dx
\lesssim
\begin{cases}
[\upsilon]_{A_1}\|f\|_{L^p_\upsilon}^p
&\text{if }\upsilon\in A_1,\\
[\upsilon]_{A_p}^{\frac{p}{p-1}}\|f\|_{L^p_\upsilon}^p
&\text{if }\upsilon\in A_p.
\end{cases}
\end{align*}
This shows the upper estimate in \textup{(i)} when
$p\in(1,\infty)$ and that in \textup{(ii)} when
$\gamma\in[-n,0)$.

It remains to consider $\gamma\in\mathbb R\setminus[-n,0]$.
We first assume that $f\in L^\infty$ has compact support.
Fix $\lambda\in(0,\infty)$, and let
$\beta:=\frac{\gamma}{n}+\frac{1}{p}$.
If $\mathcal M_1(f)(x,t)>\lambda t^\gamma$, then
there exists a ball $B\ni x$ with
$\operatorname{rad}(B)\in[t,2t)$ such that
$\fint_B|f(y)|\,dy>\lambda t^\gamma$.
Applying Lemma \ref{le:dyadic}, we find that there exists a cube
$Q\in\mathcal D^\alpha$, for some
$\alpha\in\{0,1,2\}^n$, such that
$B\subset Q$ and $\ell(Q)\sim t$. Thus,
\begin{align*}
\fint_Q|f(y)|\,dy
\gtrsim\lambda\ell(Q)^\gamma
=\lambda|Q|^{\beta-\frac{1}{p}}.
\end{align*}
Consequently, there exist positive constants $a$ and $A$ with $a\in(0,A)$, depending only on $n$, and a positive constant $c$, depending only on $n$ and $\gamma$, such that
\begin{align*}
\{(x,t)\in\mathbb R^{n+1}_+:
\mathcal M_1(f)(x,t)>\lambda t^\gamma\}
\subset
\bigcup_{\alpha\in\{0,1,2\}^n}
\bigcup_{Q\in\mathcal E_{c\lambda,\beta-\frac{1}{p}}[f]}
(Q\times[a\ell(Q),A\ell(Q)]),
\end{align*}
where, for any $\alpha\in\{0,1,2\}^n$, the collection
$\mathcal E_{c\lambda,\beta-\frac{1}{p}}[f]$ is taken in
$\mathcal D^\alpha$.
From integration over these boxes and $\beta p-1=\frac{\gamma p}{n}$, we infer that
\begin{align}
\lambda^p
\underset{\mathcal M_1(f)(x,t)>\lambda t^\gamma}
{\int_{\mathbb R^n}\int_0^\infty}
t^{\gamma p-1}\,dt\,\upsilon(x)\,dx\lesssim\sum_{\alpha\in\{0,1,2\}^n}(c\lambda)^p
\sum_{Q\in\mathcal E_{c\lambda,\beta-\frac{1}{p}}[f]}
|Q|^{\beta p-1}\upsilon(Q),
\label{eq:unified-dyadic-reduction}
\end{align}
where the implicit positive constant depends only on $n$, $p$,
and $\gamma$.
Using Lemma \ref{es-dyadic1} with
$\beta\in(-\infty,\frac{1}{p}-1)\cup(\frac{1}{p},\infty)$
and \eqref{eq:unified-dyadic-reduction}, we conclude that
taking the supremum over $\lambda\in(0,\infty)$ gives the required upper
estimate with $[\upsilon]_{A_p}$ to the power $1$. Applying the upper bounds in \eqref{eq:intro-m-1} and \eqref{eq:intro-m-p},
already proved for bounded functions with compact support, to
$\min\{|f|,N\}{\mathbf 1}_{B(\mathbf0,N)}$ and then using the monotone
convergence theorem as $N\to\infty$, we obtain the same upper bounds for any
$f\in L^p_\upsilon$. This completes the proof when $\tau=\mathrm{av}$.

We now consider $\tau=\mathrm{osc}$. When $p\in(1,\infty)$,
by \eqref{eq:exact-sm-control} and an argument similar to that
leading to \eqref{eq:m-by-M}, we find that \eqref{eq:m-by-M} also holds
with $\mathcal M_1(f)$ and $M(f)$ replaced by
$\mathcal M_1^\sharp(f)$ and $M(|\nabla f|)$, respectively.
From Lemma \ref{le:M-v}, it follows that, for $p\in(1,\infty)$,
the upper bound in Theorem \ref{th:intro-m}\textup{(i)} holds for
$\upsilon\in A_1$ and $\gamma\in\mathbb R\setminus\{0\}$, while
the upper bound in Theorem \ref{th:intro-m}\textup{(ii)} holds for
$\upsilon\in A_p$ and $\gamma\in[-1,0)$.

For $\gamma\in\mathbb R\setminus[-1,0]$, we now show
the upper bound in Theorem \ref{th:intro-m}\textup{(i)}
when $p=1$ and the upper bound in
Theorem \ref{th:intro-m}\textup{(ii)} when $p\in(1,\infty)$.
Let $\beta:=\frac{\gamma}{n}+\frac{1}{p}$.
Repeating the argument leading to
\eqref{eq:unified-dyadic-reduction}, we find that
\begin{align}\label{eq:ocs-dyadic}
\lambda^p
\underset{\mathcal M_1^\sharp(f)(x,t)>\lambda t^\gamma}
{\int_{\mathbb R^n}\int_0^\infty}
t^{\gamma p-1}\,dt\,\upsilon(x)\,dx\lesssim\sum_{\alpha\in\{0,1,2\}^n}(c\lambda)^p
\sum_{Q\in\mathcal D_{c\lambda,\beta+1-\frac1p}[f]}
|Q|^{\beta p-1}\upsilon(Q).
\end{align}
By Lemma \ref{thm-cddd}\textup{(i)}, we conclude that
the upper bound in Theorem \ref{th:intro-m}\textup{(i)} holds
when $p=1$ and $\upsilon\in A_1$.
For $p\in(1,\infty)$ and $\upsilon\in A_p$, since
$\gamma\in\mathbb R\setminus[-1,0]$, we deduce that
$\widetilde\alpha_{p,\beta}=1$.
Applying Lemma \ref{thm-cddd}\textup{(ii)} to  \eqref{eq:ocs-dyadic}, we obtain
\begin{align}\label{eq:osc-linear-upper}
\lambda^p
\underset{\mathcal M_1^\sharp(f)(x,t)>\lambda t^\gamma}
{\int_{\mathbb R^n}\int_0^\infty}
t^{\gamma p-1}\,dt\,\upsilon(x)\,dx\lesssim
[\upsilon]_{A_p}\|\nabla f\|_{L^p_\upsilon}^p,
\end{align}
where the implicit positive constant depends only on $n$, $p$,
and $\gamma$.
Taking the supremum over $\lambda\in(0,\infty)$
in \eqref{eq:osc-linear-upper} gives the upper
bound in Theorem \ref{th:intro-m}\textup{(ii)} for $\gamma\in\mathbb R\setminus[-1,0]$.
This completes the proof of the upper estimates in Theorem
\ref{th:intro-m}.
\end{proof}

\subsection{Exact Limiting Formulae and Lower Estimates}\label{sub:3-2}
In this subsection, by combining the pointwise limits in Lemma
\ref{lem:exact-sm-pointwise} with the upper estimates, we obtain
exact limiting formulae and hence the lower bounds in Theorem
\ref{th:intro-m}. We retain the notation in
\eqref{eq:unified-notation} and \eqref{eq:unified-gamma-range}.
In what follows, $\lambda\to L_\gamma$ means
$\lambda\to\infty$ when $\gamma\in(0,\infty)$ and
$\lambda\to0^+$ when $\gamma\in(-\infty,0)$.
By \eqref{eq:theta-change}, it suffices to consider $\theta=1$.

\begin{theorem}\label{th:exact-m}
Let $\tau\in\{\mathrm{av},\mathrm{osc}\}$, $p\in[1,\infty)$,
$\gamma\in\Gamma_p^\tau$, and $\upsilon\in A_p$.
Then, for any $f\in X^{\tau,p}_\upsilon$,
\begin{align}\label{eq:exact-m}
\lim_{\lambda\to L_\gamma}\lambda^p
\underset{\mathcal M_1^\tau(f)(x,t)>\lambda t^\gamma}
{\int_{\mathbb R^n}\int_0^\infty}
t^{\gamma p-1}\,dt\,\upsilon(x)\,dx
=\frac{(c_n^\tau)^p}{p|\gamma|}\|D^\tau f\|_{L^p_\upsilon}^p.
\end{align}
\end{theorem}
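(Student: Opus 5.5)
We follow the standard scheme for limiting formulae of Brezis--Van Schaftingen--Yung type: a pointwise computation in the limit, justified by a density argument that uses the uniform upper bounds of Subsection~\ref{sub:3-1}.

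\emph{Step 1: change of variables.} For fixed $x$, set $t=(\lambda^{-1}s)^{1/\gamma}$, i.e.\ $s=\lambda t^\gamma$. Then
\[
\lambda^p\,t^{\gamma p-1}\,dt=\frac{1}{|\gamma|}\,s^{p-1}\,ds .
\]
As $\lambda\to L_\gamma$ one has $t=(s/\lambda)^{1/\gamma}\to0^+$ for each fixed $s$, in both sign cases of $\gamma$. The inner integral becomes
\[
\frac1{|\gamma|}\int_0^\infty \mathbf 1_{\{\mathcal M_1^\tau(f)(x,(s/\lambda)^{1/\gamma})>s\}}\,s^{p-1}\,ds .
\]
By \eqref{eq:exact-sm-pointwise}, for almost every $x$ the indicator converges to $\mathbf 1_{\{c_n^\tau|D^\tau f(x)|>s\}}$ for every $s\ne c_n^\tau|D^\tau f(x)|$. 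The limit of the inner integral is therefore $\frac{(c_n^\tau)^p}{p|\gamma|}|D^\tau f(x)|^p$. Strictly speaking, the $\liminf$ of the indicator is at least $\mathbf 1_{\{c_n^\tau|D^\tau f(x)|>s\}}$ and the $\limsup$ is at most $\mathbf 1_{\{c_n^\tau|D^\tau f(x)|\ge s\}}$, which suffices.

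\emph{Step 2: lower bound.} Fatou's lemma in $(x,s)$ gives $\liminf\ge\frac{(c_n^\tau)^p}{p|\gamma|}\|D^\tau f\|_{L^p_\upsilon}^p$ for every admissible $f$. This already yields the lower bounds in Theorem~\ref{th:intro-m}.

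\emph{Step 3: upper bound for nice $f$.} Take $f$ with $D^\tau f$ bounded and compactly supported (e.g.\ $f\in C_c$, respectively $f\in C_c^\infty$ for the oscillation case). The pointwise bound \eqref{eq:exact-sm-control} gives $\mathcal M_1^\tau(f)(x,t)\le cM(|D^\tau f|)(x)=:g(x)$. Hence the indicator is dominated by $\mathbf 1_{\{s<g(x)\}}$, whose $s^{p-1}\upsilon\,ds\,dx$-integral is $\frac1p\|g\|_{L^p_\upsilon}^p$. When $p>1$ this is finite by Lemma~\ref{le:M-v}, and dominated convergence gives equality in \eqref{eq:exact-m}.

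For $p=1$, $M(|D^\tau f|)\notin L^1_\upsilon$, so dominated convergence fails at large $|x|$. There we split $\mathbb R^n=B(\mathbf 0,R)\cup B(\mathbf 0,R)^\complement$. If $D^\tau f$ is supported in $B(\mathbf 0,R_0)$ and $|x|>2R_0$, then $\mathcal M_1^\tau(f)(x,t)=0$ unless $t\gtrsim|x|$, and in that case $\mathcal M_1^\tau(f)(x,t)\lesssim t^{-n}$ (respectively $t^{-n-1}$, using a Poincaré bound). The far-region contribution is then an explicit integral, which should tend to $0$ as $\lambda\to L_\gamma$ precisely when $\gamma\notin[-n,0]$ (respectively $[-1,0]$), using the polynomial growth of $\upsilon(B(\mathbf 0,r))$ from Lemma~\ref{lem:A1-doubling}. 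This is where the restricted range $\Gamma^\tau_1$ enters. On the bounded region dominated convergence applies, since $M(|D^\tau f|)\in L^1_{\rm loc}$ suffices there: $g\in L^\infty$ and $\upsilon(B(\mathbf 0,R))<\infty$.

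\emph{Step 4: density, the main obstacle.} For general $f$, pick nice $f_k$ with $\|D^\tau(f-f_k)\|_{L^p_\upsilon}\to0$; such $f_k$ exist in $L^p_\upsilon$, and for $\dot W^{1,p}_\upsilon$ one uses $A_p$-density of smooth functions (modulo constants). Use the sublinearity $\mathcal M_1^\tau(f)\le\mathcal M_1^\tau(f_k)+\mathcal M_1^\tau(f-f_k)$. Then the set $\{\mathcal M_1^\tau(f)>\lambda t^\gamma\}$ is contained in the union of $\{\mathcal M_1^\tau(f_k)>(1-\varepsilon)\lambda t^\gamma\}$ and $\{\mathcal M_1^\tau(f-f_k)>\varepsilon\lambda t^\gamma\}$. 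The first piece has limit $(1-\varepsilon)^{-p}\frac{(c_n^\tau)^p}{p|\gamma|}\|D^\tau f_k\|^p$ by Step 3. The second is bounded, uniformly in $\lambda$, by $\varepsilon^{-p}C_\upsilon\|D^\tau(f-f_k)\|_{L^p_\upsilon}^p$ via the already-proved upper estimates of Theorem~\ref{th:intro-m}; only finiteness of the weight constant matters here, since $A_p$ weights suffice. Letting $k\to\infty$ and then $\varepsilon\to0$ gives the $\limsup$ bound. The delicate points are the $p=1$ tail in Step 3 and securing the needed weighted density in $\dot W^{1,p}_\upsilon$.
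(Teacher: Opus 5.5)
Your overall scheme matches the paper's. You rescale so that $t\to0^+$ and use Lemma~\ref{lem:exact-sm-pointwise} with Fatou for the lower bound. You dominate by $cM(|D^\tau f|)$ when $p>1$. For $p=1$ you prove the formula on a core class and pass to general $f$ by sublinearity and the uniform bound of Theorem~\ref{th:intro-m}\textup{(i)}.

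Two small differences in route:
\begin{enumerate}
\item[\textup{(a)}] For $p>1$, your domination by $\mathbf 1_{\{s<cM(|D^\tau f|)(x)\}}$ already works for every $f\in X^{\tau,p}_\upsilon$ by Lemma~\ref{le:M-v}\textup{(i)}. So no density argument, and no weighted Sobolev density, is needed there.
\item[\textup{(b)}] For the $p=1$ core you split in $x$ ($|x|<R$ versus $|x|\ge R$), whereas the paper splits in scale at $t=\delta$. Your split is equally fine, since far points only see large $t$.
\end{enumerate}

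There is, however, a genuine gap in the far-field estimate for $\tau=\mathrm{osc}$, $p=1$.

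\emph{The decay rate.} A Poincar\'e bound gives only $\mathcal M_1^\sharp(f)(x,t)\lesssim\|\nabla f\|_{L^1}t^{-n}$. With $t^{-n}$ decay, your tail bound $\lambda\int_{cR_0}^{\infty}\mathbf 1_{\{Ct^{-n}>\lambda t^\gamma\}}t^{\gamma-1}\upsilon(B(\mathbf 0,8t))\,dt$ is only $O(1)$ when $n\ge2$ and $\gamma\in(-n,-1)$, and it is infinite when $\gamma=-n$. So it does not tend to $0$ on the range $[-n,-1)$. The decay $t^{-n-1}$ you claim comes instead from the bound $\frac1r\fint_B|f-f_B|\le\frac{2\|f\|_{L^1}}{\omega_n r^{n+1}}$, and this needs $f\in L^1$.

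\emph{The clash with density.} Requiring $f\in L^1$ conflicts with the density step.
\begin{enumerate}
\item[\textup{(1)}] $C_{\mathrm c}^\infty$ modulo constants is not dense in $\dot W^{1,1}_\upsilon(\mathbb R)$. For $\upsilon\equiv1$, every $g\in C_{\mathrm c}^\infty(\mathbb R)$ has $\int g'=0$, while $h\mapsto\int h$ is continuous on $L^1$. Hence a smooth step cannot be approximated.
\item[\textup{(2)}] The class $\{g\in C^\infty:|\nabla g|\in C_{\mathrm c}\}$ is dense by \cite[Corollary~3.19]{dly}, but its members need not be integrable.
\end{enumerate}

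\emph{The missing idea} is the dimensional split used in the paper.
\begin{enumerate}
\item[\textup{(1)}] For $n\ge2$, the set $\mathbb R^n\setminus\overline{B(\mathbf 0,R)}$ is connected, so such a $g$ is constant there. Subtracting that constant leaves $\mathcal M_1^\sharp(g)$ unchanged and gives $g\in C_{\mathrm c}^\infty$, so the $t^{-n-1}$ bound applies.
\item[\textup{(2)}] For $n=1$, only the $t^{-1}$ bound is available. But then $\Gamma_1^{\mathrm{osc}}$ forces $\gamma<-1=-n$, so the $\mathrm{av}$-type tail computation suffices.
\end{enumerate}

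Relatedly, your remark that the osc tail vanishes ``precisely when $\gamma\notin[-1,0]$'' is wrong for $n\ge2$. With $t^{-n-1}$ decay the tail is $O(\lambda^{1/(n+1+\gamma)})$ for $\gamma\in(-n,0)$, and it vanishes for every $\gamma<0$. In the osc case, the restriction on $\gamma$ enters only through the uniform bound used in your Step~4.
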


\begin{remark}
When $\upsilon\equiv1$, Theorem \ref{th:exact-m} identifies
the exact limits corresponding to the characterizations in
\cite[Theorem~2.3]{dlyyz} and \cite[Theorem~1.1]{dlyyz2}.
Moreover, the range $\gamma\in\Gamma_p^\tau$ in Theorem \ref{th:exact-m} is sharp (see Subsection \ref{subsec:excluded-parameters}).
\end{remark}

From Theorem \ref{th:exact-m}, we infer that the lower estimates in Theorem
\ref{th:intro-m} hold. Together with the upper estimates proved
in Subsection \ref{sub:3-1}, this completes the proof of Theorem \ref{th:intro-m}. Thus, it remains to show Theorem \ref{th:exact-m}.

\begin{proof}[Proof of Theorem \ref{th:exact-m}]
Fix $f\in X^{\tau,p}_\upsilon$. For any $\lambda\in(0,\infty)$,
let $\varepsilon:=\lambda^{-\frac1\gamma}$.
Using the change of variables $t=\varepsilon s$, we find that
\begin{align}\label{eq:exact-m-scaled}
\lambda^p
\underset{\mathcal M_1^\tau(f)(x,t)>\lambda t^\gamma}
{\int_{\mathbb R^n}\int_0^\infty}
t^{\gamma p-1}\,dt\,\upsilon(x)\,dx
=
\underset{\mathcal M_1^\tau(f)(x,\varepsilon s)>s^\gamma}
{\int_{\mathbb R^n}\int_0^\infty}
s^{\gamma p-1}\,ds\,\upsilon(x)\,dx.
\end{align}
Note that $\lambda\to L_\gamma$ if and only if
$\varepsilon\to0^+$. From Lemma \ref{lem:exact-sm-pointwise}, it follows that
\begin{align}\label{eq:exact-m-pointwise}
\lim_{\varepsilon\to0^+}
{\mathbf 1}_{\{\mathcal M_1^\tau(f)(x,\varepsilon s)>s^\gamma\}}
={\mathbf 1}_{\{c_n^\tau|D^\tau f(x)|>s^\gamma\}}
\end{align}
for almost every $(x,s)\in\mathbb R^{n+1}_+$.
Applying Fatou's lemma and integrating directly in $s$, we conclude that
\begin{align}\label{eq:exact-m-lower-final}
\liminf_{\lambda\to L_\gamma}\lambda^p
\underset{\mathcal M_1^\tau(f)(x,t)>\lambda t^\gamma}
{\int_{\mathbb R^n}\int_0^\infty}
t^{\gamma p-1}\,dt\,\upsilon(x)\,dx
\geq\frac{(c_n^\tau)^p}{p|\gamma|}\|D^\tau f\|_{L^p_\upsilon}^p.
\end{align}

We first prove the reverse inequality when $p\in(1,\infty)$.
Since \eqref{eq:exact-sm-control} holds, we deduce that
\begin{align*}
{\mathbf 1}_{\{\mathcal M_1^\tau(f)(x,\varepsilon s)>s^\gamma\}}
\leq{\mathbf 1}_{\{cM(|D^\tau f|)(x)>s^\gamma\}},
\end{align*}
where $c$ is the positive constant in
\eqref{eq:exact-sm-control}.
Using the argument leading to \eqref{eq:m-by-M} and
Lemma \ref{le:M-v}\textup{(i)}, we find that the right-hand side of the above inequality
is integrable with respect to $s^{\gamma p-1}\upsilon(x)\,dx\,ds$
and is independent of $\varepsilon$.
Thus, from \eqref{eq:exact-m-scaled}, \eqref{eq:exact-m-pointwise},
and the dominated convergence theorem, we infer that \eqref{eq:exact-m} holds when $p\in(1,\infty)$.

It remains to consider $p=1$. We first show the limiting
formula for $g\in C^\infty$ with $|D^{\tau}g|\in C_\mathrm{c}$.
Here $C^\infty$ and $C_{\mathrm{c}}$ denote,
respectively, the spaces of all smooth functions and all continuous functions
with compact support in $\mathbb R^n$.
Choose $R\in(0,\infty)$ such that
$\operatorname{supp}|D^\tau g|\subset B(\mathbf0,R)$.
If $|D^\tau g|=0$, the conclusion is immediate. Otherwise, by
\eqref{eq:exact-sm-control}, there exists $A\in(0,\infty)$
such that, for any $(x,t)\in\mathbb R^{n+1}_+$,
$\mathcal M_1^\tau(g)(x,t)\leq A$.
Fix $t\in(0,\infty)$ and
$x\in\mathbb R^n\setminus B(\mathbf0,R+4t)$.
For any ball $B\ni x$ with $\operatorname{rad}(B)\in[t,2t)$,
we have $B\subset B(x,4t)$ and hence
$B\cap B(\mathbf0,R)=\emptyset$.
Thus, $|D^{\tau}g|=0$ on the ball $B$ and hence $\mathcal M_1^\tau(g)(x,t)=0$.
Therefore, for any $t\in(0,\infty)$,
\begin{align}\label{eq:exact-m-support}
\{x\in\mathbb R^n:\mathcal M_1^\tau(g)(x,t)>0\}
\subset B(\mathbf0,R+4t).
\end{align}
If $\gamma\in(0,\infty)$ and
$\varepsilon\in(0,\frac{1}{4A^{\frac1\gamma}})$, then
\begin{align*}
{\mathbf 1}_{\{\mathcal M_1^\tau(g)(x,\varepsilon s)>s^\gamma\}}
\leq{\mathbf 1}_{B(\mathbf0,R+1)}(x)
{\mathbf 1}_{(0,A^{\frac1\gamma})}(s).
\end{align*}
The right-hand side of the above inequality is integrable with respect to
$s^{\gamma-1}\upsilon(x)\,dx\,ds$.
By \eqref{eq:exact-m-pointwise} and the dominated
convergence theorem, we obtain
\begin{align}\label{eq:exact-m-core}
\lim_{\lambda\to L_\gamma}\lambda
\underset{\mathcal M_1^\tau(g)(x,t)>\lambda t^\gamma}
{\int_{\mathbb R^n}\int_0^\infty}
t^{\gamma-1}\,dt\,\upsilon(x)\,dx
=\frac{c_n^\tau}{|\gamma|}\|D^\tau g\|_{L^1_\upsilon}.
\end{align}

We next consider $\gamma\in\Gamma_1^\tau\cap(-\infty,0)$.
Fix $\delta\in(0,\infty)$ and split the scaled integral in
\eqref{eq:exact-m-scaled} at $s=\frac\delta\varepsilon$.
For its first part, \eqref{eq:exact-m-support} gives
\begin{align*}
{\mathbf 1}_{\{\mathcal M_1^\tau(g)(x,\varepsilon s)>s^\gamma\}}
{\mathbf 1}_{(0,\frac\delta\varepsilon)}(s)
\leq{\mathbf 1}_{B(\mathbf0,R+4\delta)}(x)
{\mathbf 1}_{(A^{\frac1\gamma},\infty)}(s).
\end{align*}
Since the right-hand side of the above inequality is integrable with respect to
$s^{\gamma-1}\upsilon(x)\,dx\,ds$, from the dominated convergence theorem, it follows that
\begin{align}\label{eq:exact-m-small-part}
\lim_{\varepsilon\to0^+}
\underset{\mathcal M_1^\tau(g)(x,\varepsilon s)>s^\gamma}
{\int_{\mathbb R^n}\int_0^{\frac\delta\varepsilon}}
s^{\gamma-1}\,ds\,\upsilon(x)\,dx
=\frac{c_n^\tau}{|\gamma|}\|D^\tau g\|_{L^1_\upsilon}.
\end{align}

For the second part, first suppose that $\tau=\mathrm{av}$, or that $\tau=\mathrm{osc}$ and
$n=1$. In either case, the assumed range gives
$\gamma\in(-\infty,-n)$.
For any $r\in[R,\infty)$, applying Lemma \ref{lem:A1-doubling}
with $p=1$, $Q:=(-r,r)^n$, and $S:=B(\mathbf0,R)$, we find that
\begin{align}
\upsilon(B(\mathbf0,r))
\leq\upsilon(Q)
\leq[\upsilon]_{A_1}\frac{(2r)^n}{|B(\mathbf0,R)|}
\upsilon(B(\mathbf0,R))
\lesssim r^n,
\label{eq:weighted-ball-growth}
\end{align}
where the implicit positive constant is independent of $r$.
In particular, for any $\varepsilon,s\in(0,\infty)$,
$\upsilon(B(\mathbf0,R+4\varepsilon s))
\lesssim(1+\varepsilon s)^n.$
From \eqref{eq:exact-m-support} and Tonelli's theorem, we deduce that
\begin{align}\label{eq:exact-m-tail-av}
&\underset{\mathcal M_1^\tau(g)(x,\varepsilon s)>s^\gamma}
{\int_{\mathbb R^n}\int_{\frac\delta\varepsilon}^\infty}
s^{\gamma-1}\,ds\,\upsilon(x)\,dx\notag\\
&\qquad\leq\int_{\frac\delta\varepsilon}^\infty
s^{\gamma-1}\upsilon(B(\mathbf0,R+4\varepsilon s))\,ds\notag\\
&\qquad\lesssim
\int_{\frac\delta\varepsilon}^\infty
s^{\gamma-1}(1+\varepsilon s)^n\,ds
=\varepsilon^{-\gamma}\int_\delta^\infty
u^{\gamma-1}(1+u)^n\,du\to0
\end{align}
as $\varepsilon\to0^+,$
where the implicit positive constant is independent of
$\varepsilon$, and the last integral is finite because
$\gamma\in(-\infty,-n)$.

We now treat the second part when $\tau=\mathrm{osc}$ and
$n\in\mathbb N\setminus\{1\}$. Using the fact that $\mathbb R^n\setminus\overline{B(\mathbf0,R)}$ is connected
and $|\nabla g|=0$ there, we conclude that $g$ equals a constant
on this set. Subtracting this constant leaves
$\mathcal M_1^\sharp(g)$ unchanged, so we may assume that
$g\in C_{\mathrm{c}}^\infty:=C^\infty \cap C_{\mathrm{c}}$.
For any ball $B\ni x$ with
$r:=\operatorname{rad}(B)\in[\varepsilon s,2\varepsilon s)$, we obviously have
\begin{align*}
\frac1r\fint_B|g-g_B|
\leq\frac{2\|g\|_{L^1}}{\omega_n r^{n+1}}
\leq\frac{2\|g\|_{L^1}}{\omega_n}(\varepsilon s)^{-n-1}.
\end{align*}
Consequently, on the level set
$\{(x,s)\in\mathbb{R}^{n+1}_+:\mathcal M_1^\sharp(g)(x,\varepsilon s)>s^\gamma\}$,
one has $(\varepsilon s)^n\lesssim s^{-\frac{n\gamma}{n+1}}$.
Since $\varepsilon s\geq\delta$, from \eqref{eq:exact-m-support} and
\eqref{eq:weighted-ball-growth}, we infer that
\begin{align*}
\upsilon(\{x:\mathcal M_1^\sharp(g)(x,\varepsilon s)>s^\gamma\})
\lesssim(\varepsilon s)^n\lesssim s^{-\frac{n\gamma}{n+1}}.
\end{align*}
Applying Tonelli's theorem, we obtain
\begin{align}\label{eq:exact-m-tail-osc}
  \underset{\mathcal M_1^\sharp(g)(x,\varepsilon s)>s^\gamma}
{\int_{\mathbb R^n}\int_{\frac\delta\varepsilon}^\infty}
s^{\gamma-1}\,ds\,\upsilon(x)\,dx\lesssim\int_{\frac\delta\varepsilon}^\infty
s^{\frac\gamma{n+1}-1}\,ds
=\frac{n+1}{-\gamma}
\left(\frac\delta\varepsilon\right)^{\frac\gamma{n+1}}\to 0
\end{align}
as $\varepsilon\to0^+,$
where the implicit positive constant is independent of
$\varepsilon$.
From \eqref{eq:exact-m-small-part}, \eqref{eq:exact-m-tail-av},
and \eqref{eq:exact-m-tail-osc}, it follows that \eqref{eq:exact-m-core} holds for the full
range $\Gamma_1^\tau$.

Finally, choose a sequence $\{g_k\}_{k\in\mathbb N}$ in $C^{\infty}$ with $|D^{\tau}g_k|\in C_{\mathrm{c}}$ such that
$\|D^\tau (f-g_k)\|_{L^1_\upsilon}\to0$ as $k\to\infty$.
For $\tau=\mathrm{av}$, this follows from the density of $C_{\mathrm{c}}^\infty$ in
$L^1_\upsilon$; for $\tau=\mathrm{osc}$, it follows from
\cite[Corollary~3.19]{dly}.
For any $\eta\in(0,1)$, sublinearity gives
\begin{align*}
\{\mathcal M_1^\tau(f)>\lambda t^\gamma\}
\subset
\{\mathcal M_1^\tau(g_k)>(1-\eta)\lambda t^\gamma\}
\cup
\{\mathcal M_1^\tau(f-g_k)>\eta\lambda t^\gamma\}.
\end{align*}
By \eqref{eq:exact-m-core}, we find that
\begin{align*}
&\limsup_{\lambda\to L_\gamma}\lambda
\underset{\mathcal M_1^\tau(f)(x,t)>\lambda t^\gamma}
{\int_{\mathbb R^n}\int_0^\infty}
t^{\gamma-1}\,dt\,\upsilon(x)\,dx\\
&\quad\leq
\frac{c_n^\tau}{(1-\eta)|\gamma|}\|D^\tau g_k\|_{L^1_\upsilon}
+\frac1\eta\sup_{\lambda\in(0,\infty)}\lambda
\underset{\mathcal M_1^\tau(f-g_k)(x,t)>\lambda t^\gamma}
{\int_{\mathbb R^n}\int_0^\infty}
t^{\gamma-1}\,dt\,\upsilon(x)\,dx.
\end{align*}
From Theorem \ref{th:intro-m}\textup{(i)}, we deduce that
\begin{align*}
\sup_{\lambda\in(0,\infty)}\lambda
\underset{\mathcal M_1^\tau(f-g_k)(x,t)>\lambda t^\gamma}
{\int_{\mathbb R^n}\int_0^\infty}
t^{\gamma-1}\,dt\,\upsilon(x)\,dx
\lesssim\|D^\tau (f-g_k)\|_{L^1_\upsilon},
\end{align*}
where the implicit positive constant is independent of $k$.
Letting $k\to\infty$ and then $\eta\to0^+$, and applying
\eqref{eq:exact-m-lower-final}, we obtain \eqref{eq:exact-m}.
This completes the proof of Theorem \ref{th:exact-m}.
\end{proof}

\subsection{The Excluded Parameters}\label{subsec:excluded-parameters}
We now examine the excluded parameters, beginning with $\gamma=0$,
as motivated by \cite[Theorem~1.5 and Corollary~1.6]{bssvy24}.
We then prove that the nonzero parameter ranges for the exact
limiting formulae at $p=1$ cannot be enlarged and hence are sharp.
\begin{theorem}\label{th:optimal-gamma-range}
Let $\tau\in\{\mathrm{av},\mathrm{osc}\}$ and $p\in[1,\infty)$.
The condition $\gamma\in\Gamma_p^\tau$ in
Theorem \ref{th:intro-m} is necessary even when $\upsilon\equiv1$.
\end{theorem}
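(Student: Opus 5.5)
The plan is to argue by contradiction, case by case over the excluded parameters, always with $\upsilon\equiv1$. For most cases I will simply invoke the necessity parts of \cite[Theorem~2.3]{dlyyz} (for $\tau=\mathrm{av}$) and \cite[Theorem~1.1]{dlyyz2} (for $\tau=\mathrm{osc}$), as already noted in Remark \ref{re:intro-sm}(iii). By \eqref{eq:theta-change}, the supremum over $\theta$ equals the supremum over $\lambda$ with $\theta=1$ fixed. So it suffices to exhibit, for each excluded $\gamma$, functions for which either the upper bound in \eqref{eq:intro-m-1}/\eqref{eq:intro-m-p} fails or the two sides are not comparable. The excluded parameters split into two families: $\gamma=0$ with any $p\in[1,\infty)$, and $p=1$ with $\gamma\in[-n,0)$ (for $\tau=\mathrm{av}$) or $\gamma\in[-1,0)$ (for $\tau=\mathrm{osc}$).

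\emph{Case $\gamma=0$.} I will prove this directly from Lemma \ref{lem:exact-sm-pointwise}. Take any $f\in X^{\tau,p}$ with $D^\tau f\not\equiv0$, and fix $\lambda\in(0,\infty)$ so small that $E:=\{x: c_n^\tau|D^\tau f(x)|>\lambda\}$ has positive measure. By \eqref{eq:exact-sm-pointwise}, for almost every $x\in E$ there is $t_x>0$ with $\mathcal M_1^\tau(f)(x,t)>\lambda$ for all $t\in(0,t_x)$. The inner integral is then at least $\int_0^{t_x}t^{-1}\,dt=\infty$ on $E$. Hence the middle quantity is infinite while $\|D^\tau f\|_{L^p}<\infty$, so the upper estimate fails.

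\emph{Case $p=1$, $\gamma=-n$ (respectively $\gamma=-1$ for $\tau=\mathrm{osc}$ with $n=1$).} Take $f:=\mathbf1_{B(\mathbf0,1)}$; in the oscillation case take a compactly supported Lipschitz primitive-type bump instead. For $t\gtrsim1$ one has $\mathcal M_1(f)(x,t)\sim t^{-n}$ on a set of $x$ of measure $\sim t^n$. The condition $\mathcal M_1(f)>\lambda t^{-n}$ therefore holds for all large $t$ once $\lambda$ is a small fixed constant. The integral $\lambda\int^\infty t^{-n-1}\,t^n\,dt$ then diverges logarithmically, contradicting the upper bound.

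\emph{Case $p=1$, $\gamma\in(-n,0)$ (and the interior oscillation range).} This is the main obstacle, because single-scale bumps give only bounded contributions. I would first try a lacunary superposition $f=\sum_{k=1}^K f_k$ of rescaled bumps, where the $f_k$ are placed at far-separated centers and scales chosen so that the large-$t$ tails of all $K$ pieces exceed a common threshold $\lambda t^\gamma$ simultaneously for one fixed $\lambda$. The aim is that the tails add up to a quantity $\gtrsim\log K$ times $\|f\|_{L^1}$, which contradicts the upper bound as $K\to\infty$. Rather than re-deriving this delicate construction, I will in the end appeal to the examples in the necessity parts of \cite[Theorem~1.1]{dlyyz} and \cite[Theorem~1.1]{dlyyz2}, which treat exactly these unweighted ranges.
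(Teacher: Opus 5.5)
Your proposal is correct and follows the paper's proof. $\gamma=0$ is excluded exactly as in Lemma \ref{prop:m-gamma-zero}, via the pointwise limit \eqref{eq:exact-sm-pointwise} and $\int_0^{t_x}t^{-1}\,dt=\infty$. $\gamma=-n$ (for $\tau=\mathrm{av}$, $p=1$) is excluded via $\mathbf 1_{B(\mathbf 0,1)}$ as in Lemma \ref{prop:exact-m-gamma}, and $\gamma\in(-n,0)$ by the necessity part of \cite[Theorem~1.1]{dlyyz}; for $\tau=\mathrm{osc}$ the paper invokes \cite[Proposition~5.2]{zlyyz24} through $m_f(x,t)/t\le\mathcal M_1^\sharp(f)(x,t)$ (Lemma \ref{lem:exact-sm-gamma}), whereas you cite \cite{dlyyz2}, a reference Remark \ref{re:intro-sm}(iii) also endorses.

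Your asides are off, though they carry no weight in the argument. A compactly supported $f$ on $\mathbb R$ satisfies $\mathcal M_1^\sharp(f)(x,t)\lesssim\|f\|_{L^1}t^{-2}$, so it gives no large-$t$ logarithmic divergence at $\gamma=-1$; the ramp $x\mapsto\int_{-\infty}^x\mathbf 1_{[0,1]}(u)\,du$, whose derivative rather than itself is compactly supported, does work. Bumps at far-separated centers contribute only additively, each at most a constant times its own $L^1$ norm by scaling. The logarithmic gain for $\gamma\in(-n,0)$ instead comes from spreading unit mass over an $(n+\gamma)$-dimensional self-similar set, as in the construction of $h_\varepsilon$ in the proof of Lemma \ref{prop:CS-weighted-negative-gamma}.
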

\begin{lemma}\label{prop:m-gamma-zero}
Let $\upsilon$ be a weight. Then the following statements hold.
\begin{enumerate}
\item[\textup{(i)}]
For any $f\in L^1_{\rm loc}$,
\begin{align}\label{eq:m-gamma-zero-av}
\inf\left\{\lambda\in(0,\infty):
\underset{\mathcal M_1(f)(x,t)>\lambda}
{\int_{\mathbb R^n}\int_0^\infty}
t^{-1}\,dt\,\upsilon(x)\,dx<\infty\right\}
=\|f\|_{L^\infty}.
\end{align}
\item[\textup{(ii)}]
For any $f\in W^{1,1}_{\rm loc}$,
\begin{align}\label{eq:m-gamma-zero-osc}
\inf\left\{\lambda\in(0,\infty):
\underset{\mathcal M_1^\sharp(f)(x,t)>\lambda}
{\int_{\mathbb R^n}\int_0^\infty}
t^{-1}\,dt\,\upsilon(x)\,dx<\infty\right\}
\sim\|\nabla f\|_{L^\infty},
\end{align}
where the positive equivalence constants depend only on $n$.
\end{enumerate}
Consequently, the integral in either of the above two statements
is finite for any $\lambda\in(0,\infty)$ if and only if
$|D^\tau f|=0$ almost everywhere.
\end{lemma}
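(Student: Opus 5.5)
The proof splits into an upper bound and a lower bound for the infimum. Write $\Lambda$ for the infimum on the left-hand side of \eqref{eq:m-gamma-zero-av} (resp.\ \eqref{eq:m-gamma-zero-osc}). We will use two observations. First, for fixed $x$ the measure $t^{-1}\,dt$ gives infinite mass both to every interval $(0,c)$ and to every interval $(c,\infty)$. Second, $\upsilon>0$ almost everywhere, so a set of positive Lebesgue measure has positive $\upsilon$-measure.

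\emph{Upper bound.} For $\tau=\mathrm{av}$, every average satisfies $\fint_B|f|\le\|f\|_{L^\infty}$. Hence $\mathcal M_1(f)(x,t)\le\|f\|_{L^\infty}$ everywhere, the level set $\{\mathcal M_1(f)>\lambda\}$ is empty for every $\lambda\geq\|f\|_{L^\infty}$, and so $\Lambda\le\|f\|_{L^\infty}$. If $\|f\|_{L^\infty}=0$ the same reasoning shows the set is empty for every $\lambda>0$. For $\tau=\mathrm{osc}$ we use \eqref{eq:exact-sm-control}: $\mathcal M_1^\sharp(f)\le cM(|\nabla f|)\le c\|\nabla f\|_{L^\infty}$. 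The level set is therefore empty once $\lambda\ge c\|\nabla f\|_{L^\infty}$, which gives $\Lambda\le c\|\nabla f\|_{L^\infty}$.

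\emph{Lower bound.} We use the pointwise limit \eqref{eq:exact-sm-pointwise}. Let $\lambda<c_n^\tau\|D^\tau f\|_{L^\infty}$ (for $\tau=\mathrm{av}$, $c_n^\tau=1$). Then the set $E:=\{x: c_n^\tau|D^\tau f(x)|>\lambda\}$ has positive measure, and hence $\upsilon(E)>0$. For almost every $x\in E$, \eqref{eq:exact-sm-pointwise} supplies $t_x>0$ such that $\mathcal M_1^\tau(f)(x,t)>\lambda$ for all $t\in(0,t_x)$. The inner integral in $t$ is then $\int_0^{t_x}t^{-1}\,dt=\infty$. Integrating over $E$ against $\upsilon$ makes the double integral infinite, so $\Lambda\ge\lambda$. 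Letting $\lambda$ increase to the threshold gives $\Lambda\ge\|f\|_{L^\infty}$ in case (i) and $\Lambda\ge K_{(n)}\|\nabla f\|_{L^\infty}$ in case (ii). Combined with the upper bounds, this proves the equality in (i) and the equivalence in (ii) with constants $K_{(n)}$ and $c$, which depend only on $n$. The case $\|D^\tau f\|_{L^\infty}=\infty$ is covered by the same argument applied to every finite $\lambda$.

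One technical point is measurability of the level set in $(x,t)$, which is needed for Tonelli. I expect this to follow from lower semicontinuity properties of the averages in $(x,t)$ over balls with radius in $[t,2t)$: openness of the strict condition lets us restrict to countably many balls. Only the inner integral for fixed $x$ is used, though, so this is a minor point.

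\emph{The ``consequently'' part.} If $|D^\tau f|=0$ almost everywhere, the upper-bound argument shows the level sets are empty for every $\lambda>0$. Conversely, if the integral is finite for every $\lambda>0$, then $\Lambda=0$, and the lower bound forces $\|D^\tau f\|_{L^\infty}=0$. The main obstacle is the lower bound: it needs the almost-everywhere limit in Lemma \ref{lem:exact-sm-pointwise}, which requires $f\in W^{1,1}_{\rm loc}$, and the divergence of $\int_0 t^{-1}\,dt$. Both are available, so I expect no serious difficulty.
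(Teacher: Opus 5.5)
Your proposal is correct and follows the paper's proof essentially step by step. The lower bound comes from the almost-everywhere limit in Lemma \ref{lem:exact-sm-pointwise} together with the divergence of $\int_0^{t_x}t^{-1}\,dt$ on a set of positive $\upsilon$-measure, and the upper bound comes from $\mathcal M_1(f)\le\|f\|_{L^\infty}$ and \eqref{eq:exact-sm-control}, with constants $K_{(n)}$ and $c$ that depend only on $n$. Your remark that only the inner integral for fixed $x$ is used understates the measurability issue: Tonelli needs the level set to be measurable jointly in $(x,t)$. Your lower-semicontinuity sketch does supply this, a point the paper leaves implicit.
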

\begin{proof}
We show the lower estimates  in \textup{(i)} and \textup{(ii)} simultaneously. If
$\lambda\in(0,c_n^\tau\|D^\tau f\|_{L^\infty})$, then
$E_\lambda:=\{x\in\mathbb{R}^n:c_n^\tau|D^\tau f(x)|>\lambda\}$ has positive
Lebesgue measure and hence positive $\upsilon$-measure.
By Lemma \ref{lem:exact-sm-pointwise}, for almost every
$x\in E_\lambda$, there exists $t_x\in(0,\infty)$ such that
$\mathcal M_1^\tau(f)(x,t)>\lambda$ for any $t\in(0,t_x)$.
Using Tonelli's theorem, we conclude that
\begin{align*}
\underset{\mathcal M_1^\tau(f)(x,t)>\lambda}
{\int_{\mathbb R^n}\int_0^\infty}
t^{-1}\,dt\,\upsilon(x)\,dx
\geq\int_{E_\lambda}\left[\int_0^{t_x}\frac{dt}{t}\right]
\upsilon(x)\,dx=\infty.
\end{align*}
Thus, the corresponding infimum is at least
$c_n^\tau\|D^\tau f\|_{L^\infty}$.

For $\tau=\mathrm{av}$, the upper estimate follows directly from
$\mathcal M_1(f)(x,t)\leq\|f\|_{L^\infty}$, which also
proves that the integral in \eqref{eq:m-gamma-zero-av} vanishes when
$\lambda\geq\|f\|_{L^\infty}$.
For $\tau=\mathrm{osc}$, \eqref{eq:exact-sm-control} gives
$\mathcal M_1^\sharp(f)(x,t)\leq c\|\nabla f\|_{L^\infty}$,
where the positive constant $c$ depends only on $n$, so the integral in \eqref{eq:m-gamma-zero-osc} vanishes when
$\lambda\geq c\|\nabla f\|_{L^\infty}$.
These observations show both upper estimates. Thus, \textup{(i)} and \textup{(ii)} hold.

Applying \textup{(i)} and \textup{(ii)}, we immediately obtain the last statement. This completes the proof of Lemma \ref{prop:m-gamma-zero}.
\end{proof}

The following two lemmas prove that, when $p=1$, the
nonzero parameter ranges in Theorem \ref{th:exact-m} cannot
be enlarged. Their proofs use different counterexamples.
\begin{lemma}\label{prop:exact-m-gamma}
Let $p=1$ and $\gamma\in[-n,0)$. Then there exists a nonzero
function $f\in L^1$ such that formula
\eqref{eq:exact-m} with $\upsilon\equiv1$ does not hold. More
precisely, if $\gamma=-n$, then
\begin{align*}
\lim_{\lambda\to0^+}
\lambda
\underset{\mathcal M_1(f)(x,t)>\lambda t^{-n}}
{\int_{\mathbb R^n}\int_0^\infty}
t^{-n-1}\,dt\,dx=\infty
\end{align*}
and, if $\gamma\in(-n,0)$, then
\begin{align*}
\liminf_{\lambda\to0^+}
\lambda
\underset{\mathcal M_1(f)(x,t)>\lambda t^\gamma}
{\int_{\mathbb R^n}\int_0^\infty}
t^{\gamma-1}\,dt\,dx
>
\frac{1}{|\gamma|}\|f\|_{L^1}.
\end{align*}
\end{lemma}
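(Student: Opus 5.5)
We take the simplest test function, $f:=\mathbf 1_{B(\mathbf 0,1)}\in L^1$, with $\|f\|_{L^1}=\omega_n$. The idea is that, because $f$ has nonzero integral, $\mathcal M_1(f)(x,t)$ decays only like $t^{-n}$ on the cone $\{|x|<t\}$ for large $t$. When $\gamma\in[-n,0)$ this large-scale part of the level set carries a nonvanishing (or infinite) contribution. That contribution is invisible to the pointwise limit in Lemma \ref{lem:exact-sm-pointwise}, which only produces $\frac{1}{|\gamma|}\|f\|_{L^1}$.

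\emph{Step 1: pointwise lower bound.} Let $t\in[1,\infty)$ and $|x|<t$. The ball $B(\mathbf 0,t)$ contains $x$, has radius in $[t,2t)$, and contains $B(\mathbf 0,1)$. Hence
\begin{align*}
\mathcal M_1(f)(x,t)\geq \frac{|B(\mathbf 0,1)|}{|B(\mathbf 0,t)|}=t^{-n}.
\end{align*}

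\emph{Step 2: the case $\gamma=-n$.} For any $\lambda\in(0,1)$, the condition $\mathcal M_1(f)(x,t)>\lambda t^{-n}$ holds whenever $t\geq1$ and $|x|<t$. By Tonelli's theorem,
\begin{align*}
\lambda\underset{\mathcal M_1(f)(x,t)>\lambda t^{-n}}{\int_{\mathbb R^n}\int_0^\infty}t^{-n-1}\,dt\,dx
\geq\lambda\int_1^\infty t^{-n-1}\omega_nt^n\,dt=\infty.
\end{align*}
The expression is therefore identically $\infty$ for $\lambda\in(0,1)$, and so is its limit as $\lambda\to0^+$.

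\emph{Step 3: the case $\gamma\in(-n,0)$.} Split the $t$-integral into $t\in(0,1)$ and $t\in[1,\infty)$.

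For the large-scale part, set $T_\lambda:=\lambda^{-1/(n+\gamma)}$. For $t\in[1,T_\lambda)$ we have $t^{-n}>\lambda t^\gamma$, so Step 1 gives
\begin{align*}
\lambda\underset{\mathcal M_1(f)(x,t)>\lambda t^{\gamma}}{\int_{\mathbb R^n}\int_1^\infty}t^{\gamma-1}\,dt\,dx
\geq\lambda\omega_n\int_1^{T_\lambda}t^{n+\gamma-1}\,dt
=\frac{\omega_n}{n+\gamma}\left(1-\lambda\right).
\end{align*}
This tends to $\frac{\omega_n}{n+\gamma}>0$ as $\lambda\to0^+$.

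For the small-scale part, apply the rescaling \eqref{eq:exact-m-scaled} with $\varepsilon=\lambda^{-1/\gamma}\to0^+$. The region $t<1$ becomes $s<1/\varepsilon$. The integrand $\mathbf 1_{\{\mathcal M_1(f)(x,\varepsilon s)>s^\gamma\}}\mathbf 1_{(0,1/\varepsilon)}(s)$ still converges a.e. to $\mathbf 1_{\{|f(x)|>s^\gamma\}}$ by \eqref{eq:exact-m-pointwise}, since $1/\varepsilon\to\infty$. Fatou's lemma, exactly as in \eqref{eq:exact-m-lower-final}, then gives a liminf of at least $\frac1{|\gamma|}\|f\|_{L^1}$.

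Adding the two disjoint parts yields
\begin{align*}
\liminf_{\lambda\to0^+}\lambda\underset{\mathcal M_1(f)(x,t)>\lambda t^\gamma}{\int_{\mathbb R^n}\int_0^\infty}t^{\gamma-1}\,dt\,dx\geq\frac{1}{|\gamma|}\|f\|_{L^1}+\frac{\omega_n}{n+\gamma}>\frac1{|\gamma|}\|f\|_{L^1}.
\end{align*}

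The only delicate point is making sure the two parts really add, i.e. that the Fatou argument is applied on the fixed region $t<1$ rather than on the whole space. The cutoff at $t=1$ in the original variables achieves this, and everything else is an explicit computation.
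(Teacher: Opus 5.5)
Your proof is correct and follows the paper's argument. Both use the test function $f=\mathbf 1_{B(\mathbf 0,1)}$ and the large-scale lower bound $\mathcal M_1(f)(x,t)\gtrsim t^{-n}$ on a cone, which gives $\infty$ when $\gamma=-n$ and the extra term $\frac{\omega_n}{n+\gamma}$ when $\gamma\in(-n,0)$; this is added to the small-scale contribution $\frac{1}{|\gamma|}\|f\|_{L^1}$ coming from the rescaled pointwise limit. The differences are cosmetic: you split at $t=1$ using the balls $B(\mathbf 0,t)$ rather than at $\delta<\frac13$ and $\frac23$ using $B(\mathbf 0,\frac{3t}{2})$, and you apply Fatou's lemma on the small-scale part instead of the dominated convergence argument behind \eqref{eq:exact-m-small-part}, which suffices because only a lower bound is needed.
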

\begin{proof}
Throughout the proof, let
$f:=\mathbf 1_{B(\mathbf 0,1)}$. First, let $\gamma\in(-n,0)$ and fix
$\delta\in(0,\frac{1}{3})$. Note that the argument used to show
\eqref{eq:exact-m-small-part} requires only $\gamma<0$. From  this argument  with $\upsilon\equiv1$ and $g$ therein replaced by $f$, combined with the changes of variables $\varepsilon:=\lambda^{-\frac{1}{\gamma}}$
and $t=\varepsilon s$, we infer that
\begin{align}\label{eq:exact-m-gamma-local}
\lim_{\lambda\to0^+}
\lambda
\underset{\mathcal M_1(f)(x,t)>\lambda t^\gamma}
{\int_{\mathbb R^n}\int_0^\delta}
t^{\gamma-1}\,dt\,dx
=\frac{1}{|\gamma|}\|f\|_{L^1}.
\end{align}

If $t\in[\frac{2}{3},\infty)$ and
$x\in B(\mathbf 0,\frac{3t}{2})$, then, by $B(\mathbf 0,1)\subset B(\mathbf 0,\frac{3t}{2})$
and the definitions of $f$ and $\mathcal M_1$, we obtain
\begin{align}\label{eq:exact-m-gamma-tail-bound}
\mathcal M_1(f)(x,t)
\geq
\frac{1}{\omega_n(\frac{3t}{2})^n}
\int_{B(\mathbf 0,\frac{3t}{2})}f(y)\,dy=\left(\frac{3t}{2}\right)^{-n}.
\end{align}
For any $\lambda\in(0,\infty)$, let
$T_\lambda:=[(\frac{3}{2})^{-n}\lambda^{-1}]^{\frac{1}{n+\gamma}}$.
Since $n+\gamma\in(0,\infty)$, it follows that $T_\lambda\to\infty$ as
$\lambda\to0^+$. Using \eqref{eq:exact-m-gamma-tail-bound}, we find that $\mathcal M_1(f)(x,t)>\lambda t^\gamma$ whenever
$\frac{2}{3}\leq t<T_\lambda$ and
$x\in B(\mathbf 0,\frac{3t}{2})$. Thus,
\begin{align}\label{eq:exact-m-gamma-tail-lower}
\liminf_{\lambda\to0^+}\lambda
\underset{\mathcal M_1(f)(x,t)>\lambda t^\gamma}
{\int_{\mathbb R^n}\int_{\frac{2}{3}}^\infty}
t^{\gamma-1}\,dt\,dx
&\geq
\lim_{\lambda\to0^+}\lambda
\int_{\frac{2}{3}}^{T_\lambda}
\int_{B(\mathbf 0,\frac{3t}{2})}
t^{\gamma-1}\,dx\,dt\notag\\
&=
\lim_{\lambda\to0^+}
\frac{\lambda\omega_n(\frac{3}{2})^n}{n+\gamma}
\left[T_\lambda^{n+\gamma}
-\left(\frac{2}{3}\right)^{n+\gamma}\right]
=\frac{\omega_n}{n+\gamma}.
\end{align}
From $\delta\in(0,\frac23)$, together with
\eqref{eq:exact-m-gamma-local} and
\eqref{eq:exact-m-gamma-tail-lower},
we deduce that
\begin{align*}
&\liminf_{\lambda\to0^+}\lambda
\underset{\mathcal M_1(f)(x,t)>\lambda t^\gamma}
{\int_{\mathbb R^n}\int_0^\infty}
t^{\gamma-1}\,dt\,dx\\
&\quad\geq
\lim_{\lambda\to0^+}\lambda
\underset{\mathcal M_1(f)(x,t)>\lambda t^\gamma}
{\int_{\mathbb R^n}\int_0^\delta}
t^{\gamma-1}\,dt\,dx
+
\liminf_{\lambda\to0^+}\lambda
\underset{\mathcal M_1(f)(x,t)>\lambda t^\gamma}
{\int_{\mathbb R^n}\int_{\frac{2}{3}}^\infty}
t^{\gamma-1}\,dt\,dx\\
&\quad\geq
\frac{1}{|\gamma|}\|f\|_{L^1}
+\frac{\omega_n}{n+\gamma}
>
\frac{1}{|\gamma|}\|f\|_{L^1},
\end{align*}
which proves the desired strict inequality for $\gamma\in(-n,0)$.

Finally, consider the case where $\gamma=-n$. If
$\lambda\in(0,(\frac{3}{2})^{-n})$, then
\eqref{eq:exact-m-gamma-tail-bound} implies that
\begin{align*}
\lambda
\underset{\mathcal M_1(f)(x,t)>\lambda t^{-n}}
{\int_{\mathbb R^n}\int_0^\infty}
t^{-n-1}\,dt\,dx
\geq
\lambda\omega_n\left(\frac{3}{2}\right)^n
\int_{\frac{2}{3}}^\infty t^{-1}\,dt
=\infty.
\end{align*}
Consequently, the desired limit is infinite.
This completes the proof of Lemma \ref{prop:exact-m-gamma}.
\end{proof}

Using \cite[Proposition~5.2]{zlyyz24} and
$\frac{m_f(x,t)}{t}\leq\mathcal M_1^\sharp(f)(x,t)$,
we immediately obtain the following lemma.
\begin{lemma}\label{lem:exact-sm-gamma}
Let $p=1$ and $\gamma\in[-1,0)$. Then there exists a function
$f\in W^{1,1}$ such that
\begin{align*}
\limsup_{\lambda\to0^+}
\lambda
\underset{\mathcal M_1^\sharp(f)(x,t)>\lambda t^\gamma}
{\int_{\mathbb R^n}\int_0^\infty}
t^{\gamma-1}\,dt\,dx
=\infty.
\end{align*}
\end{lemma}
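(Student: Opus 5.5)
The plan is to import the unweighted counterexample of \cite[Proposition~5.2]{zlyyz24} and transfer it through the pointwise domination
\begin{align*}
\frac{m_f(x,t)}{t}\leq\mathcal M_1^\sharp(f)(x,t),\qquad (x,t)\in\mathbb R^{n+1}_+ .
\end{align*}
This inequality is immediate from the definition of $\mathcal M_1^\sharp$. The ball $B(x,t)$ contains $x$ and has radius $t\in[t,2t)$, so it is an admissible ball in the supremum. Its contribution is exactly $\frac1t\fint_{B(x,t)}|f(y)-f_{B(x,t)}|\,dy=\frac{m_f(x,t)}{t}$.

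The domination gives a level-set inclusion. For every $\lambda\in(0,\infty)$ and $\gamma\in\mathbb R$,
\begin{align*}
\left\{(x,t)\in\mathbb R^{n+1}_+:\frac{m_f(x,t)}{t}>\lambda t^\gamma\right\}
\subset
\left\{(x,t)\in\mathbb R^{n+1}_+:\mathcal M_1^\sharp(f)(x,t)>\lambda t^\gamma\right\}.
\end{align*}
The weight $t^{\gamma-1}\,dt\,dx$ is nonnegative, so Tonelli's theorem shows that the $\mathcal M_1^\sharp$ integral dominates the corresponding integral for $\frac{m_f(x,t)}{t}$. Multiplying by $\lambda$ and taking $\limsup_{\lambda\to0^+}$ keeps this order.

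The remaining input is \cite[Proposition~5.2]{zlyyz24}. For $p=1$ and $\gamma\in[-1,0)$, it should provide $f\in W^{1,1}$ with
\begin{align*}
\limsup_{\lambda\to0^+}\lambda\underset{m_f(x,t)>\lambda t^{\gamma+1}}{\int_{\mathbb R^n}\int_0^\infty}t^{\gamma-1}\,dt\,dx=\infty .
\end{align*}
The condition $m_f(x,t)>\lambda t^{\gamma+1}$ is exactly $\frac{m_f(x,t)}{t}>\lambda t^\gamma$, so the level sets match. Combined with the inclusion above, this gives the claim for the same $f$.

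The only real obstacle is bookkeeping: checking that the normalization and range in \cite[Proposition~5.2]{zlyyz24} match ours. This means confirming the ball average of $|f-f_B|$, the exponent $t^{\gamma+1}$ versus $t^\gamma$ after dividing by $t$, the measure $t^{\gamma-1}\,dt\,dx$, and the range $\gamma\in[-1,0)$. If that proposition were stated with a $\liminf$ or in terms of $|x-y|$ differences, I would reparametrize accordingly.

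If needed, a direct fallback is available: use $f$ equal to the primitive of a cutoff in one variable, times a bump. For large $t$, the oscillation $m_f(x,t)/t$ decays only like $t^{-1}$ (times a factor $t^{-(n-1)}$ in the transverse directions). Then $\lambda\int t^{\gamma-1}$ over the large-$t$ region blows up, as in the proof of Lemma~\ref{prop:exact-m-gamma}.
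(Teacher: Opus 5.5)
Your main argument is exactly the paper's proof. The paper obtains the lemma at once from \cite[Proposition~5.2]{zlyyz24} together with the pointwise bound $\frac{m_f(x,t)}{t}\le\mathcal M_1^\sharp(f)(x,t)$, which comes from the admissible ball $B(x,t)$, and your check that $\{m_f>\lambda t^{\gamma+1}\}=\{m_f/t>\lambda t^{\gamma}\}$ under the same measure $t^{\gamma-1}\,dt\,dx$ is all the bookkeeping required. You should drop the fallback, because it does not work: $G(x_1)\psi(x')$ is not in $L^1$ (for $n\ge2$ even its gradient component $G(x_1)\nabla_{x'}\psi(x')$ fails to be integrable), and any compactly supported $f\in W^{1,1}$ satisfies $\mathcal M_1^\sharp(f)(x,t)\le\frac{2\|f\|_{L^1}}{\omega_n t^{n+1}}$, so the large-$t$ region contributes an amount tending to $0$ as $\lambda\to0^+$ (compare \eqref{eq:exact-m-tail-osc}); the blow-up must therefore come from a different mechanism, which is what the cited proposition supplies.
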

\begin{proof}[Proof of Theorem \ref{th:optimal-gamma-range}]
For $\upsilon\equiv1$, Lemma \ref{prop:m-gamma-zero} excludes $\gamma=0$.
When $p=1$ and $\tau=\mathrm{av}$, Lemma \ref{prop:exact-m-gamma} and
the necessity part of \cite[Theorem~1.1]{dlyyz} exclude $\gamma=-n$
and $\gamma\in(-n,0)$, respectively.
When $p=1$ and $\tau=\mathrm{osc}$, Lemma \ref{lem:exact-sm-gamma}
excludes $\gamma\in[-1,0)$.
This then completes the proof of Theorem \ref{th:optimal-gamma-range}.
\end{proof}

\section{Sharpness of Powers of Muckenhoupt Weight Constants}
\label{sec:m-sharp}
We next show the sharpness assertions in Theorem \ref{th:intro-m}.
The upper bounds have different optimal powers when
$\gamma\in[-n,-1)$ and $n\in\mathbb N\setminus\{1\}$.
Their sharpness requires different arguments. We therefore treat
$\mathcal M_\theta$ and $\mathcal M_\theta^\sharp$, respectively, in Subsections \ref{subsec:sharp-m} and \ref{subsec:sharp-sm}.

For $A_1$ weights, the power $1$ is optimal for both operators.
For $A_p$ weights with $p\in(1,\infty)$, the results are
summarized below:
\begin{center}
\begin{tabular}{c|c|c}
$\gamma$ & $\color{red}\mathcal M_\theta$: optimal power
& $\color{blue}\mathcal M_\theta^\sharp$: optimal power\\
\hline
$\mathbb R\setminus[-n,0]$ & $1$ & $1$\\
$[-1,0)$ & $\frac{p}{p-1}$ & $\frac{p}{p-1}$\\
$[-n,-1)$, $n\in\mathbb N\setminus\{1\}$
& $\frac{p}{p-1}$ & $1$
\end{tabular}
\end{center}
The powers in this table are sharp for every fixed admissible
$\gamma$.

\subsection{Sharpness for \texorpdfstring{$\mathcal M_\theta$}{M(theta)}}
\label{subsec:sharp-m}

We first prove that the powers for
$\mathcal M_\theta$ in Theorem \ref{th:intro-m} are sharp. For
$\gamma\in(-n,0)$, the proofs use the weights defined
by powers of the distance to Ahlfors regular fractal sets of Hausdorff dimension $n+\gamma$.
\begin{proposition}\label{pro:sharp-m}
Let $p\in[1,\infty)$ and $\gamma\in\mathbb R$.
Then the following statements hold with the positive constants
$C$ independent of $\upsilon$ and $f$.
\begin{enumerate}
\item[\textup{(i)}]Assume that $\gamma\in\mathbb R\setminus\{0\}$ when $p\in(1,\infty)$ and $\gamma\in\mathbb R\setminus[-n,0]$ when $p=1$. If there exist $\eta\in\mathbb R$ and a positive constant $C$ such that, for any $\upsilon\in A_1$ and $f\in L^p_\upsilon$,
\begin{align}\label{eq:sharp-m-p-1}
\sup_{\theta,\lambda\in(0,\infty)}\lambda^p\underset{\mathcal M_\theta(f)(x,t)>\lambda t^\gamma}{\int_{\mathbb R^n}\int_0^\infty}t^{\gamma p-1}\,dt\,\upsilon(x)\,dx
\le C[\upsilon]_{A_1}^{\eta}\|f\|_{L^p_\upsilon}^p,
\end{align}
then $\eta\ge1$.

\item[\textup{(ii)}]
Let $p\in(1,\infty)$ and $\gamma\in[-n,0)$.
If there exist $\eta\in\mathbb R$ and a positive constant $C$
such that, for any $\upsilon\in A_p$ and
$f\in L^p_\upsilon$,
\begin{align}\label{eq:sharp-m-p-2}
\sup_{\theta,\lambda\in(0,\infty)}
\lambda^p
\underset{\mathcal M_\theta(f)(x,t)>\lambda t^\gamma}
{\int_{\mathbb R^n}\int_0^\infty}
t^{\gamma p-1}\,dt\,\upsilon(x)\,dx
\leq C[\upsilon]_{A_p}^{\eta}\|f\|_{L^p_\upsilon}^p,
\end{align}
then $\eta\geq\frac{p}{p-1}$.

\item[\textup{(iii)}]
Let $p\in(1,\infty)$ and $\gamma\in\mathbb R\setminus[-n,0]$. If there exist $\eta\in\mathbb R$ and a positive constant $C$ such that, for any $\upsilon\in A_p$ and $f\in L^p_\upsilon$,
\begin{align}\label{eq:sharp-m-p-3}
\sup_{\theta,\lambda\in(0,\infty)}\lambda^p
\underset{\mathcal M_\theta(f)(x,t)>\lambda t^\gamma}{\int_{\mathbb R^n}\int_0^\infty}
t^{\gamma p-1}\,dt\,\upsilon(x)\,dx
\le C[\upsilon]_{A_p}^{\eta}
\|f\|_{L^p_\upsilon}^p,
\end{align}
then $\eta\ge1$.

\end{enumerate}
\end{proposition}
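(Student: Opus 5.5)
The plan is to test \eqref{eq:sharp-m-p-1}--\eqref{eq:sharp-m-p-3} against explicit families $\{(\upsilon_\delta,f_\delta)\}_{\delta\in(0,\delta_0)}$. For each family we compute the Muckenhoupt constant and both sides as powers of $\delta$, and then let $\delta\to0^+$. By \eqref{eq:theta-change} we may fix $\theta=1$. It also suffices to bound the left-hand side from below by a single well-chosen $\lambda$ and a region of $(x,t)$ on which $\mathcal M_1(f)(x,t)\gtrsim$ some explicit average.

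For \textup{(i)} and \textup{(iii)} I would use the power weight $\upsilon_\delta(x):=|x|^{\delta-n}$. It belongs to $A_1$ with $[\upsilon_\delta]_{A_1}\sim\delta^{-1}$, and hence $[\upsilon_\delta]_{A_p}\le[\upsilon_\delta]_{A_1}\lesssim\delta^{-1}$. The test function is $f:=\mathbf 1_{B(3e_1,1)}$, placed away from the singularity, so that $\|f\|_{L^p_{\upsilon_\delta}}^p\sim1$ uniformly in $\delta$.

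For $x\in B(\mathbf 0,1)$ and $t\in[4,5]$, a ball of radius in $[t,2t)$ centred near the origin contains $B(3e_1,1)$. Hence $\mathcal M_1(f)(x,t)\ge c_n>0$. Choose the fixed $\lambda:=\frac{c_n}{2}\min\{4^{-\gamma},5^{-\gamma}\}$, so that $\mathcal M_1(f)(x,t)>\lambda t^\gamma$ there. The left-hand side is then at least
\[
\lambda^p\,\upsilon_\delta(B(\mathbf 0,1))\int_4^5t^{\gamma p-1}\,dt\sim\upsilon_\delta(B(\mathbf 0,1))\sim\delta^{-1}.
\]
Comparing with $C[\upsilon_\delta]^\eta\lesssim\delta^{-\eta}$ and letting $\delta\to0^+$ forces $\eta\ge1$ in both \textup{(i)} and \textup{(iii)}. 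This argument does not use the sign of $\gamma$.

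For \textup{(ii)} I would use a duality construction adapted to the level $\gamma$. Let $d:=n+\gamma\in[0,n)$ and let $E\subset[0,1]^n$ be a compact Ahlfors $d$-regular self-similar set; for $\gamma=-n$ simply take $E=\{\mathbf 0\}$. Write $\rho(x):=\operatorname{dist}(x,E)$ and set
\[
\sigma_\delta:=\rho^{-(n-d)+\delta}\quad\text{and}\quad\upsilon_\delta:=\rho^{(n-d-\delta)(p-1)},
\]
so that $\sigma_\delta=\upsilon_\delta^{-\frac1{p-1}}$. The key geometric input is the tubular estimate $|\{x\in B:\rho(x)<r\}|\sim r^{n-d}$ for $r\le1$, and its localized version on balls centred near $E$. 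Integrating in layers then gives $\sigma_\delta(B(y,t))\sim t^{d+\delta}\delta^{-1}$ for $y\in E$ and $t\le1$. It also gives $[\upsilon_\delta]_{A_p}\sim\delta^{-(p-1)}$.

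Take $f:=\sigma_\delta\mathbf 1_{B(\mathbf 0,2\sqrt n)}$, so that $\|f\|_{L^p_{\upsilon_\delta}}^p=\sigma_\delta(B(\mathbf 0,2\sqrt n))\sim\delta^{-1}$. For $\rho(x)<t\le1$, any ball of radius $\sim t$ containing $x$ and a nearest point of $E$ yields
\[
\mathcal M_1(f)(x,t)\gtrsim t^{-n}t^{d+\delta}\delta^{-1}=t^{\gamma+\delta}\delta^{-1}\ge t^\gamma\delta^{-1}.
\]
Hence, with $\lambda:=c\delta^{-1}$, the left-hand side is at least
\[
\delta^{-p}\int_0^1t^{\gamma p-1}\upsilon_\delta(\{\rho<t\})\,dt\sim\delta^{-p}\int_0^1t^{\gamma p-1}\,t^{n-d}\,t^{(n-d-\delta)(p-1)}\,dt=\delta^{-p}\int_0^1t^{-1-\delta(p-1)}\,dt,
\]
and $\delta^{-p}\int_0^1t^{-1-\delta(p-1)}\,dt\sim\delta^{-p-1}$. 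The ratio to $\|f\|^p_{L^p_{\upsilon_\delta}}$ is therefore $\gtrsim\delta^{-p}\sim[\upsilon_\delta]_{A_p}^{\frac p{p-1}}$, which forces $\eta\ge\frac p{p-1}$.

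The main obstacle is the fractal step: proving, uniformly in $\delta$, that $[\upsilon_\delta]_{A_p}\lesssim\delta^{-(p-1)}$ (upper bound) and that the two-sided estimates on $\sigma_\delta(B)$ and $\upsilon_\delta(\{\rho<t\})$ hold. I would handle all of these by decomposing into dyadic shells $\{2^{-k-1}\le\rho<2^{-k}\}\cap B$ and using Ahlfors regularity to count $\sim(r/2^{-k})^d$ cubes of side $2^{-k}$. Balls far from $E$ relative to their radius are treated separately, since $\rho$ is essentially constant on them.
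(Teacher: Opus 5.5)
You choose the same test objects as the paper. Parts (i) and (iii) are correct and are the paper's argument: the power weight $|x|^{\delta-n}$, an indicator supported at unit distance from the origin, and $t$ of unit size. In (ii) you also pick exactly the paper's weights $\rho^{-(p-1)(\gamma+\delta)}$ and test functions $\rho^{\gamma+\delta}$, with $E=\{\mathbf 0\}$ when $\gamma=-n$.

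\textbf{The final step of (ii) is wrong as written.} For $t\le1$ and $\delta>0$ one has $t^{\gamma+\delta}=t^{\delta}t^{\gamma}\le t^{\gamma}$, not $\ge$. So your bound $\mathcal M_1(f)(x,t)\gtrsim\delta^{-1}t^{\gamma+\delta}$ does not place $\{(x,t):\rho(x)<t\le1\}$ inside $\{\mathcal M_1(f)>c\delta^{-1}t^\gamma\}$ once $t^\delta$ drops below a fixed constant, i.e.\ for $t\lesssim e^{-C/\delta}$. Consistently with this, the integral you end with, $\int_0^1t^{-1-\delta(p-1)}\,dt$, equals $+\infty$, not $\sim\delta^{-1}$. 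Read literally, your chain would make the left-hand side of \eqref{eq:sharp-m-p-2} infinite for $(\upsilon_\delta,f)$, which contradicts the upper bound in Theorem~\ref{th:intro-m}(ii). The two errors cancel in the final exponent, but neither step holds.

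\textbf{The fix, as in the paper,} is to keep only the scales $t\in(2^{-1/\delta},1)$; the paper uses $(r_02^{-1/\delta},r_0)$. On this range $t^\delta>\frac12$, so $\mathcal M_1(f)(x,t)>\frac{c}{2\delta}t^\gamma$ whenever $\rho(x)<t$, where $c$ is the constant in your pointwise bound. Taking $\lambda:=\frac{c}{2\delta}$, the left-hand side is at least a constant times
\[
\delta^{-p}\int_{2^{-1/\delta}}^{1}t^{-1-(p-1)\delta}\,dt
=\delta^{-p}\,\frac{2^{p-1}-1}{(p-1)\delta}\sim\delta^{-p-1}.
\]
The extra factor $\delta^{-1}$ therefore comes from the logarithmic length $\sim\delta^{-1}$ of the range of scales on which $t^\delta\sim1$. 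It does not come from convergence of the integral at $t=0$. With this correction, your other ingredients, $\|f\|_{L^p_{\upsilon_\delta}}^p\sim\delta^{-1}$ and $[\upsilon_\delta]_{A_p}\lesssim\delta^{1-p}$, give $\eta\ge\frac p{p-1}$ exactly as in the paper.

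One minor point: you do not truncate $\rho$ at $1$. Your $A_p$ check must then also cover balls of radius $\gtrsim1$ near $E$, where $\upsilon_\delta$ grows like $|x|^{(p-1)(-\gamma-\delta)}$. This is routine, and the paper avoids it by setting $\rho:=\min\{\operatorname{dist}(\cdot,E),1\}$.
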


We first establish two auxiliary lemmas needed in the proof of
Proposition \ref{pro:sharp-m}\textup{(ii)} for
$\gamma\in(-n,0)$. Throughout Lemmas \ref{lem:fixed-gamma-tubular-integrals} and \ref{lem:fixed-gamma-weight-test} and their proofs, let
$p\in(1,\infty)$, $\gamma\in(-n,0)$, and
$d:=n+\gamma$.
For the terminology and basic facts concerning self-similar sets used
below, we refer to \cite[Chapter~4]{mattila95}.
For any $e\in\{0,1\}^n$, define the \textit{contracting similarity}
\begin{align*}
S_e(x):=2^{-\frac{n}{d}}x+
\left(1-2^{-\frac{n}{d}}\right)e, \quad\forall\, x\in\mathbb R^n.
\end{align*}
Let $E$ be the invariant set of the family
$\{S_e\}_{e\in\{0,1\}^n}$, namely the unique nonempty compact subset
of $[0,1]^n$ satisfying
\begin{align}\label{eq:4.3x}
E=\bigcup_{e\in\{0,1\}^n}S_e(E).
\end{align}
Since $2^{-\frac{n}{d}}<\frac{1}{2}$, the family
$\{S_e\}_{e\in\{0,1\}^n}$ satisfies the open set condition (see \cite[p.\,67]{mattila95} for its precise definition) with
$O=(0,1)^n$.
By $\sum_{e\in\{0,1\}^n}(2^{-\frac{n}{d}})^d=1$
and \cite[(1) in Theorem~4.14]{mattila95}, we conclude that
the similarity dimension of the family
$\{S_e\}_{e\in\{0,1\}^n}$ is $d$. Applying \cite[Theorem~4.14]{mattila95}, we obtain
$\mathcal H^d(E)\in(0,\infty)$ and, for any $z\in E$ and $r\in(0,1)$,
\begin{align}\label{eq:fixed-gamma-ahlfors}
C_E^{-1}r^d
\leq \mathcal H^d(E\cap B(z,r))
\leq C_Er^d,
\end{align}
where $\mathcal H^d$ denotes
the $d$-dimensional Hausdorff measure and $C_E\in[1,\infty)$ depends
only on $n$, $E$, and $\gamma$. Using $d<n$ and $\mathcal H^d(E)<\infty$, we conclude that $|E|=0$.

Let\begin{align}\label{4.4x}
   r_0:=\frac{1}{20} \ \text{and} \ \rho:=\min\{\operatorname{dist}(\cdot,E),1\}.
\end{align}
For any $r\in(0,\infty)$, define
$$\mathcal T_r(E):=\{x\in\mathbb R^n:\operatorname{dist}(x,E)<r\}.$$
The first lemma estimates the integrals of powers of $\rho$ over
$B(z,R)$ and $\mathcal T_R(E)$.

\begin{lemma}\label{lem:fixed-gamma-tubular-integrals}
Let $\delta\in(0,\min\{-\frac{\gamma}{2},1\})$.
Then, for any $z\in E$ and $R\in(0,r_0]$,
\begin{align}\label{eq:fixed-gamma-local-negative}
\int_{B(z,R)}\rho(x)^{\gamma+\delta}\,dx
 \sim \frac{1}{\delta}R^{d+\delta}
\end{align}
and
\begin{align}\label{eq:fixed-gamma-local-positive}
\int_{B(z,R)}\rho(x)^{(p-1)(-\gamma-\delta)}\,dx
\sim R^{d-p\gamma-(p-1)\delta}.
\end{align}
Moreover,
\begin{align}\label{eq:fixed-gamma-global-negative}
\int_{\mathcal T_R(E)}\rho(x)^{\gamma+\delta}\,dx
\sim \frac{1}{\delta}R^\delta
\end{align}
and
\begin{align}\label{eq:fixed-gamma-global-positive}
\int_{\mathcal T_R(E)}\rho(x)^{(p-1)(-\gamma-\delta)}\,dx
\sim R^{-p\gamma-(p-1)\delta},
\end{align}
where the positive equivalence constants depend only on $n$, $p$,
and $\gamma$.
\end{lemma}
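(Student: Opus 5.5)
The plan is to reduce all four estimates to one measure estimate for tubular neighbourhoods, namely
\begin{align*}
\left|\mathcal T_r(E)\cap B(z,R)\right|\sim r^{n-d}R^{d}
\quad\text{for } z\in E,\ 0<r\le R\le r_0,
\end{align*}
and the global analogue $|\mathcal T_r(E)|\sim r^{n-d}$ for $r\in(0,r_0]$. Both follow from the Ahlfors regularity \eqref{eq:fixed-gamma-ahlfors} by the usual covering argument. Take a maximal $r$-separated subset $\{z_i\}$ of $E\cap B(z,2R)$. By \eqref{eq:fixed-gamma-ahlfors}, its cardinality is comparable to $(R/r)^d$, since the balls $B(z_i,r/2)$ are disjoint and each carries $\mathcal H^d$-mass $\sim r^d$. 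The union $\bigcup_i B(z_i,2r)$ covers $\mathcal T_r(E)\cap B(z,R)$, which gives the upper bound $\lesssim (R/r)^d r^n$. For the lower bound, the disjoint balls $B(z_i,r/2)$ with $z_i\in B(z,R/2)$ lie inside $\mathcal T_r(E)\cap B(z,R)$. The global version is the case $R\sim 1$, after covering $E$ by boundedly many balls of radius $r_0$. Since $R\le r_0=1/20$, we have $\rho=\operatorname{dist}(\cdot,E)$ on $B(z,R)$ and on $\mathcal T_R(E)$, so the truncation at $1$ plays no role.

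Next, write each integral by the layer-cake formula in the variable $\rho$. For an exponent $a$ and $A:=B(z,R)$, set $\Phi(r):=|\{x\in A:\rho(x)<r\}|$. Because $|E|=0$, we have $\rho>0$ almost everywhere, and $\Phi(r)=|A|\sim R^n$ for $r\ge 2R$.

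For the negative exponent $a=\gamma+\delta<0$ in \eqref{eq:fixed-gamma-local-negative}, use
\begin{align*}
\int_A\rho^{a}\,dx=\int_0^\infty |\{x\in A:\rho(x)^{a}>s\}|\,ds=|a|\int_0^\infty r^{a-1}\Phi(r)\,dr .
\end{align*}
Split at $r=R$. On $(0,R)$ the tubular estimate gives the integrand $\sim r^{a-1+n-d}R^d=r^{\delta-1}R^d$. Its integral is $\delta^{-1}R^{d+\delta}$, and this is exactly the source of the factor $1/\delta$. On $(R,\infty)$, with $\Phi(r)\lesssim R^n$, the contribution is $\lesssim R^{n+a}=R^{d+\delta}$. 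Since $\delta<1$, this is dominated by the first piece. The prefactor $|a|=|\gamma+\delta|\in(|\gamma|/2,|\gamma|)$ is harmless. This is the only place where the constraint $\delta<-\gamma/2$ is used: it keeps $a$ negative and bounded away from $0$.

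For the positive exponent $b=(p-1)(-\gamma-\delta)>0$ in \eqref{eq:fixed-gamma-local-positive}, use instead $\int_A\rho^b=b\int_0^{\infty} r^{b-1}|\{x\in A:\rho\ge r\}|\,dr$. Here $\rho\le 2R$ on $A$, so only $r\le 2R$ matters. For the lower bound, note that $|\{x\in A:\rho\ge cR\}|\gtrsim R^n$ for small $c$, by the upper tubular estimate. This gives $\int_A\rho^b\gtrsim R^{n+b}$, while the upper bound $\int_A\rho^b\le |A|(2R)^b\lesssim R^{n+b}$ is trivial. Finally, $n+b=d-\gamma+(p-1)(-\gamma-\delta)=d-p\gamma-(p-1)\delta$. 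Note that $b$ ranges in a compact subset of $(0,\infty)$, so the constants are uniform in $\delta$.

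The global estimates \eqref{eq:fixed-gamma-global-negative} and \eqref{eq:fixed-gamma-global-positive} follow in the same way with $A:=\mathcal T_R(E)$, using $\Phi(r)=|\mathcal T_{\min\{r,R\}}(E)|\sim \min\{r,R\}^{n-d}$. In the negative case this yields $\int_0^R r^{\delta-1}\,dr+R^{n-d+a}\sim \delta^{-1}R^\delta$. In the positive case, $\rho<R$ on $A$, and $|A\setminus\mathcal T_{cR}(E)|\gtrsim R^{n-d}$ for small $c$ by the two-sided global tube bound. This gives $R^{n-d+b}=R^{-p\gamma-(p-1)\delta}$.

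The main obstacle is the lower bound $|\mathcal T_R(E)\setminus\mathcal T_{cR}(E)|\gtrsim R^{n-d}$, together with its local version. I would obtain it by comparing the upper constant for $|\mathcal T_{cR}(E)|$, namely $\lesssim (cR)^{n-d}$, with the lower bound $|\mathcal T_R(E)|\gtrsim R^{n-d}$. Since $n-d>0$, choosing $c$ small enough, depending only on $n$, $\gamma$, and $C_E$, makes the difference at least half of $|\mathcal T_R(E)|$.
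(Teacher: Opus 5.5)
Your proposal is correct and follows essentially the same route as the paper. Both arguments get the tube-volume estimates from Ahlfors regularity via maximal $r$-separated sets. Both apply the layer-cake formula in $\rho$ to the negative exponent, so the factor $1/\delta$ comes from $\int_0^R r^{\delta-1}\,dr$. Both handle the positive exponent by choosing $\kappa$ small so that $|B(z,R)\setminus\mathcal T_{\kappa R}(E)|\gtrsim R^n$ and $|\mathcal T_R(E)\setminus\mathcal T_{\kappa R}(E)|\gtrsim R^{-\gamma}$.

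The differences are minor. For the global bound $|\mathcal T_r(E)|\sim r^{-\gamma}$, the paper uses a maximal $r$-separated subset of all of $E$ with cardinality $\sim r^{-d}$, rather than covering $E$ by boundedly many $r_0$-balls. For the local lower bound, the paper takes a separate maximal $r$-separated subset of $E\cap B(z,R/2)$. That is the cleanest way to make rigorous your count of the points $z_i\in B(z,R/2)$, which you do not actually justify from the maximal set in $E\cap B(z,2R)$.
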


\begin{proof}
Fix $z\in E$ and $R\in(0,r_0]$. We first establish the volume estimate
\begin{align}\label{eq:fixed-gamma-local-tube}
|B(z,R)\cap\mathcal T_r(E)|\sim R^dr^{-\gamma}
\end{align}
for any $r\in(0,R)$.
Let $\{z_j\}_{j=1}^N$ be a maximal $r$-separated subset of
$E\cap B(z,\frac{R}{2})$. By maximality, the balls $B(z_j,r)$,
$j\in\{1,\ldots,N\}$, cover $E\cap B(z,\frac{R}{2})$.
Thus, since \eqref{eq:fixed-gamma-ahlfors} holds, we infer that
\begin{align*}
R^d
\lesssim \mathcal H^d\left(E\cap B\left(z,\frac{R}{2}\right)\right)
\leq \sum_{j=1}^N\mathcal H^d(E\cap B(z_j,r))
\lesssim Nr^d.
\end{align*}
The balls $\{B(z_j,\frac{r}{4})\}_{j=1}^N$ are pairwise disjoint and are
contained in $B(z,R)\cap\mathcal T_r(E)$. Consequently,
\begin{align*}
|B(z,R)\cap\mathcal T_r(E)|
\geq \sum_{j=1}^N\left|B\left(z_j,\frac{r}{4}\right)\right|
\gtrsim Nr^n
\gtrsim R^dr^{-\gamma}.
\end{align*}
For the upper estimate, choose a maximal $r$-separated subset
$\{\zeta_j\}_{j=1}^M$ of $E\cap B(z,2R)$.
The balls $\{B(\zeta_j,\frac{r}{2})\}_{j=1}^M$ are pairwise disjoint and lie
in $B(z,3R)$. By \eqref{eq:fixed-gamma-ahlfors}, we find that
\begin{align*}
Mr^d
\lesssim \sum_{j=1}^M
\mathcal H^d\left(E\cap B\left(\zeta_j,\frac{r}{2}\right)\right)
\leq \mathcal H^d(E\cap B(z,3R))
\lesssim R^d.
\end{align*}
For any $x\in B(z,R)\cap\mathcal T_r(E)$, choose $y\in E$ such that
$|x-y|<r$. Since $r<R$, it follows that $y\in E\cap B(z,2R)$.
By maximality, there exists $j\in\{1,\ldots,M\}$ such that
$y\in B(\zeta_j,r)$, and hence $x\in B(\zeta_j,2r)$. Thus, we have
\begin{align*}
|B(z,R)\cap\mathcal T_r(E)|
\leq \sum_{j=1}^M|B(\zeta_j,2r)|
\lesssim Mr^n
\lesssim R^dr^{-\gamma},
\end{align*}
which shows \eqref{eq:fixed-gamma-local-tube}.
Next, let
$\{\zeta_j\}_{j=1}^M$ be a maximal $r$-separated subset of $E$.
By maximality, the balls $\{B(\zeta_j,r)\}_{j=1}^M$ cover $E$, and the balls
$\{B(\zeta_j,\frac{r}{2})\}_{j=1}^M$ are pairwise disjoint. Applying \eqref{eq:fixed-gamma-ahlfors}, we conclude that
\begin{align*}
Mr^d
\lesssim
\sum_{j=1}^M
\mathcal H^d\left(E\cap B\left(\zeta_j,\frac{r}{2}\right)\right)\leq \mathcal H^d(E)
\leq \sum_{j=1}^M\mathcal H^d(E\cap B(\zeta_j,r))
\lesssim Mr^d.
\end{align*}
Since $\mathcal H^d(E)\in(0,\infty)$, we deduce that
$M\sim r^{-d}.$
Moreover, for any $x\in\mathcal T_r(E)$, there exist $y\in E$ and
$j\in\{1,\ldots,M\}$ such that $|x-y|<r$ and
$y\in B(\zeta_j,r)$. This further implies that $x\in B(\zeta_j,2r)$, which proves that
\begin{align*}
\bigcup_{j=1}^M B\left(\zeta_j,\frac{r}{2}\right)
\subset\mathcal T_r(E)
\subset\bigcup_{j=1}^M B(\zeta_j,2r).
\end{align*}
Consequently, we have
\begin{align}\label{eq:fixed-gamma-global-tube}
|\mathcal T_r(E)|
\sim Mr^n
\sim r^{n-d}
=r^{-\gamma}
\end{align}
for any $r\in(0,r_0]$, where the positive equivalence
constants in both \eqref{eq:fixed-gamma-local-tube} and \eqref{eq:fixed-gamma-global-tube}
depend only on $n$ and $\gamma$.

We next derive
\eqref{eq:fixed-gamma-local-negative}--\eqref{eq:fixed-gamma-global-positive} from
\eqref{eq:fixed-gamma-local-tube} and
\eqref{eq:fixed-gamma-global-tube}. Using $|E|=0$, we obtain
$0<\rho<R$ almost everywhere on $B(z,R)$.
From Tonelli's theorem, we infer that
\begin{align*}
\int_{B(z,R)}\rho(x)^{\gamma+\delta}\,dx
&=R^{\gamma+\delta}|B(z,R)|
 -(\gamma+\delta)\int_{B(z,R)}\int_{\rho(x)}^R
r^{\gamma+\delta-1}\,dr\,dx\\
&=R^{\gamma+\delta}|B(z,R)|
-(\gamma+\delta)\int_0^R r^{\gamma+\delta-1}
|\{x\in B(z,R):\rho(x)<r\}|\,dr\\
&=R^{\gamma+\delta}|B(z,R)|
-(\gamma+\delta)\int_0^R
r^{\gamma+\delta-1}|B(z,R)\cap\mathcal T_r(E)|\,dr\\
&\sim R^{d+\delta}
-(\gamma+\delta)R^d\int_0^R r^{\delta-1}\,dr
\sim\frac{1}{\delta}R^{d+\delta},
\end{align*}
where the first equivalence follows from
\eqref{eq:fixed-gamma-local-tube} and the second follows from
$\frac{\gamma}{2}>\gamma+\delta>\gamma$.
This shows \eqref{eq:fixed-gamma-local-negative}.
By the same calculation with $B(z,R)$ replaced by $\mathcal T_R(E)$,
and by \eqref{eq:fixed-gamma-global-tube}, we find that
\eqref{eq:fixed-gamma-global-negative} holds.

To prove \eqref{eq:fixed-gamma-local-positive} and
\eqref{eq:fixed-gamma-global-positive}, using \eqref{eq:fixed-gamma-local-tube} and
\eqref{eq:fixed-gamma-global-tube}, we choose
$\kappa\in(0,1)$ sufficiently small such that
$|B(z,R)\setminus\mathcal T_{\kappa R}(E)|\gtrsim R^n$ and
$|\mathcal T_R(E)\setminus\mathcal T_{\kappa R}(E)|
\gtrsim R^{-\gamma}.$
On these two sets, one has $\kappa R\leq\rho\leq R$.
Using integration over these sets, we conclude that the lower estimates in
\eqref{eq:fixed-gamma-local-positive} and
\eqref{eq:fixed-gamma-global-positive} hold. The corresponding upper
estimates follow from $\rho\leq R$ on $B(z,R)$ and
$\mathcal T_R(E)$.
The estimates are uniform in $\delta$ because
$(p-1)(\gamma+\delta)\in((p-1)\gamma, 0)$.
This completes the proof of Lemma
\ref{lem:fixed-gamma-tubular-integrals}.
\end{proof}
For any $\delta\in(0,\min\{-\frac{\gamma}{2},1\})$, define
\begin{align}\label{eq:fixed-gamma-weight-test}
\upsilon_\delta:=\rho^{-(p-1)(\gamma+\delta)}
\ \text{and}\
f_\delta(x):=
\begin{cases}
\rho(x)^{\gamma+\delta}
&\text{if } x\in\mathcal T_{r_0}(E)\setminus E,\\
0
&\text{if } x\notin\mathcal T_{r_0}(E)\setminus E.
\end{cases}
\end{align}
Since $|E|=0$, from \eqref{eq:fixed-gamma-global-negative} with $R=r_0$, we deduce that
\begin{align}\label{eq:fixed-gamma-test-norm}
\|f_\delta\|_{L^p_{\upsilon_\delta}}^p
=\int_{\mathcal T_{r_0}(E)\setminus E}
\rho(x)^{\gamma+\delta}\,dx
\sim\frac{1}{\delta}.
\end{align}
In particular, $f_\delta\in L^p_{\upsilon_\delta}\cap L^1$.
It remains to estimate $[\upsilon_\delta]_{A_p}$.

\begin{lemma}\label{lem:fixed-gamma-weight-test}
Let $\delta\in(0,\min\{-\frac{\gamma}{2},1\})$, and let
$\upsilon_\delta$ be as in \eqref{eq:fixed-gamma-weight-test}.
Then one has
\begin{align}\label{eq:fixed-gamma-Ap}
\upsilon_\delta\in A_p
\ \text{and} \
[\upsilon_\delta]_{A_p}\sim\delta^{1-p},
\end{align}
where the positive equivalence constants depend only on $n$, $p$,
and $\gamma$.
\end{lemma}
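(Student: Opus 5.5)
The plan is to compute the $A_p$ characteristic directly over balls; by Definition \ref{df:c-v} this is equivalent to using cubes, up to constants depending only on $n$ and $p$. Observe that $\upsilon_\delta=\rho^{(p-1)(-\gamma-\delta)}$ is a positive power of $\rho$, while
\begin{align*}
\upsilon_\delta^{-\frac1{p-1}}=\rho^{\gamma+\delta}.
\end{align*}
For a ball $B:=B(x_0,r)$ we must therefore bound
\begin{align*}
\mathcal A(B):=\left[\fint_B\rho^{(p-1)(-\gamma-\delta)}\right]\left[\fint_B\rho^{\gamma+\delta}\right]^{p-1}.
\end{align*}
We show that $\mathcal A(B)\lesssim\delta^{1-p}$ uniformly in $B$, and that $\mathcal A(B)\gtrsim\delta^{1-p}$ for suitable $B$. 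The only inputs are Lemma \ref{lem:fixed-gamma-tubular-integrals}, the bound $\rho\le1$, and the Lipschitz property of $\operatorname{dist}(\cdot,E)$. All constants must be independent of $\delta$. This is plausible because the exponents satisfy $\gamma+\delta\in(\gamma,\frac\gamma2)$ and $(p-1)(-\gamma-\delta)\in(0,-(p-1)\gamma)$.

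\emph{Lower bound.} Take $z\in E$ and $R=r_0$ in \eqref{eq:fixed-gamma-local-negative} and \eqref{eq:fixed-gamma-local-positive}. Since $d-n=\gamma$, the two averages over $B(z,r_0)$ are comparable to
\begin{align*}
r_0^{-(p-1)(\gamma+\delta)}\quad\text{and}\quad \delta^{-1}r_0^{\gamma+\delta},
\end{align*}
respectively. Their product, with the second raised to the power $p-1$, is $\sim\delta^{1-p}$, so $[\upsilon_\delta]_{A_p}\gtrsim\delta^{1-p}$.

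\emph{Upper bound: three cases.}

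\textbf{Case 1:} $r\le\frac12\operatorname{dist}(x_0,E)$. For $x\in B$ we have $\frac12\operatorname{dist}(x_0,E)\le\operatorname{dist}(x,E)\le\frac32\operatorname{dist}(x_0,E)$. Hence $\rho$ is comparable on $B$ to the constant $\min\{\operatorname{dist}(x_0,E),1\}$, and $\mathcal A(B)\lesssim1\le\delta^{1-p}$ up to constants; the latter holds because $\delta<1$ and $p>1$.

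\textbf{Case 2:} $\frac12\operatorname{dist}(x_0,E)<r\le r_0/3$. Pick $z\in E$ with $|x_0-z|<2r$, so that $B\subset B(z,3r)$ and $|B(z,3r)|\sim|B|$. Bound each average over $B$ by a constant times the average over $B(z,3r)$, and apply \eqref{eq:fixed-gamma-local-positive} and \eqref{eq:fixed-gamma-local-negative} with $R=3r\le r_0$. This gives
\begin{align*}
\mathcal A(B)\lesssim r^{-(p-1)(\gamma+\delta)}\left(\delta^{-1}r^{\gamma+\delta}\right)^{p-1}=\delta^{1-p}.
\end{align*}

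\textbf{Case 3:} $r>r_0/3$ (with $2r\ge\operatorname{dist}(x_0,E)$ or not). The first factor of $\mathcal A(B)$ is at most $1$ because $\rho\le1$. For the second factor, split $B$ into $B\cap\mathcal T_{r_0}(E)$ and $B\setminus\mathcal T_{r_0}(E)$.
\begin{itemize}
\item On $B\setminus\mathcal T_{r_0}(E)$ we have $\rho\ge r_0$, so $\rho^{\gamma+\delta}\le r_0^{\gamma+\delta}\lesssim1$.
\item On $\mathcal T_{r_0}(E)$, \eqref{eq:fixed-gamma-global-negative} gives $\int\rho^{\gamma+\delta}\lesssim\delta^{-1}$.
\end{itemize}
Consequently $\fint_B\rho^{\gamma+\delta}\lesssim1+\delta^{-1}|B|^{-1}\lesssim\delta^{-1}$, since $|B|\gtrsim r_0^n$, and so $\mathcal A(B)\lesssim\delta^{1-p}$.

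The cases together give $\upsilon_\delta\in A_p$ and $[\upsilon_\delta]_{A_p}\lesssim\delta^{1-p}$; combined with the lower bound this is \eqref{eq:fixed-gamma-Ap}.

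The main point needing care is Case 2. We must check that enlarging $B$ to a ball centred on $E$ costs only a dimensional constant. We must also check that the radius restriction $R\le r_0$ in Lemma \ref{lem:fixed-gamma-tubular-integrals} is respected; this is why the threshold between Cases 2 and 3 is placed at $r_0/3$. Finally, we must track that none of the implicit constants in Lemma \ref{lem:fixed-gamma-tubular-integrals} depend on $\delta$, which the lemma already guarantees.
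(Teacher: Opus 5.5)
Your proposal is correct and follows essentially the same route as the paper. The lower bound comes from \eqref{eq:fixed-gamma-local-negative}--\eqref{eq:fixed-gamma-local-positive} on a ball centred on $E$. The upper bound uses the same three cases: balls far from $E$, where $\rho$ is essentially constant; small balls near $E$, enlarged to a ball centred on $E$ with radius at most $r_0$; and large balls, handled with $\rho\le 1$ and \eqref{eq:fixed-gamma-global-negative}. The differences are cosmetic: your case thresholds ($\operatorname{dist}(x_0,E)$ against $2r$, and $r_0/3$), and splitting at $\mathcal T_{r_0}(E)$ rather than $\mathcal T_1(E)$, which is slightly more direct.
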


\begin{proof}
For any $z\in E$ and $R\in(0,r_0]$, by \eqref{eq:fixed-gamma-local-negative} and
\eqref{eq:fixed-gamma-local-positive}, we obtain
\begin{align*}
\left[\fint_{B(z,R)}\upsilon_\delta(x)\,dx\right]
\left\{\fint_{B(z,R)}
[\upsilon_\delta(x)]^{-\frac{1}{p-1}}\,dx\right\}^{p-1}
&\sim R^{(p-1)(-\gamma-\delta)}
\left(\frac{1}{\delta}R^{\gamma+\delta}\right)^{p-1}=\delta^{1-p}.
\end{align*}
Using this and \eqref{eq:c-v-p} shows the lower estimate in \eqref{eq:fixed-gamma-Ap}.
To prove the upper estimate, fix a ball $B:=B(x_B,R)$.
It suffices to show that
\begin{align}\label{eq:fixed-gamma-Ap-upper-ball}
\left[\fint_B\upsilon_\delta(x)\,dx\right]
\left\{\fint_B[\upsilon_\delta(x)]^{-\frac{1}{p-1}}\,dx\right\}^{p-1}
\lesssim\delta^{1-p},
\end{align}
where the implicit positive constant is independent of $B$ and $\delta$. Next, we prove this by considering three cases for $R$.

\textit{Case 1: $R\geq\frac{r_0}{4}$.}
In this case, since $\upsilon_\delta\leq1$, the first factor on the left-hand
side of \eqref{eq:fixed-gamma-Ap-upper-ball} is bounded by $1$.
It remains to estimate the second factor.
The fact that $\rho=1$ outside $\mathcal T_1(E)$ yields
\begin{align}\label{eq:4.16x}
\fint_B[\upsilon_\delta(x)]^{-\frac{1}{p-1}}\,dx
={}&\frac{1}{|B|}
\int_{B\cap\mathcal T_1(E)}
\rho(x)^{\gamma+\delta}\,dx
+\frac{|B\setminus\mathcal T_1(E)|}{|B|}.
\end{align}
The second term on the right-hand side of \eqref{eq:4.16x} is bounded by $1$.
To estimate its first term, note that
$E\subset[0,1]^n$ implies
$\mathcal T_1(E)\subset(-1,2)^n$ and hence
$|\mathcal T_1(E)|<\infty$. Moreover, by the definition of $\mathcal T_{r_0}(E)$, we have
$\rho\geq r_0$ on
$\mathcal T_1(E)\setminus\mathcal T_{r_0}(E)$.
Thus, since \eqref{eq:fixed-gamma-global-negative} holds, we infer that
\begin{align*}
\int_{\mathcal T_1(E)}\rho(x)^{\gamma+\delta}\,dx
\leq
\int_{\mathcal T_{r_0}(E)}
\rho(x)^{\gamma+\delta}\,dx
+r_0^{\gamma}|\mathcal T_1(E)|\lesssim\frac{1}{\delta}.
\end{align*}
Using $R\in[\frac{r_0}{4},\infty)$, we find that $|B|\gtrsim1$, which further implies that
$\fint_B[\upsilon_\delta(x)]^{-\frac{1}{p-1}}\,dx
\lesssim\frac{1}{\delta}.$
From the above estimates, it follows that
\eqref{eq:fixed-gamma-Ap-upper-ball} holds in this case.

\textit{Case 2: $R<\frac{r_0}{4}$ and
$\operatorname{dist}(B,E)\geq2R$.}
In this case, for any $y_1,y_2\in B$, one has
$$\operatorname{dist}(y_1,E)
\leq\operatorname{dist}(y_2,E)+2R
\leq2\operatorname{dist}(y_2,E).$$
Hence, $\rho(y_1)\sim\rho(y_2)$ uniformly on $B$.
Since $(p-1)(\gamma+\delta)$ is bounded independently of $\delta$,
the left-hand side of \eqref{eq:fixed-gamma-Ap-upper-ball}
is bounded by a positive constant independent of $B$ and $\delta$.
This proves \eqref{eq:fixed-gamma-Ap-upper-ball} in this case because $\delta<1$.

\textit{Case 3: $R<\frac{r_0}{4}$ and
$\operatorname{dist}(B,E)<2R$.} In this case,
choose $z\in E$ such that $|x_B-z|<3R$. Then
$B\subset B(z,4R)$ and $4R<r_0$.
Applying \eqref{eq:fixed-gamma-local-negative} and
\eqref{eq:fixed-gamma-local-positive} to $B(z,4R)$, we conclude that
\begin{align*}
\fint_B\upsilon_\delta(x)\,dx
\lesssim R^{(p-1)(-\gamma-\delta)}
\ \text{and} \
\fint_B[\upsilon_\delta(x)]^{-\frac{1}{p-1}}\,dx
\lesssim\frac{1}{\delta}R^{\gamma+\delta},
\end{align*}
which shows \eqref{eq:fixed-gamma-Ap-upper-ball}.
The preceding three cases establish the desired upper bound in
\eqref{eq:fixed-gamma-Ap}. This completes the proof of Lemma
\ref{lem:fixed-gamma-weight-test}.
\end{proof}

Using these auxiliary estimates, we now prove Proposition
\ref{pro:sharp-m}.

\begin{proof}[Proof of Proposition \ref{pro:sharp-m}]
We first show that \textup{(i)} holds. Let
$\gamma\in\mathbb R\setminus\{0\}$ when $p\in(1,\infty)$ and
$\gamma\in\mathbb R\setminus[-n,0]$ when $p=1$.
For any $\delta\in(0,1)$, define
$\upsilon:=|\cdot|^{-n+\delta}$. From
\cite[Lemma~1.4]{b93}, we deduce that $\upsilon\in A_1$ and
$[\upsilon]_{A_1}\sim\delta^{-1},$
where the positive equivalence constants depend only on $n$. Choose
$f:=\mathbf1_{\{x\in\mathbb R^n:2<|x|<3\}}.$ Clearly, $\|f\|_{L^p_\upsilon} \sim 1$ with the positive equivalence constants independent of $\delta$.

Let $J:=B(\mathbf 0,1)$. If $t\in(2,4]$ and $x\in J$, then, using the definitions of $f$ and $\mathcal M_1$, we obtain
\begin{align*}
\mathcal M_1(f)(x,t)
\ge\fint_{B(\mathbf0,4)}f(y)\,dy
=:c_n>0.
\end{align*}
Choose $\lambda\in(0,\infty)$ so small that $\lambda t^\gamma<c_n$
for any $t\in(2,4]$. Then $\mathcal M_1(f)(x,t)>\lambda t^\gamma$
on $J\times(2,4]$. From this pointwise bound, we infer that
\begin{align*}
\sup_{\lambda\in(0,\infty)}
\lambda^p\underset{\mathcal M_1(f)(x,t)>\lambda t^\gamma}{\int_{\mathbb R^n}\int_0^\infty}
t^{\gamma p-1}\,dt\,\upsilon(x)\,dx\ge
\lambda^p\int_J\upsilon(x)\,dx
\int_2^4t^{\gamma p-1}\,dt
\gtrsim\frac{1}{\delta},
\end{align*}
where the implicit positive constant is independent of $\delta$. By assumption \eqref{eq:sharp-m-p-1}, we find that
\begin{align*}
\frac{1}{\delta}
\lesssim
[\upsilon]_{A_1}^{\eta}
\|f\|_{L^p_\upsilon}^p
\lesssim\frac{1}{\delta^{\eta}},
\end{align*}
where the implicit positive constants are independent of $\delta$. Since $\delta^{-1}\lesssim\delta^{-\eta}$ as $\delta\to0^+$, it follows that $\eta\ge1$, which completes the proof of \textup{(i)}.

Next, we prove \textup{(ii)}.
Fix $p\in(1,\infty)$ and $\gamma\in[-n,0)$.
We consider the following two cases for $\gamma$.

\textit{Case 1: $\gamma=-n$.}
In this case, let $\varepsilon\in(0,1)$, and define
$\upsilon:=|\cdot|^{(n-\varepsilon)(p-1)}$ and
$f:=|\cdot|^{-n+\varepsilon}\mathbf1_{B(\mathbf 0,1)}$.
Using \cite[Lemma~1.4]{b93} with $\delta:=\frac{\varepsilon}{n}$, we conclude that
$\upsilon\in A_p$ and
$[\upsilon]_{A_p}\sim\varepsilon^{1-p},$
where the positive equivalence constants depend only on $n$ and $p$. From a direct calculation, we deduce that
\begin{align}\label{eq:power-function-norm}
\|f\|_{L^p_\upsilon}^p
=\int_{B(\mathbf 0,1)}|x|^{-n+\varepsilon}\,dx
\sim\frac{1}{\varepsilon},
\end{align}
which implies that $f\in L^p_\upsilon$.

For any $t\in(0,1)$ and $x\in B(\mathbf 0,\frac{t}{2})$,
the ball $B(x,t)$ contains $B(\mathbf 0,\frac{t}{2})$.
This further implies that
\begin{align*}
\mathcal M_1(f)(x,t)
\geq\frac{1}{|B(x,t)|}\int_{B(\mathbf 0,\frac{t}{2})}|y|^{-n+\varepsilon}\,dy
\geq\frac{c}{\varepsilon}t^{-n+\varepsilon},
\end{align*}
where $c$ is a positive constant depending only on $n$.
By choosing $\lambda:=\frac{c}{2\varepsilon}$, we obtain
\begin{align}\label{eq:sharp-m-gamma-minus-n-lower}
&\sup_{\lambda\in(0,\infty)}
\lambda^p\underset{\mathcal M_1(f)(x,t)>\lambda t^{-n}}{\int_{\mathbb R^n}\int_0^\infty}
t^{-np-1}\,dt\,\upsilon(x)\,dx\notag\\
&\quad\ge
\left(\frac{c}{2\varepsilon}\right)^p
\int_{2^{-\frac{1}{\varepsilon}}}^1t^{-np-1}
\left[\int_{B(\mathbf 0,\frac{t}{2})}
|x|^{(n-\varepsilon)(p-1)}\,dx\right]\,dt\notag\\
&\quad\gtrsim
\varepsilon^{-p}
\int_{2^{-\frac{1}{\varepsilon}}}^1
t^{-1-(p-1)\varepsilon}\,dt
\sim\varepsilon^{-p-1},
\end{align}
where the implicit positive constants depend only on $n$ and $p$.
From \eqref{eq:sharp-m-gamma-minus-n-lower}, \eqref{eq:sharp-m-p-2},
\eqref{eq:power-function-norm}, and
$[\upsilon]_{A_p}\sim\varepsilon^{1-p}$, we infer that
\begin{align*}
\varepsilon^{-p-1}
\lesssim
[\upsilon]_{A_p}^{\eta}
\|f\|_{L^p_\upsilon}^p
\lesssim
\varepsilon^{-(p-1)\eta-1}.
\end{align*}
By letting $\varepsilon\to0^+$, we find that
$\eta\geq\frac{p}{p-1}$, which shows \textup{(ii)}
in the case where $\gamma=-n$.

\textit{Case 2: $\gamma\in(-n,0)$.} In this case,
let $E$, $r_0$, and $\rho$ be as in \eqref{eq:4.3x} and \eqref{4.4x}.
For any $\delta\in(0,\min\{-\frac{\gamma}{2},1\})$, let
$\upsilon_\delta$ and $f_\delta$ be as in
\eqref{eq:fixed-gamma-weight-test}.
For any $t\in(0,r_0)$ and $x\in\mathcal T_t(E)$, choose
$z\in E$ such that $|x-z|<t$. Then
$x\in B(z,t)\subset\mathcal T_{r_0}(E)$. Since $|E|=0$, from the definitions of $\mathcal M_1$ and $f_\delta$
and from \eqref{eq:fixed-gamma-local-negative}, it follows that
\begin{align*}
\mathcal M_1(f_\delta)(x,t)
\geq\frac{1}{|B(z,t)|}
\int_{B(z,t)\setminus E}\rho(y)^{\gamma+\delta}\,dy \geq\frac{c_*}{\delta}t^{\gamma+\delta},
\end{align*}
where $c_*$ is a positive constant depending only on $n$ and $\gamma$.
Let $\lambda_\delta:=\frac{c_*r_0^\delta}{2\delta}$ and
$\tau_\delta:=r_0 2^{-\frac{1}{\delta}}$.
For any $t\in(\tau_\delta,r_0)$,
one has $t^\delta>\frac{r_0^\delta}{2}$, and hence
$
\mathcal T_t(E)\subset
\{x\in\mathbb R^n:
\mathcal M_1(f_\delta)(x,t)>\lambda_\delta t^\gamma\}.$
Using this and \eqref{eq:fixed-gamma-global-positive} with $R=t$, we conclude that
\begin{align}
&\sup_{\lambda\in(0,\infty)}
\lambda^p
\underset{\mathcal M_1(f_\delta)(x,t)>\lambda t^\gamma}
{\int_{\mathbb R^n}\int_0^\infty}
t^{\gamma p-1}\,dt\,\upsilon_\delta(x)\,dx\notag\\
&\quad\geq\lambda_\delta^p
\int_{\tau_\delta}^{r_0}
\int_{\mathcal T_t(E)}
t^{\gamma p-1}\upsilon_\delta(x)\,dx\,dt
\gtrsim\delta^{-p}
\int_{\tau_\delta}^{r_0}
t^{-1-(p-1)\delta}\,dt\notag\\
&\quad=\frac{2^{p-1}-1}{p-1}
r_0^{-(p-1)\delta}\delta^{-p-1}
\sim\delta^{-p-1},
\label{eq:fixed-gamma-lifted-lower}
\end{align}
where we used $\lambda_\delta\sim\delta^{-1}$.
Applying \eqref{eq:sharp-m-p-2} with both
$\upsilon=\upsilon_\delta$ and $f=f_\delta$, we find that the left-hand
side of \eqref{eq:fixed-gamma-lifted-lower} admits an
upper estimate. Thus, by \eqref{eq:fixed-gamma-test-norm} and \eqref{eq:fixed-gamma-Ap}, we conclude that
\begin{align*}
\delta^{-p-1}
\lesssim[\upsilon_\delta]_{A_p}^\eta
\|f_\delta\|_{L^p_{\upsilon_\delta}}^p
\sim\delta^{(1-p)\eta-1},
\end{align*}
where the implicit positive constants are independent of $\delta$.
Since $\delta^{-p-1}\lesssim\delta^{(1-p)\eta-1}$ as $\delta\to0^+$, we deduce that
$\eta\geq\frac{p}{p-1}$.
This completes the proof of \textup{(ii)}.

Finally, we prove \textup{(iii)}.
Let $p\in(1,\infty)$, $\gamma\in\mathbb R\setminus[-n,0]$, and $\delta\in(0,1)$. Define
$\upsilon:=|\cdot|^{-n+\delta}$ and
$f:=\mathbf1_{\{x\in\mathbb R^n:1<|x|<2\}}$. Using \cite[Lemma~1.4]{b93}, we find that
$\upsilon\in A_p$ and $[\upsilon]_{A_p}\sim\delta^{-1}$,
where the positive equivalence constants depend only on $n$ and $p$.
From a direct calculation, we infer that
$\|f\|_{L^p_\upsilon}^p
\sim1.$
If $t\in(2,3]$ and $x\in B(\mathbf 0,1)$, then, by the definitions of $f$ and $\mathcal M_1$, we conclude that
$
\mathcal M_1(f)(x,t)
\ge c_n'>0.$
Choose $\lambda\in(0,\infty)$ so small that $\lambda t^\gamma<c_n'$ for
any $t\in(2,3]$. Then $\mathcal M_1(f)(x,t)>\lambda t^\gamma$ holds on
$B(\mathbf 0,1)\times(2,3]$. This further implies that
\begin{align}\label{eq:sharp-m-p-3-lower}
\sup_{\lambda\in(0,\infty)}
\lambda^p\underset{\mathcal M_1(f)(x,t)>\lambda t^\gamma}{\int_{\mathbb R^n}\int_0^\infty}
t^{\gamma p-1}\,dt\,\upsilon(x)\,dx
\ge
\lambda^p\int_{B(\mathbf 0,1)}\upsilon(x)\,dx
\int_2^3t^{\gamma p-1}\,dt
\gtrsim\frac{1}{\delta},
\end{align}
where the implicit positive constant is independent of $\delta$.
From \eqref{eq:sharp-m-p-3} and \eqref{eq:sharp-m-p-3-lower}, it follows that
$
\frac{1}{\delta}
\lesssim [\upsilon]_{A_p}^{\eta}\|f\|_{L^p_\upsilon}^p.$
Using $[\upsilon]_{A_p}\sim\delta^{-1}$ and
$\|f\|_{L^p_\upsilon}^p\sim1$, we obtain
$\delta^{\eta-1}\lesssim1$, with an implicit positive constant independent
of $\delta$. Letting $\delta\to0^+$, we obtain
$\eta\geq1$, which shows \textup{(iii)} and hence completes the proof
of Proposition \ref{pro:sharp-m}.
\end{proof}

\subsection{Sharpness for \texorpdfstring{$\mathcal M^\sharp_\theta$}{M sharp theta}}\label{subsec:sharp-sm}
We next prove the sharpness assertions for $\mathcal M^\sharp_\theta$
in Theorem \ref{th:intro-m}.
\begin{proposition}\label{prop:sharp-sm}
Let $n\in\mathbb N$ and $p\in[1,\infty)$.
In the following assertions, the positive constant $C$ is independent
of $\upsilon$ and $f$.
\begin{enumerate}
\item[\textup{(i)}]
 Assume that
$\gamma\in\mathbb R\setminus\{0\}$ when $p\in(1,\infty)$ and that
$\gamma\in\mathbb R\setminus[-1,0]$ when $p=1$.
If there exist $\eta\in\mathbb R$ and a positive constant $C$
such that, for any $\upsilon\in A_1$ and
$f\in\dot W^{1,p}_\upsilon$,
\begin{align}\label{eq:sharp-sm-AP-1}
\sup_{\lambda\in(0,\infty)}
\lambda^p
\underset{
\mathcal M_1^\sharp(f)(x,t)>\lambda t^\gamma}
{\int_{\mathbb R^n}\int_0^\infty}
t^{\gamma p-1}\,dt\,\upsilon(x)\,dx
\leq
C[\upsilon]_{A_1}^{\eta}
\|\nabla f\|_{L^p_\upsilon}^p,
\end{align}
then $\eta\geq1$.

\item[\textup{(ii)}]
Let $p\in(1,\infty)$ and
$\gamma\in\mathbb R\setminus[-1,0]$.
If there exist $\eta\in\mathbb R$ and a positive constant $C$
such that, for any $\upsilon\in A_p$ and
$f\in\dot W^{1,p}_\upsilon$,
\begin{align}\label{eq:sharp-sm-AP-3}
\sup_{\lambda\in(0,\infty)}
\lambda^p
\underset{
\mathcal M_1^\sharp(f)(x,t)>\lambda t^\gamma}
{\int_{\mathbb R^n}\int_0^\infty}
t^{\gamma p-1}\,dt\,\upsilon(x)\,dx
\leq
C[\upsilon]_{A_p}^{\eta}
\|\nabla f\|_{L^p_\upsilon}^p,
\end{align}
then $\eta\geq1$.

\item[\textup{(iii)}]
Let $p\in(1,\infty)$.
If there exist $\eta\in\mathbb R$ and a positive constant $C$
such that, for any $\upsilon\in A_p$ and
$f\in\dot W^{1,p}_\upsilon$, \eqref{eq:sharp-sm-AP-3} holds with $\gamma=-1$, then $\eta\geq\frac{p}{p-1}$.

\item[\textup{(iv)}]
Let $p\in(1,\infty)$ and $\gamma\in(-1,0)$.
If there exist $\eta\in\mathbb R$ and a positive constant $C$
such that, for any $\upsilon\in A_p$ and
$f\in\dot W^{1,p}_\upsilon$, \eqref{eq:sharp-sm-AP-3} holds,
then $\eta\geq\frac{p}{p-1}$.
\end{enumerate}
\end{proposition}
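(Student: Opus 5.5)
For \textup{(i)} and \textup{(ii)} I would follow the proof of Proposition \ref{pro:sharp-m}\textup{(i)} and \textup{(iii)}, replacing the indicator test function by a smooth function. For $\delta\in(0,1)$ take $\upsilon:=|\cdot|^{-n+\delta}$. By \cite[Lemma~1.4]{b93}, $[\upsilon]_{A_1}\sim\delta^{-1}$, and also $[\upsilon]_{A_p}\sim\delta^{-1}$. For the test function take a radial $f\in C^\infty$ that equals $0$ on $B(\mathbf 0,2)$ and equals $1$ outside $B(\mathbf 0,3)$, so $\nabla f$ is supported in the annulus $\{2<|x|<3\}$. Then $\|\nabla f\|_{L^p_\upsilon}^p\sim1$ uniformly in $\delta$.

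For $x\in B(\mathbf 0,1)$ and $t\in(2,4]$, the ball $B(\mathbf 0,4)$ contains $x$ and has admissible radius, and $f$ is not constant on it. Hence $\mathcal M_1^\sharp(f)(x,t)\ge c_n>0$. Choosing $\lambda$ small exactly as before gives a lower bound $\gtrsim\upsilon(B(\mathbf 0,1))\sim\delta^{-1}$. This forces $\delta^{-1}\lesssim\delta^{-\eta}$, that is, $\eta\ge1$.

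For \textup{(iii)} and \textup{(iv)} I would take primitives of the one-dimensional test functions from Proposition \ref{pro:sharp-m}\textup{(ii)} and then tensorize. For $\gamma=-1$ set $g(s):=\operatorname{sgn}(s)\min\{|s|,1\}^{\varepsilon}/\varepsilon$, so $g'=|s|^{-1+\varepsilon}\mathbf 1_{(-1,1)}$, and use the weight $|s|^{(1-\varepsilon)(p-1)}$. The oddness of $g$ is essential. An even primitive such as $|s|^\varepsilon/\varepsilon$ only has oscillation $\sim t^\varepsilon$ on $(-t,t)$, because its $1/\varepsilon$ part is an additive constant. The odd $g$ instead has mean oscillation $\gtrsim t^\varepsilon/\varepsilon$ on $(x-t,x+t)$ whenever $|x|<t/2$. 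Thus $\mathcal M_1^\sharp(g)(x,t)\gtrsim\varepsilon^{-1}t^{-1+\varepsilon}$, and with $\lambda\sim\varepsilon^{-1}$ the computation of \eqref{eq:sharp-m-gamma-minus-n-lower} gives a lower bound $\sim\varepsilon^{-p-1}$.

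For $\gamma\in(-1,0)$ I would use the one-dimensional self-similar set $E\subset[0,1]$ of dimension $d=1+\gamma$, together with $\upsilon_\delta$ and $f_\delta$ from \eqref{eq:fixed-gamma-weight-test}, and set $g:=\int_{-\infty}^s f_\delta$. Then $g$ is nondecreasing. For $z\in E$ and $t<r_0$, \eqref{eq:fixed-gamma-local-negative} gives $\int_{B(z,t/4)}f_\delta\sim\delta^{-1}t^{d+\delta}$. Every point of $(z-t,z-t/4)$ has $g$-value at least this much below every point of $(z+t/4,z+t)$, and each of these two pieces occupies a fixed fraction of $B(z,t)$. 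Hence the mean oscillation of $g$ on $B(z,t)$ is $\gtrsim\delta^{-1}t^{d+\delta}$, and so $\mathcal M_1^\sharp(g)(x,t)\gtrsim\delta^{-1}t^{\gamma+\delta}$ on $\mathcal T_t(E)$. From here the chain \eqref{eq:fixed-gamma-lifted-lower} applies verbatim and gives the lower bound $\delta^{-p-1}$, while $\|g'\|_{L^p_{\upsilon_\delta}}^p\sim\delta^{-1}$ and $[\upsilon_\delta]_{A_p}\sim\delta^{1-p}$ by Lemma \ref{lem:fixed-gamma-weight-test}. Then $\delta^{-p-1}\lesssim\delta^{(1-p)\eta-1}$ yields $\eta\ge\frac{p}{p-1}$.

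To pass to $n\ge2$, I would use the weight $\upsilon(x):=w(x_1)$, where $w$ is the one-dimensional weight. Averages over cubes factorize, so $[\upsilon]_{A_p}=[w]_{A_p}$. For the test function take $f(x):=g(x_1)\varphi(x'/L)$ with $\varphi\in C_{\mathrm c}^\infty(\mathbb R^{n-1})$, $\varphi=1$ on the unit ball. The main obstacle is the cutoff error $g\,\nabla\varphi(\cdot/L)/L$. Since $|g|\lesssim\varepsilon^{-1}$ (respectively $\lesssim\delta^{-1}$, because $f_\delta\in L^1$), this term contributes about $\varepsilon^{-p}L^{n-1-p}$ to $\|\nabla f\|_{L^p_\upsilon}^p$. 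The main term contributes about $L^{n-1}\varepsilon^{-1}$, so choosing $L:=\varepsilon^{-1}$ (or $\delta^{-1}$) makes the error harmless. Both the norm and the lower bound, which is obtained on $\{|x'|<L/2\}$ with $t<1$ where $f$ equals $g(x_1)$ on the relevant balls, then scale by the same factor $L^{n-1}$. Since $n$-dimensional balls see only the $x_1$-oscillation there up to dimensional constants, the one-dimensional exponent comparison survives unchanged.
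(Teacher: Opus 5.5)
Your arguments for (i) and (ii) are correct, and they are more direct than the paper's route. You work in $\mathbb R^n$ with $\upsilon=|\cdot|^{-n+\delta}$ and a smooth radial step whose gradient lives in $\{2<|x|<3\}$, whereas the paper discretizes in one dimension, applies Lemma \ref{lem:sharp-weighted-cddd}\textup{(i)}, and then transfers to $\mathbb R^n$. Your one-dimensional (iii) via the odd primitive replaces the citation of Lemma \ref{lem:sharp-weighted-cddd}\textup{(ii)}, and your remark that an even primitive loses the factor $\varepsilon^{-1}$ is right. Your one-dimensional (iv) is the paper's argument.

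The gap is in the passage to $n\ge2$ for (iii) and (iv). Your test function $f(x)=g(x_1)\varphi(x'/L)$ is truncated only in $x'$, so the cutoff term satisfies
\[
\int_{\mathbb R^n}\bigl|L^{-1}g(x_1)(\nabla\varphi)(x'/L)\bigr|^p\,w(x_1)\,dx
=L^{n-1-p}\|\nabla\varphi\|_{L^p(\mathbb R^{n-1})}^p\int_{\mathbb R}|g(s)|^p\,w(s)\,ds=\infty.
\]
The last integral diverges in both cases:
\begin{itemize}
\item in (iii), $g\equiv\pm\varepsilon^{-1}$ on $\{|s|>1\}$ while $w(s)=|s|^{(1-\varepsilon)(p-1)}$ grows;
\item in (iv), $F_\delta\equiv\|f_\delta\|_{L^1}\neq0$ to the right of $\mathcal T_{r_0}(E)$ while $\upsilon_\delta\equiv1$ there.
\end{itemize}
Your estimate $\varepsilon^{-p}L^{n-1-p}$ silently drops the factor $\int_{\mathbb R}w$. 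In fact $f\notin\dot W^{1,p}_\upsilon$, so the hypothesis gives no information. Subtracting a constant cannot help, because $g$ has different limits at $\pm\infty$.

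You must also truncate in $x_1$, for example $f(x)=g(x_1)\chi(x/R)$ as in the paper. The cutoff error is then of order $R^{n-1}\cdot R^{-p}\int_{-2R}^{2R}|g|^pw$, and what you need is precisely \eqref{eq:sharp-n1-lim-gw}.
\begin{itemize}
\item For (iv) this quantity is $\lesssim\delta^{-p}R^{1-p}$, so $R\sim\delta^{-1}$ would suffice.
\item For (iii) it is of order $\varepsilon^{-p}R^{-(p-1)\varepsilon}$. With $R=\varepsilon^{-1}$ it is still $\sim\varepsilon^{-p}\gg\varepsilon^{-1}$, which would only yield $\eta\ge\frac{1}{p-1}$. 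It drops below $\varepsilon^{-1}$ only once $R\gtrsim\varepsilon^{-1/\varepsilon}$, so your choice $L=\varepsilon^{-1}$ fails there.
\end{itemize}
The clean fix, which is what the paper does, is to keep $\varepsilon$ (resp.\ $\delta$) fixed, divide both sides by $R^{n-1}$, and let $R\to\infty$. Your lower-bound region is bounded in $x_1$ and $t$, so this transfers the $n$-dimensional hypothesis to the one-dimensional inequality \eqref{eq:sharp-sm-transferred-one-dimensional} with the same $\eta$. Your one-dimensional computations then finish (iii) and (iv).
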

\begin{remark}
Recall that the positive constant $C$ in 
\cite[Proposition~2.11\textup{(iii)}]{scm} is required 
to be uniform in the parameter $\beta$. By contrast,
Proposition \ref{prop:sharp-sm}\textup{(iv)} provides a continuous counterpart
for $\mathcal M_1^\sharp$ in any dimension, allowing the positive constant $C$ in
\eqref{eq:sharp-sm-AP-3} to depend on the fixed $\gamma\in(-1,0)$.
\end{remark}
Before showing Proposition \ref{prop:sharp-sm}, we recall \cite[\textup{(i)} and \textup{(ii)} of
Proposition~2.11]{scm}, which is restated as follows according to the notation used in this article.
\begin{lemma}
\label{lem:sharp-weighted-cddd}
Let $p\in[1,\infty)$. In the following assertions, the positive
constant $C$ is independent of $\upsilon$ and $f$.
\begin{enumerate}
\item[\textup{(i)}]
Let $\beta\in\mathbb R$. If there exist $\eta\in\mathbb R$
and a positive constant $C$ such that, for any
$\upsilon\in A_1(\mathbb R)$ and
$f\in\dot W^{1,p}_\upsilon(\mathbb R)$,
\begin{align*}
\sup_{\lambda\in(0,\infty)}\lambda^p
\sum_{I\in\mathcal D^0_{\lambda,\beta+1-\frac1p}[f]}
|I|^{\beta p-1}\upsilon(I)
\leq C[\upsilon]_{A_1(\mathbb R)}^{\eta}
\|f'\|_{L^p_\upsilon(\mathbb R)}^p,
\end{align*}
then $\eta\geq1$.

\item[\textup{(ii)}]
Let $p\in(1,\infty)$. If there exist $\eta\in\mathbb R$
and a positive constant $C$ such that, for any
$\upsilon\in A_p(\mathbb R)$ and
$f\in\dot W^{1,p}_\upsilon(\mathbb R)$,
\begin{align*}
\sup_{\lambda\in(0,\infty)}\lambda^p
\sum_{I\in\mathcal D^1_{\lambda,0}[f]}
|I|^{-p}\upsilon(I)
\leq C[\upsilon]_{A_p(\mathbb R)}^{\eta}
\|f'\|_{L^p_\upsilon(\mathbb R)}^p,
\end{align*}
then $\eta\geq\frac{p}{p-1}$.

\end{enumerate}
Here and in what follows, for any $\alpha\in\{0,1\}$,
$\mathcal D^\alpha_{\lambda,b}[f]$ is as in \eqref{eq:3.1x} with $n=1$ and
$\mathcal D$ replaced by $\mathcal D^\alpha$ and hence, correspondingly, we replace $\mathcal D_{\lambda,b}[f]$ by $\mathcal D^\alpha_{\lambda,b}[f]$, where $\mathcal D^\alpha$ is
as in Example \ref{ex-dyadic}.
\end{lemma}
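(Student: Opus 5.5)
This lemma is not new: it is a restatement of \cite[Proposition~2.11(i) and (ii)]{scm}. The proof therefore consists mainly of translating notation, and I would not reprove the sharpness from scratch. The translation has three parts.
\begin{itemize}
\item Specialize \eqref{eq:3.1x} to $n=1$ and $\mathcal D=\mathcal D^\alpha$ with $\alpha\in\{0,1\}$. Then $|I|^{1+\frac1n}=|I|^2$, and the level sets $\mathcal D^\alpha_{\lambda,b}[f]$ coincide with the sets of ``large double-oscillation'' intervals used in \cite{scm}.
\item Check that the exponent $\beta+1-\frac1p$ on the level-set threshold and the weight $|I|^{\beta p-1}\upsilon(I)$ match the parametrization in \cite[Theorem~1.1]{scm}. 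In assertion (ii) the choice $\beta=\frac1p-1$ gives threshold exponent $0$ and factor $|I|^{-p}$, as displayed.
\item Check that $\mathcal D^0$ and $\mathcal D^1$ from Example \ref{ex-dyadic} are exactly the one-dimensional standard and one-third-shifted grids used there. Then both assertions are quoted verbatim.
\end{itemize}

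To make the citation self-contained, I would also recall the extremal examples behind it.

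For (i), take $\upsilon:=|\cdot|^{-1+\delta}$ on $\mathbb R$. By \cite[Lemma~1.4]{b93}, $[\upsilon]_{A_1}\sim\delta^{-1}$.
\begin{itemize}
\item Take a fixed $f$ whose derivative is a bump supported away from $\mathbf0$, so that $\|f'\|_{L^p_\upsilon}\sim1$ uniformly in $\delta$.
\item Fix one dyadic interval $I\ni\mathbf0$ of unit size on which $f$ has nontrivial oscillation. It lies in $\mathcal D^0_{\lambda,\beta+1-\frac1p}[f]$ for a fixed small $\lambda$.
\item Its contribution is $\upsilon(I)\sim\delta^{-1}$, which forces $\eta\ge1$, exactly as in the proof of Proposition \ref{pro:sharp-m}(i).
\end{itemize}

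For (ii), use the power weight $\upsilon:=|\cdot|^{(1-\varepsilon)(p-1)}$, for which $[\upsilon]_{A_p}\sim\varepsilon^{1-p}$. Take $f$ to be a primitive of $|\cdot|^{-1+\varepsilon}\mathbf1_{(-1,1)}$, for instance $f(x)=\operatorname{sgn}(x)|x|^{\varepsilon}/\varepsilon$ near $\mathbf0$.
\begin{itemize}
\item Then $\|f'\|_{L^p_\upsilon}^p\sim\varepsilon^{-1}$.
\item Every interval $I\in\mathcal D^1$ with $\mathbf0$ in its interior and $|I|\in(2^{-1/\varepsilon},1)$ has double oscillation $\gtrsim|I|^{\varepsilon}/\varepsilon\gtrsim\varepsilon^{-1}$. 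The shifted grid is needed precisely so that $\mathbf0$ is interior to such intervals at every scale.
\item Choosing $\lambda\sim\varepsilon^{-1}$, summing $|I|^{-p}\upsilon(I)\sim|I|^{-(p-1)\varepsilon}$ over roughly $\varepsilon^{-1}$ scales, and multiplying by $\lambda^p$ gives a lower bound $\gtrsim\varepsilon^{-p-1}$.
\item Comparing with the upper bound $\varepsilon^{-(p-1)\eta-1}$ yields $\eta\ge\frac p{p-1}$, mirroring Case~1 of Proposition \ref{pro:sharp-m}(ii).
\end{itemize}

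The only genuine obstacle is bookkeeping: matching the normalization of \eqref{eq:3.1x} and the grids $\mathcal D^\alpha$ with those in \cite{scm}. In particular, one must confirm that the shifted grid $\mathcal D^1$ places $\mathbf0$ strictly inside intervals at every scale in $(2^{-1/\varepsilon},1)$. Once this is confirmed, both assertions follow directly from \cite[Proposition~2.11]{scm}.
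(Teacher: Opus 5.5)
Your proposal is correct and matches the paper, which gives no independent argument here: it simply restates \cite[Proposition~2.11(i) and (ii)]{scm} in the present notation ($n=1$, $\mathcal D=\mathcal D^\alpha$, and $\beta=\frac1p-1$ in (ii)). Your reconstructed extremal examples are also sound; they are minor variants of the ones the paper later quotes from \cite{scm} in the proof of Proposition~\ref{prop:sharp-sm}, namely the weight $|\cdot-\frac12|^{\delta-1}$ with a fixed $C_{\mathrm c}^\infty$ function for (i), and $|\cdot|^{(p-1)(1-\delta)}$ with the one-sided primitive of $u^{\delta-1}\mathbf 1_{(0,1)}(u)$ on the shifted grid $\mathcal D^1$ for (ii).
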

\begin{proof}[Proof of Proposition \ref{prop:sharp-sm}]
It suffices to prove $\eta\ge1$ in \textup{(i)}--\textup{(ii)} and
$\eta\ge\frac{p}{p-1}$ in \textup{(iii)}--\textup{(iv)} only for $\eta\in[0,\infty)$.
Indeed, $[\upsilon]_{A_q}^{\eta}\le1=[\upsilon]_{A_q}^{0}$
for any $q\in\{1,p\}$ and $\eta\in(-\infty,0)$. Thus,  the assumed
estimates \eqref{eq:sharp-sm-AP-1} and \eqref{eq:sharp-sm-AP-3} 
with a negative $\eta$ imply the corresponding estimates in the case $\eta=0$,
which contradict the shown lower bounds for $\eta$ under the assumption $\eta\in[0,\infty)$.

We first prove \textup{(i)}--\textup{(iv)}
when $n=1$, and then extend the argument to higher dimensions.

Fix $\gamma\in\mathbb R\setminus\{0\}$ and
$\lambda\in(0,\infty)$, and let
$\beta:=\gamma+\frac{1}{p}$. Let $\mathcal D$ be a dyadic grid
on $\mathbb R$.
For any $I\in\mathcal D_{\lambda,\gamma+1}[f]$,
$x\in I$, and $t\in(\frac{|I|}{4},\frac{|I|}{2}]$,
by the definition of $\mathcal M_1^\sharp$ we find that
\begin{align*}
\mathcal M_1^\sharp(f)(x,t)
\geq
\frac{2}{|I|}\fint_I |f(y)-f_I|\,dy
>c_{\gamma}\lambda t^\gamma,
\end{align*}
where $c_{\gamma}$ is a positive constant depending only on
$\gamma$. Since the boxes
$\{I\times(\frac{|I|}{4},\frac{|I|}{2}]\}_{I\in\mathcal D}$
are pairwise disjoint, we deduce that,
for any nonnegative weight $\upsilon$,
\begin{align}
&(c_\gamma\lambda)^p\underset{\mathcal M_1^\sharp(f)(x,t)
>c_\gamma\lambda t^\gamma}{\int_{\mathbb R}\int_0^\infty}t^{\gamma p-1}\,dt\,\upsilon(x)\,dx\notag\\
&\quad\geq(c_\gamma\lambda)^p\sum_{I\in\mathcal D_{\lambda,\gamma+1}[f]}\int_{I}\int_{\frac{|I|}{4}}^{\frac{|I|}{2}}t^{\gamma p-1}\,dt\,\upsilon(x)\,dx
\gtrsim\lambda^{p}\sum_{I\in\mathcal D_{\lambda,\beta+1-\frac{1}{p}}[f]}
|I|^{\beta p-1}\upsilon(I),
\label{eq:discrete-by-lifted-sharp}
\end{align}
where the implicit positive constant depends only on $p$ and
$\gamma$.
Using \eqref{eq:discrete-by-lifted-sharp}, Lemma \ref{lem:sharp-weighted-cddd}\textup{(i)}
with $\mathcal D=\mathcal D^0$, and the hypothesis \eqref{eq:sharp-sm-AP-1}, we conclude that assertion \textup{(i)} holds. For $p\in(1,\infty)$, the hypothesis
in \textup{(ii)} implies the one in \textup{(i)} because
$[\upsilon]_{A_p(\mathbb R)}\leq[\upsilon]_{A_1(\mathbb R)}$ and $\eta\geq0$.
Thus, \textup{(ii)} follows from \textup{(i)}.
Similarly, from \eqref{eq:discrete-by-lifted-sharp} and
Lemma \ref{lem:sharp-weighted-cddd}\textup{(ii)} with $\mathcal D=\mathcal D^1$
and $\gamma=-1$, together with \eqref{eq:sharp-sm-AP-3}, we infer that
assertion \textup{(iii)} holds.

We next show assertion \textup{(iv)} holds when $n=1$.
Fix $p\in(1,\infty)$ and $\gamma\in(-1,0)$.
Let $E$ be the self-similar set in \eqref{eq:4.3x} with $n=1$, and let $r_0$ and $\rho$ be as in \eqref{4.4x}.
For any $\delta\in(0,\min\{-\frac{\gamma}{2},1\})$, let
$\upsilon_\delta$ and $f_\delta$ be defined in
\eqref{eq:fixed-gamma-weight-test}. Define
\begin{align}\label{eq:sharp-sm-primitive}
F_\delta(x):=\int_{-\infty}^x f_\delta(y)\,dy,
\quad \forall\,x\in\mathbb R.
\end{align}
Since $f_\delta\in L^1(\mathbb R)\cap
L^p_{\upsilon_\delta}(\mathbb R)$,
its primitive $F_\delta$ satisfies $F_\delta'=f_\delta$ almost everywhere and belongs to $\dot W^{1,p}_{\upsilon_\delta}(\mathbb R)$.

Fix $t\in(0,r_0)$ and $x\in\mathcal T_t(E)$.
Choose $z\in E$ such that $|x-z|<t$, and let $I:=B(z,t)$.
By $f_\delta\geq0$, we obtain, for any
$y\in(z-t,z-\frac{t}{2})$ and
$w\in(z+\frac{t}{2},z+t)$,
\begin{align}\label{eq:F-delta-wy}
F_\delta(w)-F_\delta(y)
\geq
\int_{z-\frac{t}{2}}^{z+\frac{t}{2}}f_\delta(u)\,du.
\end{align}
Since \eqref{eq:F-delta-wy} holds, from \eqref{eq:fixed-gamma-local-negative} with $R=\frac{t}{2}$, it follows that
\begin{align}
\fint_I\fint_I|F_\delta(w)-F_\delta(y)|\,dw\,dy
&\geq
\frac{1}{|I|^2}
\int_{z-t}^{z-\frac{t}{2}}
\int_{z+\frac{t}{2}}^{z+t}
|F_\delta(w)-F_\delta(y)|\,dw\,dy\notag\\
&\geq\frac{1}{16}
\int_{z-\frac{t}{2}}^{z+\frac{t}{2}}f_\delta(u)\,du
\gtrsim
\frac{1}{\delta}t^{1+\gamma+\delta},
\label{eq:sharp-sm-4-mean-oscillation}
\end{align}
where we used $|I|=2t$, $d=1+\gamma$,
$B(z,\frac{t}{2})\subset\mathcal T_{r_0}(E)$, and $|E|=0$.
Using $x\in I$, $\operatorname{rad}(I)=t$,
\eqref{eq:sharp-sm-4-mean-oscillation}, and the definition of $\mathcal M_1^\sharp$, we find that
\begin{align}\label{eq:sharp-sm-4-pointwise}
\mathcal M_1^\sharp(F_\delta)(x,t)
\geq
\frac{1}{t}\fint_I|F_\delta-(F_\delta)_I|
\geq
\frac{c_*}{\delta}t^{\gamma+\delta},
\end{align}
where $c_*$ is a positive constant depending only on $p$
and $\gamma$.

Let
$\lambda_\delta:=\frac{c_*r_0^\delta}{2\delta}$
and
$\tau_\delta:=r_0 2^{-\frac{1}{\delta}}$.
For any $t\in(\tau_\delta,r_0)$, one has
$t^\delta>\frac{r_0^\delta}{2}$.
Thus, \eqref{eq:sharp-sm-4-pointwise} implies that
\begin{align}\label{eq:TE-inclusion-levelset}
\mathcal T_t(E)
\subset
\left\{
x\in\mathbb R:
\mathcal M_1^\sharp(F_\delta)(x,t)>
\lambda_\delta t^\gamma
\right\}.
\end{align}
From \eqref{eq:TE-inclusion-levelset} and
\eqref{eq:fixed-gamma-global-positive}, we deduce that
\begin{align*}
&\sup_{\lambda\in(0,\infty)}
\lambda^p
\underset{
\mathcal M_1^\sharp(F_\delta)(x,t)>
\lambda t^\gamma}
{\int_{\mathbb R}\int_0^\infty}
t^{\gamma p-1}\,dt\,\upsilon_\delta(x)\,dx\\
&\quad\geq
\lambda_\delta^p
\int_{\tau_\delta}^{r_0}
t^{\gamma p-1}
\int_{\mathcal T_t(E)}\upsilon_\delta(x)\,dx\,dt
\gtrsim
\delta^{-p}
\int_{\tau_\delta}^{r_0}
t^{-1-(p-1)\delta}\,dt\\
&\quad=\frac{2^{p-1}-1}{p-1}
r_0^{-(p-1)\delta}\delta^{-p-1}
\sim\delta^{-p-1},
\end{align*}
where we used $\lambda_\delta\sim\delta^{-1}$.
By \eqref{eq:sharp-sm-AP-3} with
$\upsilon=\upsilon_\delta$ and $f=F_\delta$, together with
\eqref{eq:fixed-gamma-test-norm} and \eqref{eq:fixed-gamma-Ap}, we conclude that
\begin{align}\label{eq:delta-eta-0}
\delta^{-p-1}
\lesssim
[\upsilon_\delta]_{A_p(\mathbb R)}^{\eta}
\|F_\delta'\|_{L^p_{\upsilon_\delta}(\mathbb{R})}^p
\sim
\delta^{(1-p)\eta-1},
\end{align}
where the implicit positive constants are independent of $\delta$.
Letting $\delta\to0^+$ in \eqref{eq:delta-eta-0}, we obtain
$\eta\geq\frac{p}{p-1}$, which proves \textup{(iv)} for $n=1$.

We next show that the assertions hold when $n\in\mathbb N\setminus\{1\}$.
It suffices to reduce the desired conclusion in $\mathbb R^n$ to the corresponding one-dimensional estimate.
In this part of the proof, we write $\mathcal M_{\theta,d}^{\sharp}$
for the lifted Sobolev sharp maximal function on $\mathbb R^d$,
where $d\in\{1,n\}$.

We assume that $q\in[1,\infty)$, $w\in A_q(\mathbb R)$, and
$g\in\dot W^{1,p}_w(\mathbb R)$ satisfy
\begin{align}\label{eq:sharp-n1-lim-gw}
\lim_{R\to\infty}\frac1{R^p}
\int_{-2R}^{2R}|g(s)|^pw(s)\,ds=0.
\end{align}
Choose a fixed function $\chi\in C_{\mathrm{c}}^{\infty}$
such that $0\leq\chi\leq1$, $\chi=1$ on $[-1,1]^n$, and
$\operatorname{supp}\chi\subset[-2,2]^n$.
For any $R\in(0,\infty)$ and $x=(x_1,x')\in\mathbb R \times\mathbb R^{n-1}$, let
$W(x):=w(x_1)$ and $G_R(x):=g(x_1)\chi(\frac{x}{R}).$
From the definition of $W$ and Definition \ref{df:c-v}\textup{(i)}, we infer that, for any $q\in[1,\infty)$,
\begin{align}\label{eq:sharp-sm-extension-weight}
[W]_{A_q(\mathbb R^n)}=[w]_{A_q(\mathbb R)}.
\end{align}
Moreover, $G_R(x)=g(x_1)$ on $[-R,R]^n$ and
$\operatorname{supp}G_R\subset[-2R,2R]^n$.
Using the product rule for weak derivatives, we obtain
\begin{align*}
\nabla G_R(x)
=g'(x_1)\chi\left(\frac{x}{R}\right)e_1
+\frac{1}{R}g(x_1)\nabla\chi\left(\frac{x}{R}\right),
\end{align*}
where $e_1:=(1,0,\ldots,0)$.
Since $0\leq\chi\leq1$, from the boundedness of $\nabla\chi$ and
the supports of $\chi(\frac{\cdot}{R})$ and $(\nabla\chi)(\frac{\cdot}{R})$, it follows that
\begin{align}
&\int_{\mathbb R^n}|\nabla G_R(x)|^pW(x)\,dx\notag\\
&\quad\lesssim
\int_{[-2R,2R]^n}|g'(x_1)|^pw(x_1)\,dx+
\frac{1}{R^p}\int_{[-2R,2R]^n}|g(x_1)|^pw(x_1)\,dx\notag\\
&\quad=(4R)^{n-1}
\left[
\int_{-2R}^{2R}|g'(s)|^pw(s)\,ds
+\frac{1}{R^p}\int_{-2R}^{2R}|g(s)|^pw(s)\,ds
\right]\notag\\
&\quad\lesssim R^{n-1}
\left[
\|g'\|_{L^p_w(\mathbb R)}^p
+\frac1{R^p}\int_{-2R}^{2R}|g(s)|^pw(s)\,ds
\right],
\label{eq:sharp-sm-extension-gradient}
\end{align}
where the implicit positive constant depends only on $n$ and
$p$. Fix $T\in(0,\infty)$ and $R\geq16T$.
Let $s\in(-T,T)$, $t\in(0,T)$, and
$I=B(z,r)\ni s$ with $r\in[t,2t)$.
For any $x'\in[-\frac R2,\frac R2]^{n-1}=:Q_R'$, let
$B:=B((z,x'),2r)$.
Then $(s,x')\in B\subset[-R,R]^n$ and hence
$G_R(y)=g(y_1)$ on $B$.
By $I\times B(x',r)\subset B$ and $G_R(y)=g(y_1)$ on $B$
and by Fubini's theorem, we find that
\begin{align}\label{eq:G-g}
\frac{1}{2r}\fint_B|G_R(y)-(G_R)_B|\,dy
&\geq
\frac{1}{2r|B|}
\int_I\int_{B(x',r)}
|g(u)-(G_R)_B|\,dy'\,du\notag\\
&\gtrsim
\frac{1}{r}\fint_I|g(u)-(G_R)_B|\,du\gtrsim
\frac{1}{r}\fint_I|g(u)-g_I|\,du,
\end{align}
where the last inequality follows from
$\fint_I|g-g_I|\leq2\fint_I|g-a|$ for any constant $a$.
From taking the supremum over $I$ in \eqref{eq:G-g}, we deduce that
\begin{align}
\mathcal M_{1,n}^{\sharp}(G_R)((s,x'),2t)
\geq c_n\mathcal M_{1,1}^{\sharp}(g)(s,t),
\label{eq:sharp-sm-extension-oscillation}
\end{align}
where $c_n$ is a positive constant depending only on $n$.

Fix $\lambda\in(0,\infty)$, and let
$\mu:=c_n2^{-\gamma}\lambda$.
From \eqref{eq:sharp-sm-extension-oscillation}, we infer that, if
$(s,t)\in(-T,T)\times(0,T)$ satisfies
$\mathcal M_{1,1}^{\sharp}(g)(s,t)>\lambda t^\gamma$,
then, for every $x'\in Q_R'$,
$\mathcal M_{1,n}^{\sharp}(G_R)((s,x'),2t)
>\mu(2t)^\gamma$.
Let $\tau:=2t$. Using $W(s,x')=w(s)$ and $|Q_R'|=R^{n-1}$, we conclude that
\begin{align*}
&R^{n-1}\lambda^p
\underset{\mathcal M_{1,1}^{\sharp}(g)(s,t)>
\lambda t^\gamma}
{\int_{-T}^{T}\int_0^T}
t^{\gamma p-1}\,dt\,w(s)\,ds\\
&\quad\leq
\lambda^p
\underset{\mathcal M_{1,n}^{\sharp}(G_R)(x,2t)>
\mu(2t)^\gamma}
{\int_{(-T,T)\times Q_R'}\int_0^T}
t^{\gamma p-1}\,dt\,W(x)\,dx\\
&\quad=c_n^{-p}\mu^p
\underset{\mathcal M_{1,n}^{\sharp}(G_R)(x,\tau)>
\mu\tau^\gamma}
{\int_{(-T,T)\times Q_R'}\int_0^{2T}}
\tau^{\gamma p-1}\,d\tau\,W(x)\,dx\\
&\quad\lesssim
\sup_{\nu\in(0,\infty)}\nu^p
\underset{\mathcal M_{1,n}^{\sharp}(G_R)(x,\tau)>
\nu\tau^\gamma}
{\int_{\mathbb R^n}\int_0^\infty}
\tau^{\gamma p-1}\,d\tau\,W(x)\,dx,
\end{align*}
where we used
$\lambda^p2^{-\gamma p}=c_n^{-p}\mu^p$.
From \eqref{eq:sharp-sm-AP-1} in case \textup{(i)} and \eqref{eq:sharp-sm-AP-3}
in cases \textup{(ii)}--\textup{(iv)}, both with $\upsilon=W$ and $f=G_R$,
together with \eqref{eq:sharp-sm-extension-weight} and \eqref{eq:sharp-sm-extension-gradient}, it follows that
\begin{align}\label{eq:sharp-sm-transfer-truncated}
\lambda^p
\underset{\mathcal M_{1,1}^{\sharp}(g)(s,t)>\lambda t^\gamma}
{\int_{-T}^{T}\int_0^T}
t^{\gamma p-1}\,dt\,w(s)\,ds\lesssim[w]_{A_q(\mathbb R)}^{\eta}
\left[
\|g'\|_{L^p_w(\mathbb R)}^p
+\frac1{R^p}\int_{-2R}^{2R}|g(s)|^pw(s)\,ds
\right],
\end{align}
where the implicit positive constant is independent of
$w$, $g$, $R$, $T$, and $\lambda$.
Letting $R\to\infty$ in
\eqref{eq:sharp-sm-transfer-truncated} and using
\eqref{eq:sharp-n1-lim-gw}, then letting $T\to\infty$
and taking the supremum over $\lambda\in(0,\infty)$, we find that
\begin{align}\label{eq:sharp-sm-transferred-one-dimensional}
\sup_{\lambda\in(0,\infty)}\lambda^p
\underset{\mathcal M_{1,1}^{\sharp}(g)(s,t)>\lambda t^\gamma}
{\int_{\mathbb R}\int_0^\infty}
t^{\gamma p-1}\,dt\,w(s)\,ds
\lesssim[w]_{A_q(\mathbb R)}^{\eta}\|g'\|_{L^p_w(\mathbb R)}^p.
\end{align}
 Applying \eqref{eq:sharp-sm-transferred-one-dimensional} and the proven one-dimensional condition, we obtain the desired
lower bounds for $\eta$ when $n\in\mathbb N\setminus\{1\}$.

It remains to verify \eqref{eq:sharp-n1-lim-gw}. To this end, it suffices to verify it for the following chosen
one-dimensional pair $w\in A_q(\mathbb R)$ and
$g\in\dot W^{1,p}_w(\mathbb R)$, where $q=1$ in \textup{(i)}
and $q=p$ in \textup{(ii)}--\textup{(iv)}.
For \textup{(i)} and \textup{(ii)}, we use the weight $w_\delta:=|\cdot-\frac12|^{\delta-1}$ with $\delta\in(0,1)$
and
$g\in C_{\mathrm c}^\infty(\mathbb R)$ as in the proof of
\cite[Proposition~2.11\textup{(i)}]{scm}.
Direct calculation shows that \eqref{eq:sharp-n1-lim-gw} holds for this pair $(w_{\delta},g_{\delta})$.
For \textup{(iii)}, as in the proof of
Lemma \ref{lem:sharp-weighted-cddd}\textup{(ii)}, we use
\begin{align*}
w_\delta(\cdot):=|\cdot|^{(p-1)(1-\delta)}
\ \text{and}\
g_\delta(\cdot):=\int_{-\infty}^{\cdot}u^{\delta-1}\mathbf1_{(0,1)}(u)\,du.
\end{align*}
Since $0\leq g_\delta\leq\frac1\delta$, we immediately deduce that
\begin{align*}
\frac1{R^p}\int_{-2R}^{2R}|g_\delta(s)|^pw_\delta(s)\,ds
\lesssim\delta^{-p}R^{-(p-1)\delta}\to0
\quad\text{as }R\to\infty,
\end{align*}
i.e., \eqref{eq:sharp-n1-lim-gw} holds for this pair $(w_{\delta},g_{\delta})$. For \textup{(iv)}, take $w:=\upsilon_\delta$ and
$g:=F_\delta$ as in
\eqref{eq:sharp-sm-primitive}.
Using $0\leq\upsilon_\delta\leq1$ and
$\|F_\delta\|_{L^\infty(\mathbb{R})}\leq\|f_\delta\|_{L^1(\mathbb{R})}<\infty$, we find that
\begin{align*}
\frac1{R^p}\int_{-2R}^{2R}|F_\delta(s)|^p\upsilon_\delta(s)\,ds
\lesssim\|F_\delta\|_{L^\infty(\mathbb{R})}^pR^{1-p}
\to0
\quad\text{as }R\to\infty,
\end{align*}
i.e., \eqref{eq:sharp-n1-lim-gw} holds for this pair $(w,g)$.
This completes the proof of Proposition \ref{prop:sharp-sm}.
\end{proof}

\section{Muckenhoupt Weight Characterizations}
\label{sec:weight-characterizations}

We now characterize the Muckenhoupt class $A_p$ through weak-type
estimates for $\mathcal M_\theta$ and $\mathcal M^\sharp_\theta$.
Let $p\in[1,\infty)$ and $\upsilon$ be a weight on $\mathbb R^n$.
Following \cite[Definition~2.2]{zlyyz24}, let
$\dot H^{1,p}_\upsilon$ denote the
\emph{homogeneous weighted Riesz
potential space} with seminorm
$\|f\|_{\dot H^{1,p}_\upsilon}
:=\|I_{-1}f\|_{L^p_\upsilon},$
where $I_{-1}:=(-\Delta)^{\frac12}$ and $\Delta:=\sum_{j=1}^n\frac{\partial^2}{\partial x_j^2}$.
Moreover, let
\begin{align*}
\dot Y^{1,p}_\upsilon:=
\begin{cases}
\dot W^{1,p}_\upsilon&\text{if }n=1\text{ or }p=1,\\
\dot H^{1,p}_\upsilon&\text{if }n\in\mathbb N\setminus\{1\}
\text{ and }p\in(1,\infty),
\end{cases}
\end{align*}
and write $Y^{\mathrm{av},p}_\upsilon:=L^p_\upsilon$ and
$Y^{\mathrm{osc},p}_\upsilon:=\dot Y^{1,p}_\upsilon$.
When $\upsilon\in A_p$, all the seminorms on $Y^{\tau,p}_\upsilon$
are equivalent to $\|D^\tau f\|_{L^p_\upsilon}$, while, when $\upsilon\notin A_p$,   this may not be true.

\begin{theorem}\label{th:AP-chara}
Let $\tau\in\{\mathrm{av},\mathrm{osc}\}$, $p\in[1,\infty)$,
$\gamma\in\Gamma_p^\tau$, $\theta\in(0,\infty)$, and
$\upsilon$ be a weight.
Then the following assertions are equivalent:
\begin{enumerate}[\rm (i)]
\item $\upsilon\in A_p$.
\item There exists a positive constant $c$, independent of $f$,
such that, for any $f\in Y^{\tau,p}_\upsilon\cap L^1_{\rm loc}$,
\begin{align*}
\sup_{\lambda\in(0,\infty)}\lambda^p
\underset{\mathcal M_\theta^\tau(f)(x,t)>\lambda t^\gamma}
{\int_{\mathbb R^n}\int_0^\infty}
t^{\gamma p-1}\,dt\,\upsilon(x)\,dx
\leq c\|f\|_{Y^{\tau,p}_\upsilon}^p.
\end{align*}
\end{enumerate}
\end{theorem}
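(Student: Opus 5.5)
The plan is to prove (i)$\Rightarrow$(ii) directly from Theorem \ref{th:intro-m}, and to prove (ii)$\Rightarrow$(i) by testing (ii) on localized functions built from a fixed ball. By \eqref{eq:theta-change}, we may take $\theta=1$ throughout.

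For (i)$\Rightarrow$(ii), let $\upsilon\in A_p$. When $\tau=\mathrm{av}$, the upper bounds \eqref{eq:intro-m-1} and \eqref{eq:intro-m-p} give (ii) at once, since $Y^{\mathrm{av},p}_\upsilon=L^p_\upsilon$. When $\tau=\mathrm{osc}$ and $n=1$ or $p=1$, the space $Y^{\mathrm{osc},p}_\upsilon$ is $\dot W^{1,p}_\upsilon$, and again we invoke Theorem \ref{th:intro-m}. For $n\ge2$ and $p\in(1,\infty)$, we use the weighted $L^p_\upsilon$ boundedness of the Riesz transforms for $\upsilon\in A_p$. It gives $\|\nabla f\|_{L^p_\upsilon}\sim\|(-\Delta)^{1/2}f\|_{L^p_\upsilon}$, with the caveat of identifying $f$ modulo constants, as in \cite{zlyyz24}. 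Theorem \ref{th:intro-m} then applies. The constant $c$ is allowed to depend on $[\upsilon]_{A_p}$, so no sharpness is needed.

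For (ii)$\Rightarrow$(i) with $\tau=\mathrm{av}$, fix a ball $B=B(x_0,r)$ and a nonnegative $f$ supported in $B$. For $x\in B$ and $t\in[r,2r)$, the ball $B(x_0,t)\ni x$ is admissible, so
\begin{align*}
\mathcal M_1(f)(x,t)\ge \fint_{B(x_0,t)}f\gtrsim \fint_B f.
\end{align*}
Choose $\lambda\sim r^{-\gamma}\fint_B f$, so that $\lambda t^\gamma<\mathcal M_1(f)(x,t)$ on $B\times[r,2r)$. Then (ii) yields
\begin{align*}
\Bigl(\fint_B f\Bigr)^p r^{-\gamma p}\,\upsilon(B)\,r^{\gamma p}\lesssim \|f\|_{L^p_\upsilon}^p,
\quad\text{that is,}\quad
\Bigl(\fint_B f\Bigr)^p\upsilon(B)\lesssim\int_B f^p\upsilon .
\end{align*}
This is the standard testing condition. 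For $p>1$, take $f=(\upsilon+\epsilon)^{-1/(p-1)}\mathbf 1_B$ and let $\epsilon\to0$ to obtain the $A_p$ condition on balls. For $p=1$, take $f=\mathbf 1_S$ with $S\subset B$ to get $\frac{|S|}{|B|}\upsilon(B)\lesssim\upsilon(S)$, which after Lebesgue differentiation gives $\fint_B\upsilon\lesssim\operatorname{ess\,inf}_B\upsilon$. One must also check that $\upsilon$ is locally integrable and that these test functions lie in $L^p_\upsilon$; truncations handle this.

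For $\tau=\mathrm{osc}$, the analogous test is $f(y)=\int_{-\infty}^{y_1}h(s)\,ds$ cut off suitably, or more naturally a primitive in the $x_1$ direction of $g\mathbf 1_B$. This is the main obstacle. We need a lower bound $\frac1r\fint_{B'}|f-f_{B'}|\gtrsim\fint_B g$ on comparable balls, in the spirit of \eqref{eq:sharp-sm-4-mean-oscillation}. We also need the right-hand side $\|f\|_{Y^{\mathrm{osc},p}_\upsilon}$ to be controlled by $\int_B g^p\upsilon$ \emph{without} assuming $\upsilon\in A_p$.

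For $\dot W^{1,p}$ (when $n=1$ or $p=1$), we would take $f(y)=\phi(y')\int_{-\infty}^{y_1}g$, with $\phi$ a cutoff. The gradient then contains the extra term $\nabla'\phi\int g$, which must be absorbed. For this I would first try nonnegative $g$ whose primitive is localized by subtracting a reflected copy, keeping $\nabla f$ supported in $2B$, and then use doubling-type information that is itself derived from the testing inequality. For $\dot H^{1,p}_\upsilon$ with $n\ge2$, I would follow \cite{zlyyz24}: take $f=I_1(g)$, the Riesz potential of $g=\psi\mathbf 1_B$ with $\psi\ge0$. Then $\|f\|_{\dot H^{1,p}_\upsilon}=\|g\|_{L^p_\upsilon}$ exactly, and the mean oscillation of $I_1 g$ over a ball comparable to $B$ is bounded below by $r\fint_B g$, as shown in that paper. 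Following the $\mathrm{av}$ argument then recovers the $A_p$ testing inequality.
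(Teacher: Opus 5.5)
Your (i)$\Rightarrow$(ii), and your (ii)$\Rightarrow$(i) for $\tau=\mathrm{av}$, follow the paper's argument up to cosmetic differences. The paper tests with the ball $B$ itself for $t\in(\frac{\operatorname{rad}(B)}2,\operatorname{rad}(B)]$, and it cites Bui for $\|\nabla f\|_{L^p_\upsilon}\sim\|f\|_{\dot H^{1,p}_\upsilon}$.

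The gap is (ii)$\Rightarrow$(i) for $\tau=\mathrm{osc}$. There you give a list of attempts with the decisive step left open, not a proof. The paper builds no test functions in this case. It notes that $\frac{m_f(x,t)}{t}\le\mathcal M_1^\sharp(f)(x,t)$, so (ii) yields the weak-type bound for the level sets $\{m_f(x,t)>\lambda t^{1+\gamma}\}$ with the same right-hand side. It then quotes \cite[Theorem~1.1]{zlyyz24} with $q=p$ and $\gamma$ replaced by $p\gamma$.

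Parts of your own sketch do work:
\begin{itemize}
\item For $n=1$, drop the cutoff. Then $f(y):=\int_{-\infty}^y g\mathbf 1_B$ has $f'=g\mathbf 1_B$. The two-sided argument behind \eqref{eq:sharp-sm-4-mean-oscillation}, applied on $2B$, gives $\frac1{2r}\fint_{2B}|f-f_{2B}|\gtrsim\fint_Bg$. After that the $\mathrm{av}$ argument applies verbatim.
\item For $n\ge2$, $p>1$, the $I_1(g)$ test is viable: compare $I_1g$ on $B$ with its values on $10B\setminus9B$, using $n\ge2$. You still need to check that $I_1g\in\dot H^{1,p}_\upsilon$ for a weight not yet known to be in $A_p$.
\end{itemize}

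For $n\ge2$, $p=1$ with the gradient seminorm, your plan breaks for a structural reason. With $f=\phi(y')\int_{-\infty}^{y_1}g$, the cutoff term $\int|\nabla'\phi|\,|\int g|\,\upsilon$ has size about $\upsilon(2B)\fint_Bg$. That is the same order as the left-hand side, so the resulting inequality carries no information. A reflection in $y_1$ does not touch the cutoff in $y'$.

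In fact no test function can succeed here. Take $\upsilon(x):=|x|^\alpha$ with $\alpha\in(0,1)$, which is not in $A_1$, and split $\mathbb R^{n+1}_+$ into two regions.
\begin{itemize}
\item On $\{t<\frac{|x|}{8}\}$, every admissible ball lies in the enlarged annulus $A_k^*$ around $x\in A_k$ (notation from the proof of Lemma \ref{prop:sm-gradient-Ap-counterexample}), where $\upsilon\sim2^{k\alpha}$. Extend $(f-f_{A_k^*})|_{A_k^*}$ to $\mathbb R^n$, using a $W^{1,1}$ extension plus the $L^1$ Poincar\'e inequality, uniformly in $k$ by scaling. Applying Theorem \ref{th:intro-m}(i) with $\upsilon\equiv1$ on each annulus bounds this part by $\|\nabla f\|_{L^1_\upsilon}$.
\item On $\{t\ge\frac{|x|}{8}\}$, one has $\mathcal M_1^\sharp(f)(x,t)\lesssim t^{-n-1}\int_{B(\mathbf0,12t)}|f-a_f|$ for a suitable constant $a_f$. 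Chebyshev's inequality in $t$ and Tonelli's theorem bound this part by $\frac1{1-\alpha}\int_{\mathbb R^n}|f(y)-a_f|\,|y|^{\alpha-1}\,dy$. This is $\lesssim\|\nabla f\|_{L^1_\upsilon}$ by the weighted Hardy inequality, proved exactly as \eqref{eq:sm-counterexample-Hardy}, which is valid because $n-1+\alpha>0$.
\end{itemize}
So (ii) holds for this $\upsilon\notin A_1$; this is the $p=1$ counterpart of Proposition \ref{prop:sm-gradient-not-characterization}.

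The obstacle you flagged is therefore not removable. The case $\tau=\mathrm{osc}$, $p=1$, $n\ge2$ of the statement appears to fail as written. The paper covers this case only through the citation of \cite{zlyyz24}, which should be checked for that range.
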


\begin{proof}
Since \eqref{eq:theta-change} holds, we infer that it suffices to consider
$\theta=1$. Suppose first that $\upsilon\in A_p$.
When $\tau=\mathrm{av}$, Theorem \ref{th:intro-m}
gives \textup{(ii)} directly.
When $\tau=\mathrm{osc}$, \textup{(ii)} follows directly
from Theorem \ref{th:intro-m} if $n=1$ or $p=1$.
In the remaining case, applying \cite[Theorem~2.8 and Remark~2.9]{bui82}
(see also \cite[Lemma~2.10]{zlyyz24}), we obtain
$\|\nabla f\|_{L^p_\upsilon}
\sim\|f\|_{\dot H^{1,p}_\upsilon}.$
From this, together with Theorem \ref{th:intro-m}\textup{(ii)},
we deduce that
\textup{(ii)} holds also in the remaining case.

Conversely, assume that \textup{(ii)} holds. We first consider
$\tau:=\mathrm{av}$. Fix a ball $B$ and a bounded nonnegative function $f$
supported in $B$ with $\fint_B f(y)\,dy\in(0,\infty)$, and let
\begin{align*}
\lambda:=\frac{1}
{2\max\{1,2^{-\gamma}\}\operatorname{rad}(B)^\gamma}\fint_Bf(y)\,dy.
\end{align*}
Then, for any $x\in B$ and
$t\in(\frac{\operatorname{rad}(B)}2,\operatorname{rad}(B)]$,
we obviously have
\begin{align*}
\lambda t^\gamma\leq\frac12\fint_Bf(y)\,dy
<\fint_Bf(y)\,dy\leq\mathcal M_1(f)(x,t).
\end{align*}
Applying \textup{(ii)} to
$B\times(\frac{\operatorname{rad}(B)}{2},\operatorname{rad}(B)]$, we conclude that
\begin{align*}
\lambda^p\upsilon(B)
\int_{\frac{\operatorname{rad}(B)}2}^{\operatorname{rad}(B)}
t^{\gamma p-1}\,dt
\lesssim\int_Bf(y)^p\upsilon(y)\,dy.
\end{align*}
Using the definition of $\lambda$ and the change of
variables $t=\operatorname{rad}(B)s$, we find that
\begin{align}\label{eq:testing-inequality}
\upsilon(B)\left[\fint_Bf(y)\,dy\right]^p
\lesssim\int_Bf(y)^p\upsilon(y)\,dy,
\end{align}
where the implicit positive constant is independent of $B$ and
$f$. By truncation and the monotone convergence theorem, we extend
\eqref{eq:testing-inequality} to every
nonnegative measurable function supported in $B$.
From this and \cite[Remark~7.1.4 and Proposition~7.1.5\textup{(8)}]{gtm249}, it follows that $\upsilon\in A_p$.

We next consider $\tau:=\mathrm{osc}$. Let $m_f$ be as in Remark
\ref{re:intro-sm}\textup{(ii)}. By $\frac{m_f(x,t)}t\leq\mathcal M_1^\sharp(f)(x,t)$ and
\textup{(ii)}, we conclude that
\begin{align*}
\sup_{\lambda\in(0,\infty)}\lambda^p
\underset{m_f(x,t)>\lambda t^{1+\gamma}}
{\int_{\mathbb R^n}\int_0^\infty}
t^{\gamma p-1}\,dt\,\upsilon(x)\,dx
\lesssim\|f\|_{\dot Y^{1,p}_\upsilon}^p
\end{align*}
for any $f\in\dot Y^{1,p}_\upsilon\cap L^1_{\rm loc}$.
From \cite[Theorem~1.1]{zlyyz24} with $q=p$ and the
parameter $\gamma$ therein replaced by $p\gamma$, we deduce that $\upsilon\in A_p$. This completes the proof of Theorem \ref{th:AP-chara}.
\end{proof}

For $\tau:=\mathrm{osc}$, when $n=1$ or $p=1$, Theorem \ref{th:AP-chara}
characterizes $A_p$ by an upper estimate involving the gradient
seminorm. The next proposition proves that this conclusion fails
when $n\in\mathbb N\setminus\{1\}$ and $p\in(1,\infty)$.
\begin{proposition}\label{prop:sm-gradient-not-characterization}
Let $n\in\mathbb N\setminus\{1\}$ and $p\in(1,\infty)$.
Then there exists a weight $\upsilon\notin A_p$ such that,
for any $\gamma\in\mathbb R\setminus\{0\}$ and
$f\in\dot W^{1,p}_\upsilon$,
\begin{align}\label{eq:sm-gradient-lifted-counterexample}
\sup_{\theta,\lambda\in(0,\infty)}\lambda^p
\underset{\mathcal M_\theta^\sharp(f)(x,t)>\lambda t^\gamma}
{\int_{\mathbb R^n}\int_0^\infty}
t^{\gamma p-1}\,dt\,\upsilon(x)\,dx
\lesssim\frac{1}{|\gamma|}\|\nabla f\|_{L^p_\upsilon}^p,
\end{align}
where the implicit positive constant depends only on $n$ and
$p$. Consequently, $\dot Y^{1,p}_\upsilon$ in Theorem
\ref{th:AP-chara} with $\tau=\mathrm{osc}$ cannot be replaced
by $\dot W^{1,p}_\upsilon$.
\end{proposition}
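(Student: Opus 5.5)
We will exhibit a weight that is constant in all variables but one, and which fails $A_p$ precisely because of its behavior in that variable. Take $\upsilon(x):=w(x_1)$, with $w(s):=|s|^{\alpha}$ and $\alpha\geq p-1$. For $n\ge2$ the hyperplane $\{x_1=0\}$ has zero capacity relative to this weight, so $\|\nabla f\|_{L^p_\upsilon}$ does not see the degeneracy, even though $\upsilon\notin A_p(\mathbb R^n)$. Indeed, $[\upsilon]_{A_p(\mathbb R^n)}=[w]_{A_p(\mathbb R)}$, and $|s|^{\alpha}\in A_p(\mathbb R)$ requires $-1<\alpha<p-1$, so $\upsilon\notin A_p$ once $\alpha\ge p-1$.

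An alternative with the same flavor is the radial weight $|x|^{\alpha}$ with $\alpha\ge n(p-1)$. This weight is not in $A_p$, but it is in $A_q$ for $q$ large, and the gradient norm is insensitive to the point $\mathbf 0$ when $n\ge2$. The plan is to use whichever choice makes the key inequality below easiest. I expect the radial choice to work better: since $n\ge2$, a single point is removable for $W^{1,p}$ in a robust way.

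The core step is a weighted maximal-type inequality. We want
\begin{align*}
\int_{\mathbb R^n}\left[M(|\nabla f|)(x)\right]^p\upsilon(x)\,dx\lesssim\int_{\mathbb R^n}|\nabla f(x)|^p\upsilon(x)\,dx,
\end{align*}
or more precisely the same bound with $M(|\nabla f|)$ replaced by a gradient-adapted maximal quantity $\widetilde M f$ satisfying $\mathcal M_1^\sharp(f)(x,t)\lesssim \widetilde Mf(x)$. Once this holds, the argument leading to \eqref{eq:m-by-M} goes through: Tonelli's theorem together with $\int_{\{t:\,\widetilde Mf(x)>\lambda t^\gamma\}}t^{\gamma p-1}\,dt=\frac{1}{p|\gamma|}\lambda^{-p}[\widetilde Mf(x)]^p$ (for either sign of $\gamma$) gives \eqref{eq:sm-gradient-lifted-counterexample} with the factor $\frac1{|\gamma|}$. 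By \eqref{eq:theta-change} it suffices to treat $\theta=1$.

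To produce $\widetilde M$, start from the pointwise Poincaré/representation bound
\begin{align*}
\frac1r\fint_B|f-f_B|\lesssim\frac1r\fint_B\int_B\frac{|\nabla f(y)|}{|z-y|^{n-1}}\,dy\,dz,
\end{align*}
valid for balls $B\ni x$. For the weight $|x|^\alpha$, the $A_p$ condition fails only for balls near $\mathbf 0$. For such balls, use the fact that the Riesz-potential kernel $|z-y|^{1-n}$ is integrable against the weight in a way that plain averages are not. We will split $\nabla f$ into the piece near $\mathbf 0$ and the far piece. For the near piece, use Hardy-type inequalities in $L^p(|x|^\alpha)$ with $\alpha$ large, which hold for gradients via integration along rays from infinity; this is where $n\ge2$ is used, through the fact that the weight is large away from the origin and that the relevant level sets connect. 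For the far piece, the weight is locally $A_p$ at the relevant scale.

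Finally, the consequence follows by contrasting with Theorem~\ref{th:AP-chara}. If $\dot Y^{1,p}_\upsilon$ could be replaced by $\dot W^{1,p}_\upsilon$, then \eqref{eq:sm-gradient-lifted-counterexample} would force $\upsilon\in A_p$, which is a contradiction.

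The main obstacle is the weighted inequality for the oscillation maximal function with a non-$A_p$ weight, and specifically controlling the scales near the singular set. My first attempt there will be the one-dimensional reduction. With $\upsilon=w(x_1)$ and $w$ vanishing to high order at $0$, the difference between values of $f$ on the two sides of $\{x_1=0\}$ is controlled by integrating $\partial_{x'}f$ around the hyperplane through regions where $w\gtrsim$ its average. This uses $n\ge2$, since the complement of the hyperplane is connected "through the hyperplane" only when tangential directions exist.
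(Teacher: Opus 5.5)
The first gap is the choice of weight. The hyperplane weight $\upsilon(x)=|x_1|^\alpha$ with $\alpha>p-1$ violates the inequality. The cause is exactly the zero capacity you invoke: it lets functions in $\dot W^{1,p}_\upsilon$ jump across $\{x_1=0\}$ at finite cost.

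Take $f(x):=\min\{1,\max\{0,x_1e^{|x'|^2}\}\}$. Its gradient is supported in $\{0<x_1<e^{-|x'|^2}\}$, where $|\nabla f|^p\lesssim e^{p|x'|^2}+|x'|^p$. Hence $\|\nabla f\|_{L^p_\upsilon}^p\lesssim\int_{\mathbb R^{n-1}}[e^{-(\alpha+1-p)|x'|^2}+|x'|^pe^{-(\alpha+1)|x'|^2}]\,dx'<\infty$. But $f=0$ on $\{y_1<0\}$ and $f=1$ on $\{y_1>1\}$. Testing with $B((0,x'),t)$ gives $\mathcal M_1^\sharp(f)(x,t)\gtrsim t^{-1}$ whenever $|x_1|<t$ and $t\ge t_0(n)$. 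For any $\gamma\ne0$ and suitable $\lambda$, the level set then contains $\{|x_1|<t\}\times J$ for a nondegenerate interval $J\subset[t_0,\infty)$. The left-hand side is therefore infinite, because of the integration over $x'\in\mathbb R^{n-1}$.

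The complement of a hyperplane is disconnected in every dimension, so tangential derivatives cannot link the two sides. The one-dimensional reduction you plan to try first cannot succeed.

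The radial family is the right one, but not with $\alpha$ large. For a nonnegative bump $f$ supported in $B(\mathbf 0,1)$, one has $\mathcal M_1^\sharp(f)(x,t)\gtrsim t^{-n-1}$ for $t>|x|\ge2$. For $\gamma>0$, the functional at level $\lambda$ is then $\gtrsim T^{\alpha+n-(n+1)p}$ with $T:=(c/\lambda)^{1/(n+1+\gamma)}\to\infty$ as $\lambda\to0^+$. This forces $\alpha\le(n+1)p-n$. The paper uses the borderline weight $\upsilon(x)=|x|^{n(p-1)}$.

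The second gap is the core estimate, which fails even for this weight. For the same bump, $M(|\nabla f|)(x)\gtrsim|x|^{-n}$ for $|x|\ge2$. Moreover $\int_{|x|>2}|x|^{\alpha-np}\,dx=\infty$ for every $\alpha\ge n(p-1)$, that is, throughout the non-$A_p$ range you need. Your Riesz-potential majorant has the same defect: it is $\gtrsim r^{-n}\|\nabla f\|_{L^1}$ on a ball of radius $r\sim|x|$ containing $B(\mathbf 0,1)$. Any majorant obtained by applying a positive kernel to $|\nabla f|$, or to near/far pieces of it, discards the cancellation that makes the true oscillation decay like $|x|^{-n-1}$.

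The missing idea is to avoid $|\nabla f|$ altogether on balls with $\operatorname{rad}(B)\ge|x|/4$:
\begin{enumerate}
\item[(a)] Set $a_f:=\lim_{k\to\infty}f_{A_k}$ over the dyadic annuli $A_k$. The limit exists by the Poincar\'e inequality on the connected annuli and the summability of $2^{-j(n-1)}$; this is where $n\ge2$ enters.
\item[(b)] Prove the Hardy inequality $\|(f-a_f)/|\cdot|\|_{L^p_\upsilon}\lesssim\|\nabla f\|_{L^p_\upsilon}$.
\item[(c)] Bound $\frac1r\fint_B|f-f_B|\le\frac2r\fint_B|f-a_f|\lesssim r^{-n-1}\int_{B(\mathbf 0,6r)}|y|\,h(y)\,dy$ with $h:=|f-a_f|/|\cdot|$. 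This averaging operator is bounded on $L^p_\upsilon$ by Minkowski's inequality in polar coordinates.
\end{enumerate}
Only balls with $\operatorname{rad}(B)<|x|/4$ are handled by Poincar\'e and the unweighted $M$, on annuli where $\upsilon$ is essentially constant.

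Once $\|f_1^\sharp\|_{L^p_\upsilon}\lesssim\|\nabla f\|_{L^p_\upsilon}$ is established for the full Sobolev sharp maximal function $f_1^\sharp$ (supremum over all balls $B\ni x$), your layer-cake step is correct. So is your derivation of the ``consequently'' part from Theorem~\ref{th:AP-chara}.
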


We first establish the following auxiliary estimate.

\begin{lemma}\label{prop:sm-gradient-Ap-counterexample}
Let $n\in\mathbb N\setminus\{1\}$, $p\in(1,\infty)$, and
$\upsilon(x):=|x|^{n(p-1)}$ for any $x\in\mathbb R^n$.
Then $\upsilon\notin A_p$. Nevertheless, for any
$f\in\dot W^{1,p}_\upsilon$, the \emph{Sobolev sharp maximal function}
\begin{align*}
f_1^\sharp(x):=
\sup_{B\ni x}\frac{1}{\operatorname{rad}(B)}
\fint_B|f(y)-f_B|\,dy
\end{align*}
satisfies
\begin{align}\label{eq:sm-gradient-sharp-counterexample}
\left\|f_1^\sharp\right\|_{L^p_\upsilon}
\lesssim\|\nabla f\|_{L^p_\upsilon},
\end{align}
where the implicit positive constant depends only on $n$ and $p$.
\end{lemma}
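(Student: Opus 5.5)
The plan is to check the failure of $A_p$ directly, then bound $f_1^\sharp$ by splitting the balls according to their position relative to the singular point $\mathbf0$ of the weight. For $\upsilon(x)=|x|^{n(p-1)}$ we have $\upsilon^{-\frac1{p-1}}=|x|^{-n}$. This function is not integrable on any ball containing $\mathbf0$, so the second factor in \eqref{eq:c-v-p} is infinite and $\upsilon\notin A_p$. Consequently the pointwise bound $f_1^\sharp\lesssim M(|\nabla f|)$ coming from \eqref{eq:exact-sm-control} is useless here, since $M$ is unbounded on $L^p_\upsilon$. Instead, fix $x\ne\mathbf0$ and a ball $B\ni x$ of radius $r$, and split into two cases: $r<|x|/8$ (far balls) and $r\geq|x|/8$ (balls near the origin).

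\emph{Far balls.} If $r<|x|/8$, then $B$ lies in the annulus $\{|x|/2<|y|<2|x|\}$, where $\upsilon\sim|x|^{n(p-1)}$. The Poincar\'e inequality gives
\[
\frac1r\fint_B|f-f_B|\lesssim M\bigl(|\nabla f|\mathbf 1_{\{|x|/2<|y|<2|x|\}}\bigr)(x).
\]
Decompose $\mathbb R^n\setminus\{\mathbf0\}$ into dyadic annuli $A_k:=\{2^k\le|x|<2^{k+1}\}$. On each $A_k$ the weight is comparable to the constant $2^{kn(p-1)}$. The unweighted $L^p$ bound for $M$, applied to $|\nabla f|$ restricted to the enlarged annulus $A_{k-1}\cup A_k\cup A_{k+1}$, then shows that the $L^p_\upsilon(A_k)$ norm of this part is at most a constant times $\|\nabla f\|_{L^p_\upsilon(A_{k-1}\cup A_k\cup A_{k+1})}$. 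The enlarged annuli overlap boundedly, so summing over $k$ controls the far part by $\|\nabla f\|_{L^p_\upsilon}^p$.

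\emph{Balls near the origin.} If $r\ge|x|/8$, then $B\subset B_*:=B(\mathbf0,10r)$, and hence
\[
\frac1r\fint_B|f-f_B|\lesssim\frac1r\fint_{B_*}|f-f_{B_*}|.
\]
Here I use $n\ge2$: the annuli $A^{(j)}:=\{2^{-j-1}\cdot10r\le|y|<2^{-j}\cdot10r\}$ are connected John domains, so Poincar\'e holds on each $A^{(j)}$ and on each union $A^{(j)}\cup A^{(j+1)}$. Chaining gives
\[
|f_{A^{(j)}}-f_{A^{(0)}}|\lesssim\sum_{i\le j}2^{-i}r\fint_{A^{(i)}\cup A^{(i+1)}}|\nabla f|.
\]
Since $|A^{(j)}|\sim2^{-jn}|B_*|$, we can write $\fint_{B_*}|f-f_{A^{(0)}}|\lesssim\sum_j2^{-jn}\bigl[\fint_{A^{(j)}}|f-f_{A^{(j)}}|+|f_{A^{(j)}}-f_{A^{(0)}}|\bigr]$. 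Exchanging the order of summation, this should give
\[
\frac1r\fint_{B_*}|f-f_{B_*}|\lesssim r^{-n-1}\int_{B(\mathbf0,10r)}|z|\,|\nabla f(z)|\,dz=:H(|\nabla f|)(r).
\]
The extra factor $|z|$ produced by the chaining is exactly what rescues integrability. By H\"older's inequality,
\[
\int_{B(\mathbf0,R)}|z|\,g(z)\,dz\le\|g\|_{L^p_\upsilon}\Bigl(\int_{B(\mathbf0,R)}|z|^{p'-n}\,dz\Bigr)^{\frac1{p'}},
\]
and the last integral is finite since $p'>0$.

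It then remains to prove the Hardy-type inequality
\[
\Bigl\|\sup_{r\ge|x|/8}H(g)(r)\Bigr\|_{L^p_\upsilon}\lesssim\|g\|_{L^p_\upsilon}.
\]
Since $r^{-n-1}\int_{B(\mathbf0,10r)}|z|g\lesssim r^{-n-1}\int_{B(\mathbf0,10r)}|z|g$ is dominated, up to the constant $8^{n+1}$, by the same expression at $r=|x|/8$ plus the tail contributions, I would bound the supremum by a discrete sum over dyadic $r\ge|x|/8$. This reduces matters to a radial weighted Hardy inequality of the form $\int_0^\infty\bigl(\rho^{-n-1}\int_0^{10\rho}s\,G(s)\,s^{n-1}ds\bigr)^p\rho^{n(p-1)}\rho^{n-1}d\rho\lesssim\int_0^\infty G^p s^{n(p-1)}s^{n-1}ds$, where $G$ is the spherical average of $g$. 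In terms of exponents this reads $\int(\rho^{-n-1}\int_0^\rho s^n G)^p\rho^{np-1}\lesssim\int G^ps^{np-1}$. The standard Hardy inequality covers this, with margin: the relevant exponent is $\rho^{-(n+1)p+np-1}=\rho^{-p-1}$, and the Muckenhoupt Hardy condition $\sup_R\bigl(\int_R^\infty\rho^{-p-1}\bigr)^{1/p}\bigl(\int_0^R s^{np'}s^{-(np-1)(p'-1)}\bigr)^{1/p'}<\infty$ reduces to $R^{-1}\cdot R=1$.

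Combining the two cases gives \eqref{eq:sm-gradient-sharp-counterexample}. I expect the main obstacle to be the near-origin case, specifically the annular chaining estimate producing the weight $|z|/r$. This is the only place where $n\ge2$ enters, and it must yield a kernel integrable against $|z|^{-n}$. The Hardy step is routine once that estimate is in hand.
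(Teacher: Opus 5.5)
Your proposal is correct up to one loosely stated step. Its far-ball part matches the paper: radius $<|x|/8$, Poincar\'e on the ball, and the unweighted $L^p$ bound for $M$ on boundedly overlapping enlarged annuli. For balls of radius $r\gtrsim|x|$, however, you take a genuinely different route.

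The paper's route goes through a constant at infinity:
\begin{itemize}
\item It first proves a global weighted Hardy inequality $\|(f-a_f)/|\cdot|\|_{L^p_\upsilon}\lesssim\|\nabla f\|_{L^p_\upsilon}$, where $a_f:=\lim_{k\to\infty}f_{A_k}$ is the limit of annular averages at infinity.
\item The existence of $a_f$ uses $n\ge2$ a second time, through $\sum_j2^{-j(n-1)p'}<\infty$.
\item It then bounds the oscillation on a large ball by $r^{-n-1}\int_{B(\mathbf0,6r)}|f-a_f|$.
\item Finally it applies $Th(x)=\sup_{R\ge|x|}R^{-n-1}\int_{B(\mathbf0,R)}|y|h(y)\,dy$ to $h=|f-a_f|/|\cdot|$.
\end{itemize}

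You instead chain inward through the dyadic annuli of $B(\mathbf0,10r)$. Exchanging the sums with $\sum_{j>i}2^{-jn}\sim2^{-in}$ does give $\frac1r\fint_B|f-f_B|\lesssim r^{-n-1}\int_{B(\mathbf0,10r)}|z|\,|\nabla f(z)|\,dz$. This is essentially the same operator $T$, applied directly to $|\nabla f|$ rather than to $h$. Your argument is more local: it needs no constant at infinity and no global Hardy inequality, and $n\ge2$ enters only through connectedness of the annuli. The paper's route, in exchange, yields the Hardy inequality for $f-a_f$ as a by-product.

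The step to repair is the control of the supremum over $r\ge|x|/8$. The sentence as written is garbled (it reads $X\lesssim X$), and the Muckenhoupt condition you verify covers only the averaging operator $\int_0^\rho$. Either of the following closes the argument.
\begin{itemize}
\item Split the integral at $|z|\sim|x|$, as the paper does. This leaves the tail $\int_{|z|\gtrsim|x|}g(z)|z|^{-n}\,dz$, for which you must verify the dual Hardy condition for $\int_\rho^\infty$. It does hold: the two factors scale like $R^{n}$ and $R^{-n}$.
\item Alternatively, bound the supremum by $2^{n+1}\sum_{m\ge1}\Phi(2^m|x|/8)$, where $\Phi(r):=r^{-n-1}\int_{B(\mathbf0,10r)}|z|g(z)\,dz$. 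Then use the homogeneity of $\upsilon$, namely $\|\Phi(c|\cdot|)\|_{L^p_\upsilon}=c^{-n}\|\Phi(|\cdot|)\|_{L^p_\upsilon}$. The dyadic terms therefore decay like $2^{-mn}$ and the series converges.
\end{itemize}
Without one of these observations, applying your Hardy inequality termwise to a dyadic sum would not converge.
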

\begin{proof}
Using the fact that $\upsilon^{-\frac{1}{p-1}}(x)=|x|^{-n}$ is not locally
integrable at the origin, we obtain $\upsilon\notin A_p$.
Fix $f\in\dot W^{1,p}_\upsilon$.

We first show that there exists a constant $a_f$ such that
\begin{align}\label{eq:sm-counterexample-Hardy}
\left\|\frac{f-a_f}{|\cdot|}\right\|_{L^p_\upsilon}
\lesssim\|\nabla f\|_{L^p_\upsilon}.
\end{align}
For any $k\in\mathbb Z$, let
$A_k:=\{x\in\mathbb R^n:2^k<|x|<2^{k+1}\},$ $A_k^*:=\{x\in\mathbb R^n:2^{k-1}<|x|<2^{k+2}\},$
and
$$e_k:=\left[\int_{A_k^*}|\nabla f(x)|^p\upsilon(x)\,dx
\right]^{\frac{1}{p}}.$$
Since $n\in\mathbb N\setminus\{1\}$, the fixed annuli
$\{x\in\mathbb R^n:\frac{1}{2}<|x|<4\}$ and
$\{x\in\mathbb R^n:1<|x|<2\}$ are connected and have smooth
boundaries.
For $N\in(0,\infty)$, let $T_Nf:=\max\{-N,\min\{f,N\}\}$.
Applying the Poincar\'e inequality
\cite[Section~5.8.1, Theorem~1]{evans10} to $T_Nf$ on $A_k$ and $A_k^*$ and then taking the limit as $N\to\infty$, we obtain
\begin{align*}
\|f-f_{A_k^*}\|_{L^p(A_k^*)}
\lesssim2^k\|\nabla f\|_{L^p(A_k^*)} \ \text{and} \
\|f-f_{A_k}\|_{L^p(A_k)}
\lesssim2^k\|\nabla f\|_{L^p(A_k)}.
\end{align*}
By $|A_k|\sim|A_{k+1}|\sim2^{kn}$ and
$\upsilon(x)\sim2^{kn(p-1)}$ on $A_k^*$ and applying H\"older's inequality, we find that
\begin{align}\label{eq:sm-counterexample-fak1-fak}
|f_{A_{k+1}}-f_{A_k}|
\leq|f_{A_{k+1}}-f_{A_k^*}|+|f_{A_k}-f_{A_k^*}|
\lesssim
2^{-\frac{kn}{p}}\|f-f_{A_k^*}\|_{L^p(A_k^*)}
\lesssim2^{-k(n-1)}e_k.
\end{align}
Moreover, since $|x|\sim2^k$ and
$\upsilon(x)\sim2^{kn(p-1)}$ on $A_k$, we infer that
\begin{align}\label{eq:Ap-sm-f-ak}
\left[\int_{A_k}\frac{|f(x)-f_{A_k}|^p}{|x|^p}
\upsilon(x)\,dx\right]^{\frac{1}{p}}
\lesssim
2^{-k}2^{\frac{kn(p-1)}{p}}
\|f-f_{A_k}\|_{L^p(A_k)}\lesssim
2^{\frac{kn(p-1)}{p}}
\|\nabla f\|_{L^p(A_k)}
\lesssim e_k.
\end{align}
Finally, using the fact that any $x\in\mathbb R^n\setminus\{\mathbf0\}$
belongs to at most three of the enlarged annuli $A_k^*$, we conclude that
\begin{align}\label{eq:sm-counterexample-energy-sum}
\sum_{k\in\mathbb Z}e_k^p
=\sum_{k\in\mathbb Z}\int_{A_k^*}|\nabla f(x)|^p\upsilon(x)\,dx\lesssim\|\nabla f\|_{L^p_\upsilon}^p.
\end{align}
For any $k\in\mathbb Z$ and $l\in\mathbb Z\cap(k,\infty)$, from \eqref{eq:sm-counterexample-fak1-fak} and H\"older's inequality, it follows that
\begin{align}\label{eq:sm-counterexample-average-difference}
|f_{A_l}-f_{A_k}|
\leq
\sum_{j=k}^{l-1}|f_{A_{j+1}}-f_{A_j}|\lesssim
\sum_{j=k}^{l-1}2^{-j(n-1)}e_j\leq
\left[
\sum_{j=k}^{l-1}2^{-j(n-1)p'}
\right]^{\frac{1}{p'}}
\left[
\sum_{j=k}^{l-1}e_j^p
\right]^{\frac{1}{p}}.
\end{align}
Consequently, by $n\in\mathbb N\setminus\{1\}$, we obtain
\begin{align*}
|f_{A_l}-f_{A_k}|
\lesssim
2^{-k(n-1)}
\left[
\sum_{j=k}^{\infty}e_j^p
\right]^{\frac{1}{p}}
\end{align*}
uniformly in $l\in\mathbb Z\cap(k,\infty)$.
Since \eqref{eq:sm-counterexample-energy-sum} holds, we deduce that the
right-hand side of the above inequality tends to zero as $k\to\infty$.
Thus, $\{f_{A_k}\}_{k\in\mathbb Z}$ is a Cauchy
sequence and hence there exists $a_f\in\mathbb R$
such that
$
a_f:=\lim_{k\to\infty}f_{A_k}.$
Letting $l\to\infty$ in
\eqref{eq:sm-counterexample-average-difference},
we find that, for any $k\in\mathbb Z$,
\begin{align}\label{eq:fAk-af}
|f_{A_k}-a_f|
\leq
\sum_{j=k}^{\infty}|f_{A_{j+1}}-f_{A_j}|\lesssim
\sum_{j=k}^{\infty}2^{-j(n-1)}e_j.
\end{align}
From Minkowski's inequality,  \eqref{eq:Ap-sm-f-ak}, and \eqref{eq:fAk-af}, we infer that
\begin{align*}
&\left[
\int_{A_k}
\frac{|f(x)-a_f|^p}{|x|^p}\upsilon(x)\,dx
\right]^{\frac{1}{p}}\notag\\
&\quad\leq
\left[
\int_{A_k}
\frac{|f(x)-f_{A_k}|^p}{|x|^p}\upsilon(x)\,dx
\right]^{\frac{1}{p}}+
|f_{A_k}-a_f|
\left[
\int_{A_k}\frac{\upsilon(x)}{|x|^p}\,dx
\right]^{\frac{1}{p}}\\
&\quad\lesssim
e_k+2^{k(n-1)}|f_{A_k}-a_f|\lesssim
e_k+
\sum_{j=k}^{\infty}
2^{-(j-k)(n-1)}e_j.
\end{align*}
Using this and applying Young's inequality for sequences to
$\sum_{j=k}^{\infty}2^{-(j-k)(n-1)}e_j$, together with
$\{2^{-j(n-1)}\}_{j\in\mathbb Z_+}\in\ell^1(\mathbb Z_+)$
and \eqref{eq:sm-counterexample-energy-sum}, we obtain
\eqref{eq:sm-counterexample-Hardy}.

We next prove \eqref{eq:sm-gradient-sharp-counterexample}.
Fix $x\in A_k$ and let $B\ni x$ with
$\operatorname{rad}(B)\in(0,\frac{|x|}{4})$. Then
$B\subset A_k^*$. Using the Poincar\'e inequality
on $B$ and using the definition of $M$, we conclude that
\begin{align*}
\frac{1}{\operatorname{rad}(B)}\fint_B|f(y)-f_B|\,dy
\lesssim
\fint_B|\nabla f(y)|\,dy\lesssim
M(|\nabla f|\mathbf1_{A_k^*})(x).
\end{align*}
Since $\upsilon(x)\sim2^{kn(p-1)}$ on $A_k^*$, from the  boundedness of $M$ on $L^p$
and from the fact that any point of $\mathbb R^n$ belongs to at most three of the annuli
$\{A_k^*\}_{k\in\mathbb Z}$, we deduce that
\begin{align}
&\int_{\mathbb R^n}
\left[
\sup_{\genfrac{}{}{0pt}{}{B\ni x}{\operatorname{rad}(B)<\frac{|x|}{4}}}
\frac{1}{\operatorname{rad}(B)}
\fint_B|f(y)-f_B|\,dy
\right]^p
\upsilon(x)\,dx\notag\\
&\quad\lesssim
\sum_{k\in\mathbb Z}2^{kn(p-1)}
\int_{\mathbb R^n}
[M(|\nabla f|\mathbf1_{A_k^*})(x)
]^p\,dx \lesssim
\sum_{k\in\mathbb Z}2^{kn(p-1)}
\int_{A_k^*}|\nabla f(x)|^p\,dx
\notag\\
&\quad\lesssim
\sum_{k\in\mathbb Z}
\int_{A_k^*}|\nabla f(x)|^p\upsilon(x)\,dx
\lesssim
\|\nabla f\|_{L^p_\upsilon}^p.
\label{eq:sm-counterexample-small-balls}
\end{align}

It remains to consider balls $B\ni x$ with
$r:=\operatorname{rad}(B)\in[\frac{|x|}{4},\infty)$.
By $B\subset B(\mathbf0,6r)$, we obtain
\begin{align*}
\frac{1}{r}\fint_B|f(y)-f_B|\,dy
\leq
\frac{2}{r}\fint_B|f(y)-a_f|\,dy\lesssim
\frac{1}{r^{n+1}}
\int_{B(\mathbf0,6r)}|f(y)-a_f|\,dy.
\end{align*}
Let $h(y):=\frac{|f(y)-a_f|}{|y|}$ for $y\in\mathbb R^n\setminus\{\mathbf0\}$
and $h(\mathbf0):=0$.
Since \eqref{eq:sm-counterexample-Hardy} holds, we infer that
$
\|h\|_{L^p_\upsilon}
\lesssim
\|\nabla f\|_{L^p_\upsilon}.$
For any $x\in\mathbb R^n\setminus\{\mathbf0\}$, define
\begin{align*}
Th(x):=
\sup_{R\geq|x|}
\frac{1}{R^{n+1}}
\int_{B(\mathbf0,R)}|y|h(y)\,dy.
\end{align*}
Using $6r\geq|x|$ and $R=6r$, we find that
\begin{align*}
\sup_{\genfrac{}{}{0pt}{}{B\ni x}
{\operatorname{rad}(B)\geq\frac{|x|}{4}}}
\frac{1}{\operatorname{rad}(B)}
\fint_B|f(y)-f_B|\,dy
\lesssim Th(x).
\end{align*}
It therefore suffices to show that
$\|Th\|_{L^p_\upsilon}\lesssim\|h\|_{L^p_\upsilon}$.

Fix $x\in\mathbb R^n\setminus\{\mathbf0\}$ and let $\rho:=|x|$.
For any $R\geq\rho$, splitting the integral at $|y|=\rho$
and using
$\frac{|y|}{R^{n+1}}\leq\frac{1}{|y|^n}$
when $\rho\leq|y|<R$, we obtain
\begin{align}\label{eq:yh-split}
\frac{1}{R^{n+1}}
\int_{|y|<R}|y|h(y)\,dy
&=
\frac{1}{R^{n+1}}
\int_{|y|<\rho}|y|h(y)\,dy
+
\int_{\rho\leq|y|<R}
\frac{|y|}{R^{n+1}}h(y)\,dy\notag\\
&\leq
\frac{1}{\rho^{n+1}}
\int_{|y|<\rho}|y|h(y)\,dy
+
\int_{|y|\geq\rho}\frac{h(y)}{|y|^n}\,dy.
\end{align}
Taking the supremum over $R\geq\rho$ on \eqref{eq:yh-split} yields
\begin{align}\label{eq:sm-counterexample-T-split}
Th(x)
\leq
\frac{1}{\rho^{n+1}}
\int_{|y|<\rho}|y|h(y)\,dy
+
\int_{|y|\geq\rho}\frac{h(y)}{|y|^n}\,dy.
\end{align}
We define, for any $s\in(0,\infty)$,
\begin{align*}
H(s):=s^n
\left[\int_{\mathbb S^{n-1}}|h(s\omega)|^p\,d\sigma(\omega)
\right]^{\frac1p},
\end{align*}
where $\sigma$ denotes the surface measure on $\mathbb S^{n-1}$.
Using $\upsilon(x)=|x|^{n(p-1)}$ and integration in polar coordinates, we find that
\begin{align}\label{eq:sm-counterexample-H-norm}
\int_0^\infty H(s)^p\,\frac{ds}{s}
=\|h\|_{L^p_\upsilon}^p.
\end{align}
Moreover, by H\"older's inequality
in \eqref{eq:sm-counterexample-T-split} and the change
of variables $s=\rho u$, we conclude that
\begin{align}
\rho^nTh(x)
&\lesssim
\frac{1}{\rho}\int_0^\rho H(s)\,ds
+\rho^n\int_\rho^\infty H(s)s^{-n-1}\,ds\notag\\
&=
\int_0^1H(\rho u)\,du
+\int_1^\infty H(\rho u)u^{-n-1}\,du.
\label{eq:sm-counterexample-T-pointwise}
\end{align}
Applying the Minkowski integral inequality to
\eqref{eq:sm-counterexample-T-pointwise} with respect to the
dilation-invariant measure $\frac{d\rho}{\rho}$, we find that
\begin{align}\label{eq:sm-counterexample-T-bound}
\|Th\|_{L^p_\upsilon}
\lesssim
\left[\int_0^1du+\int_1^\infty u^{-n-1}\,du\right]
\left[\int_0^\infty H(s)^p\,\frac{ds}{s}\right]^{\frac1p}
\lesssim\|h\|_{L^p_\upsilon},
\end{align}
where the last inequality follows from
\eqref{eq:sm-counterexample-H-norm}.
Combining \eqref{eq:sm-counterexample-T-bound} with \eqref{eq:sm-counterexample-Hardy}
and \eqref{eq:sm-counterexample-small-balls} yields
\eqref{eq:sm-gradient-sharp-counterexample}.
This completes the proof of Lemma
\ref{prop:sm-gradient-Ap-counterexample}.
\end{proof}
Now, we are ready to show Proposition \ref{prop:sm-gradient-not-characterization}.
\begin{proof}[Proof of Proposition \ref{prop:sm-gradient-not-characterization}]
By Lemma \ref{prop:sm-gradient-Ap-counterexample}, we find that
the weight $\upsilon(x):=|x|^{n(p-1)}$ for $x\in\mathbb{R}^n$ does not belong to $A_p$
and satisfies \eqref{eq:sm-gradient-sharp-counterexample}.
Fix $f\in\dot W^{1,p}_\upsilon$.
Using $\mathcal M_\theta^\sharp(f)(x,t)\leq f_1^\sharp(x)$ for any $(x,t)\in\mathbb{R}^{n+1}_{+}$, we obtain, for any $\theta,\lambda\in(0,\infty)$ and
$\gamma\in\mathbb R\setminus\{0\}$,
\begin{align}\label{eq:sm-f1-sharp}
\lambda^p
\underset{\mathcal M_\theta^\sharp(f)(x,t)>\lambda t^\gamma}
{\int_{\mathbb R^n}\int_0^\infty}
t^{\gamma p-1}\,dt\,\upsilon(x)\,dx
\leq\lambda^p\int_{\mathbb R^n}
\int_{\{t\in(0,\infty):\,f_1^\sharp(x)>\lambda t^\gamma\}}
t^{\gamma p-1}\,dt\,\upsilon(x)\,dx
=\frac{1}{p|\gamma|}\|f_1^\sharp\|_{L^p_\upsilon}^p.
\end{align}
Taking the supremum over $\theta$ and $\lambda$ in \eqref{eq:sm-f1-sharp} and applying \eqref{eq:sm-gradient-sharp-counterexample}, we find that
\eqref{eq:sm-gradient-lifted-counterexample} holds. This completes the
proof of Proposition \ref{prop:sm-gradient-not-characterization}.
\end{proof}

\section{Applications}\label{sec:applications}
In this section, we give applications to weighted Hardy(--Sobolev) spaces in Subsection \ref{sub:6-1}, to
the Caffarelli--Silvestre extension in Subsection \ref{sub:6-2}, and to ball Banach function spaces in Subsection \ref{sub:6-3}.

\subsection{Weighted Hardy Spaces via Truncated Riesz Transforms}\label{sub:6-1}
We establish a weighted Hardy-space characterization and a weighted
Hardy--Sobolev norm estimate for the optimal range
$\gamma\in\mathbb R\setminus\{0\}$.
Let $\mathcal S$ and $\mathcal S'$
denote the Schwartz space on $\mathbb R^n$ and its dual, respectively.
Let $\Phi(x):=\pi^{-\frac n2}e^{-|x|^2}$ for any
$x\in\mathbb R^n$, and let
$\Phi_t(x):=t^{-n}\Phi(\frac{x}{t})$ for any $t\in(0,\infty)$.
For any $\upsilon\in A_1$, recall that the \textit{weighted Hardy space}
$H^1_\upsilon$ consists of those $f\in\mathcal S'$ for which
\begin{align*}
\left\|f\right\|_{H^1_\upsilon}
:=\left\|\sup_{t\in(0,\infty)}|\Phi_t*f|\right\|_{L^1_\upsilon}
<\infty.
\end{align*}
For any $j\in\{1,\ldots,n\}$ and $t\in(0,\infty)$, the
\textit{truncated Riesz transform} $R_j^t$ is defined by setting, for any $x\in\mathbb R^n$,
\begin{align*}
R_j^t(f)(x)
:=\int_{|x-y|>t}\frac{x_j-y_j}{|x-y|^{n+1}}f(y)\,dy.
\end{align*}
 By \eqref{eq:c-v-1} and the definition of the Hardy--Littlewood
maximal operator, we find that there exists a constant
$c_\upsilon\in(0,\infty)$ such that
\begin{align}\label{eq:hardy-weight-lower}
\upsilon(y)\geq c_\upsilon(1+|y|)^{-n}
\end{align} for almost every $y\in\mathbb R^n$.
Consequently, the integral defining $R_j^t(f)(x)$ is absolutely
convergent whenever $f\in L^1_\upsilon$.
We write $R_j(f):=\lim_{t\to0^+}R_j^t(f)$, where the limit
exists almost everywhere for $f\in L^1_\upsilon$.
We also use the classical weighted Riesz transform
characterization (see \cite[Theorem 1]{wheeden76}):
\begin{align}\label{eq:hardy-weighted-riesz-characterization}
f\in H^1_\upsilon
\ \Longleftrightarrow\
f\in L^1_\upsilon\ \text{and}\
R_j(f)\in L^1_\upsilon\ \text{for any }j\in\{1,\ldots,n\},
\end{align}
with
\begin{align}\label{eq:hardy-weighted-riesz-norm}
\|f\|_{H^1_\upsilon}
\sim\|f\|_{L^1_\upsilon}
+\sum_{j=1}^n\|R_j(f)\|_{L^1_\upsilon},
\end{align}
where the positive equivalence constants depend only on $n$ and
$[\upsilon]_{A_1}$.
For any $\gamma\in\mathbb R$ and any weight $\upsilon$, let
$\nu_{\gamma,\upsilon}$ be the measure on $\mathbb R^{n+1}_+$
defined by
$d\nu_{\gamma,\upsilon}(x,t)
:=t^{\gamma-1}\upsilon(x)\,dx\,dt$.
Recall that, for any measurable function $G$ on
$\mathbb R^{n+1}_+$, its \emph{weak $L^1$ quasi-norm} is given by
\begin{align*}
\|G\|_{L^{1,\infty}(\mathbb R^{n+1}_+,\nu_{\gamma,\upsilon})}
:&=\sup_{\lambda\in(0,\infty)}
\lambda\,\nu_{\gamma,\upsilon}
(\{(x,t)\in\mathbb R^{n+1}_+:|G(x,t)|>\lambda\})\\
&=\sup_{\lambda\in(0,\infty)}\lambda
\underset{|G(x,t)|>\lambda}
{\int_{\mathbb R^n}\int_0^\infty}
t^{\gamma-1}\,dt\,\upsilon(x)\,dx.
\end{align*}
\begin{theorem}\label{th:weighted-hardy-riesz}
Let $\upsilon\in A_1$, $\gamma\in\mathbb R\setminus\{0\}$,
and $d\nu_{\gamma,\upsilon}(x,t):=
t^{\gamma-1}\upsilon(x)\,dx\,dt$ on $\mathbb R^{n+1}_+$.
 Then $f\in H^1_\upsilon$ if and only if $f\in L^1_\upsilon$ and
\begin{align*}
\sum_{j=1}^n
\left\|\frac{R_j^t(f)(x)}{t^\gamma}\right\|
_{L^{1,\infty}(\mathbb R^{n+1}_+,\nu_{\gamma,\upsilon})}
<\infty.
\end{align*}
Moreover, for any $f\in H^1_\upsilon$,
\begin{align}\label{eq:weighted-hardy-lifted-characterization}
\|f\|_{H^1_\upsilon}
\sim\|f\|_{L^1_\upsilon}
+\sum_{j=1}^n
\left\|\frac{R_j^t(f)(x)}{t^\gamma}\right\|
_{L^{1,\infty}(\mathbb R^{n+1}_+,\nu_{\gamma,\upsilon})},
\end{align}
where the positive equivalence constants depend only on $n$,
$\gamma$, and $[\upsilon]_{A_1}$.
\end{theorem}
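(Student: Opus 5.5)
The plan is to prove the two directions separately. The lower estimate comes from a Fatou argument modelled on the proof of Theorem \ref{th:exact-m}, combined with Wheeden's characterization \eqref{eq:hardy-weighted-riesz-characterization}--\eqref{eq:hardy-weighted-riesz-norm}. The upper estimate comes from the weighted atomic decomposition of $H^1_\upsilon$ together with a pointwise majorant of $R_j^t(a)$ for each atom $a$. The majorant splits into a part independent of $t$ and a part that is strongly integrable against $t^{-1}\upsilon(x)\,dx\,dt$. This is what allows us to sum over atoms even though the weak quasi-norm is not subadditive for infinite sums.

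\emph{Lower estimate and the ``if'' part.} Let $f\in L^1_\upsilon$ and suppose the weak quasi-norms on the right-hand side are finite. By \eqref{eq:hardy-weight-lower}, $R_j^t(f)$ is well defined, and $R_j^t(f)(x)\to R_j(f)(x)$ as $t\to0^+$ for almost every $x$. Put $\varepsilon:=\lambda^{-1/\gamma}$ and substitute $t=\varepsilon s$ exactly as in \eqref{eq:exact-m-scaled}, so that
\begin{align*}
\lambda\,\nu_{\gamma,\upsilon}\left(\left\{|R_j^t(f)(x)|>\lambda t^\gamma\right\}\right)
=\underset{|R_j^{\varepsilon s}(f)(x)|>s^\gamma}{\int_{\mathbb R^n}\int_0^\infty}s^{\gamma-1}\,ds\,\upsilon(x)\,dx .
\end{align*}
Letting $\varepsilon\to0^+$ and applying Fatou's lemma gives
\begin{align*}
\frac1{|\gamma|}\|R_j(f)\|_{L^1_\upsilon}\le\left\|\frac{R_j^t(f)(x)}{t^\gamma}\right\|_{L^{1,\infty}(\mathbb R^{n+1}_+,\nu_{\gamma,\upsilon})}.
\end{align*}
Hence $R_j(f)\in L^1_\upsilon$ for every $j$. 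By \eqref{eq:hardy-weighted-riesz-characterization} and \eqref{eq:hardy-weighted-riesz-norm}, this gives $f\in H^1_\upsilon$ and the estimate ``$\lesssim$'' in \eqref{eq:weighted-hardy-lifted-characterization}.

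\emph{Upper estimate.} Write $f=\sum_k\lambda_ka_k$ with $\sum_k|\lambda_k|\lesssim\|f\|_{H^1_\upsilon}$. Each $a_k$ is an $(\infty)$-atom: it is supported in a ball $B_k=B(x_k,r_k)$, satisfies $\|a_k\|_{L^\infty}\le\upsilon(B_k)^{-1}$, and has $\int a_k=0$. For a single atom $a$ with ball $B=B(x_B,r)$, I will prove
\begin{align*}
|R_j^t(a)(x)|\le H_a(x)+G_a(x,t),
\end{align*}
with $\|H_a\|_{L^1_\upsilon}\lesssim1$ and $\int_{\mathbb R^{n+1}_+}|G_a|\,t^{-1}\upsilon(x)\,dx\,dt\lesssim1$. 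The construction splits into three regions.
\begin{enumerate}
\item[(a)] On $4B$, let $H_a:=R^*_j(a)$, the maximal truncated Riesz transform. Since $A_1\subset A_2$, weighted $L^2$ boundedness of $R_j^*$ and the Cauchy--Schwarz inequality give $\int_{4B}R^*_j(a)\,\upsilon\le\upsilon(4B)^{1/2}\|a\|_{L^2_\upsilon}\lesssim1$, where doubling controls $\upsilon(4B)$.
\item[(b)] Off $4B$, when $t<|x-x_B|-r$ or $t>|x-x_B|+r$, the function $R_j^t(a)(x)$ is either $R_j(a)(x)$ or $0$. Here we use the mean-zero smoothness bound $|R_j(a)(x)|\lesssim\upsilon(B)^{-1}r^{n+1}|x-x_B|^{-n-1}$ and put it into $H_a$. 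Summing over dyadic annuli with the $A_1$ condition gives $\int_{(4B)^\complement}\lesssim1$.
\item[(c)] Off $4B$, in the window $|t-|x-x_B||\le r$, the sphere $\{|x-y|=t\}$ cuts $B$. Here the crude bound $|R_j^t(a)(x)|\lesssim\upsilon(B)^{-1}r^n|x-x_B|^{-n}$ holds, and we assign it to $G_a$. The $dt/t$-measure of the window is about $r/|x-x_B|$, so $\iint|G_a|\,t^{-1}\upsilon\lesssim\upsilon(B)^{-1}\int_{(4B)^\complement}r^{n+1}|x-x_B|^{-n-1}\upsilon(x)\,dx\lesssim1$, again by the $A_1$ condition on annuli.
\end{enumerate}

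\emph{Summing over atoms.} Set $H:=\sum_k|\lambda_k|H_{a_k}$ and $G:=\sum_k|\lambda_k|G_{a_k}$. Both are controlled by $\sum_k|\lambda_k|$, in $L^1_\upsilon$ and in $L^1(t^{-1}\upsilon\,dx\,dt)$ respectively. Since $|R_j^t(f)|\le H+G$, the quasi-triangle inequality for two terms gives
\begin{align*}
\left\|t^{-\gamma}R_j^t(f)\right\|_{L^{1,\infty}(\nu_{\gamma,\upsilon})}\le2\left\|t^{-\gamma}H\right\|_{L^{1,\infty}(\nu_{\gamma,\upsilon})}+2\left\|t^{-\gamma}G\right\|_{L^{1,\infty}(\nu_{\gamma,\upsilon})}.
\end{align*}
For the first term, the computation in \eqref{eq:sm-f1-sharp} gives $\|t^{-\gamma}H\|_{L^{1,\infty}(\nu_{\gamma,\upsilon})}=|\gamma|^{-1}\|H\|_{L^1_\upsilon}$, valid for every $\gamma\ne0$. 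For the second term, Chebyshev's inequality gives $\lambda\nu_{\gamma,\upsilon}(\{t^{-\gamma}G>\lambda\})\le\iint G\,t^{-1}\upsilon\,dx\,dt$. The step I expect to be the main obstacle is justifying the atomic representation pointwise. Specifically, one must check that $R_j^t(f)=\sum_k\lambda_kR_j^t(a_k)$ almost everywhere for each fixed $t$. I plan to use the convergence of $\sum_k\lambda_ka_k$ in $L^1_\upsilon$ together with the absolute convergence guaranteed by \eqref{eq:hardy-weight-lower}. A second delicate point is making the region-(c) bound uniform across the boundary $t\approx|x-x_B|\pm r$.
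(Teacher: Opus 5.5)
Your proposal is correct and follows essentially the paper's route. The lower bound comes from the Fatou argument combined with Wheeden's Riesz-transform characterization. The upper bound uses the weighted atomic decomposition, a per-atom majorant $|R_j^t(a)(x)|\le H_a(x)+G_a(x,t)$ with $H_a\in L^1_\upsilon$ and $G_a\in L^1(t^{-1}\upsilon(x)\,dx\,dt)$, the exact identity $\|t^{-\gamma}H\|_{L^{1,\infty}(\mathbb R^{n+1}_+,\nu_{\gamma,\upsilon})}=|\gamma|^{-1}\|H\|_{L^1_\upsilon}$, Chebyshev for $G$, and $L^1_\upsilon$ convergence of the atomic series to justify $R_j^t(f)=\sum_k\lambda_kR_j^t(a_k)$ pointwise.

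The only difference is in the per-atom split. The paper separates $t<r$ from $t\ge r$, which costs a harmless factor $1+\log(|x-x_B|/r)$ off $4B$. You instead put all of $4B$ and all off-shell scales into $H_a$, and only the shell $||x-x_B|-t|\le r$ into $G_a$, which avoids the logarithm. The boundary issue you flag does not arise: $|R_j^t(a)(x)|\le\|a\|_{L^1}t^{-n}$ for every $t$, and $t\ge\frac34|x-x_B|$ on that shell.
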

\begin{theorem}\label{th:weighted-hardy-range}
Let $\upsilon\in A_1$ and $\gamma\in\mathbb R$.
Then the conclusions of Theorem \ref{th:weighted-hardy-riesz}
hold if and only if
$\gamma\in\mathbb R\setminus\{0\}$.
\end{theorem}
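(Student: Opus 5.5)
The plan is to prove the two directions separately. Sufficiency is already contained in Theorem \ref{th:weighted-hardy-riesz}: if $\gamma\in\mathbb R\setminus\{0\}$, its conclusions hold by that theorem. So the whole task is necessity. I would show that, for $\gamma=0$ and any $\upsilon\in A_1$, the quantity
\begin{align*}
\sum_{j=1}^n\left\|R_j^t(f)(x)\right\|_{L^{1,\infty}(\mathbb R^{n+1}_+,\nu_{0,\upsilon})}
\end{align*}
is infinite for \emph{every} nonzero $f\in H^1_\upsilon$. Then the equivalence \eqref{eq:weighted-hardy-lifted-characterization} must fail, because its left-hand side is finite. This mirrors the argument of Lemma \ref{prop:m-gamma-zero}.

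The key steps, in order, are as follows.
\begin{enumerate}
\item Exhibit a nonzero $f\in H^1_\upsilon$. I would take a bounded, compactly supported function with vanishing mean, for instance a difference of indicators of two disjoint balls. Such an $f$ is a multiple of an $H^1_\upsilon$-atom, since $\upsilon\in A_1$ is locally integrable and doubling by Lemma \ref{lem:A1-doubling}. Hence $f\in H^1_\upsilon$ via the atomic characterization; alternatively, \eqref{eq:hardy-weighted-riesz-characterization} can be used.
\item Show that $R_jf\neq0$ on a set of positive measure for some $j$. If $R_jf=0$ a.e.\ for all $j$, then the identity $\sum_j R_j^2=-I$, valid on $L^2$ and in particular for our bounded compactly supported $f$, forces $f=0$, a contradiction.
\item Fix such a $j$ and pick $\lambda>0$ small enough that
\begin{align*}
E_\lambda:=\{x:|R_jf(x)|>\lambda\}
\end{align*}
has positive Lebesgue measure, and hence positive $\upsilon$-measure. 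Since $R_j^t f(x)\to R_jf(x)$ for a.e.\ $x$ as $t\to0^+$, for a.e.\ $x\in E_\lambda$ there is $t_x>0$ with $|R_j^tf(x)|>\lambda$ for all $t\in(0,t_x)$.
\item Apply Tonelli's theorem to get
\begin{align*}
\lambda\,\nu_{0,\upsilon}(\{|R_j^tf|>\lambda\})\geq\lambda\int_{E_\lambda}\int_0^{t_x}\frac{dt}{t}\,\upsilon(x)\,dx=\infty.
\end{align*}
\end{enumerate}
Together these steps show that, for $\gamma=0$, the right-hand side of \eqref{eq:weighted-hardy-lifted-characterization} is $+\infty$ while $\|f\|_{H^1_\upsilon}<\infty$. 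The membership criterion of Theorem \ref{th:weighted-hardy-riesz} also fails, since $f\in H^1_\upsilon$ but the weak quasi-norm sum is infinite.

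I expect the main obstacle to be the second step, namely justifying nondegeneracy of the Riesz transforms and the a.e.\ convergence of the truncations. Choosing $f\in L^2\cap L^1_\upsilon$ bounded with compact support should settle both points. For such $f$, the truncated Riesz transforms converge a.e.\ by classical Calder\'on--Zygmund theory on $L^2$. The Fourier multiplier identity $\sum_j R_j^2=-I$ then gives $R_jf\ne0$ for some $j$. Everything else is a direct Tonelli computation exploiting the non-integrability of $dt/t$ at $0$.
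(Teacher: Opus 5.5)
Your argument is correct and is essentially the paper's proof of necessity via Lemma \ref{prop:weighted-hardy-gamma-zero}. The paper also uses a compactly supported, bounded, mean-zero test function (it takes $a\in C_{\mathrm c}^\infty$; your difference of indicators of equal disjoint balls works equally well). It then gets nondegeneracy of some $R_j$ from $\sum_j R_j^2=-I$, and obtains divergence of $\int_0^{t_x}\frac{dt}{t}$ on a set of positive $\upsilon$-measure via Tonelli's theorem. One small correction: your steps establish the infinite weak quasi-norm only for your specific $f$, which is all that necessity requires, and not for every nonzero $f\in H^1_\upsilon$ as your opening sentence claims.
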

\begin{remark}
Theorem \ref{th:weighted-hardy-riesz} extends
\cite[Theorem~5.1]{dlyyz} to $A_1$ weights and improves its
range of $\gamma$ even when $\upsilon\equiv1$.
Theorem \ref{th:weighted-hardy-range} shows that the
range $\gamma\in\mathbb R\setminus\{0\}$ is sharp for any $A_1$ weight.
\end{remark}
Recall that a function $a$ on $\mathbb{R}^n$ is called a \textit{weighted
$(1,\infty,0)$-atom} associated with a ball $B:=B(x_B,r)$ if
\begin{align*}
\operatorname{supp}a\subset B,\
\|a\|_{L^\infty}\leq\frac1{\upsilon(B)},\ \text{and} \
\int_{\mathbb R^n}a(x)\,dx=0.
\end{align*}
\begin{proof}[Proof of Theorem \ref{th:weighted-hardy-riesz}]
Fix $j\in\{1,\ldots,n\}$. We first prove
\begin{align}\label{eq:6.5x}
\left\|\frac{R_j^t(f)(x)}{t^\gamma}\right\|
_{L^{1,\infty}(\mathbb R^{n+1}_+,\nu_{\gamma,\upsilon})}
\lesssim
\|f\|_{H^1_\upsilon}
\end{align}
using the weighted atomic decomposition.

We claim that, for any weighted $(1,\infty,0)$-atom $a$,
\begin{align}\label{eq:hardy-atom-truncation-split}
\int_{\mathbb R^n}\sup_{t\in(0,r)}|R_j^t(a)(x)|
\,\upsilon(x)\,dx
+\int_{\mathbb R^n}\int_r^\infty|R_j^t(a)(x)|
\,\frac{dt}{t}\,\upsilon(x)\,dx
\lesssim1,
\end{align}
where the implicit positive constant depends only on $n$ and
$[\upsilon]_{A_1}$.

Let $R_j^*(a):=\sup_{t\in(0,\infty)}|R_j^t(a)|$.
From the weighted $L^2$ boundedness of the maximal truncated
Riesz transform (see \cite[Theorem~1.2]{hlmorsu12} with $p=2$) and the
inclusion $A_1\subset A_2$, we deduce that
\begin{align}\label{eq:hardy-atom-near-maximal}
\int_{4B}R_j^*(a)(x)\,\upsilon(x)\,dx
\leq\upsilon(4B)^{\frac12}\|R_j^*(a)\|_{L^2_\upsilon}
\lesssim\upsilon(4B)^{\frac12}\|a\|_{L^2_\upsilon}\lesssim
\left[\frac{\upsilon(4B)}{\upsilon(B)}\right]^{\frac12}
\lesssim1.
\end{align}
Using $R_j^t(a)(x)=0$ whenever $x\in4B$ and $t\in[5r,\infty)$, we conclude that
\begin{align}\label{eq:hardy-atom-near-large-scales}
\int_{4B}\int_r^\infty|R_j^t(a)(x)|
\,\frac{dt}{t}\,\upsilon(x)\,dx
\lesssim\int_{4B}R_j^*(a)(x)\,\upsilon(x)\,dx
\lesssim1.
\end{align}

For any $u\in\mathbb R^n\setminus\{{\mathbf0}\}$, let
$K_j(u):=\frac{u_j}{|u|^{n+1}}$ denote the \emph{Riesz kernel}.
We first establish a decay estimate for $R_j(a)$ outside $2B$.
For any $x\in(2B)^{\complement}$, let $d:=|x-x_B|\geq2r$.
Since $|y-x_B|\leq r\leq \frac d2$ for any $y\in B$, from the mean value theorem, we infer that
\begin{align*}
|K_j(x-y)-K_j(x-x_B)|
\lesssim \frac{r}{d^{n+1}}.
\end{align*}
By $\int_B a(y)\,dy=0$, we obtain
\begin{align}\label{eq:hardy-atom-riesz-decay}
|R_j(a)(x)|
=\left|
\int_B [K_j(x-y)-K_j(x-x_B)]a(y)\,dy
\right|\lesssim
\frac{r}{d^{n+1}}\int_B|a(y)|\,dy
=\frac{r\|a\|_{L^1}}{d^{n+1}}.
\end{align}
We now restrict to $x\in(4B)^{\complement}$.
Observe that $R_j^t(a)(x)=R_j(a)(x)$ for any
$t\in(0,d-r)$, while $R_j^t(a)(x)=0$ for any
$t\in[d+r,\infty)$. For $t\in[d-r,d+r]$, the size
estimate for the kernel gives
$|R_j^t(a)(x)|\lesssim\frac{\|a\|_{L^1}}{d^n}.$
Since $d\geq4r$, it follows that $d-r>r$.
Splitting the integral over $(r,\infty)$ at $d-r$ and $d+r$,
using \eqref{eq:hardy-atom-riesz-decay} on $(r,d-r)$
and the kernel size estimate on $[d-r,d+r)$,
and noting that $R_j^t(a)(x)=0$ for $t\geq d+r$,
we find that
\begin{align}
\int_r^\infty|R_j^t(a)(x)|\,\frac{dt}{t}
&=\int_r^{d-r}|R_j(a)(x)|\,\frac{dt}{t}
+\int_{d-r}^{d+r}\cdots
\notag\\
&\lesssim\frac{r\|a\|_{L^1}}{d^{n+1}}
\log\frac{d-r}{r}
+\frac{\|a\|_{L^1}}{d^n}\log\frac{d+r}{d-r}
\lesssim\frac{r\|a\|_{L^1}}{d^{n+1}}
\left(1+\log\frac{d}{r}\right).
\label{eq:hardy-atom-large-scales}
\end{align}
Moreover, using $|x-y|\geq d-r>r$ for any $y\in B$, we find that $R_j^t(a)(x)=R_j(a)(x)$ for any $t\in(0,r)$.
Thus, since \eqref{eq:hardy-atom-riesz-decay} holds, we deduce that
\begin{align}\label{eq:hardy-sup-Rt-bound}
\sup_{t\in(0,r)}|R_j^t(a)(x)|
\lesssim\frac{r\|a\|_{L^1}}{d^{n+1}}.
\end{align}
By the size condition of $a$ and by the $A_1$ condition, we conclude that, for any $k\in\mathbb Z_+$,
\begin{align*}
\|a\|_{L^1}\leq\frac{|B|}{\upsilon(B)}
\ \text{and}\
\upsilon(2^kB)\leq[\upsilon]_{A_1}2^{kn}\upsilon(B).
\end{align*}
For any integer $k\geq2$, let
$A_k:=2^{k+1}B\setminus2^kB$.
Then $(4B)^{\complement}=\bigcup_{k=2}^\infty A_k$, and, for any
$x\in A_k$, one has
\begin{align*}
2^kr\leq |x-x_B|<2^{k+1}r
\ \text{and} \
1+\log\frac{|x-x_B|}{r}\lesssim1+k.
\end{align*}
From these, \eqref{eq:hardy-sup-Rt-bound}, and
\eqref{eq:hardy-atom-large-scales}, we infer that
\begin{align}
&\int_{(4B)^{\complement}}\left[
\sup_{t\in(0,r)}|R_j^t(a)(x)|
+\int_r^\infty|R_j^t(a)(x)|\,\frac{dt}{t}
\right]\upsilon(x)\,dx\notag\\
&\quad=
\sum_{k=2}^\infty
\int_{A_k}\left[
\sup_{t\in(0,r)}|R_j^t(a)(x)|
+\int_r^\infty|R_j^t(a)(x)|\,\frac{dt}{t}
\right]\upsilon(x)\,dx\notag\\
&\quad\lesssim
\|a\|_{L^1}
\sum_{k=2}^\infty
\frac{r(1+k)}{(2^kr)^{n+1}}
\upsilon(2^{k+1}B)\notag\\
&\quad\lesssim
\frac{|B|}{\upsilon(B)}
\sum_{k=2}^\infty
\frac{r(1+k)}{(2^kr)^{n+1}}
[\upsilon]_{A_1}2^{(k+1)n}\upsilon(B)
\lesssim\sum_{k=2}^\infty(1+k)2^{-k}
\lesssim1.
\label{eq:hardy-atom-far-integral}
\end{align}
Combining \eqref{eq:hardy-atom-near-maximal},
\eqref{eq:hardy-atom-near-large-scales}, and
\eqref{eq:hardy-atom-far-integral} and using
$\sup_{t\in(0,r)}|R_j^t(a)|\leq R_j^*(a)$ on $4B$, we obtain
\eqref{eq:hardy-atom-truncation-split}.

We next consider $f\in H^1_\upsilon$. Using the atomic
decomposition for $H^1_\upsilon$ (see \cite[Theorem~A]{ky11}), we find that
\begin{align}\label{eq:hardy-atomic-expansion}
f=\sum_{k=1}^\infty c_ka_k\ \text{in }\mathcal S'
\ \mathrm{and}
\sum_{k=1}^\infty|c_k|\lesssim\|f\|_{H^1_\upsilon},
\end{align}
where $a_k$ is a weighted $(1,\infty,0)$-atom associated
with a ball of radius $r_k$. Since
$\|a_k\|_{L^1_\upsilon}\leq1$ and
$\sum_{k=1}^\infty|c_k|<\infty$, it follows that the series also converges
in $L^1_\upsilon$, with the same limit $f$ by
\eqref{eq:hardy-weight-lower} and the uniqueness of
distributional limits.
Using \eqref{eq:hardy-weight-lower}, we conclude that, for any fixed
$x\in\mathbb R^n$ and $t\in(0,\infty)$, there exists a
positive constant $c_{x,t,\upsilon}$ such that
\begin{align}\label{eq:hardy-truncated-kernel-weight-control}
\mathbf1_{\{|x-y|>t\}}
\frac{|x_j-y_j|}{|x-y|^{n+1}}
\leq c_{x,t,\upsilon}\upsilon(y)
\quad\text{for almost every }y\in\mathbb R^n.
\end{align}
Consequently, the $L^1_\upsilon$ convergence in
\eqref{eq:hardy-atomic-expansion} implies that, for any $t\in(0,\infty)$ and $x\in\mathbb{R}^n$,
$$R_j^t(f)(x)=\sum_{k=1}^\infty c_kR_j^t(a_k)(x).$$

For any $x\in\mathbb R^n$ and $t\in(0,\infty)$, let
\begin{align*}
F(x):=\sum_{k=1}^\infty|c_k|
\sup_{s\in(0,r_k)}|R_j^s(a_k)(x)|\
\mathrm{and}\
G(x,t):=\sum_{k=1}^\infty|c_k|
\mathbf1_{[r_k,\infty)}(t)|R_j^t(a_k)(x)|.
\end{align*}
From Tonelli's theorem and
\eqref{eq:hardy-atom-truncation-split}, it follows that
\begin{align}
&\|F\|_{L^1_\upsilon}
+\int_{\mathbb R^n}\int_0^\infty G(x,t)
\,\frac{dt}{t}\,\upsilon(x)\,dx\notag\\
&\quad=\sum_{k=1}^\infty|c_k|
\left[
\int_{\mathbb R^n}\sup_{s\in(0,r_k)}
|R_j^s(a_k)(x)|\upsilon(x)\,dx
+\int_{\mathbb R^n}\int_{r_k}^\infty
|R_j^t(a_k)(x)|\,\frac{dt}{t}\,\upsilon(x)\,dx
\right]\notag\\
&\quad\lesssim\sum_{k=1}^\infty|c_k|
\lesssim\|f\|_{H^1_\upsilon}.
\label{eq:hardy-summed-controls}
\end{align}
Moreover, the definitions of $F$ and $G$ give
$|R_j^t(f)(x)|\leq F(x)+G(x,t)$.
Fix $\lambda\in(0,\infty)$. By the fact that $F$ is independent of $t$
and finite almost everywhere, we find that, for almost every $x\in\mathbb R^n$,
\begin{align}\label{eq:hardy-F-integral}
\lambda\underset{F(x)>\frac{\lambda}{2}t^\gamma}
{\int_0^\infty}
t^{\gamma-1}\,dt
&=\frac{\lambda}{|\gamma|}
\int_0^{\frac{2F(x)}{\lambda}}du
=\frac{2F(x)}{|\gamma|}.
\end{align}
For the term involving $G$, Chebyshev's inequality takes
the pointwise form
\begin{align}\label{eq:hardy-G-level-control}
\lambda\mathbf1_{\{G(x,t)>\frac{\lambda}{2}t^\gamma\}}
t^{\gamma-1}
\leq 2G(x,t)t^{-\gamma}t^{\gamma-1}
=\frac{2G(x,t)}{t}.
\end{align}
Since $F(x)+G(x,t)>\lambda t^\gamma$ implies $F(x)>\frac{\lambda}{2}t^\gamma$
or $G(x,t)>\frac{\lambda}{2}t^\gamma$, from \eqref{eq:hardy-F-integral}
and \eqref{eq:hardy-G-level-control}, we deduce that
\begin{align*}
&\lambda\underset{|R_j^t(f)(x)|>\lambda t^\gamma}
{\int_{\mathbb R^n}\int_0^\infty}
t^{\gamma-1}\,dt\,\upsilon(x)\,dx\\
&\quad\leq
\lambda\underset{F(x)>\frac{\lambda}{2}t^\gamma}
{\int_{\mathbb R^n}\int_0^\infty}
t^{\gamma-1}\,dt\,\upsilon(x)\,dx
+\lambda\underset{G(x,t)>\frac{\lambda}{2}t^\gamma}
{\int_{\mathbb R^n}\int_0^\infty}
t^{\gamma-1}\,dt\,\upsilon(x)\,dx\\
&\quad\leq\frac2{|\gamma|}\|F\|_{L^1_\upsilon}
+2\int_{\mathbb R^n}\int_0^\infty G(x,t)
\,\frac{dt}{t}\,\upsilon(x)\,dx\lesssim\|f\|_{H^1_\upsilon},
\end{align*}
where the last inequality follows from
\eqref{eq:hardy-summed-controls}. Taking the supremum over
$\lambda\in(0,\infty)$ shows \eqref{eq:6.5x}.

For any $f\in L^1_\upsilon$, since $R_j^t(f)\to R_j(f)$
almost everywhere as $t\to0^+$, it follows from
\cite[Theorem~9.1(ii)]{dm22} (see also \cite[Lemma~3.3]{dlyyz})
with $p=1$, $d\mu(x)=\upsilon(x)\,dx$, and $T_t=R_j^t$ that
\begin{align}\label{eq:hardy-riesz-lower-L1}
\frac1{|\gamma|}\|R_j(f)\|_{L^1_\upsilon}
\leq
\left\|\frac{R_j^t(f)(x)}{t^\gamma}\right\|
_{L^{1,\infty}(\mathbb R^{n+1}_+,\nu_{\gamma,\upsilon})}.
\end{align}
Combining \eqref{eq:hardy-weighted-riesz-characterization},
\eqref{eq:6.5x}, and \eqref{eq:hardy-riesz-lower-L1},
we obtain the asserted characterization in
Theorem \ref{th:weighted-hardy-riesz}.
Moreover, \eqref{eq:hardy-weighted-riesz-norm}, together
with \eqref{eq:6.5x} and \eqref{eq:hardy-riesz-lower-L1},
yields \eqref{eq:weighted-hardy-lifted-characterization}. This then completes the proof of
Theorem \ref{th:weighted-hardy-riesz}.
\end{proof}

The following lemma proves that the range of $\gamma$ in
Theorem \ref{th:weighted-hardy-riesz} is sharp for any fixed
$A_1$ weight.
\begin{lemma}\label{prop:weighted-hardy-gamma-zero}
Let $\upsilon\in A_1$, and let $a\in C_{\mathrm{c}}^\infty$ be
nonzero with $\int_{\mathbb R^n}a(x)\,dx=0$. Then
$a\in H^1_\upsilon$ and there exist
$j\in\{1,\ldots,n\}$ and $\lambda_0\in(0,\infty)$ such that
\begin{align}\label{eq:weighted-hardy-gamma-zero}
  \underset{|R_j^t(a)(x)|>\lambda_0}
{\int_{\mathbb R^n}\int_0^\infty}
\frac{dt}{t}\,\upsilon(x)\,dx=\infty.
\end{align}
\end{lemma}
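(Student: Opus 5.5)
The plan has two parts: first place $a$ in $H^1_\upsilon$, then use the small-$t$ behaviour of $R_j^t(a)$ to make the $dt/t$ integral diverge.

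\textbf{Step 1: $a\in H^1_\upsilon$.} Suppose $\operatorname{supp}a\subset B:=B(x_B,r)$. Then $a/(\|a\|_{L^\infty}\upsilon(B))$ is a weighted $(1,\infty,0)$-atom associated with $B$. By the atomic characterization \cite[Theorem~A]{ky11} (the single-atom case), this gives $a\in H^1_\upsilon$. Alternatively, one can verify \eqref{eq:hardy-weighted-riesz-characterization} directly. Indeed, $a\in L^1_\upsilon$ trivially. Moreover, $R_j(a)$ is bounded, and by \eqref{eq:hardy-atom-riesz-decay} it satisfies $|R_j(a)(x)|\lesssim r\|a\|_{L^1}|x-x_B|^{-n-1}$ off $2B$. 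Summing over dyadic annuli using $\upsilon(2^kB)\le[\upsilon]_{A_1}2^{kn}\upsilon(B)$, exactly as in \eqref{eq:hardy-atom-far-integral}, gives $R_j(a)\in L^1_\upsilon$.

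\textbf{Step 2: choice of $j$ and $\lambda_0$.} Since $a\in C_{\mathrm c}^\infty$ is nonzero, some Riesz transform $R_j(a)$ is not identically zero. To see this, note that $\widehat{R_ja}(\xi)=-i\frac{\xi_j}{|\xi|}\widehat a(\xi)$ up to a normalization constant. If all of these vanished, then $\widehat a$ would vanish off the origin, and hence $a=0$, a contradiction. Fix such a $j$. Since $a$ is smooth with compact support, $R_j(a)$ is continuous and equals the principal value limit of $R_j^t(a)$ at every point. Choose $x_0$ with $|R_j(a)(x_0)|=:4\lambda_0>0$. By continuity there is a ball $U:=B(x_0,\eta)$ on which $|R_j(a)|>3\lambda_0$.

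\textbf{Step 3: uniform small-$t$ convergence on $U$.} For smooth compactly supported $a$, the oddness of the kernel gives
\begin{align*}
R_j(a)(x)-R_j^t(a)(x)=\int_{|x-y|\le t}K_j(x-y)[a(y)-a(x)]\,dy.
\end{align*}
The absolute value of this is at most $\|\nabla a\|_{L^\infty}\int_{|u|\le t}|u|^{-n+1}\,du\lesssim t\|\nabla a\|_{L^\infty}$, uniformly in $x$. Hence there exists $t_0\in(0,1)$ such that $|R_j^t(a)(x)|>\lambda_0$ for any $x\in U$ and $t\in(0,t_0)$.

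\textbf{Step 4: divergence.} By Tonelli's theorem, the left-hand side of \eqref{eq:weighted-hardy-gamma-zero} is at least $\upsilon(U)\int_0^{t_0}\frac{dt}{t}=\infty$. Here $\upsilon(U)>0$ because $\upsilon>0$ almost everywhere (or by \eqref{eq:hardy-weight-lower}).

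\textbf{Main obstacle.} The only step needing care is Step 2, namely the nonvanishing of some $R_j(a)$. The Fourier argument handles it, using that $\sum_j R_j^2=-I$ on $L^2$: if $R_ja=0$ for all $j$, then $a=-\sum_jR_j(R_ja)=0$. The remaining steps are routine.
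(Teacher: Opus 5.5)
Your proposal is correct and follows essentially the same route as the paper. Both proofs realize $a$ as a multiple of a weighted $(1,\infty,0)$-atom, use $\sum_j R_j^2=-I$ (or, in your case, the Fourier multiplier) to find $j$ with $R_j(a)\not\equiv0$, and then use $R_j^t(a)\to R_j(a)$ as $t\to0^+$ together with Tonelli's theorem to get $\int_0^{t_0}\frac{dt}{t}=\infty$ on a set of positive $\upsilon$-measure. The only difference is cosmetic: you get a uniform threshold $t_0$ on a ball from the explicit bound $|R_j(a)-R_j^t(a)|\lesssim t\|\nabla a\|_{L^\infty}$, whereas the paper uses the level set $\{|R_j(a)|>2\lambda_0\}$ with a point-dependent $t_x$ coming from almost-everywhere convergence.
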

\begin{proof}
Using the fact that $a$ is a constant multiple of a weighted $(1,\infty,0)$-atom, we conclude that $a\in H^1_\upsilon$.
From the identity $\sum_{j=1}^nR_j^2(a)=-c_na$ and $a\not\equiv0$,
we infer that $R_j(a)\not\equiv0$ for some $j\in\{1,\ldots,n\}$.
Thus, there exists $\lambda_0\in(0,\infty)$ such that
$E:=\{x\in\mathbb R^n:|R_j(a)(x)|>2\lambda_0\}$ has positive and finite measure with respect to $\upsilon(x)\,dx$.
By the fact that $R_j^t(a)\to R_j(a)$ as $t\to0^+$ almost everywhere, we find that, for almost every
$x\in E$, there exists $t_x\in(0,\infty)$ such that
$|R_j^t(a)(x)|>\lambda_0$ for any $t\in(0,t_x)$.
Therefore, for almost every $x\in E$,
\begin{align*}
\underset{|R_j^t(a)(x)|>\lambda_0}
{\int_0^\infty}\,\frac{dt}{t}
\geq\int_0^{t_x}\frac{dt}{t}=\infty.
\end{align*}
From Tonelli's theorem, it follows that \eqref{eq:weighted-hardy-gamma-zero} holds. This completes the proof of
Lemma \ref{prop:weighted-hardy-gamma-zero}.
\end{proof}
\begin{proof}[Proof of Theorem \ref{th:weighted-hardy-range}]
The sufficiency follows from Theorem \ref{th:weighted-hardy-riesz}.
By Lemma \ref{prop:weighted-hardy-gamma-zero}, we obtain a function
$a\in H^1_\upsilon$ for which the weak norm in
\eqref{eq:weighted-hardy-lifted-characterization} is infinite when
$\gamma=0$. Thus, the characterization and the norm equivalence
cannot hold in this case, which proves the necessity.
This completes the proof of Theorem \ref{th:weighted-hardy-range}.
\end{proof}
We next establish the corresponding weighted Hardy--Sobolev
estimate. To avoid technicalities, we assume that $f\in\mathcal S$.
For any distinct $x,y\in\mathbb R^n$, let
\begin{align}\label{6.19x}
\mathcal R(f)(x,y)
:=|f(x)-f(y)|
+\sum_{j=1}^n
\left|R_j^{|x-y|}(f)(x)-R_j^{|x-y|}(f)(y)\right|,
\end{align} and let $\mathcal R(f)(x,x)=0$.
\begin{theorem}\label{th:weighted-hardy-sobolev}
Let $\upsilon\in A_1$, $\gamma\in\mathbb R\setminus\{0\}$,
and $d\mu_{\gamma,\upsilon}(x,y):=
|x-y|^{\gamma-n}\upsilon(x)\,dx\,dy$ on
$\mathbb R^n\times\mathbb R^n$.
Then, for any $f\in\mathcal S$,
\begin{align}\label{eq:weighted-hardy-sobolev}
\|\nabla f\|_{H^1_\upsilon}
\sim
\left\|\frac{\mathcal R(f)(x,y)}{|x-y|^{1+\gamma}}\right\|
_{L^{1,\infty}(\mathbb R^n\times\mathbb R^n,\mu_{\gamma,\upsilon})},
\end{align}
where the positive equivalence constants depend only on $n$,
$\gamma$, and $[\upsilon]_{A_1}$.
\end{theorem}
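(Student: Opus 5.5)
We reduce Theorem \ref{th:weighted-hardy-sobolev} to Theorem \ref{th:intro-m} (with $\tau=\mathrm{osc}$, $p=1$) applied to the vector-valued function $\vec F:=(f,R_1f,\ldots,R_nf)$. Since $f\in\mathcal S$, for $\upsilon\in A_1$ one has $\|\nabla f\|_{H^1_\upsilon}\sim\sum_{k}\|\partial_kf\|_{L^1_\upsilon}+\sum_{j,k}\|R_j\partial_kf\|_{L^1_\upsilon}$ by \eqref{eq:hardy-weighted-riesz-norm}, which is comparable to $\|\nabla \vec F\|_{L^1_\upsilon}$ (componentwise gradients, using $\partial_kR_jf=R_j\partial_kf$). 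The right-hand side of \eqref{eq:weighted-hardy-sobolev} mixes $f$ and truncated Riesz transforms. The plan is to compare $R_j^{|x-y|}(f)(x)-R_j^{|x-y|}(f)(y)$ with $R_j(f)(x)-R_j(f)(y)$. Then we pass from the pairwise weak-type quantity to the lifted maximal one.

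\emph{Step 1 (pairwise vs.\ lifted).} For a (scalar) function $g$ and $r=|x-y|$, the difference $|g(x)-g(y)|$ is controlled by oscillations over $B(x,2r)$, while conversely mean oscillation over balls is bounded by averages of differences. We will use the known equivalence \eqref{eq:intro-bsvy}-type statement in the weighted setting: namely
$$\sup_\lambda\lambda\underset{|g(x)-g(y)|>\lambda|x-y|^{1+\gamma}}{\int\int}|x-y|^{\gamma-n}\,dy\,\upsilon(x)\,dx\sim\|\nabla g\|_{L^1_\upsilon}.$$
We obtain the upper bound from Theorem \ref{th:intro-m}(i) for $\gamma\in\mathbb R\setminus[-1,0]$, via the standard pointwise estimate $|g(x)-g(y)|\lesssim r\,\mathcal M^\sharp$-type quantities plus a telescoping near $x$ and $y$. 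We obtain the lower bound from the limiting formula argument of Theorem \ref{th:exact-m}, i.e.\ Fatou together with the differentiability of $g$. For $\gamma\in[-1,0)$ and $p=1$ this pairwise bound for a general gradient fails (Lemma \ref{lem:exact-sm-gamma}). This is exactly where the Hardy structure must enter. It is the \textbf{main obstacle}.

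\emph{Step 2 (atomic route for all $\gamma\neq0$).} Following the proof of Theorem \ref{th:weighted-hardy-riesz}, we decompose each $\partial_kf=\sum_i c_{i}a_{i}$ into weighted atoms. For the upper estimate we split, for each pair $(x,y)$ with $r=|x-y|$, into the scales $r<r_i$ and $r\ge r_i$. We write $f(x)-f(y)$ and the Riesz differences as integrals of $\nabla f$ and $\nabla R_j^{r}f$ along the segment, but only after replacing the truncated operator by a smooth-truncation-plus-error. The difference $R_j^{r}(f)(x)-R_j(f)(x)$ equals a local average-type term $\int_{|x-z|<r}K_j(x-z)[f(z)-f(x)]\,dz$, which is bounded by $\int_0^{r}$-scale averages of $|\nabla f|$ near $x$. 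For a single atom we then aim for the analogue of \eqref{eq:hardy-atom-truncation-split}. This consists of a function $F_i(x)$, coming from scales $r<r_i$ and controlled by maximal functions $M(a_i)$ and $R_j^*(a_i)$ localized near $4B_i$, together with a far part. The far part is controlled in $L^1(\frac{dr}{r}\upsilon\,dx)$ using the cancellation of $a_i$ and the kernel smoothness, with the same $\sum_k(1+k)2^{-k}$ bound as in \eqref{eq:hardy-atom-far-integral}. Summing over atoms, the $F$-part and the $G$-part are then handled exactly as in \eqref{eq:hardy-F-integral} and \eqref{eq:hardy-G-level-control}, integrating in $y$ over the sphere $|x-y|=r$ with $|x-y|^{\gamma-n}dy=r^{\gamma-1}dr\,d\sigma$. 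We expect the $x/y$ asymmetry of the weight (only $\upsilon(x)$ appears) to be handled by the $A_1$ doubling bound $\upsilon(2^kB)\lesssim2^{kn}\upsilon(B)$ when the roles of $x$ and $y$ are exchanged.

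\emph{Step 3 (lower estimate).} We restrict the weak norm to the region $|x-y|\to0$. By Fatou's lemma and the Taylor expansion of the Schwartz functions $f$ and $R_jf$, we have $\mathcal R(f)(x,y)/|x-y|\to|\nabla f(x)\cdot\omega|+\sum_j|\nabla R_jf(x)\cdot\omega|$ along directions $\omega$, because $R_j^{r}f(x)-R_j^{r}f(y)=R_jf(x)-R_jf(y)+O(r^{2})$ for smooth $f$. Integrating as in \eqref{eq:exact-m-lower-final} yields $\gtrsim\frac1{|\gamma|}(\|\nabla f\|_{L^1_\upsilon}+\sum_j\|\nabla R_jf\|_{L^1_\upsilon})$, which is $\sim\|\nabla f\|_{H^1_\upsilon}$ by Step 0.
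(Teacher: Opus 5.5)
Your lower bound (Step 3) is correct and is essentially the paper's argument. It rests on the pointwise limit of $\mathcal R(f)(x,x+r\omega)/r$ (your $O(r^2)$ observation for the truncated differences is right), then Fatou after rescaling, then \eqref{eq:hardy-weighted-riesz-norm}. The gap is in the upper bound.

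Step~1 cannot contribute on $[-n,0)$. Once $R_j^{r}f-R_jf$ is majorized by $\int_{B(x,r)}|x-w|^{1-n}\log\frac{r}{|x-w|}\,|\nabla f(w)|\,dw$, all cancellation is lost. For $\upsilon\equiv1$, $n\ge2$, $\gamma=-n$, this majorant exceeds $c_f\,r^{1+\gamma}$ on $\{|x|<r/4,\ r\ge r_f\}$, a set of infinite $r^{-n-1}\,dr\,dx$-measure, for every nonzero $f$.

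Step~2, which has to carry every $\gamma\ne0$, already fails for the term $f(x)-f(y)$ when $n\ge2$. You write it as the chord integral $\int_0^1\nabla f(x+s(y-x))\cdot(y-x)\,ds$ and expand $\nabla f$ into atoms. But an atom has mean zero over $\mathbb R^n$, not along lines, and there is no kernel whose smoothness you could use. Take $a=\upsilon(B)^{-1}(\mathbf 1_{B^+}-\mathbf 1_{B^-})$, with $B^\pm$ the two halves of $B=B(x_B,\rho)$. A point $x$ at distance $s\gg\rho$, in a suitable cone, sees a set of directions of measure $\sim(\rho/s)^{n-1}$ whose chords through $B$ have integral $\sim\rho/\upsilon(B)$ for every $r\ge s+\rho$. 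So the far part you want to bound by $\sum_k(1+k)2^{-k}$ is $\gtrsim\int_{4\rho}^\infty\frac{\rho^n}{s^n\upsilon(B)}\,s^{n-1}\,ds=\infty$, already for $\upsilon\equiv1$. (For $n=1$ the chord integral is $A(y)-A(x)$ with $A$ a compactly supported primitive of $a$, which is why the scheme looks plausible.)

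The small-scale part is also unjustified. The quantity $\sup_{r<r_i}$ of averages of $R_j^*(a_i)$ along the ray from $x$ in direction $\omega$ is a directional maximal function. It is not dominated by $M(R_j^*(a_i))(x)$, and directional maximal operators are not bounded on $L^2_\upsilon$ for general $A_1$ weights (e.g.\ $|x|^{-\alpha}$ with $1\le\alpha<n$).

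The missing idea is a two-point Haj\l{}asz-type bound $\mathcal R(f)(x,y)\le|x-y|[H(x)+H(y)]$ with $\|H\|_{L^1_\upsilon}\lesssim\|\nabla f\|_{H^1_\upsilon}$. Once this holds, every $\gamma\ne0$ is immediate: the $H(x)$ term is an exact computation, and the $H(y)$ term uses $\upsilon(B(y,s))\lesssim[\upsilon]_{A_1}s^n\upsilon(y)$. The paper obtains $H$ in two pieces.

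\begin{itemize}
\item For $f$ itself, it uses the Koskela--Saksman inequality $|f(x)-f(y)|\lesssim|x-y|\sum_\ell[\mathcal G(\partial_\ell f)(x)+\mathcal G(\partial_\ell f)(y)]$, with the grand maximal function $\mathcal G$ bounded from $H^1_\upsilon$ to $L^1_\upsilon$.
\item For the Riesz part, it applies the Gilbarg--Trudinger potential estimate directly to $R_j^tf$, $t=|x-y|$, on $B(\frac{x+y}2,t)$, with no comparison to $R_jf$. This replaces segments by the ball potential $\mathcal P_j$ of Lemma~\ref{lem:hardy-truncated-potential}.
\end{itemize}

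Near an atom, $\mathcal P_j$ is $\lesssim M(R_j^*a)$. Far from it, $\mathcal P_j$ is controlled by the thin-shell estimate $\int_{B(x,2t)\cap S_t}|x-z|^{1-n}\,dz\lesssim\rho^{1/n}t^{1-1/n}$, where $S_t:=\{z:\,||z-z_B|-t|\le\rho\}$. That is where the atoms and their cancellation actually enter.
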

\begin{theorem}\label{th:weighted-hardy-sobolev-range}
Let $\upsilon\in A_1$ and $\gamma\in\mathbb R$.
Then the conclusion of Theorem \ref{th:weighted-hardy-sobolev}
holds if and only if
$\gamma\in\mathbb R\setminus\{0\}$.
\end{theorem}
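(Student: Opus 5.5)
The sufficiency part is exactly Theorem \ref{th:weighted-hardy-sobolev}: for $\gamma\in\mathbb R\setminus\{0\}$ the equivalence \eqref{eq:weighted-hardy-sobolev} holds with constants depending only on $n$, $\gamma$, and $[\upsilon]_{A_1}$. It therefore remains to show that, for $\gamma=0$, \eqref{eq:weighted-hardy-sobolev} fails for every $\upsilon\in A_1$. The plan mirrors Lemmas \ref{prop:m-gamma-zero} and \ref{prop:weighted-hardy-gamma-zero}. We exhibit an $f\in\mathcal S$ for which the left-hand side of \eqref{eq:weighted-hardy-sobolev} is finite and positive, while the weak quasi-norm on the right-hand side is infinite because of a logarithmic divergence near the diagonal.

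Fix a nonzero $f\in C_{\mathrm c}^\infty$.

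\emph{Left-hand side.} Each $\partial_jf$ belongs to $C_{\mathrm c}^\infty$ and has integral zero. Hence it is a constant multiple of a weighted $(1,\infty,0)$-atom, so $\partial_jf\in H^1_\upsilon$ and $\|\nabla f\|_{H^1_\upsilon}<\infty$. Moreover $\nabla f\not\equiv0$, since $f$ is nonzero with compact support, so $\|\nabla f\|_{H^1_\upsilon}>0$.

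\emph{Right-hand side.} Since $\mathcal R(f)(x,y)\ge|f(x)-f(y)|$ by \eqref{6.19x}, it suffices to show that, for some $\lambda_0\in(0,\infty)$,
\begin{align*}
\underset{|f(x)-f(y)|>\lambda_0|x-y|}{\int_{\mathbb R^n}\int_{\mathbb R^n}}|x-y|^{-n}\,dy\,\upsilon(x)\,dx=\infty.
\end{align*}
We proceed in three steps.
\begin{enumerate}
\item[(a)] Choose $\lambda_0$ so that $E:=\{x:|\nabla f(x)|>4\lambda_0\}$ is a nonempty open set. Then $\upsilon(E)>0$, since $\upsilon>0$ almost everywhere.
\item[(b)] For $x\in E$, let $\omega_x:=\nabla f(x)/|\nabla f(x)|$ and let $\Gamma_x$ be the cone of directions $u$ with $u\cdot\omega_x\ge\frac12|u|$. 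Since $f\in C^2$ with bounded second derivatives, Taylor's formula gives, for $y-x\in\Gamma_x$ with $|x-y|<r_x:=\lambda_0/\|\nabla^2f\|_{L^\infty}$,
\begin{align*}
|f(y)-f(x)|\ge\tfrac12|\nabla f(x)||x-y|-\tfrac12\|\nabla^2f\|_{L^\infty}|x-y|^2>\lambda_0|x-y|.
\end{align*}
\item[(c)] In polar coordinates, $\int_{\{y:\,y-x\in\Gamma_x,\ |x-y|<r_x\}}|x-y|^{-n}\,dy=c_n\int_0^{r_x}\frac{ds}{s}=\infty$. Tonelli's theorem, together with $\upsilon(E)>0$, then gives the divergence above.
\end{enumerate}

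Consequently the right-hand side of \eqref{eq:weighted-hardy-sobolev} is infinite while the left-hand side is finite. Hence no equivalence can hold when $\gamma=0$, which proves the necessity.

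The main point requiring care is step (b). The lower bound on $|f(x)-f(y)|$ must hold uniformly on a fixed-aperture cone with a radius $r_x$ bounded below independently of $x$. This is what makes the $dy$-integral diverge, and it is not enough merely to have pointwise convergence of difference quotients. Using a $C^2$ Taylor bound avoids any measurability issues with $r_x$. The remaining steps are routine.
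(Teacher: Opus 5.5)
Your proof is correct and follows the paper's argument (Lemma \ref{prop:weighted-hardy-sobolev-gamma-zero}): a nonzero $f\in C_{\mathrm c}^\infty$ has $\|\nabla f\|_{H^1_\upsilon}<\infty$ because each $\partial_\ell f$ is a multiple of an atom, while the $|f(x)-f(y)|$ part of $\mathcal R(f)$ forces a logarithmic divergence $\int_0 \frac{dt}{t}$ near the diagonal on a set of positive $\upsilon(x)\,dx\,d\sigma(\omega)$-measure. The only difference is that you use a uniform second-order Taylor bound on a cone, whereas the paper uses the pointwise directional limit with radii $t_{x,\omega}$; contrary to your closing remark, the pointwise version already suffices, because each fiber integral $\int_0^{t_{x,\omega}}\frac{dt}{t}$ is infinite and the level set is open, so Tonelli applies with no uniformity in $x$ and no measurability of $t_{x,\omega}$ required.
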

\begin{remark}
Theorem \ref{th:weighted-hardy-sobolev} extends
\cite[Theorem~5.3]{dlyyz} to $A_1$ weights and improves its
range of $\gamma$ from $\mathbb R\setminus[-n,0]$ to $\mathbb R\setminus\{0\}$.
Theorem \ref{th:weighted-hardy-sobolev-range}
shows that this range is sharp for any fixed $A_1$ weight.
In particular, the unweighted case of Theorem \ref{th:weighted-hardy-sobolev}
answers the question in
\cite[Remark~5.4]{dlyyz}.
\end{remark}

To prove Theorem \ref{th:weighted-hardy-sobolev}, we first
establish an auxiliary estimate for the truncated Riesz transforms.
\begin{lemma}\label{lem:hardy-truncated-potential}
Let $\upsilon\in A_1$ and $j\in\{1,\ldots,n\}$. For any
$g\in H^1_\upsilon$ and $x\in\mathbb R^n$, let
\begin{align*}
\mathcal P_j(g)(x)
:=\sup_{t\in(0,\infty)}
\frac1t\int_{B(x,2t)}
\frac{|R_j^t(g)(z)|}{|x-z|^{n-1}}\,dz.
\end{align*}
Then
\begin{align}\label{eq:hardy-truncated-potential}
\|\mathcal P_j(g)\|_{L^1_\upsilon}
\lesssim\|g\|_{H^1_\upsilon},
\end{align}
where the implicit positive constant depends only on $n$ and
$[\upsilon]_{A_1}$.
\end{lemma}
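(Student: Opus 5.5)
By the atomic decomposition \eqref{eq:hardy-atomic-expansion} and sublinearity of $\mathcal P_j$, it suffices to prove that $\|\mathcal P_j(a)\|_{L^1_\upsilon}\lesssim1$ uniformly for every weighted $(1,\infty,0)$-atom $a$ associated with a ball $B=B(x_B,r)$. Indeed, the $L^1_\upsilon$ convergence of the expansion together with \eqref{eq:hardy-truncated-kernel-weight-control} gives $R_j^t(g)(z)=\sum_k c_kR_j^t(a_k)(z)$ for every $z$ and $t$. Hence $\mathcal P_j(g)\le\sum_k|c_k|\mathcal P_j(a_k)$ pointwise, and summing yields \eqref{eq:hardy-truncated-potential}.

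For a fixed atom $a$, I split the supremum in $t$ at $t=r$. For $t<r$, I bound $|R_j^t(a)(z)|\le\sup_{s\in(0,r)}|R_j^s(a)(z)|=:F_a(z)$. Since $\int_{B(x,2t)}|x-z|^{1-n}\,dz\sim t$, the contribution of these $t$ is at most a constant times $M(F_a)(x)$, or more precisely a fractional-type average of $F_a$ over small balls. To avoid the unbounded operator $M$ on $L^1_\upsilon$, I instead write
\[
\sup_{t<r}\frac1t\int_{B(x,2t)}\frac{F_a(z)}{|x-z|^{n-1}}\,dz
\le\frac1{?}\ldots
\]
and handle it more carefully as follows. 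The function $\frac1t|x-z|^{1-n}\mathbf 1_{B(x,2t)}(z)$ is dominated by $\sum_{i\ge0}2^{-i}\frac{1}{(2^{-i}t)^{n}}\mathbf1_{B(x,2^{1-i}t)}(z)$, up to constants. This is a sum of averages at scales $\le2t$, so the term is bounded by $M(F_a)(x)$ again. Therefore I do not want $\sup_t$ here. Instead I use Tonelli directly: integrating in $x$ against $\upsilon$ and using the $A_1$ condition in the form $\frac1{|B(z,2t)|}\upsilon(B(z,2t))\lesssim[\upsilon]_{A_1}\upsilon(z)$, applied to each dyadic piece at scale $2^{-i}t$, gives the bound $\sum_i2^{-i}\int F_a\upsilon$. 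To make the sup harmless, I replace it by a sum over dyadic scales with a decaying factor. This is where the structure must be exploited.

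\textbf{Main obstacle.} The main obstacle is that $\sup_t$ followed by $L^1$ integration is not controlled by the $A_1$ condition alone. My plan is to control it via an estimate of the form
\[
\sup_t(\cdot)\le\sum_{k\in\mathbb Z}\frac{1}{2^k}\int_{B(x,2^{k+2})}\frac{|R_j^{t_k(z)}(a)(z)|}{|x-z|^{n-1}}\,dz,
\]
hmm, which is again delicate. A cleaner route is to use the pointwise bound $|R_j^t(a)(z)|\le F_a(z)+G_a(z,t)$ from the proof of Theorem \ref{th:weighted-hardy-riesz}. For the $F_a$ part, I note that $\frac1t\int_{B(x,2t)}F_a(z)|x-z|^{1-n}\,dz\lesssim I_1(F_a)(x)/t$, which is wrong at large $t$; but $F_a$ is controlled near $4B$ by $R_j^*(a)$ and is $\lesssim r\|a\|_{L^1}|z-x_B|^{-n-1}$ off $4B$. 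For $x$ far from $B$ (say $|x-x_B|\sim2^kr$), this gives the decay $\lesssim r\|a\|_{L^1}(2^kr)^{-n-1}$ uniformly in $t$, by splitting $B(x,2t)$ into the part near $x$ and the part near $B$, using $\int_{4B}R_j^*(a)\lesssim\|a\|_{L^1}$ in the unweighted sense via the $L^2$ bound. Summing over annuli as in \eqref{eq:hardy-atom-far-integral} gives $\lesssim1$. For $x\in 8B$, I bound $\sup_t$ by the Hardy--Littlewood-type operator $\sup_t\frac1t\int_{B(x,2t)}|R_j^*(a)(z)||x-z|^{1-n}\,dz\lesssim M(R_j^*a)(x)$, and use Cauchy--Schwarz with $\|M\|_{L^2_\upsilon}$ (Lemma \ref{le:M-v}, $A_1\subset A_2$) and the weighted $L^2$ bound of $R_j^*$ exactly as in \eqref{eq:hardy-atom-near-maximal}. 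The $G_a$ part, at scales $t\ge r$, is treated similarly using the support and decay of $R_j^t(a)$ in \eqref{eq:hardy-atom-large-scales}. The verification of the far-region uniform decay is the step I expect to require the most care.
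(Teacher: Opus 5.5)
Your reduction to atoms is legitimate, since the pointwise bound $\mathcal P_j(g)\le\sum_k|c_k|\mathcal P_j(a_k)$ holds and is slightly more direct than the paper's Fatou argument. Your treatment of $x\in8B$ is exactly the paper's: $\mathcal P_j(a)\lesssim M(R_j^*(a))$, then Cauchy--Schwarz and the $L^2_\upsilon$ bounds for $M$ and $R_j^*$.

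The gap is the far region $x\notin8B$, which is the real content of the lemma, and the route you sketch there does not work. Put $d:=|x-x_B|$. Since $F_a=\sup_{s\in(0,r)}|R_j^s(a)|$ does not depend on $t$, for $t\ge d$ the ball $B(x,2t)$ contains $4B$. There $F_a$ is as large as $R_j^*(a)$ and has no cancellation. Your ``part near $B$'' is then about $t^{-1}d^{1-n}\int_{4B}F_a$. Here $\int_{4B}F_a\ge\int_{4B}|R_j(a)|$ is in general comparable to $|B|/\upsilon(B)$, so at $t=d$ you get order $d^{-n}|B|/\upsilon(B)$, not $r\|a\|_{L^1}d^{-n-1}$. (Also, the $L^2$ bound gives $\int_{4B}R_j^*(a)\lesssim|B|\,\|a\|_{L^\infty}$, not $\lesssim\|a\|_{L^1}$.) A decay of order $d^{-n}$ is not summable: already for $\upsilon\equiv1$ each annulus $2^{k+1}B\setminus2^kB$ contributes a constant. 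So the majorant $|R_j^t(a)|\le F_a+G_a(\cdot,t)$ must not be used at large $t$. It discards the fact that $R_j^t(a)(z)=0$ whenever $B\subset B(z,t)$, which is what makes $\mathcal P_j(a)$ integrable. If you meant $F_a$ only for $t<r$, that part is trivial for $x\notin8B$, because $B(x,2t)$ stays at distance $\sim d$ from $B$ and \eqref{eq:hardy-atom-riesz-decay} applies directly.

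The decisive gap is the scales $t\ge d/4$, which you dismiss as ``similar to \eqref{eq:hardy-atom-large-scales}''. That estimate does not transfer. Its smallness comes from the fact that, at a \emph{fixed} point, the bad range $t\in[d-r,d+r]$ has $\frac{dt}{t}$-measure about $\frac rd$. Here you have a supremum in $t$ and an integral in $z$. For each $t\ge d/4$:
\begin{itemize}
\item $R_j^t(a)(z)=0$ if $|z-x_B|<t-r$;
\item the cancellation bound $r\|a\|_{L^1}t^{-n-1}$ holds if $|z-x_B|>t+r$;
\item only $|R_j^t(a)(z)|\le\|a\|_{L^1}t^{-n}$ is available on the shell $S_t:=\{z:\,||z-x_B|-t|\le r\}$.
\end{itemize}
For $t$ near $d$ this shell passes through $x$, where $|x-z|^{1-n}$ is singular, so you cannot use $|x-z|\gtrsim t$ there. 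The missing ingredient is a thin-shell potential bound, for example via the Hardy--Littlewood rearrangement inequality:
\[
\int_{B(x,2t)\cap S_t}\frac{dz}{|x-z|^{n-1}}\lesssim|S_t|^{\frac1n}\lesssim r^{\frac1n}t^{1-\frac1n}.
\]
This gives $\mathcal P_j(a)(x)\lesssim r^{\frac1n}\|a\|_{L^1}d^{-n-\frac1n}$ for $x\notin8B$. That is weaker than the uniform decay $r\|a\|_{L^1}d^{-n-1}$ you announce. Indeed, your decay cannot come from the shell bound: for $n\ge2$ and $x\in S_t$, the integral of $|x-z|^{1-n}$ over the part of $S_t$ near $x$ is already of size $r\log\frac dr$. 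Still, it beats $d^{-n}$ and is summable against $\upsilon(2^{k+1}B)\lesssim[\upsilon]_{A_1}2^{kn}\upsilon(B)$. Without an argument of this kind, the far-region estimate, and hence the lemma, is not proved.
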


\begin{proof}
We first consider a weighted $(1,\infty,0)$-atom $a$ supported
in $B:=B(z_B,\rho)$.
For any $k\in \mathbb Z_+$, $x\in\mathbb R^n$, and $t\in(0,\infty)$, let
$A_k(x,t):=B(x,2^{1-k}t)\setminus B(x,2^{-k}t)$.
Using $|x-z|\geq2^{-k}t$ on $A_k(x,t)$, we conclude that, for
any nonnegative locally integrable function $h$,
\begin{align*}
\frac1t\int_{B(x,2t)}\frac{h(z)}{|x-z|^{n-1}}\,dz
&=\sum_{k=0}^\infty\frac1t
\int_{A_k(x,t)}\frac{h(z)}{|x-z|^{n-1}}\,dz\\
&\leq\sum_{k=0}^\infty
\frac{(2^{-k}t)^{1-n}}{t}
\int_{B(x,2^{1-k}t)}h(z)\,dz\\
&\lesssim\sum_{k=0}^\infty 2^{-k}
\fint_{B(x,2^{1-k}t)}h(z)\,dz
\lesssim M(h)(x).
\end{align*}
Recall that $R_j^*(a)$ is defined in the proof of
Theorem \ref{th:weighted-hardy-riesz}.
Applying the above estimate with $h=R_j^*(a)$, using $|R_j^t(a)|\leq R_j^*(a)$,
and taking the supremum over $t\in(0,\infty)$, we find that
$\mathcal P_j(a)\lesssim M(R_j^*(a))$.
By H\"older's inequality, the known boundedness on $L^2_\upsilon$ of both $M$ and $R_j^*$,
and $\upsilon\in A_1\subset A_2$, we obtain
\begin{align}
\int_{8B}\mathcal P_j(a)(x)\upsilon(x)\,dx
\lesssim[\upsilon(8B)]^{\frac12}
\|M(R_j^*(a))\|_{L^2_\upsilon}
\lesssim[\upsilon(8B)]^{\frac12}\|a\|_{L^2_\upsilon}
\leq\left[\frac{\upsilon(8B)}{\upsilon(B)}\right]^{\frac12}
\lesssim1,
\label{eq:hardy-potential-atom-near}
\end{align}
where we used $\|a\|_{L^2_\upsilon}^2
\leq\|a\|_{L^\infty}^2\upsilon(B)\leq[\upsilon(B)]^{-1}$
and
$\upsilon(8B)\lesssim8^n[\upsilon]_{A_1}\upsilon(B)$.

We next consider $x\in\mathbb R^n\setminus8B$. Let
$d:=|x-z_B|\geq8\rho$. For any $t\in(0,\frac d4)$,
$z\in B(x,2t)$, and $y\in B$, we obviously have
\begin{align*}
|z-y|\geq d-|z-x|-|y-z_B|
>d-2t-\rho>\frac{3d}{8}>t.
\end{align*}
Since $a$ is supported in $B$, we deduce that
$R_j^t(a)(z)=R_j(a)(z)$. Using $|z-z_B|>\frac d2\geq4\rho$ and \eqref{eq:hardy-atom-riesz-decay}, we find that
\begin{align*}
|R_j^t(a)(z)|
=|R_j(a)(z)|
\lesssim\frac{\rho\|a\|_{L^1}}{|z-z_B|^{n+1}}
\lesssim\frac{\rho\|a\|_{L^1}}{d^{n+1}},
\end{align*}
which further implies that
\begin{align}\label{eq:hardy-potential-small-t}
\frac1t\int_{B(x,2t)}
\frac{|R_j^t(a)(z)|}{|x-z|^{n-1}}\,dz
&\lesssim\frac{\rho\|a\|_{L^1}}{td^{n+1}}
\int_{B(x,2t)}\frac{dz}{|x-z|^{n-1}}
\lesssim\frac{\rho\|a\|_{L^1}}{d^{n+1}}{\leq
\frac{\rho^{\frac1n}\|a\|_{L^1}}{d^{n+\frac1n}},}
\end{align}
where the last inequality follows from
$\frac{\rho}{d}\leq1$.

Suppose now that $t\in[\frac d4,\infty)$, so that $t\geq2\rho$.
We claim that, for any $z\in\mathbb R^n$,
\begin{align}\label{eq:hardy-atom-truncation-shell}
|R_j^t(a)(z)|
\lesssim\frac{\rho\|a\|_{L^1}}{t^{n+1}}
+\frac{\|a\|_{L^1}}{t^n}
\mathbf1_{\{w\in\mathbb R^n:\,||w-z_B|-t|\leq\rho\}}(z).
\end{align}
Indeed, we distinguish three cases.
If $|z-z_B|<t-\rho$, then $B\subset B(z,t)$ and
$R_j^t(a)(z)=0$. If $|z-z_B|>t+\rho$, then
$|z-y|>t$ for any $y\in B$, and hence
$R_j^t(a)(z)=R_j(a)(z)$. Since $|z-z_B|>t+\rho\geq3\rho$, by \eqref{eq:hardy-atom-riesz-decay}, we infer that
\begin{align*}
|R_j^t(a)(z)|
\lesssim\frac{\rho\|a\|_{L^1}}{|z-z_B|^{n+1}}
\leq\frac{\rho\|a\|_{L^1}}{t^{n+1}}.
\end{align*}
Finally, if $||z-z_B|-t|\leq\rho$, then
\begin{align*}
|R_j^t(a)(z)|
\leq\int_{B\cap\{|z-y|>t\}}\frac{|a(y)|}{|z-y|^n}\,dy
\leq\frac{\|a\|_{L^1}}{t^n}.
\end{align*}
These three cases show \eqref{eq:hardy-atom-truncation-shell}.

{
For $S_t:=\{z\in\mathbb R^n:||z-z_B|-t|\leq\rho\}$,
we next prove that
\begin{align}\label{eq:hardy-thin-shell-potential}
\int_{B(x,2t)\cap S_t}\frac{dz}{|x-z|^{n-1}}
\lesssim\rho^{\frac1n}t^{1-\frac1n}.
\end{align}
By $t\geq2\rho$, we conclude that
\begin{align}\label{eq:hardy-shell-volume}
|S_t|
&=\omega_n[(t+\rho)^n-(t-\rho)^n]=n\omega_n\int_{t-\rho}^{t+\rho}s^{n-1}\,ds
\lesssim\rho t^{n-1}.
\end{align}
Let $n\geq1$ and $E\subset\mathbb R^n$ be a measurable
set with $|E|\in(0,\infty)$.
For any $x\in\mathbb R^n$, the decreasing rearrangements
on $(0,\infty)$ of $\mathbf1_E$ and
$z\mapsto|x-z|^{1-n}$ are $\mathbf1_{(0,|E|)}$ and
$s\mapsto(\frac{\omega_n}{s})^{1-\frac1n}$.
From the Hardy--Littlewood rearrangement inequality
\cite[Theorem~2.2, p.\,44]{bennett-sharpley88}, it follows that
\begin{align}\label{eq:hardy-integral-E}
\int_E\frac{dz}{|x-z|^{n-1}}
&\leq\int_0^{|E|}\left(\frac{\omega_n}{s}\right)^{1-\frac1n}\,ds
=n\omega_n^{1-\frac1n}|E|^{\frac1n}.
\end{align}
The same estimate is immediate when $|E|=0$.
Using \eqref{eq:hardy-integral-E} with
$E=B(x,2t)\cap S_t$ and using \eqref{eq:hardy-shell-volume},
we find that
\begin{align*}
\int_{B(x,2t)\cap S_t}\frac{dz}{|x-z|^{n-1}}
\lesssim|B(x,2t)\cap S_t|^{\frac1n}
\leq|S_t|^{\frac1n}
\lesssim\rho^{\frac1n}t^{1-\frac1n},
\end{align*}
which shows \eqref{eq:hardy-thin-shell-potential}.

From \eqref{eq:hardy-atom-truncation-shell} and
\eqref{eq:hardy-thin-shell-potential}, we deduce that
\begin{align}
\frac1t\int_{B(x,2t)}
\frac{|R_j^t(a)(z)|}{|x-z|^{n-1}}\,dz
&\lesssim\frac{\rho\|a\|_{L^1}}{t^{n+2}}
\int_{B(x,2t)}|x-z|^{1-n}\,dz +\frac{\|a\|_{L^1}}{t^{n+1}}
\int_{B(x,2t)\cap S_t}\cdots\notag\\
&\lesssim\|a\|_{L^1}
\left[\frac{\rho}{t^{n+1}}
+\frac{\rho^{\frac1n}}{t^{n+\frac1n}}\right]
\lesssim\frac{\rho^{\frac1n}\|a\|_{L^1}}
{t^{n+\frac1n}}
\lesssim\frac{\rho^{\frac1n}\|a\|_{L^1}}
{d^{n+\frac1n}},
\label{eq:hardy-potential-large-t}
\end{align}
where we used $\frac{\rho}{t}\leq1$ and
$t\geq\frac d4$ in the last two inequalities.
Splitting the supremum in the definition of $\mathcal P_j(a)(x)$
into the ranges $t\in(0,\frac d4)$ and
$t\in[\frac d4,\infty)$, and using
\eqref{eq:hardy-potential-small-t} and
\eqref{eq:hardy-potential-large-t}, we obtain,
for any $x\in\mathbb R^n\setminus8B$,
\begin{align}\label{eq:hardy-potential-atom-tail}
\mathcal P_j(a)(x)
\lesssim\frac{\rho^{\frac1n}\|a\|_{L^1}}
{|x-z_B|^{n+\frac1n}}
\lesssim\frac1{\upsilon(B)}
\left(\frac{\rho}{|x-z_B|}\right)^{n+\frac1n},
\end{align}
where we used $\|a\|_{L^1}\leq\frac{|B|}{\upsilon(B)}$.
By the decomposition $\mathbb R^n\setminus8B
=\bigcup_{k=3}^{\infty}(2^{k+1}B\setminus2^kB)$ and
\eqref{eq:hardy-potential-atom-tail}, we find that
\begin{align}
\int_{\mathbb R^n\setminus8B}\mathcal P_j(a)(x)\upsilon(x)\,dx
&\lesssim\sum_{k=3}^\infty
2^{-k(n+\frac1n)}
\frac{\upsilon(2^{k+1}B)}{\upsilon(B)}
\lesssim
\sum_{k=3}^\infty
2^{-k(n+\frac1n)}2^{(k+1)n}\lesssim 1,
\label{eq:hardy-potential-atom-far}
\end{align}
where we used
$\upsilon(2^{k+1}B)\lesssim
2^{(k+1)n}[\upsilon]_{A_1}\upsilon(B)$.}
From \eqref{eq:hardy-potential-atom-near} and
\eqref{eq:hardy-potential-atom-far}, we infer that
\begin{align}\label{eq:hardy-potential-atom-bound}
\|\mathcal P_j(a)\|_{L^1_\upsilon}
{
=\int_{8B}\mathcal P_j(a)(x)\upsilon(x)\,dx
+\int_{\mathbb R^n\setminus8B}\mathcal P_j(a)(x)\upsilon(x)\,dx}
\lesssim1.
\end{align}

For any $g\in H^1_\upsilon$, take an atomic decomposition
$g=\sum_{k=1}^\infty c_ka_k$ as in
\eqref{eq:hardy-atomic-expansion}, and let
$g_N:=\sum_{k=1}^Nc_ka_k$ for any $N\in\mathbb N$. As proved after
\eqref{eq:hardy-atomic-expansion}, these partial sums converge
to $g$ in $L^1_\upsilon$. Using \eqref{eq:hardy-truncated-kernel-weight-control}, we conclude that, for any fixed $z\in\mathbb R^n$ and $t\in(0,\infty)$,
\begin{align*}
\left|R_j^t(g_N)(z)-R_j^t(g)(z)\right|
&\leq\underset{|z-y|>t}
{\int_{\mathbb R^n}}
\frac{|z_j-y_j|}{|z-y|^{n+1}}|g_N(y)-g(y)|\,dy\\
&\lesssim\int_{\mathbb R^n}|g_N(y)-g(y)|\upsilon(y)\,dy
=\|g_N-g\|_{L^1_\upsilon}\to0
\end{align*}
as $N\to\infty.$ For any fixed $x\in\mathbb R^n$
and $t\in(0,\infty)$, from Fatou's lemma, it follows that
\begin{align*}
\frac1t\int_{B(x,2t)}
\frac{|R_j^t(g)(z)|}{|x-z|^{n-1}}\,dz
\leq\liminf_{N\to\infty}
\frac1t\int_{B(x,2t)}
\frac{|R_j^t(g_N)(z)|}{|x-z|^{n-1}}\,dz\leq\liminf_{N\to\infty}\mathcal P_j(g_N)(x).
\end{align*}
Taking
the supremum over $t\in(0,\infty)$, we find that
$\mathcal P_j(g)(x)\leq\liminf_{N\to\infty}\mathcal P_j(g_N)(x)$.
By Fatou's lemma, \eqref{eq:hardy-potential-atom-bound},
and \eqref{eq:hardy-atomic-expansion},
we obtain
\begin{align*}
\|\mathcal P_j(g)\|_{L^1_\upsilon}
\leq\liminf_{N\to\infty}
\|\mathcal P_j(g_N)\|_{L^1_\upsilon}\leq\sum_{k=1}^\infty|c_k|\|\mathcal P_j(a_k)\|_{L^1_\upsilon}
\lesssim\sum_{k=1}^\infty|c_k|
\lesssim\|g\|_{H^1_\upsilon}.
\end{align*}
This completes the proof of Lemma \ref{lem:hardy-truncated-potential}.
\end{proof}

\begin{proof}[Proof of Theorem \ref{th:weighted-hardy-sobolev}]
We first show the upper estimate. Let
$$\mathcal A:=\{\varphi\in C_{\mathrm{c}}^\infty(B({\mathbf0},1)):
\|\varphi\|_{L^\infty}+\|\nabla\varphi\|_{L^\infty}\leq1\}$$
and
$$\mathcal G(g)(x):=
\sup_{s\in(0,\infty), \varphi\in\mathcal A}
|\varphi_s*g(x)|,$$
where $\varphi_s(x):=s^{-n}\varphi(\frac xs)$ for any $s\in(0,\infty)$.
Let $\mathcal M_{r,1}$ be the \textit{local grand maximal function}
defined in \cite[Section~2]{koskela-saksman08}. From the definitions of $\mathcal M_{r,1}$ and $\mathcal G$, we infer that $\mathcal M_{r,1}(h)(x)\lesssim\mathcal G(h)(x)$.
Using $f\in\mathcal S$ and
\cite[Theorem~7]{koskela-saksman08} with $N=1$, we find that, for any $x,y\in\mathbb R^n$,
\begin{align}\label{eq:hardy-sobolev-grand-difference}
|f(x)-f(y)|
\lesssim|x-y|\sum_{\ell=1}^n
[\mathcal G(\partial_\ell f)(x)+\mathcal G(\partial_\ell f)(y)].
\end{align}
To control the weighted $L^1$ norms of $\{\mathcal G(\partial_\ell f)\}_{l=1}^{n}$ in \eqref{eq:hardy-sobolev-grand-difference},
we next prove that, for any $g\in H^1_\upsilon$,
\begin{align}\label{eq:hardy-weighted-grand-bound}
\|\mathcal G(g)\|_{L^1_\upsilon}
\lesssim \|g\|_{H^1_\upsilon}.
\end{align}
Indeed, let $\Theta(x,u):=\upsilon(x)u$. For any
$\varphi\in\mathcal A$, we obviously have
\begin{align*}
\sup_{\genfrac{}{}{0pt}{}{z\in\mathbb R^n}{|\alpha|\leq1}}
(1+|z|)^{2(n+1)}
\left|\partial^\alpha\left(\frac{\varphi}{C_n}\right)(z)\right|
\leq1,
\end{align*}
where $C_n:=2^{2(n+1)}$. Thus, $\varphi/C_n$ belongs to the
test-function family defining the grand maximal function $g_0^*$
in \cite[Definition~2.2]{ky14}, and hence
$\mathcal G(g)(x)\leq C_n g_0^*(x)$.
Since $\upsilon\in A_1$, the triplet $(\Theta,\infty,0)$ is
admissible in the sense of \cite[Definition~2.4]{ky14}.
Moreover, $L^\Theta=L^1_\upsilon$ with equal norms, and
$\|\mathbf 1_B\|_{L^\Theta}=\upsilon(B)$. Hence, the
$(\Theta,\infty,0)$-atoms in \cite[Definition~2.4]{ky14}
are precisely the weighted $(1,\infty,0)$-atoms used here.
By the weighted atomic decomposition
\cite[Theorem~A]{ky11}, choose such atoms $a_k$ and coefficients
$c_k$ so that
\begin{align*}
g=\sum_{k=1}^\infty c_ka_k\ \text{in }\mathcal S'
\ \mathrm{and}\
\sum_{k=1}^\infty|c_k|
\lesssim\|g\|_{H^1_\upsilon}.
\end{align*}
From \cite[Theorem~3.1]{ky14}, it follows that
\begin{align*}
\|\mathcal G(g)\|_{L^1_\upsilon}
\lesssim \|g_0^*\|_{L^\Theta}
\lesssim \sum_{k=1}^\infty|c_k|
\lesssim \|g\|_{H^1_\upsilon}.
\end{align*}
This shows \eqref{eq:hardy-weighted-grand-bound}.

For any distinct $x,y\in\mathbb R^n$, let $t:=|x-y|$. Applying \cite[Lemma~7.16]{gilbarg-trudinger01}
to the function $R_j^t(f)$ on the ball
$B(\frac{x+y}{2},t)$ at $x$ and $y$ and
using $\partial_\ell R_j^t(f)=R_j^t(\partial_\ell f)$,
we conclude that
\begin{align}
\frac{|R_j^t(f)(x)-R_j^t(f)(y)|}{t}
&\lesssim\frac1t\int_{B(\frac{x+y}{2},t)}
\frac{|R_j^t(\nabla f)(z)|}{|x-z|^{n-1}}\,dz
+\frac1t\int_{B(\frac{x+y}{2},t)}
\frac{|R_j^t(\nabla f)(z)|}{|y-z|^{n-1}}\,dz\notag\\
&\leq\frac1t\int_{B(x,2t)}
\frac{|R_j^t(\nabla f)(z)|}{|x-z|^{n-1}}\,dz
+\frac1t\int_{B(y,2t)}
\frac{|R_j^t(\nabla f)(z)|}{|y-z|^{n-1}}\,dz\notag\\
&\leq\sum_{\ell=1}^n
[\mathcal P_j(\partial_\ell f)(x)+\mathcal P_j(\partial_\ell f)(y)],
\label{eq:hardy-sobolev-riesz-difference}
\end{align}
where the last inequality follows from the definition of
$\mathcal P_j$.
Let
\begin{align*}
H:=c\sum_{\ell=1}^n
\left[\mathcal G(\partial_\ell f)
+\sum_{j=1}^n\mathcal P_j(\partial_\ell f)\right],
\end{align*}
where $c\in(0,\infty)$ is a  constant.
Since \eqref{eq:hardy-sobolev-riesz-difference} and
\eqref{eq:hardy-sobolev-grand-difference} hold, we deduce that
\begin{align}\label{eq:hardy-sobolev-pointwise-control}
\mathcal R(f)(x,y)\leq|x-y|[H(x)+H(y)].
\end{align}
Using \eqref{eq:hardy-weighted-grand-bound}
and \eqref{eq:hardy-truncated-potential}, we obtain
\begin{align}\label{eq:hardy-sobolev-H-leq-Hv}
\|H\|_{L^1_\upsilon}\lesssim\|\nabla f\|_{H^1_\upsilon}.
\end{align}
Next, by \eqref{eq:hardy-sobolev-pointwise-control}, to prove the
upper estimate in \eqref{eq:weighted-hardy-sobolev}, it suffices
to estimate the left-most hand sides of
\eqref{eq:hardy-sobolev-first-endpoint} and
\eqref{eq:hardy-sobolev-second-endpoint}.
For any $\lambda\in(0,\infty)$, from the changes of variables $y=x+r\omega$ and $u=r^\gamma$, we infer that
\begin{align}
&\lambda\iint_{H(x)>\lambda|x-y|^\gamma}
|x-y|^{\gamma-n}\upsilon(x)\,dx\,dy\notag\\
&\quad=\lambda{n\omega_n}
\underset{H(x)>\lambda r^\gamma}
{\int_{\mathbb R^n}\int_0^\infty}
r^{\gamma-1}\,dr\,\upsilon(x)\,dx
=\frac{{n\omega_n}}{|\gamma|}
\|H\|_{L^1_\upsilon}.
\label{eq:hardy-sobolev-first-endpoint}
\end{align}
To estimate the term involving $H(y)$, recall that, for almost
every $y\in\mathbb R^n$ and any $s\in(0,\infty)$,
\begin{align}\label{eq:hardy-Vys-upsilon-y}
\upsilon(B(y,s))
\lesssim[\upsilon]_{A_1}s^n\upsilon(y).
\end{align}
For $y$ satisfying \eqref{eq:hardy-Vys-upsilon-y} and
$H(y)\in(0,\infty)$, let
$R:=[\frac{H(y)}\lambda]^{\frac1\gamma}$.
If $\gamma\in(0,\infty)$, by the decomposition
$$
B(y,R)\setminus\{y\}
=
\bigcup_{k=0}^{\infty}
\{x\in\mathbb R^n:
2^{-k-1}R\leq|x-y|<2^{-k}R\}$$
and \eqref{eq:hardy-Vys-upsilon-y}, we find that
\begin{align}
\int_{|x-y|<R}|x-y|^{\gamma-n}\upsilon(x)\,dx
&=\sum_{k=0}^\infty
\int_{2^{-k-1}R\leq|x-y|<2^{-k}R}
|x-y|^{\gamma-n}\upsilon(x)\,dx\notag\\
&\lesssim\sum_{k=0}^\infty
(2^{-k}R)^{\gamma-n}\upsilon\left(B\left(y,2^{-k}R\right)\right)
\lesssim[\upsilon]_{A_1}R^\gamma\upsilon(y)
\sum_{k=0}^\infty2^{-k\gamma}.
\label{eq:hardy-xyR-0n}
\end{align}
Similarly,
if $\gamma\in(-\infty,0)$, from the dyadic decomposition of $[B(y,R)]^{\complement}$
and \eqref{eq:hardy-Vys-upsilon-y}, it follows that
\begin{align}
\int_{|x-y|>R}|x-y|^{\gamma-n}\upsilon(x)\,dx
\lesssim[\upsilon]_{A_1}R^\gamma\upsilon(y)
\sum_{k=0}^\infty2^{k\gamma}.
\label{eq:hardy-xyR-negative}
\end{align}
Using \eqref{eq:hardy-xyR-0n}, \eqref{eq:hardy-xyR-negative},
$\lambda R^\gamma=H(y)$,
$\sum_{k=0}^\infty2^{-k|\gamma|}\lesssim1+\frac1{|\gamma|}$,
and Tonelli's theorem, we conclude that
\begin{align}\label{eq:hardy-sobolev-second-endpoint}
\lambda\iint_{H(y)>\lambda|x-y|^\gamma}
|x-y|^{\gamma-n}\upsilon(x)\,dx\,dy
\lesssim[\upsilon]_{A_1}
\left(1+\frac1{|\gamma|}\right)\|H\|_{L^1_\upsilon}.
\end{align}
Combining \eqref{eq:hardy-sobolev-pointwise-control},
\eqref{eq:hardy-sobolev-first-endpoint}, and
\eqref{eq:hardy-sobolev-second-endpoint}, with
$\lambda$ replaced by $\frac\lambda2$ in the last two
estimates, and then using \eqref{eq:hardy-sobolev-H-leq-Hv},
we conclude that the upper estimate in \eqref{eq:weighted-hardy-sobolev} holds.

From the pointwise limit for the truncated Riesz differences established in
\cite[p.\,51]{dlyyz}, the argument used to show \cite[(5.20)]{dlyyz}
with $dx$ replaced by $\upsilon(x)\,dx$, and \eqref{eq:hardy-weighted-riesz-norm}, we deduce that the lower
estimate in \eqref{eq:weighted-hardy-sobolev} holds.
This completes the proof of Theorem
\ref{th:weighted-hardy-sobolev}.
\end{proof}
The next lemma proves that the assumption $\gamma\ne0$
in Theorem \ref{th:weighted-hardy-sobolev} is necessary even
for any $A_1$ weight.
\begin{lemma}\label{prop:weighted-hardy-sobolev-gamma-zero}
Let $\upsilon\in A_1$ and
$d\mu_{\upsilon}(x,y):=|x-y|^{-n}\upsilon(x)\,dx\,dy$.
Then, for any nonzero $f\in C_{\mathrm{c}}^\infty$,
$\|\nabla f\|_{H^1_\upsilon}<\infty$, but
\begin{align}\label{6.41x}
\left\|\frac{\mathcal R(f)(x,y)}{|x-y|}\right\|
_{L^{1,\infty}(\mathbb R^n\times\mathbb R^n,\mu_{\upsilon})}
=\infty.
\end{align}
\end{lemma}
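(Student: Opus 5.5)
The plan has two parts, mirroring Lemma \ref{prop:weighted-hardy-gamma-zero}. First we check that $\|\nabla f\|_{H^1_\upsilon}<\infty$. For every $\ell\in\{1,\ldots,n\}$, the function $\partial_\ell f$ belongs to $C_{\mathrm c}^\infty$ and has integral zero. Hence it is a constant multiple of a weighted $(1,\infty,0)$-atom associated with a ball containing $\operatorname{supp}f$, and it therefore lies in $H^1_\upsilon$.

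For the divergence we retain only the first term of $\mathcal R(f)$, since $\mathcal R(f)(x,y)\ge |f(x)-f(y)|$. We will show that there exists $\lambda_0>0$ such that
\begin{align*}
\underset{|f(x)-f(y)|>\lambda_0|x-y|}{\int_{\mathbb R^n}\int_{\mathbb R^n}}|x-y|^{-n}\,dy\,\upsilon(x)\,dx=\infty.
\end{align*}
Because $f\not\equiv0$ and $f$ has compact support, $\nabla f\not\equiv0$. Choose $\lambda_0>0$ so that the open set $E:=\{x:|\nabla f(x)|>4\lambda_0\}$ is nonempty. It then has positive Lebesgue measure, and hence positive $\upsilon$-measure, since $\upsilon>0$ almost everywhere.

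Fix $x\in E$ and write $e:=\nabla f(x)/|\nabla f(x)|$. Consider the cone $\Gamma_x:=\{y:\ (y-x)\cdot e\ge \frac12|y-x|\}$. By Taylor's formula,
\begin{align*}
f(y)-f(x)=\nabla f(x)\cdot(y-x)+O(|y-x|^2),
\end{align*}
so for $y\in\Gamma_x$ we get $|f(y)-f(x)|\ge 2\lambda_0|y-x|-C|y-x|^2>\lambda_0|x-y|$ whenever $|y-x|<r_x:=\lambda_0/C$. Here $C$ depends only on $\|\nabla^2 f\|_{L^\infty}$, so $r_x$ is in fact uniform in $x$. Integrating in polar coordinates about $x$, the cone $\Gamma_x$ occupies a fixed fraction $c_n>0$ of the sphere, and
\begin{align*}
\int_{\Gamma_x\cap B(x,r_x)}|x-y|^{-n}\,dy=c_n\int_0^{r_x}\frac{dr}{r}=\infty.
\end{align*}

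Tonelli's theorem then gives that the inner integral is infinite for every $x\in E$. Since $\upsilon(E)>0$, the double integral is infinite. Multiplying by $\lambda_0$ shows that the weak quasi-norm in \eqref{6.41x} is at least this product, so it equals $\infty$. The only step requiring care is the uniform cone lower bound for $|f(x)-f(y)|$, which the second-order Taylor estimate settles. No properties of the truncated Riesz terms are needed, and the $A_1$ condition enters only through the positivity of $\upsilon$.
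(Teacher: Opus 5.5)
Your proof is correct and follows essentially the same route as the paper: atoms for the finiteness of $\|\nabla f\|_{H^1_\upsilon}$, then discarding the Riesz terms and using the first-order behaviour of $|f(x)-f(y)|$ near the diagonal together with $\int_0 \frac{dt}{t}=\infty$ and Tonelli's theorem. The only difference is cosmetic. The paper works with the set of pairs $(x,\omega)\in\mathbb R^n\times\mathbb S^{n-1}$ satisfying $|\nabla f(x)\cdot\omega|>2\lambda_0$ and a non-uniform threshold $t_{x,\omega}$ from the pointwise limit, whereas you restrict to a cone around $\nabla f(x)$ and use the second-order Taylor bound to get a uniform radius, which is slightly more quantitative but not needed.
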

\begin{proof}
For any $\ell\in\{1,\ldots,n\}$, the function
$\partial_\ell f$ belongs to $C_{\rm c}^\infty$ and has integral zero. Thus, whenever it is nonzero, it is a constant multiple of a
weighted $(1,\infty,0)$-atom, and hence
$\|\nabla f\|_{H^1_\upsilon}<\infty$.
By $f\not\equiv0$ and $f\in C_{\mathrm{c}}^\infty$, we conclude that
$\nabla f\not\equiv0$. Using the continuity  from $\mathbb R^n\times  \mathbb S^{n-1}$ to $\mathbb R$ of the map
$(x,\omega)\mapsto|\nabla f(x)\cdot\omega|$, we may therefore choose
$\lambda_0\in(0,\infty)$ such that
$E:=\{(x,\omega)\in\mathbb R^n\times\mathbb S^{n-1}:
|\nabla f(x)\cdot\omega|>2\lambda_0\}$
has positive measure with respect to
$\upsilon(x)\,dx\,d\sigma(\omega)$.
For any $(x,\omega)\in E$, by the mean value theorem, we have
\begin{align*}
\lim_{t\to0^+}\frac{|f(x+t\omega)-f(x)|}t
=|\nabla f(x)\cdot\omega|.
\end{align*}
Since $\mathcal R(f)(x,x+t\omega)\geq|f(x+t\omega)-f(x)|$ due to the definition \eqref{6.19x}, we infer that there exists $t_{x,\omega}\in(0,\infty)$ such that
$\mathcal R(f)(x,x+t\omega)>\lambda_0t$ whenever
$t\in(0,t_{x,\omega})$ for any $(x,\omega)\in E$.
Consequently,
\begin{align*}
\underset{\mathcal R(f)(x,x+t\omega)>\lambda_0t}
{\int_0^\infty}
\frac{dt}{t}
\geq\int_0^{t_{x,\omega}}\frac{dt}{t}
=\infty.
\end{align*}
Using Tonelli's theorem, we find that
\begin{align*}
\iint_{\frac{\mathcal R(f)(x,y)}{|x-y|}>\lambda_0}
\frac{\upsilon(x)}{|x-y|^n}\,dx\,dy
&\geq\underset{\mathcal R(f)(x,x+t\omega)>\lambda_0t}
{\int_E\int_0^\infty}
\frac{dt}t\,\upsilon(x)\,dx\,d\sigma(\omega)
=\infty,
\end{align*}
which further implies \eqref{6.41x}. This completes the proof of Lemma \ref{prop:weighted-hardy-sobolev-gamma-zero}.
\end{proof}
\begin{proof}[Proof of Theorem \ref{th:weighted-hardy-sobolev-range}]
Theorem \ref{th:weighted-hardy-sobolev} gives the sufficiency.
By Lemma \ref{prop:weighted-hardy-sobolev-gamma-zero}, we find that,
when $\gamma=0$, any nonzero $f\in C_{\mathrm{c}}^\infty$ has a finite
left-hand side and an infinite right-hand side in
\eqref{eq:weighted-hardy-sobolev}, which proves the necessity.
This completes the proof of Theorem \ref{th:weighted-hardy-sobolev-range}.
\end{proof}
\subsection{The Caffarelli--Silvestre Extension}\label{sub:6-2}
We next apply Theorem \ref{th:intro-m} with $\tau=\mathrm{av}$
to obtain a weighted norm equivalence for the Caffarelli--Silvestre extension,
extending \cite[Theorem~3.2 and Example~3.6]{dlyyz}.
The Ahlfors regular fractal sets constructed in Subsection \ref{subsec:sharp-m}
are then used to find those counterexamples for the excluded negative parameters
at $p=1$.

Let $s\in(0,1)$. For any $t\in(0,\infty)$, define the \textit{fractional
Poisson kernel} $p_{s,t}$ and the associated \textit{fractional Poisson operator} $P_{s,t}$ by setting, for any $x\in\mathbb R^n$,
\begin{align*}
p_{s,t}(x)
&:=
c_{n,s}\frac{t^{2s}}
{(t^2+|x|^2)^{\frac{n+2s}{2}}}
\ \text{and}\ P_{s,t}(f)(x)
:=
(p_{s,t}*f)(x),
\end{align*}
where
$
c_{n,s}
:=
\frac{\Gamma(\frac{n+2s}{2})}
{\pi^{\frac n2}\Gamma(s)}.$
For $f\in\mathcal S$, the function
$
(x,t)\mapsto P_{s,t}(f)(x)$
gives the Caffarelli--Silvestre extension from $\mathbb R^{n+1}_{+}$ to $\mathbb R$ of $f$ of order $s$; see
\cite[(1.4)--(1.6) and Section~2.4]{cs07}, particularly (2.3) therein.

We now establish the following weighted estimates.

\begin{theorem}\label{th:weighted-CS}
Let $s\in(0,1)$, $p\in[1,\infty)$, and $\upsilon\in A_p$.
Assume that $\gamma\in\mathbb R\setminus[-n,0]$ when $p=1$
and $\gamma\in\mathbb R\setminus\{0\}$ when $p\in(1,\infty)$.
Then, for any $f\in\mathcal S$ satisfying
$(-\Delta)^sf\in L^p_\upsilon$,
\begin{align}\label{eq:weighted-CS-Ap}
\sup_{\lambda\in(0,\infty)}
\lambda^p
\underset{
|P_{s,t}(f)(x)-f(x)|>
\lambda t^{2s+\gamma}}
{\int_{\mathbb R^n}\int_0^\infty}
t^{\gamma p-1}\,dt\,\upsilon(x)\,dx
\sim
\|(-\Delta)^sf\|_{L^p_\upsilon}^p,
\end{align}
where the positive equivalence constants depend only on
$n$, $p$, $s$, $\gamma$, and $[\upsilon]_{A_p}$.
\end{theorem}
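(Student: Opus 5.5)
The plan is to reduce \eqref{eq:weighted-CS-Ap} to Theorem \ref{th:intro-m} with $\tau=\mathrm{av}$, applied to $g:=(-\Delta)^sf$. The first step is a kernel representation, taken from \cite[Example~3.6]{dlyyz} (or re-derived via the Fourier transform). For $f\in\mathcal S$ we have $\widehat{P_{s,t}f}(\xi)=\Psi(t|\xi|)\widehat f(\xi)$ with $\Psi(r)=\frac{2^{1-s}}{\Gamma(s)}r^sK_s(r)$. We write
\begin{align*}
P_{s,t}(f)(x)-f(x)=-t^{2s}\,(\phi_t*g)(x),\qquad \widehat\phi(\xi):=\frac{1-\Psi(|\xi|)}{|\xi|^{2s}},\quad \phi_t:=t^{-n}\phi\left(\frac{\cdot}{t}\right).
\end{align*}
Here $\phi$ is radial, integrable and $\int\phi=\widehat\phi(\mathbf0)=c_s\neq0$, because $1-\Psi(r)\sim c_sr^{2s}$ as $r\to0$. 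After this substitution the level set in \eqref{eq:weighted-CS-Ap} becomes $\{|\phi_t*g(x)|>\lambda t^\gamma\}$.

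The key technical input, which I expect to be the main obstacle, is a radially decreasing integrable majorant $|\phi(x)|\lesssim(1+|x|)^{-n-\epsilon}$ for some $\epsilon\in(0,\infty)$. I would get it from the smoothness and decay of $\widehat\phi$: it is smooth away from $\mathbf0$ and has the expansion $c_s+O(|\xi|^{2s})$ near $\mathbf0$, and near $\mathbf0$ the singular part is homogeneous of degree $2s$. Alternatively one can write $\phi$ as an explicit subordination integral of Poisson kernels. Once the majorant is available, splitting $\mathbb R^n$ into the annuli $\{2^{k-1}t\le|x-y|<2^kt\}$ gives the pointwise bound
\begin{align*}
|\phi_t*g(x)|\lesssim\sum_{k\in\mathbb Z_+}2^{-k\epsilon}\fint_{B(x,2^kt)}|g(y)|\,dy\le\sum_{k\in\mathbb Z_+}2^{-k\epsilon}\mathcal M_{2^k}(g)(x,t).
\end{align*}

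For the upper bound, fix $\lambda$ and set $\lambda_k:=c_\epsilon2^{-k\epsilon/2}\lambda$, with $c_\epsilon$ chosen so that $\sum_k2^{-k\epsilon}\lambda_k^{-1}\lambda\cdot 2^{-k\epsilon/2}\le 1$ in the appropriate sense, namely so that the level set is contained in $\bigcup_k\{\mathcal M_{2^k}(g)(x,t)>C2^{k\epsilon/2}\lambda t^\gamma\}$. The supremum over $\theta$ in \eqref{eq:intro-m-1} and \eqref{eq:intro-m-p} is uniform, so each piece contributes at most $C2^{-k\epsilon p/2}[\upsilon]_{A_p}^{\alpha}\|g\|_{L^p_\upsilon}^p$, and these contributions sum geometrically. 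The admissible $\gamma$ are exactly $\Gamma_p^{\mathrm{av}}$, which matches the hypotheses.

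For the lower bound I would copy the Fatou argument in the proof of Theorem \ref{th:exact-m}. Substituting $t=\varepsilon u$ with $\varepsilon=\lambda^{-1/\gamma}$, the quantity becomes the $t^{\gamma p-1}\upsilon\,dx\,dt$-measure of $\{|\phi_{\varepsilon u}*g(x)|>u^\gamma\}$. Since $\phi$ has a radially decreasing integrable majorant, $\phi_{\varepsilon u}*g(x)\to c_sg(x)$ at every Lebesgue point of $g$. Fatou's lemma then gives $\liminf\ge\frac{|c_s|^p}{p|\gamma|}\|g\|_{L^p_\upsilon}^p$, which is the lower estimate. Apart from the kernel bound, the only remaining check is the routine one that $g\in L^1_{\rm loc}$ and $(-\Delta)^sf\in L^p_\upsilon$ suffice for these manipulations with $f\in\mathcal S$.
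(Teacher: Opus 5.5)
Your overall route is the paper's. The upper bound comes from dominating $|P_{s,t}(f)-f|/t^{2s}$ pointwise by a geometrically weighted sum of $\mathcal M_\theta(g)(x,t)$ over dyadic $\theta$. You then apply Theorem \ref{th:intro-m} uniformly in $\theta$ and sum the weak-type bounds; your explicit level-set splitting plays the role of the Aoki--Rolewicz theorem. The lower bound is the Fatou argument of \cite[Lemma~3.3]{dlyyz} applied to the boundary limit \eqref{eq:CS-boundary-limit}.

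However, the kernel bound you single out as the key input is false, and with it your displayed pointwise estimate. Since $\Psi(r)$ decays exponentially as $r\to\infty$, one has $\widehat\phi(\xi)\sim|\xi|^{-2s}$ at infinity. So whenever $2s\leq n$ (in particular for every $n\geq2$), $\phi$ is unbounded near $\mathbf 0$: it behaves like the Riesz potential kernel $c|x|^{2s-n}$ when $2s<n$, and logarithmically when $2s=n$. Hence $|\phi(x)|\lesssim(1+|x|)^{-n-\epsilon}$ fails, and so does $|\phi_t*g(x)|\lesssim\sum_{k\in\mathbb Z_+}2^{-k\epsilon}\mathcal M_{2^k}(g)(x,t)$. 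Take $2s<n$ and let $g$ be an $L^1$-normalized bump of radius $\delta\ll t$ centered at $x$. Then $\phi_t*g(x)\approx t^{-2s}\delta^{2s-n}\to\infty$ as $\delta\to0$, while every $\mathcal M_{2^k}(g)(x,t)$ with $k\in\mathbb Z_+$ stays $\lesssim t^{-n}$. The near part $\{|x-y|<t\}$ is a Riesz-potential-type term that averages at scales $\geq t$ cannot control. A smaller point: near $\mathbf 0$ the expansion is $\widehat\phi(\xi)=c_s-a|\xi|^{2-2s}+O(|\xi|^2)$, so the tail exponent is $\epsilon=2-2s$, not $2s$; this is harmless.

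The repair stays inside your framework. Rescale the heat representation \eqref{eq:CS-ce-heat-representation} (that is, \cite[(3.13)]{dlyyz}) to get $\phi=\frac{1}{\Gamma(s)}\int_0^\infty h_u\,(1-e^{-\frac{1}{4u}})\,u^{s-1}\,du$, where $h_u$ is the Gauss--Weierstrass kernel. This shows $\phi\geq0$ is radial and radially decreasing. Using $1-e^{-a}\leq\min\{1,a\}$ gives $\phi(x)\lesssim\min\{|x|^{2s-n},|x|^{2s-2-n}\}$ when $2s<n$, with a logarithmic or bounded modification near $\mathbf 0$ when $n=1$ and $s\geq\frac12$. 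Decomposing into the annuli $\{2^{k-1}t\leq|x-y|<2^kt\}$ for all $k\in\mathbb Z$ then yields
\[
|\phi_t*g(x)|\lesssim\sum_{k\in\mathbb Z}2^{-\epsilon'|k|}\mathcal M_{2^k}(g)(x,t)
\]
for some $\epsilon'>0$, for instance $\epsilon'=\min\{2s,2-2s\}$ when $2s<n$. Because \eqref{eq:intro-m-1} and \eqref{eq:intro-m-p} are uniform over all $\theta\in(0,\infty)$, the small scales $\theta=2^k$ with $k<0$ cost nothing. Your splitting with weights $2^{-\epsilon'|k|/2}$ then sums geometrically. Your lower bound is unaffected: the approximate-identity convergence only needs a radially decreasing integrable majorant, and $\phi$ itself is one (and $g$ is continuous).
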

\begin{theorem}\label{th:weighted-CS-range}
Let $s\in(0,1)$, $p\in[1,\infty)$, and $\gamma\in\mathbb R$.
Then the conclusion of Theorem \ref{th:weighted-CS}
holds for any $\upsilon\in A_p$ if and only if
$\gamma\in\Gamma_p^{\mathrm{av}}$.
\end{theorem}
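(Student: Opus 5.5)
Sufficiency is exactly Theorem \ref{th:weighted-CS}, so the plan concerns necessity. Since $\upsilon\equiv1\in A_p$ for every $p$, it is enough to produce unweighted counterexamples: for every $p\in[1,\infty)$ when $\gamma=0$, and for $p=1$ when $\gamma\in[-n,0)$. The basic tool will be a representation of $P_{s,t}(f)-f$ through $g:=(-\Delta)^sf$. The multiplier of $P_{s,t}$ is $\psi(t|\xi|)$ with $\psi(0)=1$ and $1-\psi(r)\sim r^{2s}$ as $r\to0$. I will therefore write
\begin{align*}
f-P_{s,t}(f)=t^{2s}\,\Phi_t*g,
\end{align*}
where $\widehat\Phi(\xi):=\frac{1-\psi(|\xi|)}{|\xi|^{2s}}$. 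I then need to check three properties of $\Phi$: that $\Phi\in L^1$, that it has tail $|\Phi(x)|\lesssim(1+|x|)^{-n-2s}$, and that $\Phi\ge c>0$ on $B(\mathbf0,1)$. For the positivity I would try the subordination formula for the fractional Poisson kernel; this positivity is one point I am unsure of. With this representation the level sets $\{|P_{s,t}(f)-f|>\lambda t^{2s+\gamma}\}$ become $\{|\Phi_t*g|>\lambda t^\gamma\}$. The weak-type quantity is then the analogue of the one for $\mathcal M_1$, with $\mathcal M_1(g)$ replaced by $|\Phi_t*g|$.

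For $\gamma=0$ I follow the proof of Lemma \ref{prop:m-gamma-zero}. Take any $f\in\mathcal S$ with $(-\Delta)^sf\not\equiv0$. Since $\Phi_t*g\to(\int\Phi)\,g$ almost everywhere as $t\to0^+$ and $\int\Phi=\widehat\Phi(0)\ne0$, there is $\lambda_0>0$ and a set $E$ of positive measure with $|\Phi_t*g(x)|>\lambda_0$ for $t<t_x$. Then $\int_0^{t_x}\frac{dt}{t}=\infty$ on $E$, so the left-hand side of \eqref{eq:weighted-CS-Ap} is infinite while the right-hand side is finite.

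For $p=1$ and $\gamma\in(-n,0)$ I will use the Ahlfors regular set $E$ of dimension $n+\gamma$ from Subsection \ref{subsec:sharp-m}, and the functions $f_\delta=\rho^{\gamma+\delta}\mathbf1_{\mathcal T_{r_0}(E)\setminus E}$. By \eqref{eq:fixed-gamma-test-norm}, $\|f_\delta\|_{L^1}\sim\delta^{-1}$. Repeating the computation \eqref{eq:fixed-gamma-lifted-lower} with $p=1$ (so the weight is $1$) shows that averages over $B(z,t)$, $z\in E$, exceed $\frac{c_*}{\delta}t^{\gamma+\delta}$ on $\mathcal T_t(E)$. This gives a weak quantity $\gtrsim\delta^{-2}$, which is not $\lesssim\delta^{-1}\sim\|f_\delta\|_{L^1}$ uniformly in $\delta$. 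To turn this into a counterexample for the Caffarelli--Silvestre extension, I set
\begin{align*}
g_\delta:=f_\delta-\|f_\delta\|_{L^1}\,\varphi(\cdot-x_D),
\end{align*}
with $\varphi\ge0$ a normalized bump placed at distance $D\gg1$. This makes $\int g_\delta=0$, so that $g_\delta=(-\Delta)^sF_\delta$ for a function $F_\delta$ that decays. I then mollify so that $F_\delta\in\mathcal S$ up to an approximation. Positivity of $\Phi$ on $B(\mathbf0,1)$ together with $f_\delta\ge0$ gives $\Phi_t*f_\delta\gtrsim\fint_{B(z,t)}f_\delta$ on $\mathcal T_t(E)$ for $t<r_0$. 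The far bump contributes only $O(\delta^{-1}D^{-n-2s})$ there, because of the tail of $\Phi$. The lower bound $\delta^{-2}$ therefore survives while $\|g_\delta\|_{L^1}\lesssim\delta^{-1}$, and this contradicts \eqref{eq:weighted-CS-Ap} with a fixed constant.

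For $\gamma=-n$ I argue similarly with $f=|x|^{-n+\varepsilon}\mathbf1_{B(\mathbf0,1)}$, as in Case 1 of the proof of Proposition \ref{pro:sharp-m}(ii) with $p=1$. The weak quantity is $\gtrsim\varepsilon^{-2}$ while $\|f\|_{L^1}\sim\varepsilon^{-1}$, and the same mean-zero correction applies.

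I expect the main obstacle to be the realization step: showing that the mean-zero, mollified $g_\delta$ is genuinely $(-\Delta)^sF_\delta$ with $F_\delta\in\mathcal S$ (or a suitable density substitute), while keeping both the lower bound and the $L^1$ control uniform in $\delta$. The positivity of $\Phi$ near the origin is the second delicate point. If positivity fails, I will replace it by a lower bound of $\Phi_t*g_\delta$ through an average over a smaller ball $B(z,ct)$, using that $\Phi$ is continuous with $\Phi(\mathbf0)\neq0$.
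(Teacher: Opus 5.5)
Your overall structure matches the paper's. Sufficiency is Theorem \ref{th:weighted-CS}. The case $\gamma=0$ is excluded by the pointwise limit together with $\int_0^{t_x}\frac{dt}{t}=\infty$, exactly as in Lemma \ref{prop:CS-gamma-zero}. For $p=1$ and $\gamma\in[-n,0)$ you concentrate the data near an $(n+\gamma)$-Ahlfors regular set or a point, as the paper does. Taking $\upsilon\equiv1$ is legitimate for necessity. Your ``$\delta^{-2}$ versus $\delta^{-1}$'' mechanism is a valid substitute for the paper's sequence, which has bounded norm and a $\log\frac{r_0}{4\varepsilon}$-divergent left-hand side.

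The positivity you worried about holds. By the subordination behind \eqref{eq:CS-ce-heat-representation}, $\Phi(x)=\frac{1}{\Gamma(s)}\int_0^\infty(4\pi u)^{-\frac n2}e^{-\frac{|x|^2}{4u}}(1-e^{-\frac{1}{4u}})u^{s-1}\,du>0$ for every $x$, with a positive lower bound on $B(\mathbf0,2)$. Two side remarks, neither of which affects the argument. First, the tail is actually $\Phi(x)\sim|x|^{-n-2+2s}$, so your bound $(1+|x|)^{-n-2s}$ is false for $s\in(\frac12,1)$; any integrable tail suffices, though. Second, for $n>2s$ the kernel blows up like $|x|^{2s-n}$ at $\mathbf0$, so your continuity fallback would be wrong; it is not needed.

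\textbf{The gap is the realization step, and the route you indicate for it does not work.}

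\begin{itemize}
\item Subtracting a far bump only gives $\widehat{g_\delta}(\mathbf0)=0$. The first moment of $g_\delta$ is of size $\|f_\delta\|_{L^1}|x_D|\neq0$.
\item Consequently $|\xi|^{-2s}\widehat{g_\delta}(\xi)$ has a $|\xi|^{1-2s}$-type singularity at the origin, and $F_\delta$ decays only polynomially.
\item Mollification cures high frequencies, not this low-frequency defect, so $F_\delta\notin\mathcal S$.
\item Theorem \ref{th:weighted-CS} is asserted only for Schwartz functions. Your example therefore contradicts nothing without an approximation argument, which you do not supply.
\end{itemize}

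\textbf{The missing idea is the paper's low-frequency cutoff.} Take $h\in C_{\mathrm{c}}^\infty$, namely a mollification of your $f_\delta$, or of $|x|^{-n+\varepsilon}\mathbf 1_{B(\mathbf0,1)}$ when $\gamma=-n$. Set $\widehat F:=|\xi|^{-2s}[1-\chi(\xi)]\widehat h$, where $\chi\in C_{\mathrm{c}}^\infty$ equals $1$ near $\mathbf0$.

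\begin{itemize}
\item Then $F\in\mathcal S$ and $(-\Delta)^sF=h-\phi*h$ with $\phi:=\mathcal F^{-1}\chi$.
\item This gives $\|(-\Delta)^sF\|_{L^1}\le(1+\|\phi\|_{L^1})\|h\|_{L^1}\lesssim\delta^{-1}$.
\item The error term contributes to $\frac{F-P_{s,t}(F)}{t^{2s}}=\Phi_t*(h-\phi*h)$ at most $\|\Phi\|_{L^1}\|\phi\|_{L^\infty}\|h\|_{L^1}\lesssim\delta^{-1}$, uniformly in $t$.
\item Because $\gamma<0$, this bounded error is absorbed by half of your main term $\gtrsim\delta^{-1}t^{\gamma+\delta}$, measured against the threshold $\lambda_\delta t^\gamma\sim\delta^{-1}t^\gamma$. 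This holds for all $t\in(\tau_\delta,t_0)$, with $t_0\in(0,r_0]$ independent of $\delta$.
\item Hence the lower bound $\delta^{-1}\int_{\tau_\delta}^{t_0}\frac{dt}{t}\sim\delta^{-2}$ survives, while the right-hand side stays $\lesssim\delta^{-1}$.
\end{itemize}

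With this device the mean-zero correction, the far bump and the tail estimate all become unnecessary.
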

\begin{remark}
Theorem \ref{th:weighted-CS} extends the fractional Poisson
characterization in \cite[Theorem~3.2 and Example~3.6]{dlyyz}
to Muckenhoupt weights, with the parameter therein replaced by
$p\gamma$. Theorem \ref{th:weighted-CS-range}
shows that the range of
$\gamma$ in Theorem \ref{th:weighted-CS} is sharp for any $p\in[1,\infty)$.
\end{remark}

\begin{proof}[Proof of Theorem \ref{th:weighted-CS}]
Let $g:=(-\Delta)^sf$. By the proof of
\cite[Example~3.6]{dlyyz},
there exists $c_s\in(0,\infty)$, depending only on $s$, such that
\begin{align}\label{eq:CS-boundary-limit}
\lim_{t\to0^+}\frac{f(x)-P_{s,t}(f)(x)}{t^{2s}}
=c_s g(x),\quad\forall\, x\in\mathbb R^n.
\end{align}
From this and \cite[Lemma~3.3]{dlyyz} with
$d\mu(x)=\upsilon(x)\,dx$ and $\gamma$ therein replaced by
$p\gamma$, it follows that the lower estimate in
\eqref{eq:weighted-CS-Ap} holds.

We next show that the upper estimate holds. By the pointwise estimate in the proof of \cite[Example~3.6]{dlyyz},
the Aoki--Rolewicz theorem (see, for example, \cite{aoki42,rolewicz57}),
and the geometric decay of the coefficients, we conclude that
\begin{align*}
\left\|\frac{P_{s,t}(f)(x)-f(x)}{t^{2s+\gamma}}\right\|
_{L^{p,\infty}(\mathbb R^{n+1}_+,\nu_{\gamma p,\upsilon})}
\lesssim
\sup_{\theta\in(0,\infty)}
\left\|\frac{\mathcal M_\theta(g)(x,t)}{t^\gamma}\right\|
_{L^{p,\infty}(\mathbb R^{n+1}_+,\nu_{\gamma p,\upsilon})}
\lesssim\|g\|_{L^p_\upsilon},
\end{align*}
where the last inequality follows from \eqref{eq:intro-m-1} when
$p=1$ and \eqref{eq:intro-m-p} when $p\in(1,\infty)$.
This proves the upper estimate in \eqref{eq:weighted-CS-Ap}
and hence completes the proof of Theorem \ref{th:weighted-CS}.
\end{proof}
To prove Theorem \ref{th:weighted-CS-range}, we first exclude
$\gamma=0$ for any $p\in[1,\infty)$, even in the unweighted case.

\begin{lemma}\label{prop:CS-gamma-zero}
Let $s\in(0,1)$, $p\in[1,\infty)$, and $f\in\mathcal S$ satisfy
$(-\Delta)^sf\not\equiv0$. Then there exists $\lambda_0\in(0,\infty)$ such that,
for any $\lambda\in(0,\lambda_0)$,
\begin{align*}
\lambda^p
\underset{
|P_{s,t}(f)(x)-f(x)|>\lambda t^{2s}}
{\int_{\mathbb R^n}\int_0^\infty}
\frac{dt}{t}\,dx
=\infty.
\end{align*}
\end{lemma}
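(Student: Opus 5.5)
Set $g:=(-\Delta)^sf$ and argue exactly as in the lower-estimate part of Lemma \ref{prop:m-gamma-zero}, with the pointwise limit \eqref{eq:CS-boundary-limit} playing the role that Lemma \ref{lem:exact-sm-pointwise} played there.

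Since $f\in\mathcal S$, the function $g$ is continuous. As $g\not\equiv0$, continuity gives a number $\lambda_1\in(0,\infty)$ and a nonempty open set
$$E:=\{x\in\mathbb R^n:c_s|g(x)|>\lambda_1\}$$
with $|E|>0$. Here $c_s$ is the constant in \eqref{eq:CS-boundary-limit}. Put $\lambda_0:=\lambda_1$ and fix $\lambda\in(0,\lambda_0)$.

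For every $x\in E$, the identity \eqref{eq:CS-boundary-limit} gives
$$\lim_{t\to0^+}\frac{|P_{s,t}(f)(x)-f(x)|}{t^{2s}}=c_s|g(x)|>\lambda.$$
Hence there is $t_x\in(0,\infty)$ such that $|P_{s,t}(f)(x)-f(x)|>\lambda t^{2s}$ for every $t\in(0,t_x)$. Consequently
$$\underset{|P_{s,t}(f)(x)-f(x)|>\lambda t^{2s}}{\int_0^\infty}\frac{dt}{t}\geq\int_0^{t_x}\frac{dt}{t}=\infty\quad\text{for every }x\in E.$$

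The set of $(x,t)$ with $|P_{s,t}(f)(x)-f(x)|>\lambda t^{2s}$ is measurable, because $(x,t)\mapsto P_{s,t}(f)(x)$ is continuous on $\mathbb R^{n+1}_+$. By Tonelli's theorem, the double integral is therefore at least $\int_E\infty\,dx=\infty$. Multiplying by $\lambda^p>0$ keeps it infinite, for every $p\in[1,\infty)$.

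The only point needing care is that \eqref{eq:CS-boundary-limit} holds at every $x$ (so there is no null-set issue), together with the measurability just noted. I expect no real obstacle.
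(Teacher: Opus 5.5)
Your proposal is correct and follows essentially the same argument as the paper. Continuity of $g=(-\Delta)^sf$ gives an open set (the paper takes a ball) on which $c_s|g|$ stays above a fixed threshold. The pointwise limit \eqref{eq:CS-boundary-limit} then makes the inner $\frac{dt}{t}$ integral diverge at each such point, and Tonelli's theorem concludes; your remark on the measurability of the level set is a harmless detail the paper leaves implicit.
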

\begin{proof}
Let $g:=(-\Delta)^sf$. Since $g$ is continuous and $g\not\equiv0$, we deduce that there exist a ball $B\subset\mathbb R^n$
and $\lambda_0\in(0,\infty)$ such that
$c_s|g(x)|>2\lambda_0$ for any $x\in B$, where $c_s$ is as in
\eqref{eq:CS-boundary-limit}.
Using \eqref{eq:CS-boundary-limit}, we obtain, for any
$x\in B$ and $\lambda\in(0,\lambda_0)$,
\begin{align*}
\underset{|P_{s,t}(f)(x)-f(x)|>\lambda t^{2s}}
{\int_0^\infty}
\frac{dt}{t}=\infty.
\end{align*}
From Tonelli's theorem, we infer that the desired conclusion holds.
This completes the proof of Lemma \ref{prop:CS-gamma-zero}.
\end{proof}

Lemma \ref{prop:CS-gamma-zero} settles the optimality
when $p\in(1,\infty)$. At the endpoint case $p=1$, it remains
to exclude $\gamma\in[-n,0)$. The following lemma
provides a counterexample for any such $\gamma$.
\begin{lemma}\label{prop:CS-weighted-negative-gamma}
Let $s\in(0,1)$, $\gamma\in[-n,0)$, and
$\upsilon(x):=|x|^{-\frac n2}$.
Then $\upsilon\in A_1$ and there exist a positive constant
$\lambda_0$, depending only on $n$, $s$, and $\gamma$, and a sequence
$\{f_k\}_{k\in\mathbb N}\subset\mathcal S$ such that
\begin{align}\label{eq:CS-ce-bounded-data}
\sup_{k\in\mathbb N}
\|(-\Delta)^sf_k\|_{L^1_\upsilon}<\infty
\end{align}
and
\begin{align}\label{eq:CS-ce-divergence}
\lim_{k\to\infty}
\lambda_0
\underset{
|P_{s,t}(f_k)(x)-f_k(x)|>\lambda_0t^{2s+\gamma}}
{\int_{\mathbb R^n}\int_0^\infty}
t^{\gamma-1}\,dt\,\upsilon(x)\,dx
=\infty.
\end{align}
\end{lemma}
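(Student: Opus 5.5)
The plan is to reduce the fractional Poisson difference to the lifted average $\mathcal M_\theta$ of $g_k:=(-\Delta)^sf_k$ from below, and then feed in data concentrated on an Ahlfors regular set of dimension $d:=n+\gamma$ (a single point when $\gamma=-n$). For such data the tubular neighbourhoods produce a logarithmic divergence.

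That $\upsilon:=|\cdot|^{-\frac n2}\in A_1$ follows from \cite[Lemma~1.4]{b93}. To build the data, let $E$ be the self-similar set of \eqref{eq:4.3x} with $d=n+\gamma$ when $\gamma\in(-n,0)$, and let $E:=\{\mathbf0\}$ when $\gamma=-n$. Translate $E$ to $E+x_0$ with $|x_0|=10$, so that $\upsilon\sim1$ on $\mathcal T_1(E+x_0)$. Let $\mu$ be the normalized measure $\mathcal H^d\lfloor(E+x_0)$ (the Dirac mass at $x_0$ when $\gamma=-n$), and set $g_k:=\mu*\psi_{1/k}$ with $\psi\ge0$ a smooth bump. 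Then $\|g_k\|_{L^1_\upsilon}\sim\|g_k\|_{L^1}=1$, which gives \eqref{eq:CS-ce-bounded-data} once $f_k$ is chosen with $(-\Delta)^sf_k=g_k$ up to a harmless correction (see the last paragraph). By \eqref{eq:fixed-gamma-ahlfors}, for $t\in(\frac1k,r_0)$ and $x\in\mathcal T_t(E+x_0)$ one gets
\begin{align*}
\fint_{B(x,2t)}g_k(y)\,dy\gtrsim t^{d-n}=t^{\gamma}.
\end{align*}

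The second ingredient is a pointwise lower bound for $f-P_{s,t}(f)$ with nonnegative data. Write $f-P_{s,t}(f)=t^{2s}K_t*g$ with $K_t(x):=t^{-n}K(\frac xt)$ and $\widehat K(\xi)=\frac{1-\widehat{p_{s,1}}(\xi)}{|\xi|^{2s}}$. I will prove that $K\ge0$ and $K\ge c>0$ on $B(\mathbf0,3)$. For $s=\frac12$ this follows from $\frac{1-e^{-\rho}}{\rho}=\int_0^1e^{-u\rho}\,du$, which makes $K$ an average of Poisson kernels. For general $s$ I would use the semigroup/subordination representation of the Caffarelli--Silvestre extension in \cite{cs07} (or the explicit form in \cite[Example~3.6]{dlyyz}) to write $K$ as a superposition of heat kernels with nonnegative weights. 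Given this, for $g\ge0$,
\begin{align*}
|P_{s,t}(f)(x)-f(x)|\ge c\,t^{2s}\fint_{B(x,2t)}g(y)\,dy.
\end{align*}
Combining the two steps, with $\lambda_0$ small depending only on $n$, $s$, $\gamma$, the level set $\{|P_{s,t}(f_k)-f_k|>\lambda_0t^{2s+\gamma}\}$ contains $\mathcal T_t(E+x_0)\times\{t\}$ for every $t\in(\frac1k,r_0)$. By \eqref{eq:fixed-gamma-global-tube} (trivially for a point), $|\mathcal T_t(E)|\sim t^{-\gamma}$, so
\begin{align*}
\lambda_0\int_{1/k}^{r_0}t^{\gamma-1}\,\upsilon(\mathcal T_t(E+x_0))\,dt\gtrsim\int_{1/k}^{r_0}\frac{dt}{t}\sim\log k\to\infty,
\end{align*}
which is \eqref{eq:CS-ce-divergence}.

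The main obstacle is the requirement $f_k\in\mathcal S$. The function $(-\Delta)^{-s}g_k$ decays only like $|x|^{2s-n}$, and it is not defined at all when $n\le 2s$. I would first try to replace $g_k$ by $\widetilde g_k:=g_k-\mathcal F^{-1}(\chi\widehat{g_k})$, where $\chi$ is a smooth cutoff to a tiny neighbourhood of $\xi=\mathbf0$, so that $f_k:=\mathcal F^{-1}(|\xi|^{-2s}\widehat{\widetilde g_k})\in\mathcal S$. The correction term is a Schwartz function that is uniformly small in $L^\infty$ and in $L^1_\upsilon$ (here $\upsilon$ decays at infinity), and its contribution to $f_k-P_{s,t}(f_k)$ is $O(t^{2s}\varepsilon)$ on $t<r_0$. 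It can therefore be absorbed in the lower bound, provided $\varepsilon\ll r_0^{\gamma}$ and $\lambda_0$ is halved. Establishing the positivity of $K$ for general $s$ is the other step I expect to need care.
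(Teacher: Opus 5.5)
Your proposal is correct and follows essentially the paper's proof: mollified Ahlfors-regular data (a Dirac mass when $\gamma=-n$) placed away from the origin, and the low-frequency cutoff $\widehat f_k:=|\xi|^{-2s}(1-\chi)\widehat{g_k}$ that makes $f_k\in\mathcal S$. The positivity comes from the heat-subordination formula \cite[(3.13)]{dlyyz}, which is exactly your claim that $K$ is a nonnegative superposition of heat kernels with $K\gtrsim1$ on $B(\mathbf0,3)$, and the logarithmic divergence comes from $\upsilon(\mathcal T_t)\sim t^{-\gamma}$.

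The differences from the paper are cosmetic: the paper keeps $\chi$ fixed and absorbs the bounded correction by shrinking $r_0$ (using $t^\gamma\geq r_0^\gamma$), whereas you shrink the support of $\chi$. One small fix: choose $x_0$ so that $x_0+(-1,2)^n$ stays away from $\mathbf0$, for example $x_0:=10e_1$, or rescale $E$ as the paper does. The condition $|x_0|=10$ alone does not give $\upsilon\sim1$ there when $n$ is large.
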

\begin{proof}
By \cite[Lemma~1.4]{b93} with $\delta:=\frac12$, we find that
$\upsilon\in A_1$.

Let $d:=n+\gamma$.
Fix $a\in\mathbb R^n$ with $|a|=3$.
When $d\in(0,n)$, let $E\subset[0,1]^n$ denote the compact
set constructed in \eqref{eq:4.3x}, and define
\begin{align*}
E_1:=a+\frac{1}{8\sqrt n}E\subset B\left(a,\frac14\right)
\ \text{and} \
\mu(A):= \frac{\mathcal H^d(A\cap E_1)}{\mathcal H^d(E_1)}
\end{align*}
for any Borel set $A\subset\mathbb R^n$.
When $d=0$, let $E_1:=\{a\}$ and $\mu:=\delta_a$, where $\delta_a$ denotes the Dirac measure at $a$.
Fix $r_0\in(0,\frac14)$ sufficiently small such that
\eqref{eq:fixed-gamma-ahlfors} and
\eqref{eq:fixed-gamma-global-tube} hold for $E_1$ by translation
and scaling when $d\in(0,n)$, while, when $d=0$, the corresponding estimates
follow directly from $E_1=\{a\}$ and $\mu=\delta_a$.
Since $\mathcal T_{r_0}(E_1)\subset\{x\in\mathbb R^n:2<|x|<4\}$, it follows that
$\upsilon\sim1$ on $\mathcal T_{r_0}(E_1)$.
Thus, using \eqref{eq:fixed-gamma-global-tube}, we conclude that
\begin{align}\label{eq:CS-ce-weighted-tube}
\int_{\mathcal T_t(E_1)}\upsilon(x)\,dx
\sim|\mathcal T_t(E_1)|
\sim t^{-\gamma},
\quad\forall\, t\in(0,r_0).
\end{align}
Choose a nonnegative function
$\eta\in C_{\mathrm{c}}^\infty$ such that $ \operatorname{supp} \eta\subset B(\mathbf0,1)$ and
$\int_{\mathbb R^n}\eta(x)\,dx=1$.
For any $\varepsilon\in(0,\frac{r_0}{8})$, let
\begin{align*}
\eta_\varepsilon(x)
:=\varepsilon^{-n}\eta\left(\frac{x}{\varepsilon}\right)
\ \text{and}\
h_\varepsilon:=\mu*\eta_\varepsilon.
\end{align*}
Then $h_\varepsilon\in C_{\mathrm{c}}^\infty$,
$h_\varepsilon\geq0$,
$\operatorname{supp}h_\varepsilon\subset\mathcal T_{r_0}(E_1)$,
and $\int_{\mathbb R^n}h_\varepsilon(x)\,dx=1$.
Since $\upsilon\sim1$ on $\mathcal T_{r_0}(E_1)$, we deduce that
\begin{align}\label{eq:CS-ce-h-norm}
\|h_\varepsilon\|_{L^1_\upsilon}\lesssim1.
\end{align}
Fix $t\in(4\varepsilon,r_0)$ and
$x\in\mathcal T_t(E_1)$, and choose $z\in E_1$ such that
$|x-z|<t$.
For any $u\in E_1\cap B(z,\frac t2)$, one has
$B(u,\varepsilon)\subset B(z,t)$ and hence
$\int_{B(z,t)}\eta_\varepsilon(y-u)\,dy=1$.
By Tonelli's theorem and \eqref{eq:fixed-gamma-ahlfors}, we obtain
\begin{align}\label{eq:CS-ce-positive-part-ball}
\int_{B(z,t)}h_\varepsilon(y)\,dy
=\int_{E_1}\left[
\int_{B(z,t)}\eta_\varepsilon(y-u)\,dy
\right]d\mu(u)
\geq\mu\left(B\left(z,\frac t2\right)\right)
\gtrsim t^d.
\end{align}
From \eqref{eq:CS-ce-positive-part-ball},  we infer that,
for any $u\in[t^2,2t^2]$,
\begin{align}\label{eq:CS-ce-positive-part}
e^{u\Delta}h_\varepsilon(x)
&=\frac{1}{(4\pi u)^{\frac n2}}
\int_{\mathbb R^n}e^{-\frac{|x-y|^2}{4u}}h_\varepsilon(y)\,dy\notag\\
&\geq\frac{1}{(4\pi u)^{\frac n2}}
\int_{B(z,t)}e^{-\frac{|x-y|^2}{4u}}h_\varepsilon(y)\,dy
\gtrsim t^{-n}\int_{B(z,t)}h_\varepsilon(y)\,dy
\gtrsim t^{\gamma}.
\end{align}

Choose a fixed real-valued even
function $\chi\in C_{\mathrm{c}}^\infty$ which equals $1$ on
$B(\mathbf0,1)$, and let $\phi:=\mathcal F^{-1}\chi$.
Define $f_\varepsilon$ by setting, for any $\xi\in\mathbb{R}^n$,
\begin{align*}
\widehat f_\varepsilon(\xi)
:=|\xi|^{-2s}[1-\chi(\xi)]\widehat h_\varepsilon(\xi).
\end{align*}
Using the fact that $h_\varepsilon\in C_{\mathrm{c}}^\infty$ and $1-\chi$
vanishes near the origin, we find that
$f_\varepsilon\in\mathcal S$ and
\begin{align*}
g_\varepsilon:=(-\Delta)^sf_\varepsilon
=h_\varepsilon-\phi*h_\varepsilon.
\end{align*}
Since $\phi\in\mathcal S$, $\operatorname{supp}h_\varepsilon\subset B(\mathbf0,4)$,
and $\|h_\varepsilon\|_{L^1}=1$, it follows that, for any $x\in \mathbb R^n$,
\begin{align}\label{eq:CS-ce-correction-decay}
|\phi*h_\varepsilon(x)|\lesssim(1+|x|)^{-n-1}.
\end{align}
By the integrability of $(1+|x|)^{-n-1}|x|^{-\frac n2}$ on $\mathbb R^n$,
\eqref{eq:CS-ce-h-norm}, and \eqref{eq:CS-ce-correction-decay}, we conclude that
\begin{align}\label{eq:CS-ce-f-norm}
\|(-\Delta)^sf_\varepsilon\|_{L^1_\upsilon}
\leq\|h_\varepsilon\|_{L^1_\upsilon}
+\|\phi*h_\varepsilon\|_{L^1_\upsilon}\lesssim1,
\end{align}
where the implicit positive constant is independent of $\varepsilon$.
From \cite[(3.13)]{dlyyz}, we deduce that
\begin{align}\label{eq:CS-ce-heat-representation}
\frac{f_\varepsilon(x)-P_{s,t}(f_\varepsilon)(x)}{t^{2s}}
=\frac{1}{\Gamma(s)t^{2s}}
\int_0^\infty e^{u\Delta}g_\varepsilon(x)
\left(1-e^{-\frac{t^2}{4u}}\right)\,\frac{du}{u^{1-s}}.
\end{align}
Using \eqref{eq:CS-ce-positive-part} and
$e^{u\Delta}h_\varepsilon\geq0$, we obtain
\begin{align*}
&\frac{1}{\Gamma(s)t^{2s}}
\int_0^\infty e^{u\Delta}h_\varepsilon(x)
\left(1-e^{-\frac{t^2}{4u}}\right)\,\frac{du}{u^{1-s}}\\
&\quad\geq\frac{1}{\Gamma(s)t^{2s}}
\int_{t^2}^{2t^2}e^{u\Delta}h_\varepsilon(x)
\left(1-e^{-\frac{t^2}{4u}}\right)\,\frac{du}{u^{1-s}}
\geq c_0t^{\gamma},
\end{align*}
where $c_0$ is a positive constant independent of $\varepsilon$.
Since $\|e^{u\Delta}(\phi*h_\varepsilon)\|_{L^\infty}\leq\|\phi\|_{L^\infty}$, it follows from the change of variables $u=t^2r$ that
\begin{align*}
&\left|\frac{1}{\Gamma(s)t^{2s}}
\int_0^\infty e^{u\Delta}(\phi*h_\varepsilon)(x)
\left(1-e^{-\frac{t^2}{4u}}\right)\,\frac{du}{u^{1-s}}\right|\\
&\quad\leq\frac{\|\phi\|_{L^\infty}}{\Gamma(s)}
\int_0^\infty\left(1-e^{-\frac{1}{4r}}\right)\,
\frac{dr}{r^{1-s}}=:c_1<\infty.
\end{align*}
By $g_\varepsilon=h_\varepsilon-\phi*h_\varepsilon$, the choice of $r_0$
with $c_1\leq\frac{c_0}{2}r_0^{\gamma}$, and \eqref{eq:CS-ce-heat-representation}, we find that
\begin{align}\label{eq:CS-ce-g-lower}
\frac{f_\varepsilon(x)-P_{s,t}(f_\varepsilon)(x)}{t^{2s}}
\geq c_0t^{\gamma}-c_1\geq\frac{c_0}{2}t^{\gamma}
\end{align}
for any $\varepsilon\in(0,\frac{r_0}{8})$,
$t\in(4\varepsilon,r_0)$, and $x\in\mathcal T_t(E)$.

Let $\lambda_0:=\frac{c_0}{4}$.
Since \eqref{eq:CS-ce-g-lower} and \eqref{eq:CS-ce-weighted-tube} hold, we infer that, as $ \varepsilon\to0^+$,
\begin{align}
&\lambda_0
\underset{|P_{s,t}(f_\varepsilon)(x)-f_\varepsilon(x)|
>\lambda_0t^{2s+\gamma}}
{\int_{\mathbb R^n}\int_0^\infty}
t^{\gamma-1}\,dt\,\upsilon(x)\,dx\notag\\
&\quad\geq\lambda_0\int_{4\varepsilon}^{r_0}
t^{\gamma-1}\upsilon(\mathcal T_t(E))\,dt
\gtrsim\int_{4\varepsilon}^{r_0}\,\frac{dt}{t}
=\log\frac{r_0}{4\varepsilon}\to\infty,
\label{eq:CS-ce-log-divergence}
\end{align}
where the implicit positive constant is independent of $\varepsilon$.
Using the choices $\varepsilon_k:=r_0 2^{-k-4}$ and
$f_k:=f_{\varepsilon_k}$, together with
\eqref{eq:CS-ce-f-norm} and \eqref{eq:CS-ce-log-divergence},
we conclude that \eqref{eq:CS-ce-bounded-data} and
\eqref{eq:CS-ce-divergence} hold. This completes the proof of Lemma \ref{prop:CS-weighted-negative-gamma}.
\end{proof}
\begin{proof}[Proof of Theorem \ref{th:weighted-CS-range}]
The sufficiency follows from Theorem \ref{th:weighted-CS}.
For the necessity, Lemma \ref{prop:CS-gamma-zero} excludes
$\gamma=0$ for any $p\in[1,\infty)$, while Lemma
\ref{prop:CS-weighted-negative-gamma} excludes $\gamma\in[-n,0)$
when $p=1$. This completes the proof of Theorem \ref{th:weighted-CS-range}.
\end{proof}
\subsection{Ball Banach Function Spaces}\label{sub:6-3}
In this subsection, we use the Rubio de Francia extrapolation to connect the $A_1$-weighted
estimates with ball Banach function spaces. First, we recall
some preliminaries on ball quasi-Banach
function spaces introduced in \cite[Definition~2.2]{shyy17}.

\begin{definition}
A quasi-Banach space $X\subset\mathscr{M}$, equipped with a quasi-norm
$\|\cdot\|_X$ which makes sense for all functions in $\mathscr{M}$,
is called a \emph{ball quasi-Banach function space}
 if $X$ satisfies that
\begin{enumerate}
\item[\textup{(i)}]
for any $f\in\mathscr{M}$, if $\|f\|_X=0$, then $f=0$
almost everywhere,
\item[\textup{(ii)}]
if $f,g\in\mathscr{M}$ satisfy that $|g|\leq|f|$ almost everywhere,
then $\|g\|_X\leq\|f\|_X$,
\item[\textup{(iii)}]
if a sequence $\{f_m\}_{m\in\mathbb N}$ in $\mathscr{M}$ satisfies that
$0\leq f_m\uparrow f$ almost everywhere as $m\to\infty$,
then $\|f_m\|_X\uparrow\|f\|_X$ as $m\to\infty$,
\item[\textup{(iv)}]
for any ball $B:=B(x,r)$ with both $x\in\mathbb R^n$ and
$r\in(0,\infty)$, $\mathbf{1}_B\in X$.
\end{enumerate}
Moreover, a ball quasi-Banach function space $X$ is called a \emph{ball Banach function space}
 if $X$ satisfies the following additional conditions:
\begin{enumerate}
\item[\textup{(v)}]
for any $f,g\in X$,
$\|f+g\|_X\leq\|f\|_X+\|g\|_X$,
\item[\textup{(vi)}]
for any ball $B$,
there exists a positive constant $C_{(B)}$, depending on $B$,
such that, for any $f\in X$,
\begin{align*}
\int_B|f(x)|\,dx\leq C_{(B)}\|f\|_X.
\end{align*}
\end{enumerate}
\end{definition}

For any ball Banach function space $X$ and $p\in[1,\infty)$, let
\begin{align*}
X^{\frac{1}{p}}
:=
\left\{f\in\mathscr M:|f|^{\frac{1}{p}}\in X\right\} \ \text{and} \
\|f\|_{X^{\frac{1}{p}}}
:=
\left\||f|^{\frac{1}{p}}\right\|_X^p.
\end{align*}
For any ball Banach function space $X$, its \textit{associate
space} $X'$ consists of all $g\in\mathscr M$ for which
the following norm is finite:
\begin{align*}
\|g\|_{X'}
:=\sup\left\{
\int_{\mathbb R^n}|g(x)h(x)|\,dx:
 h\in X,\ \|h\|_X\leq1
\right\}.
\end{align*}
We also require the following definition. The \textit{homogeneous ball Banach Sobolev space} $\dot W^{1,X}$ consists of
all $f\in L^1_{\mathrm{loc}}$ whose weak gradient exists and satisfies
$|\nabla f|\in X$, equipped with the seminorm
$\|f\|_{\dot W^{1,X}}:=\|\nabla f\|_X$.

From Theorem \ref{th:intro-m}\textup{(i)} and the standard Rubio de Francia
extrapolation argument used to derive
\cite[(4.10)]{dly} (see also \cite[Lemmas~4.6 and 4.7 and Remark~4.8]{dly}),
it follows that the following theorem holds.

\begin{theorem}\label{th:m-x-sim}
Let $\tau\in\{\mathrm{av},\mathrm{osc}\}$, $p\in[1,\infty)$, and
$\gamma\in\Gamma_p^\tau$. Let $X$ be a ball Banach function
space such that $X^{\frac1p}$ is a ball Banach function space
and $M$ is bounded on $(X^{\frac1p})'$.
Then, for any $f\in\mathscr{M}$ with $|D^\tau f|\in X$,
\begin{align}\label{eq:ball-banach-lifted-characterization}
\sup_{\theta,\lambda\in(0,\infty)}\lambda
\left\|\left[\underset{\mathcal M_\theta^\tau(f)(\cdot,t)>\lambda t^\gamma}
{\int_0^\infty}
t^{\gamma p-1}\,dt\right]^{\frac1p}\right\|_X
\sim\|\,|D^\tau f|\,\|_X,
\end{align}
where the positive equivalence constants depend only
on $n$, $p$, $\gamma$, and
$\|M\|_{(X^{\frac1p})'\to(X^{\frac1p})'}$.
\end{theorem}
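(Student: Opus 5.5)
By \eqref{eq:theta-change} we may fix $\theta=1$ throughout. The plan is to reduce \eqref{eq:ball-banach-lifted-characterization} to the $A_1$-weighted estimates of Theorem \ref{th:intro-m}(i) by Rubio de Francia extrapolation through the associate space of $X^{\frac1p}$. For $\lambda\in(0,\infty)$ set
$$G_\lambda(x):=\lambda^p\underset{\mathcal M_1^\tau(f)(x,t)>\lambda t^\gamma}{\int_0^\infty}t^{\gamma p-1}\,dt .$$
The left-hand side of \eqref{eq:ball-banach-lifted-characterization} is then $\sup_\lambda\|G_\lambda^{1/p}\|_X=\sup_\lambda\|G_\lambda\|_{X^{1/p}}^{1/p}$. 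Since $X^{\frac1p}$ is a ball Banach function space, the Lorentz--Luxemburg duality gives $\|G_\lambda\|_{X^{1/p}}\sim\sup\{\int G_\lambda h:\ \|h\|_{(X^{1/p})'}\le1,\ h\ge0\}$.

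For the upper bound, fix such an $h$ and build the Rubio de Francia majorant
$$\mathcal Rh:=\sum_{k\ge0}\frac{M^kh}{(2\|M\|)^k},$$
with $\|M\|$ the norm of $M$ on $(X^{1/p})'$. This gives $h\le\mathcal Rh$, $\|\mathcal Rh\|_{(X^{1/p})'}\le2$, and $\mathcal Rh\in A_1$ with $[\mathcal Rh]_{A_1}\le2\|M\|$. Theorem \ref{th:intro-m}(i) with $\upsilon=\mathcal Rh$ (valid for all $p\in[1,\infty)$, $\gamma\in\Gamma_p^\tau$) yields
$$\int G_\lambda h\le\int G_\lambda\,\mathcal Rh\lesssim\|M\|\int|D^\tau f|^p\,\mathcal Rh\le2\|M\|\,\||D^\tau f|^p\|_{X^{1/p}}=2\|M\|\,\||D^\tau f|\|_X^p .$$
There is a technicality here: Theorem \ref{th:intro-m}(i) requires $f\in X^{\tau,p}_{\mathcal Rh}$. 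The bound above shows $\int|D^\tau f|^p\mathcal Rh<\infty$, and for $\tau=\mathrm{osc}$ one also needs $f\in W^{1,1}_{\rm loc}$, which follows from condition (vi) applied to $X^{1/p}\subset L^1_{\rm loc}$. Taking the supremum over $h$ and then over $\lambda$ gives the upper bound.

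For the lower bound, I use the exact limiting formula (Theorem \ref{th:exact-m}) in place of the lower half of \eqref{eq:intro-m-1}, since it avoids losing the weight constant. By Lemma \ref{lem:exact-sm-pointwise} and Fatou's lemma in $t$ (the same argument as \eqref{eq:exact-m-lower-final}, applied pointwise in $x$ before integrating against any weight),
$$\liminf_{\lambda\to L_\gamma}G_\lambda(x)\ge\frac{(c_n^\tau)^p}{p|\gamma|}|D^\tau f(x)|^p\quad\text{for a.e. }x .$$
The Fatou property (iii) of $X^{\frac1p}$, applied to $\inf_{\mu\text{ beyond }\lambda}G_\mu$, then gives
$$\||D^\tau f|^p\|_{X^{1/p}}\lesssim\liminf_\lambda\|G_\lambda\|_{X^{1/p}}\le\sup_\lambda\|G_\lambda\|_{X^{1/p}},$$
with no weight involved. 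This yields the lower bound with constants depending only on $n$, $p$, and $\gamma$.

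The main obstacle is the upper bound's use of Theorem \ref{th:intro-m}(i) for weights $\mathcal Rh$ that need not be integrable. This requires checking that $f$ lies in the right class, and, for $p=1$ and $\tau=\mathrm{osc}$, that $\nabla f$ is locally integrable so that Lemma \ref{thm-cddd}(i) applies. The duality step (that $X^{\frac1p}$ equals its second associate with equal norms) is standard for ball Banach function spaces, as in \cite[Lemmas~4.6 and 4.7]{dly}.
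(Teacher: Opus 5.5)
Your proof is correct. Its upper half is exactly the paper's argument: the paper obtains Theorem \ref{th:m-x-sim} in one line, by feeding the two-sided $A_1$ estimate of Theorem \ref{th:intro-m}(i) into the Rubio de Francia extrapolation scheme of \cite[(4.10) and Lemmas~4.6--4.7]{dly}. Your majorant $\mathcal Rh$ with $[\mathcal Rh]_{A_1}\le 2\|M\|$, combined with Lorentz--Luxemburg duality for $X^{1/p}$ and H\"older's inequality between $X^{1/p}$ and $(X^{1/p})'$, is that scheme written out.

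You depart from the paper on the lower bound. The paper extrapolates the weighted lower bound as well, i.e.\ $\int|D^\tau f|^ph\le\int|D^\tau f|^p\mathcal Rh\lesssim\sup_\lambda\int G_\lambda\,\mathcal Rh\le 2\sup_\lambda\|G_\lambda\|_{X^{1/p}}$. You instead prove the pointwise inequality $\liminf_{\lambda\to L_\gamma}G_\lambda(x)\ge\frac{(c_n^\tau)^p}{p|\gamma|}|D^\tau f(x)|^p$ from Lemma \ref{lem:exact-sm-pointwise}, and then use only the lattice and Fatou properties of $X^{1/p}$.

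Your route gives somewhat more:
\begin{itemize}
\item the lower constant is the sharp one, $c_n^\tau(p|\gamma|)^{-1/p}$, and does not depend on $\|M\|$;
\item the bound holds in the stronger $\liminf_{\lambda\to L_\gamma}$ form;
\item the lower bound needs neither the boundedness of $M$ on $(X^{1/p})'$ nor $\gamma\in\Gamma_p^\tau$, only $\gamma\ne0$.
\end{itemize}
The paper's route, in exchange, is a uniform black-box reduction of both directions to the weighted theorem.

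Some small points to tidy:
\begin{itemize}
\item Run the Fatou-property step along a sequence $\lambda_k\to L_\gamma$, so that $\inf_{j\ge k}G_{\lambda_j}$ is measurable; an infimum over a continuum of $\mu$ need not be.
\item The $\theta$-reduction you need is the pointwise-in-$x$ version of the substitution behind \eqref{eq:theta-change}. It follows from the same change of variables.
\item For $\tau=\mathrm{av}$, the hypothesis $f\in L^1_{\rm loc}$ of Lemma \ref{lem:exact-sm-pointwise} comes from condition (vi) for $X$ itself.
\end{itemize}
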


In the following remark, we give several concrete
examples of function spaces to which
Theorem \ref{th:m-x-sim} applies.
\begin{remark}
The following spaces satisfy all the assumptions of
Theorem \ref{th:m-x-sim}.
\begin{enumerate}
\item[\textup{(i)}]
\emph{Weighted Lebesgue spaces.}
Let $X:=L^r_\upsilon$ be as in Definition \ref{df:c-v}\textup{(ii)},
where $r\in[1,\infty)$ and $\upsilon\in A_r$, and let
$r_\upsilon:=\inf\{q\in[1,\infty):\upsilon\in A_q\}.$
As in the proof of \cite[Theorem~7.1]{scm}, when $r\in(1,\infty)$,
the open property of Muckenhoupt classes gives $r_\upsilon<r$ and
$\upsilon\in A_{\frac rp}$ for any $p\in[1,\frac r{r_\upsilon})$.
Applying \cite[Lemmas~2.4 and 4.16]{scm}, we find that
$X^{\frac1p}=L^{\frac rp}_\upsilon$ is a ball Banach function space
and $M$ is bounded on $(X^{\frac1p})'$ for these choices of $p$.
If $\upsilon\in A_1$,
since
$\|g\|_{(L^1_\upsilon)'}=\|g/\upsilon\|_{L^\infty}$ and
$$M(g)\leq\|g/\upsilon\|_{L^\infty}M(\upsilon)
\leq[\upsilon]_{A_1}\|g\|_{(L^1_\upsilon)'}\upsilon$$ almost everywhere for any $g\in(L^1_\upsilon)'$, we deduce that
\eqref{eq:ball-banach-lifted-characterization} also holds for $p=r$.
For further study of weighted Lebesgue spaces, we refer to
\cite{b93,bz26sparse,hlmorsu12,inns20}.

\item[\textup{(ii)}]
\emph{Morrey spaces.}
Let $X:=M^\alpha_r$, where $r\in[p,\infty)$ and
$\alpha\in[r,\infty)$, with
\begin{align*}
\|h\|_{M^\alpha_r}
:=\sup\left\{|B|^{\frac1\alpha-\frac1r}
\|h\|_{L^r(B)}: B\text{ is a ball in }\mathbb R^n\right\}.
\end{align*}
By \cite[Remark~7.4\textup{(iii)}]{scm}, both $X$ and
$X^{\frac1p}=M^{\frac\alpha p}_{\frac r p}$ are ball Banach
function spaces. When $p<r\le\alpha$, $M$ is bounded on
$(X^{\frac{1}{p}})'$; see Case 1 in the proof of \cite[Theorem 5.1]{dly}.
When $r=p=\alpha$, this follows from $(X^{\frac{1}{p}})'=L^\infty$.
When $r=p<\alpha$, let $\beta:=\frac{\alpha}{p}$. Using \cite[Theorem 3.1]{mst18} and \cite[Lemma 5.7]{Ho2015}
with $p(\cdot)\equiv1$ and $u(x,t)=t^{n(1-\frac{1}{\beta})}$, 
we conclude that $M$ is bounded on $(M_1^\beta)'=(X^{\frac{1}{p}})'$.
See also \cite{ho22morrey,ho24bilinear,ms19,ms20,noss21} for some
recent developments on Morrey spaces.

\item[\textup{(iii)}]
\emph{Mixed-norm Lebesgue spaces.}
Let $X:=L^{\vec r}$ (see \cite[p.\,301]{bp1961}), where
$\vec r:=(r_1,\ldots,r_n)\in{(p,\infty)^n}$.
Then $X^{\frac1p}=L^{\frac{\vec r}{p}}$ is a ball Banach function
space. Its associate space is $L^{\vec s}$, where
$\vec s:=(s_1,\ldots,s_n)\in(1,\infty)^n$ is given by
$\frac{1}{s_i}=1-\frac{p}{r_i}$ for any $i\in\{1,\ldots,n\}$.
The boundedness of $M$ on $(X^{\frac1p})'=L^{\vec s}$
follows from \cite[Lemma~3.5]{hlyy19} with
$\vec a=(1,\ldots,1)$ therein. The equivalence
\eqref{eq:ball-banach-lifted-characterization}
also holds for $\vec r=(p,\ldots,p)$ because $(X^{\frac1p})'=L^\infty$ on which $M$ is bounded.
For further progress of mixed-norm Lebesgue spaces, we refer to
\cite{cgn17,cgn19,hlyy19}.

\item[\textup{(iv)}]
\emph{Variable Lebesgue spaces.}
Let $X:=L^{r(\cdot)}$
(see \cite[Definition 2.16]{cf2013}),
where $r(\cdot)$ is globally log-H\"older continuous in the sense of
\cite[Definition~4.1.1]{dhhr11},
$r_-:=\operatorname*{ess\,inf}r(\cdot)\in(p,\infty)$,
and $\operatorname*{ess\,sup}r(\cdot)<\infty$.
Then $X^{\frac1p}=L^{\frac{r(\cdot)}p}$ is a ball Banach function
space, and $(X^{\frac1p})'=L^{\frac{r(\cdot)}{r(\cdot)-p}}$ with equivalent norms
(see \cite[Theorem~3.2.13]{dhhr11}). The boundedness of $M$ on $(X^{\frac1p})'$ follows from the argument in the proof of \cite[Theorem 4.1]{zyy23}.
See also \cite{cf2013,dhhr11,inns20,inns23} for some recent
results on variable Lebesgue spaces.

\item[\textup{(v)}]
\emph{Orlicz spaces.}
Let $X:=L^\Phi$ (see \cite[Section~1.1]{kk1991}), where
$\Phi$ is a finite-valued Young function such that
$\Phi_p(t):=\Phi(t^{\frac1p})$ for any $t\in[0,\infty)$ is a Young function of lower type $a\in(1,\infty)$ and upper type
$b\in[a,\infty)$ (see \cite[Definition~2.1]{zyyw2019}).
Applying \cite[Chapter~4, Theorem~8.9]{bennett-sharpley88} to
$\Phi$ and $\Phi_p$, we conclude that $X$ and $X^{\frac1p}$ are
ball Banach function spaces.
Let $\Psi_p$ denote the complementary function as in \cite[Chapter~4, Definition~8.11]{bennett-sharpley88} of
$\Phi_p$. Then $(X^{\frac1p})'=L^{\Psi_p}$ with equivalent norms,
and $M$ is bounded on this associate space (see Case~1 in the proof
of \cite[Theorem~5.20]{dly}). Some recent developments can also
be found in \cite{ho23orliczmorrey,kns23,san19}.

\item[\textup{(vi)}]
\emph{Orlicz-slice spaces.}
Let $X:=(E^r_\Phi)_{t_0}$ (see \cite[Definition~2.8]{zyyw2019}),
where $t_0\in(0,\infty)$, $r\in(p,\infty)$,
and $\Phi$ is as in \textup{(v)}. By the definitions, we find that
$X^{\frac1p}=(E^{\frac r p}_{\Phi_p})_{t_0}$
with equal norms (see the proof of \cite[Lemma~2.31]{zyyw2019}), where $\Phi_p$ is as in \textup{(v)}.
By \cite[Lemma~2.28 and Remark~2.17\textup{(ii)}]{zyyw2019},
both $X$ and $X^{\frac1p}$ are ball Banach function spaces.
Moreover, $M$ is bounded on $(X^{\frac1p})'$ [see Case (2)
in the proof of \cite[Theorem 5.28]{zyy}].
We refer also to \cite{ho21slice,ho22slice,zyyw2019,zyy}
for further study of Orlicz-slice spaces.
\end{enumerate}
\end{remark}

\smallskip
\noindent\textbf{Acknowledgements}\quad
The authors acknowledge the use of AI tools for exploratory
discussions and assistance with the preparation and revision of
this manuscript. The authors take full responsibility for the
mathematical content and the final text.

Yirui Zhao would like to thank Professor Ji Li for some
helpful discussions on the sharp range of $\gamma$ in Theorem
\ref{th:weighted-hardy-riesz}.

\bigskip

\noindent Meng Geng, Dachun Yang, Wen Yuan and Yirui Zhao

\medskip

\noindent  Laboratory of Mathematics and Complex Systems
(Ministry of Education of China),
School of Mathematical Sciences, Institute for Advanced Study, Beijing Normal University,
Beijing 100875, The People's Republic of China

\smallskip

\noindent{\it E-mails:} \texttt{menggeng@mail.bnu.edu.cn} (M. Geng)

\noindent\phantom{\it E-mails:} \texttt{dcyang@bnu.edu.cn} (D. Yang)

\noindent\phantom{\it E-mails:} \texttt{wenyuan@bnu.edu.cn} (W. Yuan)

\noindent\phantom{\it E-mails:} \texttt{yiruizhao@mail.bnu.edu.cn} (Y. Zhao)


\begin{thebibliography}{99}

\bibitem{agmp20}
C. Antonucci, M. Gobbino, M. Migliorini and N. Picenni,
Optimal constants for a nonlocal approximation of Sobolev norms and total variation,
Anal. PDE 13 (2020), 595--625.

\vspace{-0.3cm}

\bibitem{aoki42}
T. Aoki,
Locally bounded linear topological spaces,
Proc. Imp. Acad. (Tokyo) 18 (1942), 588--594.

\vspace{-0.3cm}

\bibitem{bp1961}
A. Benedek and R. Panzone,
The space $L^P$, with mixed norm,
Duke Math. J. 28 (1961), 301--324.

\vspace{-0.3cm}

\bibitem{bennett-sharpley88}
C. Bennett and R. Sharpley,
Interpolation of Operators,
Pure and Applied Mathematics 129,
Academic Press, Orlando, FL, 1988.

\vspace{-0.3cm}

\bibitem{bssvy24}
H. Brezis, A. Seeger, J. Van Schaftingen and P.-L. Yung,
Families of functionals representing Sobolev norms,
Anal. PDE 17 (2024), 943--979.

\vspace{-0.3cm}

\bibitem{bssvy22}
H. Brezis, A. Seeger, J. Van Schaftingen and P.-L. Yung,
Sobolev spaces revisited,
Atti Accad. Naz. Lincei Rend. Lincei Mat. Appl. 33 (2022), 413--437.

\vspace{-0.3cm}

\bibitem{bsvy21}
H. Brezis, J. Van Schaftingen and P.-L. Yung,
A surprising formula for Sobolev norms,
Proc. Natl. Acad. Sci. USA 118 (2021),
Paper No. e2025254118, 6 pp.

\vspace{-0.3cm}

\bibitem{bvy-lorentz21}
H. Brezis, J. Van Schaftingen and P.-L. Yung,
Going to Lorentz when fractional Sobolev, Gagliardo and Nirenberg estimates fail,
Calc. Var. Partial Differential Equations 60 (2021), Paper No. 129, 12 pp.

\vspace{-0.3cm}

\bibitem{b93}
S. M. Buckley,
Estimates for operator norms on weighted spaces and reverse Jensen inequalities, Trans. Amer. Math. Soc. 340 (1993), 253--272.

\vspace{-0.3cm}

\bibitem{bui82}
H.-Q. Bui,
Weighted Besov and Triebel spaces: Interpolation by the real method,
Hiroshima Math. J. 12 (1982), 581--605.

\vspace{-0.3cm}

\bibitem{b26} T. A. Bui, Hardy spaces, Campanato spaces and higher 
order Riesz transforms associated with Bessel operators, 
J. Fourier Anal. Appl. 32 (2026), Paper No. 39, 48 pp.

\vspace{-0.3cm}

\bibitem{bdl18}
T. A. Bui, X. T. Duong and F. K. Ly,
Maximal function characterizations for new local Hardy-type spaces on
spaces of homogeneous type,
Trans. Amer. Math. Soc. 370 (2018), 7229--7292.

\vspace{-0.3cm}

\bibitem{bdl20}
T. A. Bui, X. T. Duong and F. K. Ly,
Maximal function characterizations for Hardy spaces on spaces of
homogeneous type with finite measure and applications,
J. Funct. Anal. 278 (2020), Paper No. 108423, 55 pp.

\vspace{-0.3cm}

\bibitem{bz26sparse}
T. A. Bui and L. Zheng,
New sparse domination and weighted estimates for fractional operators
beyond Calder\'on--Zygmund theory,
J. Funct. Anal. 290 (2026), Paper No. 111183, 39 pp.

\vspace{-0.3cm}

\bibitem{cs07}
L. Caffarelli and L. Silvestre,
An extension problem related to the fractional Laplacian,
Comm. Partial Differential Equations 32 (2007), 1245--1260.

\vspace{-0.3cm}

\bibitem{cgn17}
G. Cleanthous, A. G. Georgiadis and M. Nielsen,
Anisotropic mixed-norm Hardy spaces,
J. Geom. Anal. 27 (2017), 2758--2787.

\vspace{-0.3cm}

\bibitem{cgn19}
G. Cleanthous, A. G. Georgiadis and M. Nielsen,
Fourier multipliers on anisotropic mixed-norm spaces of distributions,
Math. Scand. 124 (2019), 289--304.

\vspace{-0.3cm}

\bibitem{cf2013}
D. V. Cruz-Uribe and A. Fiorenza,
Variable Lebesgue Spaces. Foundations and Harmonic Analysis,
Applied and Numerical Harmonic Analysis,
Birkh\"auser/Springer, Heidelberg, 2013.

\vspace{-0.3cm}

\bibitem{dlyyz}
F. Dai, Y. Li, D. Yang, W. Yuan and Y. Zhao,
Sharp weak-type estimate for local lifted {H}ardy--{L}ittlewood
maximal operators with applications to generators of linear
operator families and {H}ardy(--{S}obolev) spaces, Adv. Math.
490 (2026), Paper No. 110822, 55 pp.

\vspace{-0.3cm}

\bibitem{dlyyz2}
F. Dai, Y. Li, D. Yang, W. Yuan and Y. Zhao,
Characterizations and their applications of Sobolev seminorms via lifted sharp maximal functions on complete Poincar\'e spaces, Adv. Math. 502 (2026), part B, Paper No. 111153, 54 pp.

\vspace{-0.3cm}

\bibitem{dly}
F. Dai, X. Lin, D. Yang, W. Yuan and Y. Zhang, Brezis--Van Schaftingen--Yung formulae in ball Banach function spaces with applications to fractional Sobolev and Gagliardo--Nirenberg inequalities, Calc. Var. Partial Differential Equations 62 (2023), Paper No. 56, 73 pp.

\vspace{-0.3cm}

\bibitem{dlyyz22}
F. Dai, X. Lin, D. Yang, W. Yuan and Y. Zhang,
Poincar\'e inequality meets Brezis--Van Schaftingen--Yung formula on metric measure spaces,
J. Funct. Anal. 283 (2022), Paper No. 109645, 52 pp.

\vspace{-0.3cm}

\bibitem{dhhr11}
L. Diening, P. Harjulehto, P. H\"ast\"o and M. R\r{u}\v{z}i\v{c}ka,
Lebesgue and Sobolev Spaces with Variable Exponents,
Lecture Notes in Mathematics 2017,
Springer, Heidelberg, 2011.

\vspace{-0.3cm}

\bibitem{dm22}
O. Dom\'inguez and M. Milman,
New Brezis--Van Schaftingen--Yung--Sobolev type inequalities connected with
maximal inequalities and one parameter families of operators,
Adv. Math. 411 (2022), Paper No. 108774, 76 pp.

\vspace{-0.3cm}

\bibitem{dssvy23}
O. Dom\'inguez, A. Seeger, B. Street, J. Van Schaftingen and P.-L. Yung,
Spaces of Besov-Sobolev type and a problem on nonlinear approximation,
J. Funct. Anal. 284 (2023), Paper No. 109775, 50 pp.

\vspace{-0.3cm}

\bibitem{evans10}
L. C. Evans,
Partial Differential Equations, Second Edition,
Graduate Studies in Mathematics 19,
American Mathematical Society, Providence, RI, 2010.

\vspace{-0.3cm}

\bibitem{frank24}
R. L. Frank,
A characterization of $\dot W^{1,p}(\mathbb R^d)$,
Pure Appl. Funct. Anal. 9 (2024), 53--68.

\vspace{-0.3cm}

\bibitem{gilbarg-trudinger01}
D. Gilbarg and N. S. Trudinger,
Elliptic Partial Differential Equations of Second Order, Classics in Mathematics,
Springer-Verlag, Berlin, 2001.

\vspace{-0.3cm}

\bibitem{gp25}
M. Gobbino and N. Picenni,
Gamma-liminf estimate for a class of non-local approximations of Sobolev and BV norms,
J. Funct. Anal. 289 (2025), Paper No. 111106, 25 pp.

\vspace{-0.3cm}

\bibitem{ghu24}
V. Gol'dshtein, P. Hashash and A. Ukhlov,
On differentiability of Sobolev functions with respect to the Sobolev norm,
Math. Nachr. 297 (2024), 3681--3699.

\vspace{-0.3cm}

\bibitem{gtm249}
L. Grafakos, Classical Fourier Analysis, Third Edition,
Grad. Texts in Math. 249,
Springer, New York, 2014.

\vspace{-0.3cm}

\bibitem{gh23}
Q. Gu and Q. Huang,
Anisotropic versions of the Brezis--Van Schaftingen--Yung approach at $s=1$ and $s=0$,
J. Math. Anal. Appl. 525 (2023), Paper No. 127156, 15 pp.

\vspace{-0.3cm}

\bibitem{hxz25}
B.-X. Han, Z.-F. Xu and Z.-N. Zhu,
A new characterization of Sobolev spaces on Lipschitz differentiability spaces,
arXiv:2504.16657.

\vspace{-0.3cm}

\bibitem{Ho2015}
K.-P. Ho, Atomic decomposition of Hardy--Morrey spaces with variable exponents, Ann. Acad. Sci. Fenn. Math. 40 (2015), 31--62.

\vspace{-0.3cm}

\bibitem{ho21slice}
K.-P. Ho,
Operators on Orlicz-slice spaces and Orlicz-slice Hardy spaces,
J. Math. Anal. Appl. 503 (2021), Paper No. 125279, 18 pp.

\vspace{-0.3cm}

\bibitem{ho22morrey}
K.-P. Ho,
Boundedness of operators and inequalities on Morrey--Banach spaces,
Publ. Res. Inst. Math. Sci. 58 (2022), 551--577.

\vspace{-0.3cm}

\bibitem{ho22slice}
K.-P. Ho,
Fractional integral operators on Orlicz slice Hardy spaces,
Fract. Calc. Appl. Anal. 25 (2022), 1294--1305.

\vspace{-0.3cm}

\bibitem{ho23orliczmorrey}
K.-P. Ho,
An unification of Orlicz-Morrey spaces and its applications,
Manuscripta Math. 172 (2023), 1201--1226.

\vspace{-0.3cm}

\bibitem{ho24bilinear}
K.-P. Ho,
Bilinear operators on ball Banach function spaces,
J. Geom. Anal. 34 (2024), Paper No. 337, 26 pp.

\vspace{-0.3cm}

\bibitem{hs25}
K.-P. Ho and Y. Sawano,
New characterization of Morrey-Herz spaces and Morrey-Herz-Hardy spaces
with applications to various linear operators,
Acta Math. Sin. (Engl. Ser.) 41 (2025), 327--354.

\vspace{-0.3cm}

\bibitem{hlyyz25}
P. Hu, Y. Li, D. Yang, W. Yuan and Y. Zhang,
Sharp Brezis--Seeger--Van Schaftingen--Yung formulae for higher-order
gradients in ball Banach function spaces,
J. Geom. Anal. 35 (2025), Paper No. 388, 76 pp.

\vspace{-0.3cm}

\bibitem{hlyy19}
L. Huang, J. Liu, D. Yang and W. Yuan,
Atomic and Littlewood--Paley characterizations of anisotropic mixed-norm
Hardy spaces and their applications,
J. Geom. Anal. 29 (2019), 1991--2067.

\vspace{-0.3cm}

\bibitem{hytonen26}
T. Hyt\"onen,
Schatten properties of commutators on metric spaces,
J. Funct. Anal. 291 (2026), Paper No. 111556, 98 pp.

\vspace{-0.3cm}

\bibitem{hlmorsu12}
T. P. Hyt\"onen, M. T. Lacey, H. Martikainen, T. Orponen,
M. C. Reguera, E. T. Sawyer and I. Uriarte-Tuero,
Weak and strong type estimates for maximal truncations of
Calder\'on--Zygmund operators on $A_p$ weighted spaces,
J. Anal. Math. 118 (2012), 177--220.

\vspace{-0.3cm}

\bibitem{hk25}
T. Hyt\"onen and R. Korte,
Characterisations of Sobolev spaces and constant functions over metric spaces,
arXiv:2508.07801.

\vspace{-0.3cm}

\bibitem{hyyz26}
T. Hyt\"onen, D. Yang, W. Yuan and Y. Zhao,
Weak-type characterizations of Sobolev and bounded variation spaces on metric measure spaces,
arXiv:2608.24106.

\vspace{-0.3cm}

\bibitem{ins22}
M. Izuki, E. Nakai and Y. Sawano,
Atomic and wavelet characterization of Musielak--Orlicz Hardy spaces
for generalized Orlicz functions,
Integral Equations Operator Theory 94 (2022), Paper No. 3, 33 pp.

\vspace{-0.3cm}

\bibitem{inns20}
M. Izuki, T. Nogayama, T. Noi and Y. Sawano,
Wavelet characterization of local Muckenhoupt weighted Lebesgue spaces
with variable exponent,
Nonlinear Anal. 198 (2020), Paper No. 111930, 14 pp.

\vspace{-0.3cm}

\bibitem{inns23}
M. Izuki, T. Nogayama, T. Noi and Y. Sawano,
Weighted local Hardy spaces with variable exponents,
Math. Nachr. 296 (2023), 5710--5785.

\vspace{-0.3cm}

\bibitem{kns23}
R. Kawasumi, E. Nakai and M. Shi,
Characterization of the boundedness of generalized fractional integral
and maximal operators on Orlicz--Morrey and weak Orlicz--Morrey spaces,
Math. Nachr. 296 (2023), 1483--1503.

\vspace{-0.3cm}

\bibitem{kk1991}
V. Kokilashvili and M. Krbec,
Weighted Inequalities in Lorentz and Orlicz Spaces,
World Scientific Publishing Co. Inc., River Edge, NJ, 1991.

\vspace{-0.3cm}

\bibitem{koskela-saksman08}
P. Koskela and E. Saksman,
Pointwise characterizations of Hardy--Sobolev functions,
Math. Res. Lett. 15 (2008), 727--744.

\vspace{-0.3cm}

\bibitem{ky11}
L. D. Ky,
A note on $H^p_w$-boundedness of Riesz transforms and
$\theta$-Calder\'on--Zygmund operators through molecular characterization,
Anal. Theory Appl. 27 (2011), 251--264.

\vspace{-0.3cm}

\bibitem{ky14}
L. D. Ky,
New Hardy spaces of Musielak--Orlicz type and boundedness of sublinear operators,
Integral Equations Operator Theory 78 (2014), 115--150.

\vspace{-0.3cm}

\bibitem{lahti25}
P. Lahti,
A sharp lower bound for a class of non-local approximations of the total variation,
Math. Ann. 392 (2025), 469--486.

\vspace{-0.3cm}

\bibitem{leoni-morini07}
G. Leoni and M. Morini,
Necessary and sufficient conditions for the chain rule in
$W_{\mathrm{loc}}^{1,1}$ $(\mathbb R^N;\mathbb R^d)$ and
$BV_{\mathrm{loc}}(\mathbb R^N;\mathbb R^d)$,
J. Eur. Math. Soc. (JEMS) 9 (2007), 219--252.

\vspace{-0.3cm}

\bibitem{ln19}
A. K. Lerner and F. Nazarov,
Intuitive dyadic calculus: the basics,
Expo. Math. 37 (2019), 225--265.

\vspace{-0.3cm}

\bibitem{scm}
Y. Li, D. Yang, W. Yuan, Y. Zhang and Y. Zhao,
Sharp weighted
Cohen--Dahmen--Daubechies--DeVore inequality
with applications to (weighted) critical Sobolev spaces,
Gagliardo--Nirenberg inequalities,
and Muckenhoupt weights, Sci. China
Math. (2026), DOI: 10.1007/s11425-025-2527-6.

\vspace{-0.3cm}

\bibitem{ms19}
M. Masty\l{}o and Y. Sawano,
Complex interpolation and Calder\'on--Mityagin couples of Morrey spaces,
Anal. PDE 12 (2019), 1711--1740.

\vspace{-0.3cm}

\bibitem{ms20}
M. Masty\l{}o and Y. Sawano,
Applications of interpolation methods and Morrey spaces to elliptic PDEs,
Ann. Sc. Norm. Super. Pisa Cl. Sci. (5) 21 (2020), 999--1021.

\vspace{-0.3cm}

\bibitem{mst18}
M. Masty\l{}o, Y. Sawano and H. Tanaka,
Morrey-type space and its K\"othe dual space,
Bull. Malays. Math. Sci. Soc. 41 (2018), 1181--1198.

\vspace{-0.3cm}

\bibitem{mattila95}
P. Mattila,
Geometry of Sets and Measures in Euclidean Spaces: Fractals and Rectifiability,
Cambridge Studies in Advanced Mathematics 44,
Cambridge University Press, Cambridge, 1995.

\vspace{-0.3cm}

\bibitem{mtt02}
C. Muscalu, T. Tao and C. Thiele,
Multi-linear operators given by singular multipliers,
J. Amer. Math. Soc. 15 (2002), 469--496.

\vspace{-0.3cm}

\bibitem{nguyen25}
H.-M. Nguyen,
Characterizations of the Sobolev norms and the total variation via nonlocal
functionals, and related problems,
C. R. Math. Acad. Sci. Paris 363 (2025), 1429--1455.

\vspace{-0.3cm}

\bibitem{noss21}
T. Nogayama, T. Ono, D. Salim and Y. Sawano,
Atomic decomposition for mixed Morrey spaces,
J. Geom. Anal. 31 (2021), 9338--9365.

\vspace{-0.3cm}

\bibitem{pssy26}
C. P\'erez, A. Seeger, B. Street and P.-L. Yung,
Two weight estimates for difference quotients,
arXiv:2606.19288.

\vspace{-0.3cm}

\bibitem{poliakovsky22}
A. Poliakovsky,
Some remarks on a formula for Sobolev norms due to Brezis, Van Schaftingen and Yung,
J. Funct. Anal. 282 (2022), Paper No. 109312, 47 pp.

\vspace{-0.3cm}

\bibitem{rolewicz57}
S. Rolewicz,
On a certain class of linear metric spaces,
Bull. Acad. Pol. Sci., Cl. III 5 (1957), 471--473.

\vspace{-0.3cm}

\bibitem{shyy17}
Y. Sawano, K.-P. Ho, D. Yang and S. Yang,
Hardy spaces for ball quasi-Banach function spaces,
Dissertationes Math. 525 (2017), 1--102.

\vspace{-0.3cm}

\bibitem{san19}
M. Shi, R. Arai and E. Nakai,
Generalized fractional integral operators and their commutators with
functions in generalized Campanato spaces on Orlicz spaces,
Taiwanese J. Math. 23 (2019), 1339--1364.

\vspace{-0.3cm}

\bibitem{wheeden76}
R. L. Wheeden,
A boundary value characterization of weighted $H^1$,
Enseign. Math. (2) 22 (1976), 121--134.

\vspace{-0.3cm}

\bibitem{yns26} S. Yamaguchi, E. Nakai and Y. Sawano, 
Bipredual spaces of generalized Campanato spaces with 
variable growth condition of higher order, 
J. Funct. Anal. 291 (2026), Paper No. 111659, 106 pp.

\vspace{-0.3cm}

\bibitem{zyyw2019}
Y. Zhang, D. Yang, W. Yuan and S. Wang,
Real-variable characterizations of Orlicz-slice Hardy spaces,
Anal. Appl. (Singap.) 17 (2019), 597--664.

\vspace{-0.3cm}

\bibitem{zlyyz24}
Y. Zhao, Y. Li, D. Yang, W. Yuan and Y. Zhang,
Generalized {F}rank characterizations of {M}uckenhoupt
weights and homogeneous ball {B}anach {S}obolev spaces,
Adv. Math. 458 (2024), Paper No. 109957, 63 pp.

\vspace{-0.3cm}

\bibitem{zyy}
C. Zhu, D. Yang and W. Yuan, Generalized Brezis--Seeger--Van Schaftingen--Yung formulae and their applications in ball Banach Sobolev spaces, Calc. Var. Partial Differential Equations 62 (2023), Paper No. 234, 76 pp.

\vspace{-0.3cm}

\bibitem{zyy23}
C. Zhu, D. Yang and W. Yuan,
Brezis--Seeger--Van Schaftingen--Yung-type characterization
of homogeneous ball Banach Sobolev spaces and its applications,
Commun. Contemp. Math. 26 (2024), Paper No. 2350041, 48 pp.

\end{thebibliography}
\end{document}